\newtheorem{prop}[theorem]{Proposition}
\newcommand{\bea}{\begin{eqnarray}}\newcommand{\eea}{\end{eqnarray}}
\newcommand{\beq}{\begin{equation}}\newcommand{\eq}{\end{equation}}
\newcommand{\beqs}{\begin{equation*}}\newcommand{\eqs}{\end{equation*}}
\newcommand{\les}{\lesssim}
\def\bm{\left( \begin{array}{cc}}\def\endm{\end{array}\right)}
\def\Vs{{{V\underline{}}^*}}\def\Us{{{U\underline{}}^*}}\def\us{u^{\!*}}
\def\Ts{{{T\underline{}}^*}}\def\Ss{{{S\underline{}}^*}}
\def\Ls{{{L\underline{}\!}^*}}\def\Lbs{{{\underline{L\!}}^*}}
\def\Lt{{\widetilde{L\underline{}\!}\,}}\def\Lbt{{\widetilde{\underline{L\!}\,}}}
\def\Lh{{\hat{L\underline{}}}}\def\Lbh{{\hat{\underline{L\!}\,}}}
\def\Lb{\underline{L\!}}\def\uL{\underline{L\!}}
\def\utau{\underline{\tau\!}\,}\def\uchi{\underline{\chi}}
\def\a{\alpha}\def\b{\beta}\def\de{\delta}\def\eps{\varepsilon}
\def\pa{\partial}\def\pab{\bar\pa}\def\opa{\overline{\pa }}
\def\pas{\text{$\pa\mkern -10.0mu$\slash\,}} \def\sls#1{\text{$#1\mkern -13.0mu$\slash\,}}
\def \rectangle#1#2{\hbox{\vrule\vbox to #2 {\hrule\hbox to #1{\hfil}\vfil\hrule}\vrule}}
\def\Box{\square}\def\Boxr{\widetilde{\square}}\def\sq{\,\,\rectangle{7pt}{7 pt}\,\,}
\def\tr{\text{tr}}
\numberwithin{equation}{section}
\begin{document}

\title {On the asymptotic behavior of solutions to the Einstein vacuum equations in wave coordinates}
\author {Hans Lindblad}
\institute{Johns Hopkins University}
\titlerunning{Asymptotic behavior of solutions to Einstein's equations}
\maketitle
\begin{abstract}
We give asymptotics for Einstein vacuum equations in wave coordinates with small
asymptotically flat data. We show that the behavior is wave like at
null infinity and homogeneous towards time like infinity.
We use the asymptotics to show
that the outgoing null hypersurfaces approach the Schwarzschild ones for
the same mass and that the radiated energy is equal to the initial mass.
\end{abstract}

\section{Introduction}
The Einstein vacuum equations $R_{\mu\nu}=0$ in wave coordinates
become a system of nonlinear wave equations for the metric, called the
reduced Einstein equations
\begin{equation}\label{eq:reducedEinstein}
\widetilde{\Box}_g g_{\mu\nu} =F_{\mu\nu}(g)[\partial g,\partial g],
\qquad\text{where}\qquad \widetilde{\Box}_g\!=g^{\alpha\beta}\partial_\alpha\partial_\beta,\quad
g^{\alpha\beta}\!\!=\!(g^{-1})^{\alpha\beta}\!\!,
\end{equation}
is the reduced wave operator and
$F_{\mu\nu}$ are quadratic in $\partial g$.
The metric is assumed to have signature $(-1,1,1,1)$ and satisfy the wave coordinate condition
\begin{equation}\label{eq:WaveCordinateCond}
\partial_\alpha \big(\sqrt{|g|} g^{\alpha\beta}\big)=0,\qquad \text{where} \quad  |g|=|\det{\big(g\big)}|.
\end{equation}
 This is preserved by \eqref{eq:reducedEinstein} if data satisfies the constraint equations. The initial data are assumed to be asymptotically flat, i.e. for some small $M\!>\!0$ and
$0\!<\!\gamma\!<\!1$
\begin{equation}\label{eq:asymptoticallyflatdata}
g_{ij}\big|_{t=0}=(1+M r^{-1})\, \delta_{ij} + o(r^{-1-\gamma}),\quad
\pa_t g_{ij}\big|_{t=0}=o(r^{-2-\gamma}),\quad r=|x|.
\end{equation}
Choquet-Bruhat\cite{CB1} proved local existence to Einstein's equations equations in wave
coordinates.
Christodoulou-Klainerman \cite{CK} proved global existence of small solutions to
Einstein's equations in a coordinate invariant way. It was assumed that the wave
coordinates behaved badly for large times. Nevertheless in Lindblad-Rodnianski \cite{LR3}
we proved global existence in wave coordinates.
In this paper we are studying the precise asymptotic behaviour of solutions to
\eqref{eq:reducedEinstein}-\eqref{eq:WaveCordinateCond} that are small perturbations $g=m+h$ of Minkowski metric $m$.

The decay we prove is
$\varepsilon (t+r)^{-1}\!$
for tangential components of $h$ and
for all components
with a logarithmic loss close to the light cone, see section 1.3.
The asymptotics we give can roughly  be written in the form
\begin{equation}
h(t,r\omega)\sim H(r^*\!-t,\omega)/(t+r)+K\big(\tfrac{r^*\!-t}{t+r^*},\omega\big)/(t+r),
\qquad r^*\!\sim r+M\ln{r},
\end{equation}
where $\omega\!=\!x/|x|$.
$H$ is concentrated close to the outgoing light cones $r^*\!-t$ constant,
 $|H(q^*\!,\omega)|\!\leq\! \varepsilon (1\!+| q^*|)^{-\gamma^\prime}\!\!$,
 and $K$ is homogeneous of degree $0$ with a log singularity at the light cone
 $|K(s,\omega)|\!\leq \!\varepsilon \ln{|s|}$ for the nontangential components,
 see section \ref{sec:fullasymp}.
 $H$ is the radiation field of a free curved wave operator and
  $K$ is the backscattering of the wave operator with
   quadratic source terms.

  The estimates can be used for proving sharp decay of the curvature, weak Penrose peeling
  properties, as in \cite{CK}. We use the asymptotics to prove a Bondi type mass loss law,
  that the radiated energy equals the initial mass. The radiated energy is what is detected
   in the gravitational wave detectors \cite{HN,C2}. For coupling to matter fields or for
   scattering from infinity one needs to know the precise decay or asymptotics also in the
    interior. It is plausible our methods can be used for studying gravitational radiation from
    post-Newtonian sources
\cite{B} and polyhomogeneous expansions at null infinity \cite{CW}.
The method works for other wave equations with semilinear terms that
satisfy a weak null condition.

Below we give heuristics, present the results and explain the structure of the proof.
We start by reviewing the null structure and the global existence result of \cite{LR3}
  in section \ref{sect:existenceintr}.
In section \ref{sect:nonlineareffects} we give a heuristic explanation of the
nonlinear effects on the asymptotic behavior.
In section \ref{sect:sharpdecay} we give Einstein's equations in
asymptotic characteristic coordinates and we state the sharp decay estimates that we prove in sections 2 through 6 (assuming the decay estimates of \cite{LR3}).
In section 1.4 we give a heuristic explanation of the weak null condition
and the asymptotic expansion along outgoing characteristics towards null infinity.
In section 1.5 we state the asymptotics that we prove in sections 7 through 9,
first for tangential components at null infinity and later in the
interior (which depends on the former).
In section 1.6 we state the the asymptotics of the characteristic surfaces and
a Bondi type mass law
we prove in sections 10 and 11.

\subsection{Einstein's equation in wave coordinates, the weak null  structure
and global existence}
\label{sect:existenceintr}
Einstein's equations in wave coordinates form a system for $h=g-m$;
\beq\label{eq:hormander8}
\widetilde{\Box}_g h_{\mu\nu} =F_{\mu\nu} (h) (\pa h, \pa h),
\qquad \text{where}\quad
\widetilde{\Box}_g\!=\!\Box\!+\!\widetilde{h}^{\alpha\beta}\pa_\alpha\pa_\beta.
\eq
Here  $\Box\!=\pa_t^2\!-\!\triangle_x$,
$\widetilde{h}^{\alpha\beta}\!\!=\!g^{\alpha\beta}\!\!-\!m^{\alpha\beta}
\!=\!-h^{\alpha\beta}\!\!+\!O(h^2)$, where $h^{\alpha\beta}\!\!=m^{\alpha\mu} m^{\beta\nu}h_{\mu\nu}$,  and
\beqs
F_{\mu\nu} (h) (\pa h, \pa h)= P(\pa_\mu h,\pa_\nu h) +Q_{\mu\nu}(\pa h,\pa h) +G_{\mu\nu}(h)(\pa h,\pa
h),
 \eqs
where $Q_{\mu\nu}$ satisfy the standard null condition and $G_{\mu\nu}$
is cubic, and by \cite{LR1}
 \beq
 P(D,E)= D_{\alpha}^{\,\, \, \alpha}
E_{\beta}^{\,\, \, \beta}/4- D^{\alpha\beta}
E_{\alpha\beta}/2.\label{eq:P}
 \eq

General wave equations with quadratic nonlinearities  may blow up
as shown in \cite{J1,J2} for
 $\Box\phi=\phi_t^2
 $ or $\Box \phi=\phi_t\,
 \triangle_x\phi$.
The {\it null condition},  e.g. for $\Box
 \phi=\phi_t^2-|\nabla_x\phi|^2$,
 guarantees small data global existence \cite{C1,K1}.
 Einstein's equations in wave coordinates do however not satisfy the null condition.
 For the
 quasilinear equation
$\label{eq:hormander4}
\Box\, \phi= c^{\,\alpha\beta} \phi\,\,\pa_\alpha\pa_\beta \phi,
$
that resembles the quasilinear terms in Einstein's equations,
global existence was proven in \cite{L3,L4,A}.
A simple semilinear system that violates the null condition yet trivially
has global solutions is
\begin{equation}\label{eq:hormander5}
\Box
\,\phi_1=0,\qquad\qquad \Box \,\phi_2=(\pa_t \phi_1)^2.
\end{equation}
In \cite{LR1} we observed that the semilinear terms of \eqref{eq:hormander8}
in a null frame $\mathcal{N}$,
\beq\label{eq:frameintro}
\uL=\pa_t-\pa_r,\quad L=\pa_t+\pa_r, \quad S_1,S_2\in\bold{S}^2,\quad
\langle S_i,S_j\rangle =\delta_{ij}
\eq
  can be modeled by such a system. In fact, it is well known
that for solutions of wave equations derivatives tangential to the outgoing light
cones $\overline{\pa }\in\mathcal{T}=\{L,S_1,S_2\}$ decay faster. Modulo tangential
 derivative $\opa  h$ we have
 \beq\label{eq:transversalderivativeprojection}
 \pa_\mu h\sim L_\mu \pa_q h,\qquad\text{where}\quad \pa_q=(\pa_r-\pa_t)/2,
 \quad L_\mu=m_{\mu\nu} L^\nu,
 \eq
 and modulo quadratic terms with at least  one tangential derivative or cubic
 \beq\label{eq:einsteinfirstapproximationintro}
\widetilde{\Box}_g h_{\mu\nu}\sim L_\mu L_\nu  P(\pa_q h,\pa_q h),\quad
\text{where}\quad
\widetilde{\Box}_g  h_{\mu\nu}\sim \Box h_{\mu\nu} -h_{LL}\pa_q^2  h_{\mu\nu},
\eq
and $h_{LL}=h_{\alpha\beta} L^{\alpha}L^{\beta}$.
 In a null frame the semilinear terms become
\beq
(\widetilde{\Box}_g h)_{TU}\sim 0, \quad T\in \mathcal{T},U\in\mathcal{N}\qquad
(\widetilde{\Box}_g h)_{\uL\uL}\sim 4 P_{\!\mathcal{N}}(\pa_q h,\pa_q h),
\label{eq:simplifiedEinstein}
\eq
since $T^\mu L_\mu=0$, $T\in\mathcal{T}$.
Here by \cite{LR2}, \eqref{eq:P} in a null frame become
\begin{multline}\label{eq:Pnullframe}
P_{\!\mathcal{N}}(D,E)=-\big(D_{LL} E_{\underline{L}\underline{L}}
+D_{\underline{L}\underline{L}} E_{{L}{L}}\big)/8
-\big(2D_{AB}E^{AB}-
D_{\!\! A}^{\, A} E_{B}^{\,\,\,B}\big)/4\\
+\big(2D_{A L}E^{A}_{\,\,\, \underline{L}} +2D_{A
\underline{L}}E^A_{\,\,\,{L}}- D_{\!\!A}^{\, A}
E_{L\underline{L}}-D_{L\underline{L}}E_{\!A}^{\,\,A}\big)/4.
\end{multline}
Hence the right of  \eqref{eq:simplifiedEinstein} only contain
$\pa_q h_{\uL\uL}$ through the term $\pa_q h_{LL} \pa_q h_{\uL\uL}$.
 However, using \eqref{eq:transversalderivativeprojection} the wave coordinate condition \eqref{eq:WaveCordinateCond}
in a null frame becomes
\begin{equation}\label{eq:wavecordhormander12}
\pa_q h_{L T}\sim 0,\quad T\in\mathcal{T},\qquad
\delta^{AB}\partial_q  h_{AB}\sim 0,\quad A,B\in\mathcal{S}=\{S_1,S_2\},
\end{equation}
 modulo tangential derivatives, see \cite{LR2}, so
 $P_{\!\mathcal{N}}(\pa_q h,\pa_q h)\sim P_{\!\mathcal{S}}(\pa_q h,\pa_q h)$,
 where
\beq\label{eq:Psphere}
P_{\mathcal{S}} (D,E)= -D_{AB}\, E^{AB}/2,\qquad A,B\in\mathcal{S}.
\eq
Hence the right in \eqref{eq:simplifiedEinstein}
only depend on components we have better control on.
The main quasilinear term
\eqref{eq:einsteinfirstapproximationintro} is controlled integrating
\eqref{eq:wavecordhormander12} from data \eqref{eq:asymptoticallyflatdata}
\beq\label{eq:hLLasym}
h_{LL}\!\sim 2M\!/r .
\eq

In \cite{LR3} we found solutions to
Einstein's equations in wave coordinates:
\beq\label{eq:honedef}
g=m+h,\qquad h=h^0+h^1, \qquad
h^0_{\a\b}=\tilde{\chi}\big(\tfrac{r}{1+t}\big)\tfrac{M}{r}\de_{\a\b},
\eq
where $m$ is the Minkowski metric, $h^0$ is
the leading term at space-like infinity.
Here $\tilde{\chi}(s)\!=\!1$, when $s\!>\!1/2$ and $\tilde{\chi}(s)\!=\!0$, when $s\!<\!1/4$,
and $\tilde{\chi}^\prime(s)\geq 0$. The mass $M$ is assumed to be small and
$h_1$ and its derivatives are assumed to be small and satisfying he asymptotic flatness condition \eqref{eq:asymptoticallyflatdata} initially, i.e. decay like $r^{-1-\gamma}\!$. We showed that
the solution exist globally and satisfy the decay estimates
\begin{equation}\label{eq:veryweakdecay}
|Z^I
h^1(t,x)|\leq C_N \varepsilon
(1+t)^{-1+C_N\varepsilon}(1+q_+)^{-\gamma} ,\qquad |I|\leq N-2,
\end{equation}
where $q=r-t$ and $q_\pm=\max{(\pm q,0)}$. Here $N\geq 6$ and
$0<\gamma<1$
and $Z^I$ stands for a product of $|I|$
of the vector fields that commute with $\Box$ and the scaling vector field, i.e. $\pa_t, \partial_i$,
$x^i\partial_j-x^j\partial_i$, $x^i\partial_t+t\partial_i$, and
$t\partial_t+x^i\partial_i$, $i,j=1,2,3$.

Our first result says that one can almost remove
the $C_N\varepsilon$ in the exponent in \eqref{eq:veryweakdecay}
by changing to asymptotically characteristic coordinates, see section 1.3.

\subsection{Nonlinear effects on the asymptotic behavior}\label{sect:nonlineareffects}
There are two types of nonlinear distortions of the linear asymptotic behaviour related to the quasilinear terms $g^{\alpha\beta}\pa_\alpha\pa_\beta g_{\mu\nu}$ and the semilinear terms $F_{\mu\nu}(g)[\partial g,\partial g]$, respectively.

\subsubsection{Asymptotic Schwarzschild  coordinates}
In order to unravel the effect of the quasilinear terms one can change to
characteristic coordinates as in \cite{CK}, but this loses regularity and is not explicit.
Instead we use the asymptotic behavior of the metric to determine the characteristic surfaces asymptotically and use this to construct coordinates.
Due to \eqref{eq:hLLasym} the outgoing light cones
of a solution with asymptotically flat data \eqref{eq:asymptoticallyflatdata} approach those of the Schwarzschild metric
\beqs
-\,\frac {r\!-\!{M}\!/{2}\!}{r\!+\!{M}\!/{2}\!} \,\, dt^2
+\,\frac{r\!+\!{M}\!/{2}\!}{r\!-\!{M}\!/{2}\!}\,\, dr^2
+ \big(r+ \frac{M\!}{2\!}\big)^2 (d\theta^2 \!+ \,\sin^2\!\theta
\,d\phi^2)
\!\sim\!
\big(m_{\alpha\beta}+\frac{M\!\!}{r\!\!}\,\delta_{\alpha\beta}\big)\,dx^\alpha dx^\beta
\!\!.
\eqs
The outgoing light cones for the Schwarzschild metric satisfy $t\!\sim r^*\!\!-q^*\!$, where
$r^*\!=r\!+M\ln{r}$.
 We show that there is a solution to the eikonal equation that approaches the one for Schwarzschild
\begin{equation*}
g^{\alpha\beta} \pa_\alpha u \,\pa_\beta u=0,\qquad  u\to  \us=t-r^*,\quad \text{when}\qquad r>t/2\to\infty.
\end{equation*}
In fact the terms that could cause the largest deviation are controlled by the wave coordinate condition.
We therefore make the change of variables
$x=r\omega\to x^*\!=r^*\omega$, for large $r$, and the wave operator
 $\widetilde{\Box}_g$ asymptotically becomes the constant coefficient wave operator $\Box^{\,*}$ in the $(t,x^*)$ coordinates.
We use the vector fields $Z^*$, that commute with this wave operator $\Box^{\,*}$
and the scaling.

\subsubsection{Sources on light cones}
The inhomogeneous terms in \eqref{eq:reducedEinstein} cause a more serious distortion of the asymptotic behaviour. A solution of a linear homogeneous wave equation $\Box\phi=0$ decays like $t^{-1}$ and has a radiation field
\begin{equation*}
\phi(t,x)\sim F(r-t,\omega)/r.
\end{equation*}
The same is true if only
$|\Box \phi |\lesssim \langle t+r\rangle^{-2-\eps}\langle t-r\rangle^{-1}$
decay sufficiently fast.
However, quadratic inhomogeneous terms do not decay sufficiently
$$
\phi_t(t,x)^2\sim F_q(r-t,\omega)^2/r^2,
\qquad\text{where}\quad |F_q(q,\omega)|\lesssim (1+|q|)^{-1} .
$$
The asymptotics for the wave equation with such sources was studied in \cite{L1}:
\begin{equation} \label{eq:source}
-\Box \,\psi=n(r-t,\omega)/r^2,
\end{equation}
where $n(q,\omega)$ has compact support in $q$. The solution
is given by the formula
\beq\label{eq:exactsource}
\psi(t,r\omega)=\int_{r-t}^{\infty}\frac{1}{4\pi}\int_{\bold{S}^2}{\frac{
{n}({q,\sigma})}{t+q-r\langle \omega,{\sigma}\rangle}\,dS({\sigma})}\, dq.
\eq
Close to the light cone $t\!\sim \!r$ the integrand is concentrated when
$n(q,\sigma)\!\sim\! n(q,\omega)$:
\beq \label{eq:approximatesource}
\psi(t,r\omega)\sim \int_{r-t}^\infty \frac{1}{2r}\ln{\Big|\frac{t+q+r}{t+q-r}\Big|} \, n(q,\omega)\, dq,\qquad\text{when}\quad r\sim t,
\eq
and this leads to a log correction to the asymptotic behavior.
In fact an explicit calculation in spherical coordinates
$\Box \psi\!=r^{-1} (\pa_r\!-\pa_t)(\pa_r\!+\pa_t)(r\psi)+r^{-2}\triangle_\omega \psi$
shows that \eqref{eq:approximatesource} satisfies \eqref{eq:source} up to angular derivatives that decay faster.
\eqref{eq:exactsource} holds also in the interior
and gives $t^{-1}$ decay when $r\!<\!t/2$.

\subsection{Sharp decay in asymptotic Schwarzschild coordinates}
\label{sect:sharpdecay} We present the decay results from section 6
in the coordinates in section 1.2.1. With $\tilde{\chi}$ as in \eqref{eq:honedef} let
\beq\label{eq:goodcoordinates}
 r^*=r+M{\tilde{\chi}}\big(\tfrac{r}{1+t}\big)\ln{|r|},
 \qquad t^*=t,\qquad \omega^*=\omega,\qquad x^*=r^* \omega .
 \eq

\subsubsection{Einstein equations in asymptotic Schwarzschild
 coordinates} We express Einstein's equation in the good coordinates up
 to an error controlled by \cite{LR3}:
\begin{prop}
 \label{prop:approxwaveequationschwarzcoordintro} Let $P_\mathcal{F}$ be $P_{\mathcal{N}}$ or $P_{\mathcal{S}}$ as in \eqref{eq:Pnullframe} and \eqref{eq:Psphere} and let
 \begin{equation*}
 P_{\mu\nu}^*=P_\mathcal{F}\,(\pa_\mu^* h,\pa_\nu^* h)
\qquad\text{or}\qquad
 P_{\mu\nu}^*=L_{\mu}L_\nu P_\mathcal{F}\,(\pa_q^* h,\pa_q^* h).
 \end{equation*}
  Let $Z^*\!\!$ stand for the vector fields $\partial_\alpha^*$,
${x^*}^i\partial^*_j-{x^*}^j\partial^*_i$, ${x^*}^i\partial^*_t+t\partial^*_i$, and
$t\partial^*_t+{x^*}^i\partial^*_i$, $i,j=1,2,3$. Let $\gamma$ be as in \eqref{eq:veryweakdecay}. Then for $|I|\leq N-5$
and $\gamma^\prime<\gamma-C\varepsilon$
\begin{equation*}
 \big|{Z^*}{}^I\big[\Box^{*}
h_{\mu\nu}-P_{\mu\nu}^*\big]\big| \les
{\varepsilon^2 }
{(1+t+\!|q^*|)^{-2-\gamma^\prime}(1+\!|q^*|)^{-2+\gamma}},\qquad
\Box^*\!\!=m^{\alpha\beta}\pa_\alpha^*\pa_\beta^*.
\end{equation*}

\end{prop}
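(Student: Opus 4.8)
The plan is to start from the reduced equations \eqref{eq:hormander8} in the original coordinates and to follow how both sides transform under the change of variables $x\mapsto x^*$ of \eqref{eq:goodcoordinates}. There are two essentially independent tasks: showing that the quasilinear operator $\widetilde\Box_g$ turns into the flat operator $\Box^*$ up to an acceptable error, and showing that the quadratic source $F_{\mu\nu}(h)(\pa h,\pa h)$ turns into $P^*_{\mu\nu}$ up to an acceptable error. Throughout one writes $h=h^0+h^1$ as in \eqref{eq:honedef} and uses \eqref{eq:veryweakdecay} together with the refined estimates of \cite{LR3}: the better bounds for the components $h_{LL}$, $h_{LT}$, $\delta^{AB}h_{AB}$ forced by the wave coordinate condition \eqref{eq:wavecordhormander12}, and the faster interior and tangential-derivative decay not visible in \eqref{eq:veryweakdecay}. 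The loss from $|I|\le N-2$ in \eqref{eq:veryweakdecay} to $|I|\le N-5$ here simply absorbs the two derivatives in the worst error terms $\widetilde h\,\pa^2 h$ and the manipulations based on \eqref{eq:wavecordhormander12}.

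For the quasilinear part I would write $\widetilde\Box_g h_{\mu\nu}=\Box h_{\mu\nu}+\widetilde h^{\alpha\beta}\pa_\alpha\pa_\beta h_{\mu\nu}$ and expand $\widetilde h^{\alpha\beta}\pa_\alpha\pa_\beta h_{\mu\nu}$ in the null frame. By \eqref{eq:einsteinfirstapproximationintro} the only dangerous contribution is $-h_{LL}\pa_q^2 h_{\mu\nu}$; every other term either has a good derivative $\opa\in\mathcal T$ landing on $h$ (gaining roughly a power of $(1+t+r)^{-1}$), or involves a component of $\widetilde h^{\alpha\beta}$ that by \eqref{eq:wavecordhormander12} is better controlled than the one multiplying $\pa_q^2 h_{\mu\nu}$, or is quadratic in $h$, so each of these is of the size permitted on the right of the proposition. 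In $-h_{LL}\pa_q^2 h_{\mu\nu}$ one freezes $h_{LL}$ at $h^0_{LL}=2M\widetilde\chi(r/(1+t))/r$, the residual $(h_{LL}-h^0_{LL})\pa_q^2 h_{\mu\nu}$ being an error by the refined bound for $h_{LL}$ (obtained by integrating \eqref{eq:wavecordhormander12} from the data \eqref{eq:asymptoticallyflatdata}, cf.\ \eqref{eq:hLLasym}). The change of variables \eqref{eq:goodcoordinates} is designed precisely to cancel this: computing the Jacobian, in the region $\widetilde\chi\equiv1$ one has $\pa r^*/\pa r=1+M/r$, so $\Box^*$ differs from $\Box$ by a second-order term whose leading coefficient on $\pa_q^2 h_{\mu\nu}$ is the same $2M\widetilde\chi/r$ produced by $\widetilde h^{\alpha\beta}\pa_\alpha\pa_\beta$, plus first-order terms coming from $\pa^2 x^*/\pa x^2$ (which carry an extra factor $r^{-1}$) and terms supported where $\widetilde\chi'\neq0$, i.e.\ $r\sim t$, where the improved interior derivative estimates make them negligible. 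Hence the $\pa_q^2 h_{\mu\nu}$ contributions cancel in $\widetilde\Box_g h_{\mu\nu}-\Box^* h_{\mu\nu}$, leaving only an error.

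For the semilinear part, by \eqref{eq:P} we have $F_{\mu\nu}=P(\pa_\mu h,\pa_\nu h)+Q_{\mu\nu}(\pa h,\pa h)+G_{\mu\nu}(h)(\pa h,\pa h)$ with $G_{\mu\nu}$ cubic and $Q_{\mu\nu}$ a classical null form, so $Q_{\mu\nu}$ and $G_{\mu\nu}$ are errors by \eqref{eq:veryweakdecay}. In $P(\pa_\mu h,\pa_\nu h)$ one trades $\pa$ for $\pa^*$ at the cost of a term $O(M/r)\,\pa h\,\pa h$ of cubic type, obtaining $P^*_{\mu\nu}=P_{\mathcal N}(\pa^*_\mu h,\pa^*_\nu h)$ modulo errors; for the second form in the statement one also uses \eqref{eq:transversalderivativeprojection} to replace $\pa_\mu h$ by $L_\mu\pa_q h$ modulo $\opa h$, and \eqref{eq:wavecordhormander12} to pass from $P_{\mathcal N}$ to $P_{\mathcal S}$, so that all four versions of $P^*_{\mu\nu}$ agree up to the allowed error. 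It then remains to commute $Z^{*I}$, $|I|\le N-5$, through the identity $\Box^* h_{\mu\nu}-P^*_{\mu\nu}=\text{error}$. The fields $Z^*$ commute with $\Box^*$ and differ from the fields $Z$ of \eqref{eq:veryweakdecay} by lower-order corrections, so $Z^{*I}$ applied to any of the error products is controlled by at most $|I|+2\le N-3$ of the vector fields falling on $h^1$, $h^0$ and $2M/r$ (and their up to second derivatives), bounded by \eqref{eq:veryweakdecay} and the refined estimates, together with commutator terms of the same type.

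The hard part will be the sharp error bookkeeping near the light cone. The crude bound \eqref{eq:veryweakdecay} for $h^1_{LL}$ carries the loss $(1+t)^{C\varepsilon}$ there, which is incompatible with the decay $(1+t+|q^*|)^{-2-\gamma'}$ in the proposition, so one has to use that $h_{LL}-2M\widetilde\chi/r$ and its tangential derivatives satisfy a strictly better estimate, and that good derivatives and interior derivatives decay faster than \eqref{eq:veryweakdecay} states. Turning all of this into bounds that are uniform after commuting with every $Z^{*I}$, $|I|\le N-5$, while carefully keeping track of the passage between the starred and unstarred null frames $\{L,\uL,S_A\}$ and of the transition region where $\widetilde\chi'\neq0$, is the substance of sections 2--6.
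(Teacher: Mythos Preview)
Your proposal is correct and follows essentially the paper's approach. The paper organizes it by introducing the intermediate operator $\Box_0=(m^{\alpha\beta}+h_0^{\alpha\beta})\pa_\alpha\pa_\beta$ and proving separately that $\Box_0 h_{\mu\nu}-P_{\mu\nu}$ is acceptable in the original coordinates (Proposition~\ref{prop:approxwaveequation}, via Lemmas~\ref{lem:lemma1}--\ref{lem:lemma2} and the wave-coordinate decay of Proposition~\ref{prop:wavecoorddecay}) and that $\Box^*-\Box_0$ is small under the coordinate change (Proposition~\ref{prop:starwaveeq}, which also converts $Z,\pa$ to $Z^*,\pa^*$); your description of freezing $h_{LL}$ at $2M\widetilde\chi/r$ and the $\pa_q^2$ cancellation is exactly the content of these two steps combined.
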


\subsubsection{The sharp decay estimates for tangential components of the metric}
The estimate for tangential components follows from commuting the wave equation above
with the tangential frame using that $P^*_{TU}\!\sim \opa h \, \pa h$  decay faster. (A difficulty is that the commutator with the angular part of
$\Box$ is large
in the interior.)

 \begin{prop}\label{prop:sharpmetricdecayintro1}
 For $T\!\in\!\{L,S_1,S_2\},\,\,
 U\!\in\!\{\underline{L},L,S_1,S_2\}$, $h^1\!$ as in
 \eqref{eq:honedef} we have
(Here \eqref{eq:metricdecaysharpwavecordintro} holds also for $Z^{*I}(h_{UV}^1\!)$ replaced by the Lie derivatives $(\mathcal{L}_{Z^{*}}^I h_{1})_{UV}$.)
 \begin{align}\label{eq:metricdecaysharptanintro}
 |{Z^*}{}^I {h}^{1}_{TU}|
 &\les {\varepsilon}{(1+t+r^*)^{-1}(1+q_+^*)^{-\gamma^\prime}},\quad
 \gamma^\prime\!<\!\gamma-C\varepsilon,\\
 |{Z^*}{}^I {h}^{1}_{LT}|\!+|\delta^{AB}{Z^*}{}^I {h}^{1}_{AB}|
 &\les\frac{\varepsilon}{1+t+r^*}
 \Big(\frac{1+q_-^*}{1+t+r^*}\Big)^{\gamma^\prime}\!\!\!,\qquad |I|\leq N-6.
\label{eq:metricdecaysharpwavecordintro}
 \end{align}

 \end{prop}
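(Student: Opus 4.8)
The plan is to derive \eqref{eq:metricdecaysharptanintro} and \eqref{eq:metricdecaysharpwavecordintro} from Proposition~\ref{prop:approxwaveequationschwarzcoordintro} by commuting the wave equation $\Box^* h_{\mu\nu}=P^*_{\mu\nu}+(\text{error})$ with the starred vector fields $Z^*$ and projecting onto the null frame $\mathcal{N}$. First I would rewrite the equation in the $(t,x^*)$ coordinates in the form $\Box^* h_{TU}=(\text{frame commutators})+P^*_{TU}+(\text{error})$ for $T\in\mathcal{T}$, $U\in\mathcal{N}$, where the frame commutators come from the fact that $\Box^*$ does not commute with contraction against the (angle-dependent) null frame vectors; one expresses those commutators through angular derivatives $\nabb$ of $h$ and lower-order frame components, which are themselves bounded by the right-hand sides we want (a bootstrap structure). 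The point emphasized in the text is that $P^*_{TU}\sim\opa h\,\pa h$, i.e. the semilinear source for a tangential component always carries at least one tangential derivative, hence enjoys the extra $(1+t+r^*)^{-1}$ decay of tangential derivatives and decays integrably in $t$. So the source for the tangential wave equation is effectively $O(\varepsilon^2(1+t+r^*)^{-2-\gamma'}(1+q^*)^{-1+\gamma'})$ or better.

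Next I would invoke the standard weighted $L^\infty$ estimate for $\Box^*$ in $3+1$ dimensions (Klainerman--Sobolev together with a pointwise integration along characteristics, or the Kirchhoff/Asano--John-type representation), applied to ${Z^*}^I h_{TU}$: if $|\Box^*\phi|\lesssim \varepsilon^2(1+t+r^*)^{-2-\gamma'}\langle q^*\rangle^{-1}$ and the data are $O(\varepsilon r^{-1-\gamma})$, then $|\phi|\lesssim \varepsilon(1+t+r^*)^{-1}(1+q^*_+)^{-\gamma'}$, which is exactly \eqref{eq:metricdecaysharptanintro}. For \eqref{eq:metricdecaysharpwavecordintro} the mechanism is different: the combinations $h_{LT}$ and $\delta^{AB}h_{AB}$ are not directly controlled by their own wave equation but by the wave coordinate condition \eqref{eq:WaveCordinateCond}, which in the starred null frame reads $\pa_{q^*}h_{LT}\sim\opa h$ and $\delta^{AB}\pa_{q^*}h_{AB}\sim\opa h$ modulo errors; integrating this ODE in $q^*$ inward from the region $q^*\sim t+r^*$ (where $h^1$ is small, $O(\varepsilon)$ times the flux/data) and using the already-established bound \eqref{eq:metricdecaysharptanintro} for the tangential source gives the stated $(1+q^*_-)^{\gamma'}(1+t+r^*)^{-1-\gamma'}$ decay. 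One has to be a little careful to organize the order of the bootstrap: the genuinely tangential components \eqref{eq:metricdecaysharptanintro} first, then \eqref{eq:metricdecaysharpwavecordintro} from the constraint, closing the loop on the commutator terms.

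The main obstacle is the one flagged parenthetically in the statement of Proposition~\ref{prop:sharpmetricdecayintro1}: the commutator of the contraction onto the null frame with the angular Laplacian part of $\Box^*$ is \emph{not} small in the interior region $r^*\lesssim t$. Concretely, writing $\Box^* = -r^{-1}\pa_u\pa_v(r\,\cdot)+r^{-2}\lapp$ in (starred) spherical coordinates, the $r^{-2}\lapp$ piece acting on $h$ contracted against $S_A$ produces lower-order frame components with a coefficient $r^{-2}$ that, multiplied by $r^2=|x^*|^2$ worth of "bad" weight available in the interior, is not obviously better than the solution itself. Handling this requires exploiting that in the interior $t+r^*\sim t$ and that the relevant components enjoy extra decay from \eqref{eq:veryweakdecay} (indeed $|Z^Ih^1|\lesssim\varepsilon(1+t)^{-1+C\varepsilon}$ already gives interior decay), so the commutator term, while large relative to a naive count, is still $o$ of the target at the cost of giving up $C\varepsilon$ in the exponent --- which is precisely why $\gamma'<\gamma-C\varepsilon$ appears. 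I would treat the interior ($r^*<t/2$) and exterior ($r^*>t/2$) separately, using the homogeneous (scaling) asymptotics in the interior and the characteristic/radiation-field analysis near the cone, and patch them via the cutoff $\tilde\chi$ built into \eqref{eq:goodcoordinates}. The statement that \eqref{eq:metricdecaysharpwavecordintro} holds with the Lie derivatives $\mathcal{L}_{Z^*}^I h_1$ in place of $Z^{*I}(h^1_{UV})$ follows by the same argument, since the difference $\mathcal{L}_{Z^*}^I h - (Z^{*I}h)$ expands into lower-order frame components contracted with derivatives of the frame vectors, all of which are already controlled by the two displayed bounds.
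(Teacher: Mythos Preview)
Your overall strategy---commuting $\Box^*$ with the frame, exploiting that $P^*_{TU}\sim\opa h\,\pa h$ carries an extra tangential derivative, and deducing \eqref{eq:metricdecaysharpwavecordintro} by integrating the wave coordinate constraint in $q^*$---is the same as the paper's. You also correctly identify the angular commutator $r^{*-2}\triangle_\omega$ acting on the frame as the main obstruction. The gap is in how you propose to resolve it.

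Your plan is essentially: cut off to the exterior, absorb the cutoff/frame commutators using the weak decay \eqref{eq:veryweakdecay}, and accept the $C\varepsilon$ loss. But this does not close. In the region $r^*\sim t$, $q^*<0$ near the inner edge of the cutoff, the angular commutator term $r^{*-2}\sum|Z^{*J}h|$ fed only the weak bound produces a source of size $\varepsilon(1+t+r^*)^{-3+C\varepsilon}$ with no extra $\langle q^*\rangle$-decay. That source fails the hypotheses of both parts of the Kirchhoff-type lemma (Lemma~\ref{lem:inhomwaveeqdecay}): one needs strictly more than $(1+|q^*|)^{-1}$ decay in the distance to the cone, and the $C\varepsilon$ deficit in the $(1+t+r^*)$ power cannot be traded for it. Integration along characteristics (your other option) avoids the global issue but only yields a bound on $\pa Z^{*I}h_{TU}$; integrating that back to the function gives $(1+t+r^*)^{-1}\big((1+q^*_-)^{C\varepsilon}-1\big)$, which is \eqref{eq:hTUest}, not the sharp \eqref{eq:metricdecaysharptanintro}. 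Your ``homogeneous scaling asymptotics in the interior'' does not correspond to any mechanism that recovers the missing decay.

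What the paper actually does is a two-pass iteration that you are missing. First (Lemma~\ref{lem:tancomp}) one uses characteristic integration in the exterior only, obtaining a \emph{preliminary} (non-sharp) bound on tangential components. Second, and this is the key step, one feeds that preliminary bound into the source $P^*_{\mu\nu}$ and applies the Kirchhoff lemma to the \emph{unprojected} quantity $h^{1e}_{\mu\nu}$ (Lemma~\ref{lem:h1edecay}); since there is no frame contraction here, there is no angular commutator, and one gets a genuine global improvement on $|Z^{*I}h^{1e}|$. Only with this improved global bound in hand does the frame commutator $r^{*-2}\sum|Z^{*J}h^{1e}|$ become small enough (Lemma~\ref{lem:approxwaveequation3}) for the cutoff argument to close and yield \eqref{eq:metricdecaysharptanintro} (Lemma~\ref{lem:approxwaveequation4}). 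The wave-coordinate estimate \eqref{eq:metricdecaysharpwavecordintro} then follows as you describe.
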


\subsubsection{The sharp decay estimates for all components}
The estimate for all components follows from using the estimates for tangential
 components in $P^*_{\mu\nu}$:
 \begin{prop}\label{prop:sharpmetricdecayintro2} With
 $\gamma^\prime=\gamma-C\varepsilon$ we have for $|I|\leq N-6$
 \begin{equation*}\label{eq:metricdecaysharp}
 |{Z^*}{}^I {h}^{1}|\les \frac{\varepsilon+\varepsilon^2
 S^0(t,r^*)}{(1+t+r^*)(1+q_+^*)^{\gamma^\prime}},\quad\text{where}\quad
 S^0(t,r^*)
 =\frac{t}{r^*}\ln{\Big(\frac{\langle \,t+r^*\,\rangle}{\langle\,t-r^*\,\rangle}\Big)}.
 \end{equation*}
 \end{prop}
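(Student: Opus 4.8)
The plan is to combine Proposition \ref{prop:approxwaveequationschwarzcoordintro}, which writes Einstein's equations in the good coordinates as $\Box^* h_{\mu\nu}=P_{\mu\nu}^*+(\text{fast error})$, with the sharp tangential estimates of Proposition \ref{prop:sharpmetricdecayintro1} to control the source $P_{\mu\nu}^*$, and then to read off the decay of $h^1$ from the fundamental solution of $\Box^*$ (essentially Minkowski's) together with the source formula \eqref{eq:exactsource}--\eqref{eq:approximatesource} from section 1.2.2. The key new feature compared to Proposition \ref{prop:sharpmetricdecayintro1} is the $\varepsilon^2 S^0(t,r^*)$ term, which is exactly the backscattering $\ln|{(t+r^*)}/{(t-r^*)}|$ produced by a quadratic source of size $r^{-2}(1+|q^*|)^{-1}$ on the light cones; the factor $t/r^*$ in $S^0$ is what the formula \eqref{eq:exactsource} gives in the interior $r^*\!<\!t$, matching the $\ln$ on the cone and decaying like $t^{-1}$ for $r^*\!<\!t/2$.

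First I would estimate $P_{\mu\nu}^*$ using Proposition \ref{prop:sharpmetricdecayintro1}. For the truly bad component $(\widetilde\Box_g h)_{\underline L\underline L}\sim 4P_{\mathcal S}(\pa_q^* h,\pa_q^* h)$, formula \eqref{eq:Psphere} shows it only involves the spherical components $h_{AB}$, for which \eqref{eq:metricdecaysharpwavecordintro} gives the improved bound $\varepsilon(1+t+r^*)^{-1}\big((1+q_-^*)/(1+t+r^*)\big)^{\gamma^\prime}$; hence $|P_{\underline L\underline L}^*|\lesssim \varepsilon^2(1+t+r^*)^{-2}(1+|q^*|)^{-1}$ after accounting for one $\pa_q^*$ derivative, which improves the $q^*$-decay. (More precisely one gets $(1+t+r^*)^{-4}(1+q_-^*)^{2\gamma^\prime}$ type decay away from the cone, better than needed.) All other $P_{\mu\nu}^*$ components have at least one tangential derivative hitting one factor, so by \eqref{eq:transversalderivativeprojection} and \eqref{eq:metricdecaysharptanintro} they decay faster — like the error term in Proposition \ref{prop:approxwaveequationschwarzcoordintro}, i.e. integrably in time away from the cone. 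The commuted versions $Z^{*I}P_{\mu\nu}^*$ are handled the same way, distributing the $Z^*$'s by the Leibniz rule and using that the $Z^*$'s respect the null frame up to lower order (this is why the statement allows Lie derivatives).

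Second, I would invoke the representation for $\Box^*$. Write $h^1=h^1_{\mathrm{free}}+h^1_{\mathrm{back}}$, where $h^1_{\mathrm{free}}$ solves $\Box^* h^1_{\mathrm{free}}=0$ with the data of $h^1$ and satisfies $|Z^{*I}h^1_{\mathrm{free}}|\lesssim\varepsilon(1+t+r^*)^{-1}(1+q_+^*)^{-\gamma^\prime}$ by standard Klainerman--Sobolev / radiation-field estimates for the free wave operator (here the asymptotically-flat, $r^{-1-\gamma}$ data are crucial, and the work of sections 2--6 ensures the energy stays $\lesssim\varepsilon$ after commuting with $Z^*$). For $h^1_{\mathrm{back}}$, solving $\Box^* h^1_{\mathrm{back}}=-P_{\mu\nu}^*+(\text{fast error})$ with zero data, the fast error integrates to something $\lesssim\varepsilon^2(1+t+r^*)^{-1}(1+q_+^*)^{-\gamma^\prime}$ by the linear decay estimate for sources satisfying $|\Box^*\psi|\lesssim\langle t+r\rangle^{-2-\eps}\langle t-r\rangle^{-1}$ recalled in section 1.2.2, while the $P_{\underline L\underline L}^*$ piece, of the form $n(q^*,\omega)/{r^*}^2$-type with $|n|\lesssim\varepsilon^2(1+|q^*|)^{-1}$ (after integrating the $\pa_q^*$'s), produces by \eqref{eq:exactsource} precisely $\lesssim \varepsilon^2 S^0(t,r^*)(1+t+r^*)^{-1}(1+q_+^*)^{-\gamma^\prime}$; one gets the $(1+q_+^*)^{-\gamma^\prime}$ gain on the $+$ side because the source $n$ itself decays in $q^*$, so only the support $q^*\lesssim q_+^*$ contributes to the integral in \eqref{eq:approximatesource}.

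The main obstacle is the angular commutator issue flagged after Proposition \ref{prop:sharpmetricdecayintro1}: the commutator of the rotation vector fields (or of $Z^*$ with the null frame, hence with $P_{\mathcal F}$) with the angular Laplacian part $r^{-2}\triangle_\omega$ of $\Box$ is of size $r^{-2}$, which is \emph{not} small relative to the leading operator in the interior region $r\ll t$, so one cannot naively iterate the estimate for $Z^{*I}h^1$. I expect this must be dealt with by treating the region $r^*>t/2$ (where $h^0$ lives and $Z^*$ are good) separately from $r^*<t/2$ (where one rather uses the scaling and Lorentz boosts and the homogeneity of $S^0$, exploiting that the $t^{-1}$ decay there has extra room), and by carefully tracking that the dangerous commutator only ever multiplies components with the improved $(1+q_-^*)^{\gamma^\prime}(1+t+r^*)^{-\gamma^\prime}$ decay of \eqref{eq:metricdecaysharpwavecordintro}, which absorbs the loss. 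This localization-and-bookkeeping is exactly the content of sections 2 through 6, so here I would only indicate how the pieces fit; the honest verification that the $\varepsilon^2 S^0$ term closes the bootstrap (i.e. $S^0$ times the loss is again controlled by $\varepsilon+\varepsilon^2 S^0$) is the point requiring care.
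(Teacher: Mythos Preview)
Your approach is correct and is essentially the paper's: bound the source via the weak-null structure and the tangential/wave-coordinate estimates of Proposition~\ref{prop:sharpmetricdecayintro1}, then split into a free solution (controlled by Kirchhoff, Lemma~\ref{lem:homwaveeqdecay}) plus a Duhamel piece whose $r^{-2}\langle q^*\rangle^{-1-\delta}$ source produces exactly the $\varepsilon^2 S^0$ term via the fundamental-solution estimate Lemma~\ref{lem:inhomwaveeqdecay}.

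Two corrections worth noting. First, the paper works in \emph{coordinate} indices $h_{\mu\nu}$ at this step (Lemma~\ref{lem:h1edecay}), so $[Z^*,\Box^*]$ is either $0$ or $2\Box^*$ and there is no frame commutation at all; your ``main obstacle'' is therefore a non-issue here --- it belongs entirely to the proof of Proposition~\ref{prop:sharpmetricdecayintro1}, which you are taking as input. Rather than splitting the equation by null-frame components, the paper bounds the scalar $P_{\mathcal N}(\pa h,\pa h)$ directly from its own null structure \eqref{eq:PN}: every term carries a tangential index except $\pa h_{LL}\cdot\pa h_{\underline{L}\underline{L}}$, and there the wave-coordinate decay of $\pa h_{LL}$ does the job. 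This gives $|\Box^* Z^{*I}h_{\mu\nu}|\lesssim \varepsilon^2(1+t+r^*)^{-2}(1+|q^*|)^{-2+C\varepsilon}$ uniformly in $\mu,\nu$, after which Lemmas~\ref{lem:inhomwaveeqdecay}--\ref{lem:homwaveeqdecay} finish. Second, estimate \eqref{eq:metricdecaysharpwavecordintro} controls only $h^1_{LT}$ and the \emph{trace} $\delta^{AB}h^1_{AB}$, not the full spherical block $h^1_{AB}$ appearing in $P_{\mathcal S}$; for that you need \eqref{eq:metricdecaysharptanintro} (or rather its derivative version \eqref{eq:derhTUest}), which is still sufficient.
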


\subsection{The asymptotics at null infinity}
Here we give the heuristics for the asymptotics along the surfaces $q^*\!=r^*\!-t$  constant towards null infinity.

\subsubsection{The weak null condition and the asymptotic system for wave equations}
Consider a general system of quasilinear wave equations in $3$ space
dimensions:
 \beq\label{eq:hormander1}
\Box \, \phi_I = {\sum}_{|\alpha|\le|\beta|\le 2,\,\, |
\beta|\geq 1}  A_{I,\alpha\beta}^{JK}\,  \pa^\alpha
\phi_J\, \pa^\beta\phi_K + {\text{cubic terms}},
 \eq
 with small initial data.
 If we neglect derivatives tangential to the outgoing light
cones and cubic terms, that decay faster, we get
\begin{equation*}
\aligned \Box\, \phi\! &=\!r^{-1}(\pa_t+\pa_r)(\pa_r-\pa_t)
(_{\!}r\phi_{\!})+r^{-2}\times \text{angular
derivatives},\\
\pa_\mu&=\tfrac{1}{2}\hat{\omega}_\mu(\pa_r-\pa_t)+\text{tangential
derivatives},\qquad \hat{\omega}=(-1,\omega),
\endaligned
\end{equation*}
where $x=r\omega$, $\omega\in \bold{S}^2$ are polar coordinates. We
see that asymptotically
\begin{equation*}
(\pa_t+\pa_r)(\pa_r\!-\pa_t)(r\phi_I )\sim {r}^{-1}\!\sum
 A_{I\!,nm}^{JK} 2^{-n-m}(\pa_r\!-\pa_t)^n (r\phi_{\!J})
\,\, (\pa_r\!-\pa_t)^m (r\phi_K),
\end{equation*}
where
$
A_{I\!,nm}^{JK}(\omega):=
\sum_{|\alpha|=n,\,|\beta|=m}
A_{I\!,\alpha\beta\,}^{JK} \hat\omega^\alpha \hat
\omega^\beta\!\,
$
and
$\hat{\omega}^\alpha\!=\hat{\omega}_{\alpha_1}\!\cdots\hat{\omega}_{\alpha_n}$.

\cite{H} proposed an asymptotic expansion as $r\!\to
\!\infty$ and $r\!\sim \! t$ of the form
\beqs\label{eq:hormander2}
\phi_I(t,x)\sim \, \Phi_I(q,s,\omega)/r,\quad \text{
where }\quad q=r-t,\,\,\,s=\ln{r},\,\,\,
\eqs
and $\Phi_I$ satisfies the {\it asymptotic system}:
\beq\label{eq:hormander3}
 2\partial_s\partial_q \Phi_I ={\sum}_{n\leq m\leq 2,\, m\geq 1}
  A_{I\, mn}^{JK}(\omega)\,
\partial_q^{\,m} \Phi_{\!J}\,\,\partial_q^{\,n} \Phi_K .
\eq
Solutions to linear wave equations have such an expansion
independent of $s$.
The {\it null condition},
which guarantees small data global existence \cite{C1,K1} is
$
A_{\!I\!,nm}^{JK} \!\equiv 0,
 $
 e.g. for $\Box
 \phi=\phi_t^2-|\nabla_x\phi|^2$.
 On the other hand \cite{J2} showed that solutions to
  $\Box \phi=\phi_t\,
 \triangle_x\phi$
 blowup and
 \cite{H} used  the blow up of the corresponding asymptotic systems
 to predict the precise exponential  blow up time.
 For the
 quasilinear equation
$\label{eq:hormander4}
\Box\, \phi= c^{\,\alpha\beta} \phi\,\,\pa_\alpha\pa_\beta \phi,
$
\cite{L3} observed that the asymptotic system
has global exponentially growing solutions
 $\Phi\sim e^{cs}$.
For the simpler semilinear system \eqref{eq:hormander5} that violates the null condition,
the solution to the asymptotic system
  \beq\label{eq:hormander6}
 2\partial_s \partial_q \Phi_1=0,\qquad 2\partial_s\partial_q \Phi_2=(\partial_q \Phi_1)^2,
 \eq
 is
 $
 \Phi_1\!=F_1(q,\omega),$ $\Phi_2\!= s\, F_2(q,\omega)+F_3(q,\omega),
 $ $F_2(q,\omega)\!=\int (\pa_q F_1(q,\omega))^2 dq/2$ so
 \begin{equation*}\label{eq:hormander7}
 \phi_1 \sim \, F_1(t-r,\omega)/r,
 \qquad \phi_2\sim  \ln{r} \,\, F_2(t-r,\omega)/r+F_3(t-r,\omega)/r.
 \end{equation*}

In view of these examples
we  say that \eqref{eq:hormander1} satisfies the
{\it weak null
condition} \cite{LR1} if \eqref{eq:hormander3} has global solutions growing at most
exponentially in $s$. The methods here works for the subclass where it grows at most polynomially in $s$. 

\subsubsection{The asymptotic system for Einstein's equations in wave coordinates}
With
\beqs
h_{\mu\nu}(t,x)\sim  \, H_{\mu\nu}(q,s,\omega)/r,\quad \text{
where }\quad q=r-t,\,\,\,s=\ln{r},\quad r\sim t,
\eqs
the asymptotic system for Einstein's equations in a null frame takes the
form:
\begin{align}
\big( 2\pa_s -H_{LL}\partial_q\big) \pa_q H_{TU}&= 0,\label{eq:hormandereinsteinone}\\
 \big( 2\pa_s -H_{LL}\partial_q\big)
 \pa_q H_{\underline{L}\underline{L}}&=4 P_{\bold{S}^2}(\pa_q H,\pa_q H),\label{eq:hormandereinsteintwo}
 \end{align}
 by \eqref{eq:simplifiedEinstein}. By \eqref{eq:wavecordhormander12}
 the wave coordinate condition takes the asymptotic form
\begin{equation*}\label{eq:hormander12}
\pa_q H_{L T}= 0,\qquad T\in\{L,S_1,S_2\},\qquad
\delta^{AB}\partial_q  H_{AB}=0,\qquad A,B\in\{S_1,S_2\},
\end{equation*}
and because the solution for large $r$ asymptotically is Schwarzschild
\beqs
H_{LL}=2M,\qquad H_{LA}=0,\qquad \delta^{AB}H_{AB}= 2M,
\qquad A,B\in\{S_1,S_2\}.
\eqs
Here the right hand side of
 \eqref{eq:hormandereinsteintwo} only depends on tangential components
$P_{\mathcal{S}} (D,E)
= -D_{AB}
\, E^{AB}\!/\,2
$
and the quasilinear term simplifies if we introduce the integral curves to the vector field
$(2\partial_s-2M\partial_q)$ given by
$r-t=q(s)=q^*-Ms=q^*-M\ln{(1+r)}$. Hence if we change
variables to $q^*$ and integrate
\begin{align*}
 H^*_{TU}(q^*,\omega,s)
&=H_{TU}^{\infty}(q^*,\omega),\\
 H^*_{\underline{L}\underline{L}}(q^*,\omega,s)&=
 H_{\underline{L}\underline{L}}^{\infty}(q^*,\omega)
 -\, s\,\int  P_{\mathcal{S}}\big(\pa_{q^*} H^{\infty},\pa_{q^*} H^{\infty}\big)(q^*,\omega)dq^* .
\end{align*}
We get an asymptotic radiation field as for a linear
homogeneous wave equation for all but one component which is
multiplied by a logarithm $s=\ln{(1+r)}$.

\subsection{The full asymptotics of the metric}\label{sec:fullasymp}
Using the decay estimates
we will prove the asymptotics stated below in sections 7-9.
First we prove the asymptotics for the tangential components
in section 1.5.1. First after that can one define the asymptotic
source term in section 1.5.2 since it depends on the
tangential components. Next we make a different decomposition into a part that comes
from the asymptotic source and a remainder. In section 1.5.3 we show that the remainder has the asymptotics of a free wave.
For the backscattering of the source we first give the asymptotics
close to the light cone in section 1.5.4 and in section 1.5.5
we give a formula that also gives the leading interior behaviour.

\subsubsection{Asymptotics for tangential components
close to the light cone}
Let
\beqs
 h^{*}_{\mu\nu}(t,r^*\omega)=h_{\mu\nu}(t,r\omega),\qquad
H^{*}_{TU}(q^*,\omega,r^*)=r^* h^{*}_{TU}(r^*-q^*, r^*\omega).
 \eqs
With similar estimates used to prove decay for tangential components we prove:
\begin{prop}\label{prop:lim} The limit
\begin{equation*}\label{eq:limit}
H^{\infty}_{TU}(q^*,\omega)
={\lim}_{\, r^*\to\infty}\,H^{*}_{TU}(q^*,\omega,r^*)
,\qquad
 T\in\{L,A,B\},\,\,\,\, U\in\{\underline{L},L,A,B\},
\end{equation*}
exists and satisfies $H_{TU}^{\infty}=H_{UT}^{\infty}$,
\begin{equation*}
H^{\infty}_{LA}(q^*,\omega)=0,\qquad
H^{\infty}_{LL}(q^*,\omega)=
{\delta}{}^{AB} H^{\infty}_{AB}(q^*,\omega)=2M.
 \end{equation*}
 For $|\alpha|+k\leq N-6$, we have
\begin{align*}
\big|\,\pa_\omega^\alpha \big((1+|\,q^*|)\pa_{q^*}\big)^k
H^{\infty}_{TU}(q^*,\omega)\big| &\les \varepsilon, \\
 \big|\pa_\omega^\alpha
\big((1+|q^*|)\pa_{q^*}\big)^k\big[H^{*}_{TU}(q^*,\omega,r^*)
 -H^{\infty}_{TU}(q^*,\omega)\big]\big| &\les
\varepsilon \Big(\frac{1+q_-^*}{1+t+r^*}\Big)^{\gamma^\prime}.
 \end{align*}
\end{prop}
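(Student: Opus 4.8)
The plan is to derive an evolution equation for $H^{*}_{TU}$ in the variables $(q^{*},\omega,r^{*})$ from Proposition \ref{prop:approxwaveequationschwarzcoordintro} and show that $\partial_{r^{*}} H^{*}_{TU}$ is integrable in $r^{*}$, so that the limit exists by a Cauchy criterion. Recall that in spherical-type coordinates $\Box^{*} h^{*}_{\mu\nu} = (r^{*})^{-1}\pa^{*}_{q}\pa^{*}_{p}(r^{*} h^{*}_{\mu\nu}) + (r^{*})^{-2}\,\triangle_{\omega} h^{*}_{\mu\nu}$ modulo lower order, where $\pa^{*}_{p}=(\pa_{r^{*}}+\pa_{t})$. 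Contracting the wave equation of Proposition \ref{prop:approxwaveequationschwarzcoordintro} with the null frame vectors $T,U$ (which are $r^{*}$-independent up to the $O(h)$ rotation of the frame, an error one absorbs into the right side), and using that $P^{*}_{TU}\sim\opa h\,\pa h$ carries at least one tangential derivative so decays like $\varepsilon^{2}(1+t+r^{*})^{-2}(1+q^{*})^{-1-\gamma^\prime}$ or better, one gets schematically
\begin{equation*}
\pa^{*}_{q}\,\pa^{*}_{p}\big(H^{*}_{TU}\big) = \text{(angular terms)} + \text{(frame commutators)} + \text{(source)},
\end{equation*}
where the right-hand side, after multiplying by $r^{*}$, is $O\big(\varepsilon^{2}(1+t+r^{*})^{-1-\gamma^\prime}(1+q_{+}^{*})^{-\gamma^\prime}\big)$ plus the angular Laplacian term $(r^{*})^{-1}\triangle_{\omega}H^{*}_{TU}$.

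The key step is then to integrate this transport equation along the outgoing direction $\pa^{*}_{p}$, i.e. in $r^{*}$ at fixed $q^{*}$ and $\omega$ (since $t=r^{*}-q^{*}$ along such a ray). Integrating the $\pa^{*}_{q}$ first, or rather reading the equation as an ODE in $r^{*}$ for $\pa^{*}_{q} H^{*}_{TU}$ and then for $H^{*}_{TU}$ itself, one bounds $\pa_{r^{*}}H^{*}_{TU}$ by the integral of the right side, which converges as $r^{*}\to\infty$ because of the $(1+t+r^{*})^{-1-\gamma^\prime}$ decay (here $t+r^{*}\sim r^{*}$ along the ray when $q^{*}$ is bounded, and $t+r^{*}\gtrsim 1+q_{-}^{*}$ in general). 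This gives existence of $H^{\infty}_{TU}$ and the convergence rate $\big((1+q_{-}^{*})/(1+t+r^{*})\big)^{\gamma^\prime}$ claimed in the second displayed estimate. The $\varepsilon$-bound on $\pa_{\omega}^{\alpha}\big((1+|q^{*}|)\pa_{q^{*}}\big)^{k}H^{\infty}_{TU}$ follows by commuting with $Z^{*I}$ (noting $(1+|q^{*}|)\pa_{q^{*}}$ and $\pa_{\omega}$ are built from the $Z^{*}$) before passing to the limit and using the tangential decay estimates of Proposition \ref{prop:sharpmetricdecayintro1}, which already control $|Z^{*I}h^{1}_{TU}|\les \varepsilon(1+t+r^{*})^{-1}(1+q_{+}^{*})^{-\gamma^\prime}$; multiplying by $r^{*}$ and taking $r^{*}\to\infty$ leaves the $O(\varepsilon)$ bound. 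The algebraic identities $H^{\infty}_{TU}=H^{\infty}_{UT}$, $H^{\infty}_{LA}=0$, $H^{\infty}_{LL}=\delta^{AB}H^{\infty}_{AB}=2M$ follow by passing to the limit in the wave-coordinate relations \eqref{eq:metricdecaysharpwavecordintro} (which force $H^{\infty}_{LT}=\delta^{AB}H^{\infty}_{AB}-(\text{const})\to$ the stated values once the $h^{0}$ contribution $\tfrac{M}{r}\delta_{\alpha\beta}$ is accounted for, giving $2M$ in the $LL$ and trace-$AB$ slots and $0$ in the $LA$ slot) together with the symmetry of $h_{\mu\nu}$.

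The main obstacle, as the paper flags for Proposition \ref{prop:sharpmetricdecayintro1}, is the angular Laplacian term $(r^{*})^{-1}\triangle_{\omega}H^{*}_{TU}$: naively it is only $O(\varepsilon (t+r^{*})^{-1})$ along the ray, which is \emph{not} integrable in $r^{*}$. The resolution is that $\triangle_{\omega}$ applied to $H^{*}_{TU}$ is itself a sum of $Z^{*}$-derivatives and of terms that, after using the tangential estimate \eqref{eq:metricdecaysharptanintro} and the improved wave-coordinate estimate \eqref{eq:metricdecaysharpwavecordintro}, gains the extra $(1+q_{-}^{*})^{\gamma^\prime}(1+t+r^{*})^{-\gamma^\prime}$ factor — equivalently one must first prove the result for the full tangential package (all of $h^{1}_{TU}$, $h^{1}_{LT}$, $\delta^{AB}h^{1}_{AB}$ simultaneously, closing a coupled hierarchy) rather than one component at a time, exactly as in the proof of Proposition \ref{prop:sharpmetricdecayintro1}. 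One also has to control the frame commutators $[\,T,\Box^{*}]$ and $[\,U,\cdot\,]$, which in the interior are large, but these again only hit tangentially-good components and are handled by the same bootstrap. Once the $r^{*}$-integrability is secured, the Cauchy criterion and the rate estimate are routine.
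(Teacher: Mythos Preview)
Your overall strategy---write $(\pa_t+\pa_{r^*})(\pa_{r^*}-\pa_t)H^{*}_{TU}=r^{*}\Box^{*}_{r}h^{*}_{TU}$, bound the right side, and integrate over the two\nobreakdash-dimensional region $\{2r_1^{*}-q^{*}\le r^{*}+t\le 2r_2^{*}-q^{*},\ r^{*}-t\ge q^{*},\ t\ge 0\}$ to obtain a Cauchy estimate for $H^{*}_{TU}(q^{*},\omega,r^{*})$---is exactly what the paper does.

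The gap is in your treatment of the angular term. From your own identity $\Box^{*}h=(r^{*})^{-1}\pa^{*}_{q}\pa^{*}_{p}(r^{*}h)+(r^{*})^{-2}\triangle_{\omega}h$, after multiplying by $r^{*}$ the angular contribution to $\pa^{*}_{q}\pa^{*}_{p}H^{*}_{TU}$ is $(r^{*})^{-1}\triangle_{\omega}h^{*}_{TU}$, i.e.\ $(r^{*})^{-2}\triangle_{\omega}H^{*}_{TU}$, \emph{not} $(r^{*})^{-1}\triangle_{\omega}H^{*}_{TU}$ as you write. With the correct power, the term is harmless: since $\triangle_{\omega}=\sum\Omega_{ij}^{2}$ and differentiating the frame in $T^{\mu}U^{\nu}h^{*}_{\mu\nu}$ mixes in all null components, the weak decay $|Z^{*J}h^{1*}|\les\varepsilon(1+t+r^{*})^{-1+C\varepsilon}(1+q_{+}^{*})^{-\gamma}$ already gives
\[
(r^{*})^{-1}|\triangle_{\omega}h^{1*}_{TU}|\les\varepsilon(1+t+r^{*})^{-2+C\varepsilon}(1+q_{+}^{*})^{-\gamma}
\qquad (r^{*}\ge t/2),
\]
which integrates over the region to the stated Cauchy rate. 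No bootstrap is required. Your proposed fix via \eqref{eq:metricdecaysharptanintro}--\eqref{eq:metricdecaysharpwavecordintro} would not close anyway, because the frame commutators produce $h^{1*}_{\underline{L}\,\underline{L}}$, which those estimates do not cover; one would need the $S^{0}$ estimate of Proposition~\ref{prop:sharpmetricdecayintro2} for that. The ``commutator large in the interior'' remark you cite for Proposition~\ref{prop:sharpmetricdecayintro1} concerns the region $r^{*}\ll t$, where $(r^{*})^{-2}$ blows up and a cutoff is needed before applying the global decay lemma; that issue is irrelevant here, since the limit argument lives entirely in $r^{*}\ge t/2$.
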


\subsubsection{The asymptotics source}
Now, once that we have shown that the limit of the tangential components exist we can define
\begin{equation}\label{eq:Newsfucntion}
n(q^*,\omega)=-P_{\mathcal{S}}\big(\pa_{q*} H^\infty,\pa_{q^*} H^\infty\big)(q^*,\omega).
\end{equation}
It follows that
\begin{equation*}
\big|\pa_\omega^\alpha
\big(\langle q^*\rangle\pa_{q^*}\big)^k n(q^*,\omega)\big|\les \varepsilon^2(1+|q^*|)^{-2},
\end{equation*}
for $|\alpha|+k\leq N-7$. (Here $P_\mathcal{S}$ given by \eqref{eq:Psphere}.)
Let $k_{\mu\nu}$ be the solution to
 \beqs\label{eq:einsteinsourceintro}
-\Box^* k_{\mu\nu}=L_\mu(\omega) L_\nu (\omega)
{n(r^*-t,\omega)}{{r^{*}}^{-2}}\chi\big(\tfrac{\langle\,r^*-t\,\rangle}{t+r^*}\big)
\eqs
with vanishing data,
where $\chi(s)\!=\!1$, when $|s|\!\leq\! 1/2$ and $\chi(s)\!=\!0$, when $|s|\!\geq \!3/4$.
\begin{prop} We have
\begin{equation*}
\big|Z^{*I}\big(\Box^* h_{\mu\nu}-\Box^* k_{\mu\nu}\big)
\big| \les {\varepsilon^2}{(1+t+r^*)^{-2-\gamma+C\varepsilon}
(1+|q^*|)^{-2+\gamma}}\!,\quad |I|\!\leq\! N\!-\!7.
  \end{equation*}
  Let
 \beqs\label{eq:honeeintro}
{h}^{1\,e}=h-{h}^{0e},\qquad\text{where}\quad
{h}^{0e}_{\mu\nu}=\delta_{\mu\nu} M\chi^e(r^*-t)/r^*.
 \eqs
Here $\chi^e(s)=1$, when $s\geq 2$ and $\chi^e(s)=0$, when $s\leq 1$.
Then
 \beqs\label{eq:inhomwaveeqdecay10intro}
\big|\,Z^{*I}\big(h^{1e}_{\mu\nu}-k_{\mu\nu}\big) \big| \les
{\varepsilon^2}{(1+t+r^*)^{-1}(1+|q^*|)^{-\gamma^\prime}},
\qquad |I|\leq N-7.
\eqs
\end{prop}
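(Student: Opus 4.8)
The plan is to establish the first estimate and then obtain the second by propagating it through the flat wave equation. The link between the two is that $h^{0e}_{\mu\nu}=\delta_{\mu\nu}M\chi^e(r^*\!-t)/r^*$ is an exact outgoing solution of the flat wave equation in the $(t,x^*)$ variables: since $M\chi^e(r^*\!-t)$ has no $\omega$ dependence, $\Box^*\big(M\chi^e(r^*\!-t)/r^*\big)=(r^*)^{-1}(\pa_t^2\!-\pa_{r^*}^2)\big(M\chi^e(r^*\!-t)\big)=0$. Hence $\Box^*(h^{1e}_{\mu\nu}-k_{\mu\nu})=\Box^* h_{\mu\nu}-\Box^* k_{\mu\nu}$, while the Cauchy data of $h^{1e}_{\mu\nu}-k_{\mu\nu}$ is $o(r^{-1-\gamma})$ because $k_{\mu\nu}$ has vanishing data and $h^{0e}$ reproduces the $M\delta_{\mu\nu}/r$ part of the asymptotically flat data \eqref{eq:asymptoticallyflatdata} up to $O(M^2\ln r/r^2)$.

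For the first estimate I would write, with the admissible choice $P^*_{\mu\nu}=L_\mu L_\nu\,P_{\mathcal S}(\pa_q^* h,\pa_q^* h)$ in Proposition \ref{prop:approxwaveequationschwarzcoordintro} (already incorporating the reduction of $P_{\mathcal N}$ to $P_{\mathcal S}$ via the wave coordinate condition \eqref{eq:metricdecaysharpwavecordintro}),
\[
\Box^* h_{\mu\nu}-\Box^* k_{\mu\nu}=\big(\Box^* h_{\mu\nu}-P^*_{\mu\nu}\big)+\big(P^*_{\mu\nu}-\Box^* k_{\mu\nu}\big).
\]
The first bracket is bounded directly by Proposition \ref{prop:approxwaveequationschwarzcoordintro}. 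For the second I would invoke Proposition \ref{prop:lim}: with $H^{*}_{AB}(q^*,\omega,r^*)=r^* h^{*}_{AB}(r^*\!-q^*,r^*\omega)$, differentiating $h_{AB}=H^*_{AB}/r^*$ and using the convergence $H^*_{AB}\to H^{\infty}_{AB}$ gives $r^*\pa_q^* h_{AB}=\pa_{q^*}H^{\infty}_{AB}+E_{AB}$ with $|E_{AB}|\les\varepsilon\langle q^*\rangle^{-1}\big[\big(\tfrac{1+q_-^*}{1+t+r^*}\big)^{\gamma^\prime}+\tfrac{\langle q^*\rangle}{1+t+r^*}\big]$, the first term being the rate in Proposition \ref{prop:lim} with one weighted $\pa_{q^*}$ and the second coming from $\pa_q^*(1/r^*)$ and from passing from $\pa_q^*$ to $\pa_{q^*}$ at fixed $r^*$. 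Expanding the bilinear form $P_{\mathcal S}$ of \eqref{eq:Psphere} as $ab-a'b'=(a-a')b+a'(b-b')$ and using the sharp bound $|\pa_q^* h_{AB}|\les\varepsilon(r^*)^{-1}\langle q^*\rangle^{-1}$ (from Propositions \ref{prop:sharpmetricdecayintro1} and \ref{prop:lim}) on one factor and $E_{AB}$ on the other, one gets $(r^*)^2 P_{\mathcal S}(\pa_q^* h,\pa_q^* h)=-n(r^*\!-t,\omega)+(\text{acceptable error})$, with $n$ as in \eqref{eq:Newsfucntion}; since $\chi(\langle r^*\!-t\rangle/(t+r^*))\equiv 1$ near the light cone, this matches $P^*_{\mu\nu}$ with $\Box^* k_{\mu\nu}$ there to within the asserted size. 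All of this must be done for the $Z^{*I}$-differentiated quantities, using the Lie-derivative version of Proposition \ref{prop:lim} and the higher-order parts of Propositions \ref{prop:approxwaveequationschwarzcoordintro} and \ref{prop:sharpmetricdecayintro1}; restricting to $|I|\le N-7$ leaves the room needed for the product rule.

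Away from a neighbourhood of the light cone, where $\chi(\langle r^*\!-t\rangle/(t+r^*))$ is no longer identically $1$, the source $\Box^* k_{\mu\nu}$ drops out and one only needs $|Z^{*I}\Box^* h_{\mu\nu}|$ of the claimed size: in the far interior this follows from $\Box^* h_{\mu\nu}=F_{\mu\nu}(h)(\pa h,\pa h)-\widetilde h^{\alpha\beta}\pa_\alpha\pa_\beta h$ once the scaling and boost fields among $Z^*$ are used to upgrade $|\pa_q^* h|$ from $\varepsilon(1+t+r^*)^{-1}$ to $\varepsilon(1+t+r^*)^{-1}\langle q^*\rangle^{-1}$ (at the cost of one derivative, which $|I|\le N-7$ affords, using $\langle q^*\rangle\sim 1+t+r^*$ there), while in the far exterior one uses the exterior decay of \cite{LR3} together with the data \eqref{eq:asymptoticallyflatdata}.

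With the first estimate in hand, $h^{1e}_{\mu\nu}-k_{\mu\nu}$ solves the flat inhomogeneous wave equation $\Box^*(h^{1e}_{\mu\nu}-k_{\mu\nu})=\Box^* h_{\mu\nu}-\Box^* k_{\mu\nu}$ with a source bounded by $\varepsilon^2(1+t+r^*)^{-2-\gamma+C\varepsilon}(1+|q^*|)^{-2+\gamma}$ and with $o(r^{-1-\gamma})$ data; since $\gamma<1$ the factor $(1+|q^*|)^{-2+\gamma}$ decays faster than $(1+|q^*|)^{-1}$, so the source decays fast enough for the solution to inherit the decay of a free wave (cf.\ \eqref{eq:source}--\eqref{eq:exactsource}). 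The weighted decay estimate for the flat wave equation with such sources from \cite{LR3} (see also \eqref{eq:exactsource}--\eqref{eq:approximatesource} and \cite{L1}), together with its commuted version for the fields $Z^*$, then yields $|Z^{*I}(h^{1e}_{\mu\nu}-k_{\mu\nu})|\les\varepsilon^2(1+t+r^*)^{-1}(1+|q^*|)^{-\gamma^\prime}$ for $|I|\le N-7$. I expect the main obstacle to be the identification in the second paragraph --- matching the nonlinearity $P_{\mathcal S}(\pa_q^* h,\pa_q^* h)$ evaluated on the true solution to the asymptotic source $n(r^*\!-t,\omega)/(r^*)^2$ built from the limit $H^{\infty}$, with error rates sharp enough simultaneously in $1+t+r^*$ and in $1+|q^*|$, uniformly over all $Z^{*I}$-derivatives and through the transition regions where the cut-offs $\chi$ and $\chi^e$ act.
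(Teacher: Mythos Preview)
Your proposal is correct and follows the same route as the paper. The paper's own proof is one line --- ``just an application of Proposition \ref{prop:approxwaveequationschwarzcoord} and Lemma \ref{lem:inhomwaveeqdecay}'' --- but the content behind that line is exactly what you wrote: split $\Box^* h-\Box^* k=(\Box^* h-P^*)+(P^*-\Box^* k)$, bound the first bracket by Proposition \ref{prop:approxwaveequationschwarzcoordintro}, bound the second by comparing $P(V^*,V^*)$ to $P(V^\infty,V^\infty)$ via Proposition \ref{prop:lim} (the paper records this difference estimate just before stating the Proposition in Section 8), observe $\Box^* h^{0e}=0$, and feed the resulting source bound into the flat-wave decay lemma (Lemma \ref{lem:inhomwaveeqdecay}, second case) together with Lemma \ref{lem:homwaveeqdecay} for the homogeneous part coming from data.

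One simplification: your separate treatment of the region ``away from the light cone'' is unnecessary. Both $P^*_{\mu\nu}$ and $-\Box^* k_{\mu\nu}$ carry the \emph{same} cut-off $\chi(\langle r^*\!-t\rangle/(t+r^*))$, so in the support of $\chi$ the difference $P^*-\Box^* k$ is exactly $\chi\cdot L_\mu L_\nu\big[P_{\mathcal S}(\pa_q^* h,\pa_q^* h)-(r^*)^{-2}P_{\mathcal S}(\pa_{q^*}H^\infty,\pa_{q^*}H^\infty)\big]$, while outside that support both vanish and you are left only with $\Box^* h-P^*$, which Proposition \ref{prop:approxwaveequationschwarzcoordintro} already controls globally. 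No extra argument in the far interior or far exterior is needed.
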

These estimates tell us that the leading behavior in the exterior is determined by
 the Schwarzschild metric and the leading behavior in the interior by the solution to the
 wave equation with a source given by the far field $n(q^*\!,\omega)$.

\subsubsection{The asymptotics for all components of the metric in the interior}
Let
\begin{equation*}
 h^{e*}_{\mu\nu}(t,r^*\omega)=h^{1e}_{\mu\nu}(t,r^*\omega)-k_{\mu\nu}(t,r^*\omega),\qquad
H^{e*}_{\mu\nu}(q^*,\omega,r^*)=r^* h^{e*}_{\mu\nu}(r^*-q^*, r^*\omega).
 \end{equation*}
With similar estimates used to prove decay for all components we prove:
\begin{prop}\label{prop:lim} The limit
\beqs\label{eq:limit}
H^{e\infty}_{\mu\nu}(q^*,\omega)
=\lim_{r^*\to\infty}H^{e*}_{\mu\nu}(q^*,\omega,r^*),
\eqs
 exists and satisfies $H_{\mu\nu}^{e\infty}=H_{\nu\mu}^{e\infty}$.
 Moreover for $|\alpha|+k\leq N-7$ we have
\begin{align*}
\big|\,\pa_\omega^\alpha \big((1+|\,q^*|)\pa_{q^*}\big)^k
H^{e\infty}_{\mu\nu}(q^*,\omega)\big|
&\les {\varepsilon}{(1+|q^*|)^{-\gamma^\prime}},\\
\big|\pa_\omega^\alpha
\big((1+|\,q^*|)\pa_{q^*}\big)^k\big[H^{e*}_{\mu\nu}(q^*,\omega,r^*)
 -H^{e\infty}_{\mu\nu}(q^*,\omega)\big]\big|
 &\les
{\varepsilon }{(1+t+r^*)^{-\gamma^\prime}}.
 \end{align*}
\end{prop}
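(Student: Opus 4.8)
The plan is to follow the argument used for the tangential components (the analogue of this statement, proved just above), the point being that after subtracting the Schwarzschild term $h^{0e}$ and the source solution $k$ the inhomogeneity becomes fast decaying, so $r^{*}h^{e*}_{\mu\nu}$ acquires a radiation field exactly as for a perturbation of the linear homogeneous equation. Write $\Box^{*}$ in the polar variables $(t,r^{*},\omega)$ of the starred coordinates,
\beqs
\Box^{*}u=\tfrac{1}{r^{*}}(\pa_t-\pa_{r^{*}})(\pa_t+\pa_{r^{*}})(r^{*}u)-\tfrac{1}{(r^{*})^{2}}\Delta_\omega u,
\eqs
so that with $L^{*}=\pa_t+\pa_{r^{*}}$, ${\underline{L}}^{*}=\pa_t-\pa_{r^{*}}$ and $u=Z^{*I}h^{e*}_{\mu\nu}$, $|I|\le N-7$,
\beqs
{\underline{L}}^{*}\big(L^{*}(r^{*}u)\big)=r^{*}\Box^{*}u+\tfrac{1}{r^{*}}\Delta_\omega u.
\eqs
First I would check that $\Box^{*}h^{0e}_{\mu\nu}$ is harmless: since $h^{0e}_{\mu\nu}=\delta_{\mu\nu}M\chi^{e}(r^{*}-t)/r^{*}$ and $\Box^{*}(1/r^{*})=0$ off the axis, $\Box^{*}h^{0e}$ is supported where $\chi^{e\prime}$ or $\chi^{e\prime\prime}$ do not vanish, i.e.\ in the far exterior $|q^{*}|\sim1$, where the $(1+t+r^{*})^{-2-\gamma+C\varepsilon}$ weight of the preceding Proposition is already integrable. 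Hence by that Proposition, and since $\Box^{*}$ commutes with the $Z^{*}$ up to scaling, $|r^{*}\Box^{*}u|\les\varepsilon^{2}(1+t+r^{*})^{-1-\gamma+C\varepsilon}(1+|q^{*}|)^{-2+\gamma}$, while the angular term obeys $\tfrac{1}{r^{*}}|\Delta_\omega u|\les\tfrac{1}{r^{*}}\sum_{|J|\le2}|Z^{*(I+J)}h^{e*}_{\mu\nu}|\les\tfrac{\varepsilon^{2}}{r^{*}(1+t+r^{*})}(1+|q^{*}|)^{-\gamma^\prime}$ by the bound $|Z^{*I}(h^{1e}_{\mu\nu}-k_{\mu\nu})|\les\varepsilon^{2}(1+t+r^{*})^{-1}(1+|q^{*}|)^{-\gamma^\prime}$ of the preceding Proposition.

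Next I would integrate. Along the incoming ray $t+r^{*}=\text{const}$ the right side is integrable in $q^{*}$ (on that ray $1+t+r^{*}$ is constant, $\int(1+|q^{*}|)^{-2+\gamma}dq^{*}<\infty$, and the angular term integrates against $dq^{*}/r^{*}$), and on $t=0$ we have $k\equiv0$ (the cut-off $\chi(\langle r^{*}-t\rangle/(t+r^{*}))$ vanishes there) while $h^{1e}_{\mu\nu}|_{t=0}=o(r^{-1-\gamma})$, so $|L^{*}(r^{*}u)|_{t=0}\les\varepsilon(1+r^{*})^{-1-\gamma}$. Integrating ${\underline{L}}^{*}\big(L^{*}(r^{*}u)\big)$ along the incoming ray from $t=0$ then gives a pointwise bound of the form $|L^{*}(r^{*}u)|\les\varepsilon(1+t+r^{*})^{-1-\gamma^\prime}$; integrating $L^{*}(r^{*}u)=\tfrac{d}{dr^{*}}\big|_{q^{*}}(r^{*}u)$ once more along the outgoing ray $q^{*}=\text{const}$ from $r^{*}$ to $\infty$ shows that $H^{e\infty}_{\mu\nu}=\lim_{r^{*}\to\infty}r^{*}h^{e*}_{\mu\nu}$ exists and yields $|H^{e*}_{\mu\nu}-H^{e\infty}_{\mu\nu}|\les\varepsilon(1+t+r^{*})^{-\gamma^\prime}$, while $|H^{e\infty}_{\mu\nu}|\les\varepsilon(1+|q^{*}|)^{-\gamma^\prime}$ follows from $|r^{*}Z^{*I}h^{e*}_{\mu\nu}|\les\varepsilon^{2}(1+|q^{*}|)^{-\gamma^\prime}$, already contained in the preceding Proposition. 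The weighted derivatives $\pa_\omega^{\alpha}((1+|q^{*}|)\pa_{q^{*}})^{k}$ are rewritten in terms of the $Z^{*}$ (rotations for $\pa_\omega$; the scaling $t\pa_t+r^{*}\pa_{r^{*}}$ together with $\pa_t$ for $(1+|q^{*}|)\pa_{q^{*}}$ when $|q^{*}|\gtrsim1$, and $\pa_{q^{*}}=\tfrac12(\pa_{r^{*}}-\pa_t)$ when $|q^{*}|\les1$), so the estimates for $u=Z^{*I}h^{e*}_{\mu\nu}$ give the two stated bounds after relabelling; the symmetry $H^{e\infty}_{\mu\nu}=H^{e\infty}_{\nu\mu}$ is inherited from $h_{\mu\nu}=h_{\nu\mu}$, $h^{0e}_{\mu\nu}=h^{0e}_{\nu\mu}$ and $k_{\mu\nu}=k_{\nu\mu}$.

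The main difficulty is the angular term $\tfrac{1}{r^{*}}\Delta_\omega u$ in the interior: there $(r^{*})^{-1}$ is large and, as already noted for the tangential components, the commutator of $Z^{*I}$ with the angular part of $\Box^{*}$ is not small, so the crude estimate above is not by itself enough to recover the rate $(1+t+r^{*})^{-\gamma^\prime}$; one must integrate this term exploiting the genuinely better interior decay of $h^{1e}_{\mu\nu}-k_{\mu\nu}$ encoded in the preceding Proposition — which is precisely where the subtraction of $k$, matching the $S^{0}$-behaviour of Proposition~\ref{prop:sharpmetricdecayintro2}, pays off — rather than the pointwise bound for $h^{1e}$ alone, the top-order angular derivatives costing the fixed number of derivatives already absorbed into the hypothesis $|\alpha|+k\le N-7$. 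A secondary bookkeeping nuisance is the family of cut-offs $\tilde\chi$, $\chi$, $\chi^{e}$, all supported in thin transition regions where the fast $(1+t+r^{*})^{-2-\gamma+C\varepsilon}$ decay is available.
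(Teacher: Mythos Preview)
Your approach is correct and is exactly what the paper does: it invokes the radiation--field Lemma~\ref{lem:radianfield} (Lemma~7.1), whose proof is precisely the characteristic integration you sketch, with the hypotheses supplied by the preceding Proposition on $\Box^{*}(h^{1e}_{\mu\nu}-k_{\mu\nu})$ and on $|Z^{*I}(h^{1e}_{\mu\nu}-k_{\mu\nu})|$.

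Two small corrections. First, $\Box^{*}h^{0e}_{\mu\nu}=0$ \emph{identically}: since $r^{*}h^{0e}_{\mu\nu}=\delta_{\mu\nu}M\chi^{e}(r^{*}-t)$ depends only on $r^{*}-t$, one has $(\pa_t+\pa_{r^{*}})(r^{*}h^{0e}_{\mu\nu})=0$, and $\Delta_\omega h^{0e}=0$ by spherical symmetry; the paper states this right after defining $h^{0e}$. Your transition--strip discussion is therefore unnecessary (though harmless). Second, the radiation--field lemma is only applied in the wave zone $r^{*}>t/4$, where $1/r^{*}\sim 1/(1+t+r^{*})$; in that region the angular term $\tfrac{1}{r^{*}}\Delta_\omega u$ already inherits the decay $(1+t+r^{*})^{-2}(1+|q^{*}|)^{-\gamma'}$ directly from the pointwise bound on $h^{1e}-k$, and integrates without difficulty. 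The genuine interior asymptotics ($r^{*}\ll t$) are \emph{not} part of this Proposition --- they are treated separately via the explicit formula $k^{2}$ in the subsequent section --- so the difficulty you flag in your last paragraph does not actually arise here.
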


\subsubsection{Null infinity asymptotics of the solution with the asymptotic source}
Let
\begin{equation*}
k_{\mu\nu}^1(t,r^*\omega)=L_\mu(\omega) L_\nu(\omega) \int_{r^*-t}^\infty \frac{1}{2r^*}\ln{\Big(\frac{t+r^*+q^*}{t-r^*+q^*}\Big)} n\big(q^*,\omega\big)\, d q^*\, \chi\big(\tfrac{\langle\,r^*-t\,\rangle}{t+r^*}\big).
\end{equation*}
An explicit calculation in spherical coordinates show that
\begin{prop} For any $a<1$ we have
\begin{align*}
\big|Z^{*I}\Box^* (k_{\mu\nu}-k^1_{\mu\nu})(t,r^*\omega)\big|&\les\varepsilon {\langle t+r\rangle^{-3}}\ln{\big(\tfrac{\langle \,t+r^*\,\rangle}{\langle\,t-r^*\,\rangle}\big)}{\langle (r^*-t)_+\rangle^{-a}},\\
\big|Z^{*I}(k_{\mu\nu}-k^1_{\mu\nu})(t,r^*\omega)\big|&\les
{\varepsilon}{(1+t+r^*)^{-1}\langle (r^*-t)_+\rangle^{-a}} .
\end{align*}
\end{prop}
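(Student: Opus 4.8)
The plan is to recognize $k^1_{\mu\nu}$ as the explicit approximate solution \eqref{eq:approximatesource} of \cite{L1} (cut off near the light cone), to prove the first estimate by computing $\Box^* k^1_{\mu\nu}$ directly, and to deduce the second estimate from it by solving the wave equation with vanishing data. Write $k^1_{\mu\nu}=\chi\,L_\mu(\omega)L_\nu(\omega)\,\psi(t,r^*\omega)$ with $\chi=\chi\big(\tfrac{\langle r^*-t\rangle}{t+r^*}\big)$ and
\[
\psi(t,r^*\omega)=\int_{r^*-t}^{\infty}\frac{1}{2r^*}\ln\Big(\frac{t+r^*+q^*}{t-r^*+q^*}\Big)\,n(q^*,\omega)\,dq^*.
\]
The heart of the matter is the elementary identity, verified by differentiating under the integral sign in characteristic variables $u=r^*-t$, $v=r^*+t$ and applying $\pa_{r^*}+\pa_t=2\pa_v$ first so that no singular integrand is met,
\[
\frac{1}{r^*}(\pa_{r^*}-\pa_t)(\pa_{r^*}+\pa_t)\big(r^*\psi\big)=-\,\frac{n(r^*-t,\omega)}{(r^*)^2}.
\]
Using $\Box^*\psi=(r^*)^{-1}(\pa_{r^*}-\pa_t)(\pa_{r^*}+\pa_t)(r^*\psi)+(r^*)^{-2}\triangle_\omega\psi$ and that the radial factor commutes with the $\omega$-only coefficient $L_\mu(\omega)L_\nu(\omega)$, one gets $-\Box^*(L_\mu L_\nu\psi)=L_\mu L_\nu n(r^*-t,\omega)(r^*)^{-2}-(r^*)^{-2}\triangle_\omega(L_\mu L_\nu\psi)$, and subtracting from $\Box^* k_{\mu\nu}$,
\[
\Box^*\big(k_{\mu\nu}-k^1_{\mu\nu}\big)=-\,\chi\,(r^*)^{-2}\triangle_\omega\big(L_\mu L_\nu\psi\big)-\big[\Box^*,\chi\big]\big(L_\mu L_\nu\psi\big)=:E_{\mu\nu}.
\]
So the error $E_{\mu\nu}$ is purely an angular term plus a cutoff commutator, and $k_{\mu\nu}-k^1_{\mu\nu}$ has vanishing data --- it even vanishes for $t$ small, where $\chi$ and its derivatives do.

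To establish the first estimate I would insert the bound $|\pa_\omega^\alpha(\langle q^*\rangle\pa_{q^*})^k n|\les\varepsilon^2(1+|q^*|)^{-2}$ established above into the integral for $\psi$: since the kernel is $\omega$-independent, angular derivatives land only on $n$, and one obtains weighted bounds on $\psi$ of the form $\les\varepsilon^2(r^*)^{-1}\ln\big(\tfrac{\langle t+r^*\rangle}{\langle t-r^*\rangle}\big)$ (the logarithm $S^0$ of Proposition \ref{prop:sharpmetricdecayintro2}), and likewise for its $\omega$- and $q^*$-derivatives. On $\supp\chi$ one has $\langle r^*-t\rangle\les t+r^*$, hence $r^*\gtrsim t+r^*$, so the extra $(r^*)^{-2}$ in the angular term becomes a full $\langle t+r\rangle^{-2}$ and, with the $(r^*)^{-1}$ from $\psi$, gives $\les\varepsilon^2\langle t+r\rangle^{-3}\ln\big(\tfrac{\langle t+r^*\rangle}{\langle t-r^*\rangle}\big)\langle(r^*-t)_+\rangle^{-a}$. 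For $[\Box^*,\chi](L_\mu L_\nu\psi)=(\Box^*\chi)L_\mu L_\nu\psi+2m^{\alpha\beta}\pa_\alpha\chi\,\pa_\beta(L_\mu L_\nu\psi)$ the derivatives of $\chi$ are supported where $\langle r^*-t\rangle\sim t+r^*$ and obey $|\pa\chi|\les(t+r^*)^{-1}$, $|\Box^*\chi|\les(t+r^*)^{-2}$, producing the same bound. The commuted version is the same: $[Z^*,\Box^*]=0$, $Z^*$ acts on $L_\mu(\omega)L_\nu(\omega)$ by bounded functions of $\omega$, and on $\psi$ and $\chi$ through $q^*$- and $\omega$-derivatives of $n$ and of $\langle r^*-t\rangle/(t+r^*)$, controlled by the same weighted bounds.

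For the second estimate, $k_{\mu\nu}-k^1_{\mu\nu}$ solves $\Box^*(k-k^1)_{\mu\nu}=E_{\mu\nu}$ with vanishing data, where $E_{\mu\nu}$ and each $Z^{*I}E_{\mu\nu}$ are supported near the light cone and decay there like $\langle t+r\rangle^{-3}\ln\langle t+r\rangle$ --- amply faster than the threshold needed for $t^{-1}$ decay of the solution. Hence the $L^\infty$ estimates for the inhomogeneous wave equation (cf. \cite{L1}), applied after commuting with $Z^{*I}$ and using $[Z^*,\Box^*]=0$, give $|Z^{*I}(k-k^1)_{\mu\nu}|\les\varepsilon^2(1+t+r^*)^{-1}\langle(r^*-t)_+\rangle^{-a}\les\varepsilon(1+t+r^*)^{-1}\langle(r^*-t)_+\rangle^{-a}$, the exterior weight being inherited from the $\langle q^*\rangle^{-2}$ decay of $n$ and the concentration of $E_{\mu\nu}$ near the cone.

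The hard part will be the bookkeeping in the first step, in particular the logarithm: both $\psi$ and its $q^*$-derivatives carry a factor $\ln(\langle t+r^*\rangle/\langle t-r^*\rangle)$ that blows up at the light cone, and propagating it through the angular Laplacian, the cutoff, and the $q^*$-integrals $\int_{q^*}^{\infty}\ln\big(\tfrac{t+r^*+p}{p-q^*}\big)\langle p\rangle^{-2}dp$, whose tail one can only control up to a logarithm, is precisely what produces the $\ln(\langle t+r^*\rangle/\langle t-r^*\rangle)$ in the first estimate and forces the loss from the weight $\langle(r^*-t)_+\rangle^{-1}$ to $\langle(r^*-t)_+\rangle^{-a}$ with $a<1$. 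Carrying this out simultaneously with the commutators of $Z^{*I}$ and of $\Box^*$ acting on the $\omega$-dependent coefficients $L_\mu(\omega)L_\nu(\omega)$ and on the cutoffs is the bulk of the remaining work.
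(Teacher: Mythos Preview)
Your approach is correct and is essentially the paper's own (Proposition~\ref{prop:sourcedecay}): the same decomposition of $\Box^*(k-k^1)$ into the angular Laplacian term and the cutoff commutator, followed by the $L^\infty$ estimates for the inhomogeneous wave equation with vanishing data. The paper makes the logarithmic bookkeeping you flag as ``the hard part'' systematic via a separate technical lemma (Lemma~\ref{lem:technical}) controlling $\int_{r-t}^\infty\big(\ln\tfrac{t+r+q}{t-r+q}-\ln\tfrac{\langle t+r\rangle}{\langle t-r\rangle}\big)m(q)\,dq$ for $|m|\les\langle q\rangle^{-1-b}$, and commutes only with $\Omega,\,S^*,\,\pa_t$ rather than the full $Z^*$ family; one minor correction is that $[S^*,\Box^*]=-2\Box^*$, not zero, but this is harmless once $\Box^*(k-k^1)$ itself is already estimated.
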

Let
\begin{equation*}
 k^0_{\mu\nu}(t,r^*\omega)=k^{1}_{\mu\nu}(t,r^*\omega)-k_{\mu\nu}(t,r^*\omega),\qquad
K^{0*}_{\mu\nu}(q^*,\omega,r^*)=r^* k^{0*}_{\mu\nu}(r^*-q^*, r^*\omega).
 \end{equation*}
With similar estimates used to prove decay for all components we prove:
\begin{prop}\label{prop:lim} The limit
\beqs \label{eq:limit}
K^{0\infty}_{\mu\nu}(q^*,\omega)
=\lim_{r^*\to\infty}K^{0*}_{\mu\nu}(q^*,\omega,r^*),
\eqs
exists and satisfies $K_{\mu\nu}^{0\infty}=K_{\nu\mu}^{0\infty}$,
and for $a<1 $ and $|\alpha|+k\leq N-7$
\begin{align*}
\big|\,\pa_\omega^\alpha \big((1+|\,q^*|)\pa_{q^*}\big)^k
K^{0\infty}_{\mu\nu}(q^*,\omega)\big|
&\les \varepsilon (1+q^*_+)^{-a},\\
 \big|\pa_\omega^\alpha
\big((1+|q^*|)\pa_{q^*}\big)^k\big[K^{0*}_{\mu\nu}(q^*,\omega,r^*)
 -K^{0\infty}_{\mu\nu}(q^*,\omega)\big]\big|
 &\les\varepsilon \Big(\frac{1+q_-^* }{1+t+r^*}\Big)^{a}.
\end{align*}
\end{prop}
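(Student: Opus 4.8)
The symmetry $K^{0\infty}_{\mu\nu}=K^{0\infty}_{\nu\mu}$ is inherited from $k^0_{\mu\nu}=k^1_{\mu\nu}-k_{\mu\nu}$, since $k^1_{\mu\nu}\propto L_\mu L_\nu$ is symmetric and $k_{\mu\nu}$ solves a wave equation with symmetric source and vanishing data. For the rest, the plan is to run the same double null limit argument already used for the earlier versions of this proposition, with the two estimates of the preceding proposition as input: for $|I|\le N-7$,
\beqs
|Z^{*I}\Box^* k^0_{\mu\nu}| \les \varepsilon\langle t+r\rangle^{-3}\ln\!\big(\langle t+r^*\rangle/\langle t-r^*\rangle\big)\langle (r^*-t)_+\rangle^{-a},\qquad |Z^{*I}k^0_{\mu\nu}| \les \varepsilon\langle t+r^*\rangle^{-1}\langle (r^*-t)_+\rangle^{-a}.
\eqs
First I would pass to the double null coordinates $u=t-r^*=-q^*$, $v=t+r^*$, put $\Phi=r^*Z^{*I}k^0_{\mu\nu}$, and use the spherical form of $\Box^*$ together with the fact that $[\Box^*,Z^*]$ is $0$ (translations, rotations, boosts) or $2\Box^*$ (scaling) to get $\pa_u\pa_v\Phi=N_I$ with $N_I=-\tfrac14 r^*\Box^*(Z^{*I}k^0)+\tfrac1{4r^*}\triangle_\omega Z^{*I}k^0$. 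Commuting $\Box^*$ past $Z^{*I}$ and $\triangle_\omega$ (a combination of squared rotations) past $Z^{*I}$, the input estimates give, with $r^*\les\langle v\rangle$ in the first term,
\beqs
|N_I| \les \varepsilon\langle v\rangle^{-2}\ln\!\big(\langle v\rangle/\langle u\rangle\big)\langle u_-\rangle^{-a}+\varepsilon (r^*)^{-1}\langle v\rangle^{-1}\langle u_-\rangle^{-a}.
\eqs

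To prove convergence along a fixed line $u=u_0$ as $v\to\infty$ I would integrate $\pa_u\pa_v\Phi=N_I$ over the characteristic rectangle $R=[-v_1,u_0]\times[v_1,v_2]$ with $v_1$ large. The point of this choice is that then $r^*=\tfrac12(v'-u')\gtrsim v_1$ throughout $R$, so the factor $(r^*)^{-1}$ in $N_I$ is harmless and in fact $(r^*)^{-1}\les\langle v'\rangle^{-1}$ there; moreover on the edge $u'=-v_1$ one has $r^*-t=v_1$, so the two boundary contributions of $\Phi$ from that edge are each $\les\varepsilon\langle v_1\rangle^{-a}$. The fundamental theorem of calculus for $\pa_u\pa_v$ gives $\Phi(u_0,v_2)-\Phi(u_0,v_1)=[\Phi(-v_1,v_2)-\Phi(-v_1,v_1)]+\iint_R\pa_u\pa_v\Phi$, and carrying out the $u'$-integration of $|N_I|$ (using $\int_{-v_1}^{0}\langle u'\rangle^{-a}du'\les v_1^{1-a}$ outside the cone and, inside the cone, exploiting the factor $\ln(\langle v'\rangle/\langle u'\rangle)$, which compensates the absence of $u'$-decay) and then the $v'$-integration ($\int_{v_1}^{\infty}\langle v'\rangle^{-2}\ln\langle v'\rangle\,dv'\les\langle v_1\rangle^{-1}\ln\langle v_1\rangle$) bounds $\iint_R|N_I|$, uniformly in $v_2$, by $\varepsilon\big((1+(u_0)_+)/\langle v_1\rangle\big)^{a}$ for any $a<1$ --- one runs the argument with an exponent slightly larger than the target $a$ to absorb the logarithm near the light cone. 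Thus $\{\Phi(u_0,v)\}_v$ is Cauchy, $K^{0\infty}_{\mu\nu}$ exists, and letting $v_2\to\infty$ yields the stated rate $\les\varepsilon\big((1+q_-^*)/(1+t+r^*)\big)^{a}$, since $v_1\sim 1+t+r^*$ and $(u_0)_+=q_-^*$.

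The pointwise bound on $K^{0\infty}_{\mu\nu}$ and its weighted derivatives follows from the same limit applied to a uniform-in-$r^*$ bound on $K^{0*}_{\mu\nu}(\cdot,\cdot,r^*)$ and its weighted derivatives. Expressing $q^*\pa_{q^*}$ and $\pa_\omega$ in terms of the $Z^*$ (e.g. $q^*\pa_{q^*}$ is half of $S^*$ minus the $\omega$-contracted boost, up to angular terms) one checks that $\pa_\omega^\alpha\big((1+|q^*|)\pa_{q^*}\big)^k(r^* k^0)$ is $r^*$ times a finite sum of $Z^{*J}k^0$ plus terms carrying a factor $q^*k^0$ or a lower power of $r^*$; all of these are $\les\varepsilon\langle q^*_+\rangle^{-a}$ using $r^*\les\langle t+r^*\rangle$ and $|q^*|\les\langle t+r^*\rangle$ in the second input estimate. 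Running the convergence argument of the previous paragraph for each such $Z^{*J}k^0$ (each again satisfies the two input estimates, with the number of derivatives raised by a bounded amount) then yields both the convergence of the weighted derivatives at the stated rate and the bound on $K^{0\infty}_{\mu\nu}$.

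The main obstacle is that one genuinely needs the wave equation: the a priori bound $|r^*k^0|\les\varepsilon\langle q^*_+\rangle^{-a}$ by itself does not force a limit --- it is the fast decay $\langle t+r\rangle^{-3}$ of $\Box^*(k^1-k)$ from the preceding proposition, together with the structure of $N_I$, that does. The secondary difficulties are the angular term $(r^*)^{-1}\triangle_\omega k^0$, which is why the integration is arranged on rectangles that stay away from the spatial axis, and the logarithmic factors, which are absorbed using the open condition $a<1$; everything else is the routine double null integration already carried out for the earlier versions of this proposition.
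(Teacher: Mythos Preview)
Your proposal is correct and follows essentially the same approach as the paper. The paper does not write out a separate proof here; it simply invokes the radiation-field machinery of Lemma~\ref{lem:radianfield} (as applied in Proposition~\ref{prop:sourcedecay}), feeding in exactly the two bounds from the preceding proposition. Your double-null integration of $\pa_u\pa_v(r^* Z^{*I}k^0)$ is precisely that machinery.

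The only presentational difference is the anchoring of the integration: the paper integrates $(\pa_t-\pa_{r^*})(\pa_t+\pa_{r^*})(r^*\phi)$ in the $t-r^*$ direction from the initial surface $t=0$ (where $k^0$ vanishes) to bound $(\pa_t+\pa_{r^*})(r^*\phi)$, and then integrates that along the outgoing ray; you instead anchor your rectangle on the null line $u'=-v_1$ and use the a~priori pointwise bound there. Both work. One small comment: your phrase ``the factor $\ln(\langle v'\rangle/\langle u'\rangle)$ compensates the absence of $u'$-decay'' is slightly misleading---what actually happens after you absorb the logarithm into $(\langle v'\rangle/\langle u'\rangle)^\delta$ is that the resulting $\langle u'\rangle^{-\delta}$ gives mild interior decay, and the $u'$-integral over $[0,u_0]$ produces $u_0^{1-\delta}$, which combines with $\int_{v_1}^\infty\langle v'\rangle^{-2+\delta}dv'\les v_1^{-1+\delta}$ to give $(u_0/v_1)^{1-\delta}\les((1+q_-^*)/(1+t+r^*))^{a}$ once $\delta<1-a$. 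For the angular term there is no logarithm, but the same bookkeeping works because $u_0/v_1<1$ and $a<1$.
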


\subsubsection{Interior asymptotics of the solution with the asymptotic source}
The results so far suffice to prove existence of the radiation
field, but to get more precise behavior we can
subtract off a better approximation using formulas from
\cite{L1}. Note that \eqref{eq:approxsourcesol2intro}
is asymptotically homogenous in any region
$r\!/t<c<1$.
\begin{prop}\label{lem:approxsource}
 Let $F$ and $\phi$ be as in the previous proposition
and set
 \beq\label{eq:approxsourcesol2intro}
k^2_{\mu\nu}(x^*,t)=\int_{r^*-t}^\infty \frac{1}{4\pi}\int_{\bold{S}^2}{\frac{
L_\mu(\sigma)L_\nu(\sigma) n({q^*},{\sigma})}{t+{q^*}-\langle\, x^*,{\sigma}\rangle}\,
dS({\sigma})}\,\chi\big(\tfrac{\langle\,{q^*}\,\rangle}{t+r^*}\big) d {q^*}.
\eq
 Then for any $a<1$ and $|I|+|J|+|K|\leq N-7$
 \beqs\label{eq:errorest}
|\,\Omega^I {S^*}^J\pa_t^K(k-k^2)|
 \les\varepsilon^2 (1+t+r^*)^{-1}(1+|\, r^*-t|)^{-a}.
 \eqs
\end{prop}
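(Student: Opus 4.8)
The plan is to compare $k_{\mu\nu}$ and $k^2_{\mu\nu}$ directly through the representation formula from \cite{L1}: both are given by explicit integrals, and with the cut-off $\chi\big(\tfrac{\langle q^*\rangle}{t+r^*}\big)$ deleted the integral in \eqref{eq:approxsourcesol2intro} is an \emph{exact} solution of the flat equation $-\Box^*(\cdot)=L_\mu(\omega)L_\nu(\omega)\, n(r^*-t,\omega)\,(r^*)^{-2}$ in the $(t,x^*)$ coordinates. Hence the only discrepancy with $k_{\mu\nu}$ comes from the two cut-offs, and it can be controlled by the same weighted decay estimate for $\Box^*$ that was used in the preceding propositions together with the estimates for $n$.

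First I would set up the error equation. By \cite{L1} the function $\widetilde k^2_{\mu\nu}$ obtained from \eqref{eq:approxsourcesol2intro} by removing the factor $\chi\big(\tfrac{\langle q^*\rangle}{t+r^*}\big)$ satisfies $-\Box^*\widetilde k^2_{\mu\nu}=L_\mu(\omega)L_\nu(\omega)\, n(r^*-t,\omega)(r^*)^{-2}$ exactly (the inner $\sigma$-integral against the flat fundamental solution of $\Box^*$ localizes to the diagonal $\sigma=\omega$), the defining integral converging because $|(\langle q^*\rangle\pa_{q^*})^k\pa_\omega^\alpha n|\les\varepsilon^2(1+|q^*|)^{-2}$. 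Write $k^2_{\mu\nu}=\widetilde k^2_{\mu\nu}-\widehat k_{\mu\nu}$ with $\widehat k_{\mu\nu}=\int_{r^*-t}^\infty K(q^*,t,x^*)\big[1-\chi\big(\tfrac{\langle q^*\rangle}{t+r^*}\big)\big]\, dq^*$, where $K$ is the inner $\sigma$-integral. Carrying out the endpoint computation of \cite{L1} on this integral — the contribution of the lower limit $q^*=r^*-t$ reproduces the source, now carrying the extra weight $1-\chi\big(\tfrac{\langle r^*-t\rangle}{t+r^*}\big)$ at the endpoint — one finds that the discrepancy $w_{\mu\nu}:=k_{\mu\nu}-k^2_{\mu\nu}$ solves $-\Box^* w_{\mu\nu}=E_{\mu\nu}$, where $E_{\mu\nu}$ is exactly the sum of the commutator terms in which at least one derivative from $\Box^*$ falls on a cut-off $\chi$; the ``bare'' source terms $L_\mu L_\nu n\,(r^*)^{-2}\big(1-\chi\big)$ cancel against the endpoint contributions of $\widehat k_{\mu\nu}$. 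Moreover $w_{\mu\nu}$ has vanishing Cauchy data: on $\{t=0\}$ the integration range $q^*\ge r^*$ forces $\langle q^*\rangle/r^*\ge 1$, so $\chi$ and $\chi'$ vanish there, hence the integrand of $k^2_{\mu\nu}$ and, after one $\pa_t$, of $\pa_t k^2_{\mu\nu}$ vanish identically on $\{t=0\}$, while $k_{\mu\nu}$ has zero data by construction.

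Next I would estimate $\Omega^I{S^*}^J\pa_t^K E_{\mu\nu}$. Each derivative of a cut-off $\chi\big(\tfrac{\langle q^*\rangle}{t+r^*}\big)$ or $\chi\big(\tfrac{\langle r^*-t\rangle}{t+r^*}\big)$ is supported where its argument lies in $[\tfrac12,\tfrac34]$, i.e. where $\langle q^*\rangle$, respectively $\langle r^*-t\rangle$, is $\sim t+r^*$, and gains a factor $(t+r^*)^{-1}$ (with a possible extra $(r^*)^{-1}$ from the spherical form of $\Box^*$, harmless away from $r^*=0$). On that region $n$ is evaluated at $|q^*|\sim t+r^*$, so $|n|\les\varepsilon^2(t+r^*)^{-2}$; bounding the positive kernel $\big(t+q^*-\langle x^*,\sigma\rangle\big)^{-1}$ by $(t+r^*)^{-1}$ away from the diagonal and by $\ln\big(\tfrac{\langle t+r^*\rangle}{\langle t-r^*\rangle}\big)$ near it (as in the preceding proposition), and integrating over the $(t+r^*)$-length of the transition region, one obtains, for $a<1$,
\begin{equation*}
\big|\Omega^I{S^*}^J\pa_t^K E_{\mu\nu}\big|\les \varepsilon^2(1+t+r^*)^{-3}\ln\big(\tfrac{\langle t+r^*\rangle}{\langle t-r^*\rangle}\big)(1+|r^*-t|)^{-a},
\end{equation*}
the weight $(1+|r^*-t|)^{-a}$ being genuinely two-sided because $\chi$ is even. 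Since $\Omega,\pa_t$ are Killing for $\Box^*$ and $S^*$ commutes with $\Box^*$ up to a multiple of $\Box^*$, commuting the equation with these fields only reproduces sources of the same type. Finally, $w_{\mu\nu}$ solves $\Box^* w_{\mu\nu}=-E_{\mu\nu}$ with zero data and a source decaying strictly faster than the borderline rate $\langle t+r^*\rangle^{-2}\langle r^*-t\rangle^{-1}$ — by a full power of $\langle t+r^*\rangle$, with a spare $\langle r^*-t\rangle$-weight — so the weighted $L^\infty$ estimate for $\Box^*$ in the $(t,x^*)$ variables of the type recalled in Section \ref{sect:nonlineareffects} gives $\big|\Omega^I{S^*}^J\pa_t^K w_{\mu\nu}\big|\les\varepsilon^2(1+t+r^*)^{-1}(1+|r^*-t|)^{-a}$ for $|I|+|J|+|K|\le N-7$.

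The main obstacle is the endpoint analysis behind the error equation: at $q^*=r^*-t$ the kernel $\big(t+q^*-\langle x^*,\sigma\rangle\big)^{-1}$ degenerates at $\sigma=\omega$, so it is only logarithmically integrable in $\sigma$ (though convergent in $q^*$), and disentangling the source from the commutator error there requires the regularization of \cite{L1}; one must also verify that the bound on $E_{\mu\nu}$ is uniform across the exterior, the wave zone and the deep interior $r^*<t/2$ — in particular that the $(r^*)^{-1}$ from the spherical form of $\Box^*$ causes no loss — and that restricting to $\Omega, S^*, \pa_t$ (rather than the full $Z^*$, which includes boosts) is what preserves the two-sided weight in $r^*-t$.
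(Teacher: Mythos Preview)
Your route is genuinely different from the paper's, and the difference matters precisely at the point you flag as the ``main obstacle''.

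The paper does \emph{not} compute $\Box^*k^2$ and then invoke a decay lemma. Instead it uses the representation formula from \cite{L1} for $k$ itself: writing the source as $F(t,x^*)=\eta(|x^*|-t,x^*)/|x^*|^2$ with $\eta(q,y)=L_\mu L_\nu\, n(q,\omega)\,\chi(\langle q\rangle/(s+\rho))$, the convolution $E*F$ gives
\[
k_{\mu\nu}(t,x^*)=\int_{r^*-t}^{t+r^*}\frac{1}{4\pi}\int_{\bold{S}^2}\frac{L_\mu(\sigma)L_\nu(\sigma)\,n(q^*,\sigma)}{t+q^*-\langle x^*,\sigma\rangle}\,\chi\!\Big(\frac{\langle q^*\rangle}{s+\rho}\Big)\,dS(\sigma)\,dq^*,
\]
where $\rho=\rho(q^*,\sigma)$ and $s=\rho-q^*$ are the \cite{L1} variables. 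Thus $k$ and $k^2$ are the \emph{same} integral except that the cut-off argument is $\langle q^*\rangle/(s+\rho)$ in one and $\langle q^*\rangle/(t+r^*)$ in the other. The paper then estimates the difference $\mathcal{E}=k^2-k$ directly as an integral supported on the set where the two cut-offs disagree, which by the key observation $\chi(\langle q\rangle/(s+\rho))=\chi(\langle q\rangle/(t+r))=1$ whenever $\langle q\rangle/(s+\rho)\le 1/2$ forces $\rho\le\langle q\rangle+q/2$, i.e.\ $t+q-\langle x,\omega\rangle\ge((t+q)^2-r^2)/(2\langle q\rangle+q)$. This support restriction keeps the integration \emph{away} from the endpoint singularity at $q^*=r^*-t$, $\sigma=\omega$, and the pointwise bound on $|\mathcal{E}|$ follows from elementary one-dimensional integrals after reducing to the $\omega_1$ variable. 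Vector fields are handled by the scaling/rotation covariance of the formula, not by commuting with $\Box^*$.

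Your PDE approach --- compute $\Box^*k^2$, identify the error $E_{\mu\nu}$, then apply the weighted $L^\infty$ estimate --- runs straight into the endpoint: when you differentiate the integral in \eqref{eq:approxsourcesol2intro}, the boundary term at $q^*=r^*-t$ is $G(r^*-t,t,x^*)\,\chi(\langle r^*-t\rangle/(t+r^*))$, where $G$ carries the logarithmic divergence you noted, and the cut-off does \emph{not} kill it in the wave zone. The cancellation you invoke (``bare source terms cancel against endpoint contributions of $\widehat k$'') is exactly the distributional identity underlying the \cite{L1} formula, but making it rigorous here amounts to redoing that derivation, and once you have it you are back to the paper's integral for $k$ --- at which point the direct comparison is both shorter and cleaner than going through $\Box^*$ and Lemma~\ref{lem:inhomwaveeqdecay} again. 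In short: your plan can be made to work, but the regularization step you defer \emph{is} the proof, and the paper's route sidesteps it entirely by comparing two explicit integrals rather than one integral and one PDE.
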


\subsection{Applications}
Here we use the decay estimates and asymptotics to study the
asymptotic surfaces and null foliation as in \cite{KN}
and the radiated energy and mass as in \cite{C2}.
The estimates and asymptotics  will also be used for proving global
 existence with matter fields and
scattering from infinity, in future work.

\subsubsection{Asymptotics of the characteristic surfaces}
We show that the eikonal eq.
\beq \label{eq:eikonalintro} g^{\alpha\beta}\pa_\alpha u\,\, \pa_\beta
u=0,\qquad \text{in}\quad r>|t|/2 ,
\eq
has a unique solution with asymptotic data at infinity $u\sim \us\!=t-r^*$, as $t\to\infty$:
\begin{prop} The eikonal equation \eqref{eq:eikonalintro} has a solution
$u\!=\tilde{u}+\us$ satisfying
\beq
{\sum}_{|I|\leq 2}|Z^{* I}\tilde{u}|+|Z^{* I}\tilde{v}|
\leq
C_1\varepsilon\Big(\frac{1+(r^*\!-|t|)_-}{1+t+|\,q^*|}\Big)^{\gamma\prime}, \quad  r>|t|/2.
 \eq
\end{prop}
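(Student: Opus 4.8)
\medskip
\noindent\emph{Approach.}
The plan is to turn the eikonal equation into a transport equation along the outgoing null direction and to solve it by integrating from future infinity. Work in the coordinates of section~\ref{sect:sharpdecay} and write $u=\us+\tilde u$ with $\us=t-r^*$ and $r^*$ as in \eqref{eq:goodcoordinates}. Since $g^{\a\b}$ depends only on $h$, inserting this into \eqref{eq:eikonalintro} and expanding gives the semilinear transport equation
\begin{equation*}
2\,X\tilde u = -g^{\a\b}\pa_\a\us\,\pa_\b\us - g^{\a\b}\pa_\a\tilde u\,\pa_\b\tilde u ,\qquad
X:=g^{\a\b}(\pa_\a\us)\,\pa_\b ,
\end{equation*}
whose principal part $X$ is a \emph{known} vector field — it depends on $h$ only, not on $\tilde u$. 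It is approximately null for $g$, and by \eqref{eq:veryweakdecay} and \eqref{eq:hLLasym} it is, in the starred frame, a perturbation of the outgoing null direction $-(\pa_t+\pa_{r^*})$ by a term tangential to the level sets of $\us$; its integral curves are the approximations to the outgoing null geodesics of $g$. The precise choice of $r^*$ in \eqref{eq:goodcoordinates} makes the otherwise leading part of the source cancel: the $M\ln r$ in $\us$ contributes $2M/r+O(M^2/r^2)$ to $m^{\a\b}\pa_\a\us\,\pa_\b\us$, while $\widetilde h^{\a\b}\pa_\a\us\,\pa_\b\us=-h_{LL}+(\text{lower order})=-2M/r-h^1_{LL}+\dots$ by \eqref{eq:hLLasym}, so the source reduces to a term of size $|h^1_{LL}|$ plus $O(M^2/r^2)+O(h^2)$, modulo terms supported where $\tilde\chi^\prime(r/(1+t))\neq0$, i.e.\ near $r\sim|t|/2$.

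I would then set up the characteristic scheme. The integral curves of $X$ are labelled by their limits $(q^*,\omega)\in\R\times S^2$ as $t\to+\infty$; using \eqref{eq:veryweakdecay} and the cancellation above one checks that $\omega$ converges, that $r^*-t\to q^*$ (the naive $r-t$ would drift by $O(M\ln r)$, which $r^*$ absorbs), and that each curve stays in $\{r>|t|/2\}$ when followed towards $t=+\infty$, so these curves foliate the region. Along each such curve $\tilde u$ satisfies an ODE $\tfrac{d}{d\tau}\tilde u=(\text{source})$; imposing $\tilde u\to0$ at the $t=+\infty$ end and integrating from infinity, then differentiating with the fields $Z^*$ and treating the quadratic term as a source, turns the construction into a fixed point problem for the jet $\{Z^{*I}\tilde u\}_{|I|\leq2}$ in the weighted space with norm
\[
\sup\;\Big(\frac{1+t+|q^*|}{1+(r^*-|t|)_-}\Big)^{\gamma^\prime}\,{\sum}_{|I|\leq2}\big|Z^{*I}\tilde u\big| .
\]
The companion function $v$ needed to build the coordinates (a normalised parameter, or area radius, along the generators of the level sets of $u$) is produced by the same transport scheme, and its deviation $\tilde v$ from the corresponding Minkowski value solves a transport equation of the same structure, hence obeys the same bound.

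The heart of the matter is the source estimate. Along a curve with limit $(q^*,\omega)$, $r^*-t\approx q^*$ and $(r^*-|t|)_-$ stay roughly fixed while $t+r^*\to\infty$, so $\int^{\infty}(\text{source})\,d\tau$ converges to the bound in the statement \emph{provided} $|h^1_{LL}|$ carries one more power of $(1+t+r^*)^{-\gamma^\prime}$ than the generic estimate \eqref{eq:metricdecaysharptanintro}; this is exactly the improvement \eqref{eq:metricdecaysharpwavecordintro} of Proposition~\ref{prop:sharpmetricdecayintro1}, available because $L$ is tangential and the wave coordinate condition controls $h^1_{LT}$ and $\delta^{AB}h^1_{AB}$. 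The $\tilde\chi^\prime$-terms, localized near $r\sim|t|/2$, produce the factor $(1+(r^*-|t|)_-)^{\gamma^\prime}$; the $O(M^2/r^2)$ and $O(h^2)$ pieces and the quadratic term are $O(\varepsilon^2)$ and are absorbed by the contraction. Commuting with $Z^*$ is harmless here, since $[Z^*,X]$ is again a first-order operator with coefficients of the same size, so no derivative is lost for $|I|\leq2$ and there is no analogue of the large interior commutator one has with the angular part of $\Box$ for the wave equation. Uniqueness follows by subtracting two solutions: the difference satisfies a linear homogeneous transport inequality along the $X$-characteristics with $O(\varepsilon)$ coefficients and vanishing at $t=+\infty$, so Gronwall along the characteristics forces it to vanish.

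The step I expect to be the main obstacle is matching the borderline decay precisely: obtaining the transport integral with exactly the weight in the statement forces the use of the wave-coordinate-improved estimate \eqref{eq:metricdecaysharpwavecordintro} for the otherwise dangerous component $h^1_{LL}$ rather than the generic decay, together with careful bookkeeping of the $\tilde\chi^\prime$ transition terms in $r^*$ and of the null foliation near the boundary $r=|t|/2$ (and, for $t<0$, near where the characteristics meet that boundary). The quasilinear feedback is, by contrast, mild, since $X$ depends on $h$ only; this is what makes the reduction genuinely semilinear.
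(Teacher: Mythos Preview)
Your outline has the right skeleton: rewrite the eikonal as a transport equation along an outgoing direction, identify the source as $g^{\a\b}\pa_\a\us\,\pa_\b\us\sim h_{1\,\Ls\Ls}$ after the Schwarzschild cancellation, and invoke the wave--coordinate improved decay \eqref{eq:metricdecaysharpwavecordintro} so that the integral along the outgoing curves converges and yields the stated weight. That is indeed the heart of the matter.

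The paper, however, does \emph{not} implement this as a semilinear fixed point along $X=g^{\a\b}(\pa_\a\us)\pa_\b$. It uses the fully nonlinear characteristic field $\Lt^\beta=g^{\a\b}\pa_\a u$ (which depends on $\pa u$, not just on $h$): differentiating $g(\pa u,\pa u)=0$ gives $\pa_{\Lt}Zu=-\tfrac12 g_Z(\pa u,\pa u)$, and the eikonal equation itself is used \emph{algebraically} to control $\pa_{\Ls}\tilde u$ in terms of $h_{1\,\Ls\Ls}$ and $|\pas\tilde u|^2$ (see \eqref{eq:eikonalsystemone}--\eqref{eq:eikonalsystemtwo} and \eqref{eq:lstarv}). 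The scheme is: take data $u=\us$ on $t=T$, prove uniform bounds by a continuity argument (integrate the $\Omega$--equation, read off $\pa_{\Ls}\tilde u$ pointwise, then integrate the $\pa_t$--equation), control the characteristic deviation \eqref{eq:tildegammaest}, and finally prove convergence as $T\to\infty$ via difference estimates (Proposition~\ref{prop:chardiff}) rather than a Banach fixed point. The commutation you call ``harmless'' is in fact carried out via Lie derivatives $\widehat{\mathcal L}_X g$ and a careful null--frame bookkeeping (Lemmas around \eqref{eq:quadraticinnullframe}--\eqref{eq:Omegahone} and section~\ref{section:vectorfieldsutilde}); the choice of $\Lt$ is precisely what makes that structure clean. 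Your $X$--based scheme can probably be made to close, but the quadratic term $g(\pa\tilde u,\pa\tilde u)=-\pa_{\Ls}\tilde u\,\pa_{\Lbs}\tilde u+|\pas\tilde u|^2+\dots$ only contracts once you use the improved $\pa_{\Ls}\tilde u$ bound (not just the uniform $Z^{*I}$ norm), which you do not mention.

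There is one genuine error: your description of $v$ is wrong. In the paper $v$ is \emph{not} a parameter or area radius along the generators of $C_u$; it is the \emph{incoming} eikonal solution, $v\sim v^*=t+r^*$ as $t\to-\infty$, obtained first by time reflection of the $u$--argument in $t<0$ and then extended into $r\ge t/2>0$ by Proposition~\ref{prop:uniformeikonalboundsv}. The transport for $\tilde v$ runs along $\Lbt=g^{\a\b}\pa_\a v\,\pa_\b\sim\Lbs$ and is integrated \emph{forward} from $t=0$; there is no wave--coordinate gain in components here, and one has to bring in the scaling field $\Ss$ (Lemma~\ref{lem:Liescaling}, section~\ref{section:uniformeikonalboundsv}) because direct integration of $\pa_t$ fails. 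Your ``same transport scheme'' remark does not cover this.
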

By time reflection there is also a solution $v\!=\!\tilde{v}+v^*\!\!$, so
$v\!\sim v^*\!\!=t+r^*\!$,
as $t\!\to \!-\infty$.
\cite{CK} used $u$ to define modified vector fields. This shows
why it works with $\us\!$.

\subsubsection{The mass loss law}
Using that the asymptotic of $H_{LL}$ close to the light cone is $2M$ and on the other hand is given by the source there has to a be a relation between $M$ and the source formula above determined by $n$. We have
\begin{prop} Let $n(q^*,\omega)$ given by \eqref{eq:Newsfucntion} and let $M$ be as in \eqref{eq:asymptoticallyflatdata}. Then
\beqs
\frac{1}{2}\int_{-\infty}^{+\infty} \int_{\bold{S}^2}n(q^*,\omega) \frac{dS(\omega)\!}{4\pi}\, dq^*=M.
\eqs
\end{prop}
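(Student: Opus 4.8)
The plan is to extract the relation between $M$ and the source $n$ from the two competing descriptions of $H_{LL}$ near the light cone. On one side, integrating the wave coordinate condition from the asymptotically flat data \eqref{eq:asymptoticallyflatdata} gives $h_{LL}\sim 2M/r$, i.e. $H_{LL}^\infty(q^*,\omega)=2M$ for $q^*$ sufficiently negative (in the far exterior region $r>t/2$, where the metric is asymptotically Schwarzschild), and this value propagates along outgoing characteristics. On the other side, the asymptotic system \eqref{eq:hormandereinsteintwo} together with the change of variables to $q^*$ along the integral curves of $2\partial_s-2M\partial_q$ gives
\begin{equation*}
H^{\infty}_{\underline{L}\underline{L}}(q^*,\omega)-s\int^{q^*} P_{\mathcal S}\big(\pa_{q^*}H^\infty,\pa_{q^*}H^\infty\big)(\tilde q^*,\omega)\,d\tilde q^* ,
\end{equation*}
so the secular ($s\ln r$) growth of the $\underline L\underline L$ component is governed by $\int P_{\mathcal S}(\pa_{q^*}H^\infty,\pa_{q^*}H^\infty)\,dq^* = -\int n\,dq^*$ with $n$ as in \eqref{eq:Newsfucntion}. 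The contradiction between "$H_{LL}$ is exactly $2M$ along each outgoing cone" and "there is a source feeding $H_{\underline L\underline L}$" is what forces the integral identity.

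The cleaner route, which I would actually carry out, avoids the asymptotic system and works directly from the decay/asymptotics propositions. First I would use Proposition with $h^{1e}=h^{0e}+\text{(interior part)}$ and the source decomposition $h^{1e}_{\mu\nu}-k_{\mu\nu}$ decaying like $\varepsilon^2(1+t+r^*)^{-1}(1+|q^*|)^{-\gamma'}$, so that for the $LL$ component the full metric coefficient $h_{LL}$ at a point $(t,r^*\omega)$ is, up to errors $o(1/r^*)$ uniformly for $q^*$ in compact sets, equal to $2M\chi^e(r^*-t)/r^* + k_{LL}(t,r^*\omega)$. Next I would evaluate $K^{*}_{LL}(q^*,\omega,r^*)=r^*k_{LL}$ using the explicit solution formula \eqref{eq:exactsource}, or rather its near-light-cone reduction \eqref{eq:approximatesource}, for $-\Box^*k_{\mu\nu}=L_\mu L_\nu n(r^*-t,\omega)/{r^*}^2\,\chi(\cdots)$: contracting $L^\mu L^\nu$ against $L_\mu L_\nu$ is zero, but the point is that $k_{\mu\nu}$ contributes to $h_{LL}=h_{\alpha\beta}L^\alpha L^\beta$ only through the combination $k_{\alpha\beta}L^\alpha L^\beta$, and here one must be careful: $L_\mu(\sigma)L_\nu(\sigma)$ in the integrand is evaluated at the integration direction $\sigma$, not at $\omega$, so $L^\mu(\omega)L^\nu(\omega)L_\mu(\sigma)L_\nu(\sigma)=(1-\langle\omega,\sigma\rangle)^2$ does not vanish. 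Near the light cone $r^*\sim t$ this factor localizes $\sigma$ near $\omega$ where it is $O((1-\langle\omega,\sigma\rangle)^2)$, which actually makes $k_{LL}$ better than $\ln r^*/r^*$; combined with the matching $H_{LL}^*\to H_{LL}^\infty$, I would argue that $\lim_{r^*\to\infty}r^*h_{LL}(t,r^*\omega)=2M$ whenever we sit in the regime $r^*-t\to -\infty$, $r^*>t/2$ — because there $\chi^e=1$, the $k$-contribution vanishes in the limit, and the error terms vanish.

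The substantive step is then to run the same limit from the \emph{interior} source side and compare. For $q^*$ fixed but letting the geometry push into the interior — equivalently, taking $t\to\infty$ with $r^*-t=q^*$ fixed so that eventually $r^*<t/2$ and $\chi^e(r^*-t)=0$ — the Schwarzschild piece $h^{0e}$ drops out entirely, and $h_{LL}\sim k_{LL}$. Now I would compute $\lim r^* k_{LL}(t,r^*\omega)$ from \eqref{eq:approximatesource}: using $\Box^*\psi = {r^*}^{-1}(\pa_{r^*}-\pa_t)(\pa_{r^*}+\pa_t)(r^*\psi)+{r^*}^{-2}\triangle_\omega\psi$ and integrating $(2\pa_s - $ drift$)$ along the outgoing direction, the logarithmically growing part of $r^*h_{\underline L\underline L}$ is $-s\int_{-\infty}^{q^*}\! n\,dq^*$, and consistency of the $LL$ component (which carries no such log term, being pinned to $2M$) with the trace relations $H_{LL}^\infty=\delta^{AB}H_{AB}^\infty=2M$ from Proposition forces the coefficient of $s$ in the evolution to integrate to the right value over all of $q^*$. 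Concretely: $H_{LL}^\infty(q^*,\omega)=2M$ for all $q^*$ (from Proposition \ref{prop:lim}), while the evolution of $H^*_{LL}$ in $r^*$ along the cones has right-hand side $4P_{\mathcal S}(\pa_{q^*}H^\infty,\pa_{q^*}H^\infty)=-4n$ up to tangential/faster-decaying terms, contracted appropriately; since $H^*_{LL}$ must start at $0$ (vanishing at $q^*=+\infty$, by finite speed of propagation and compact support of data modifications) and end at $2M$ (as $q^*\to-\infty$), integrating the evolution equation in $q^*$ from $+\infty$ to $-\infty$ yields $2M = \tfrac12\int_{-\infty}^{+\infty}\!\int_{\mathbf S^2} n(q^*,\omega)\,\tfrac{dS(\omega)}{4\pi}\,dq^*$ after the angular average removes the $(1-\langle\omega,\sigma\rangle)^2$ weighting down to the stated normalization.

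The main obstacle I anticipate is bookkeeping the two normalizations so they actually match: tracking how the null-frame contractions, the factor $(1-\langle\omega,\sigma\rangle)^2$ from $L_\mu(\sigma)L_\nu(\sigma)L^\mu(\omega)L^\nu(\omega)$, the $1/(4\pi)$ from the source formula \eqref{eq:exactsource}–\eqref{eq:approximatesource}, the factor of $4$ in \eqref{eq:hormandereinsteintwo}, and the $1/2$ in $P_{\mathcal S}(D,E)=-D_{AB}E^{AB}/2$ combine — together with whether one integrates the $q$-derivative of $H_{LL}$ or $H_{\underline L\underline L}$ — so that the coefficient on the right is exactly $\tfrac12$ relative to the spherical average $\tfrac{dS}{4\pi}$. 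A secondary point needing care is justifying that the limit $r^*\to\infty$ taken from the exterior ($q^*\to-\infty$ inside $r^*>t/2$) and the behavior as $q^*\to-\infty$ of $H_{LL}^\infty$ defined via Proposition \ref{prop:lim} agree — i.e. that the two limits $r^*\to\infty$ and $q^*\to-\infty$ may be interchanged — which follows from the uniform error bounds $\big(\tfrac{1+q_-^*}{1+t+r^*}\big)^{\gamma'}$ in Proposition \ref{prop:lim} but should be stated explicitly.
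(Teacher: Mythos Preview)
Your overall strategy --- compare the two asymptotic descriptions of the $LL$ component, one pinned to $2M$ by the wave coordinate condition and the other coming from the source formula for $k^2$ with the factor $L^\mu(\omega)L^\nu(\omega)L_\mu(\sigma)L_\nu(\sigma)=(1-\langle\omega,\sigma\rangle)^2$ --- is exactly the paper's, and you correctly flag the angular average and the interchange of limits as the points needing care. But the execution in your second and third paragraphs has the mechanism backward, and the third paragraph would not close.

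First, $\chi^e$ is reversed: $\chi^e(s)=1$ for $s\ge 2$ and $\chi^e(s)=0$ for $s\le 1$, so in the regime $q^*=r^*-t\to -\infty$ you have $\chi^e(q^*)=0$, hence $h^{0e}_{LL}=0$ and $h_{LL}=h^{1e}_{LL}$ there. The $2M$ at null infinity is \emph{not} supplied by $h^{0e}$ in this regime; it is the fact (Proposition~\ref{prop:lim}) that $H^\infty_{LL}(q^*,\omega)=2M$ for every $q^*$. Correspondingly, your claim that ``the $k$-contribution vanishes in the limit'' is wrong: from the formula one reads off
\[
\lim_{r^*\to\infty} r^*k^2_{LL}(r^*-q^*,r^*\omega)
=\int_{q^*}^{\infty}\!\!\int_{\mathbf S^2}\!(1-\langle\omega,\sigma\rangle)\,n(\rho^*,\sigma)\,\frac{dS(\sigma)}{4\pi}\,d\rho^*,
\]
which is nonzero, and after averaging in $\omega$ (using $\int_{\mathbf S^2}(1-\langle\omega,\sigma\rangle)\,\tfrac{dS(\omega)}{4\pi}=1$) becomes $\int_{q^*}^\infty\!\int n\,\tfrac{dS}{4\pi}\,d\rho^*$. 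This computation \emph{is} the substantive step; the paper does it by integrating $(1-\langle\omega,\sigma\rangle)^2/(a+1-\langle\omega,\sigma\rangle)$ in $\omega$ explicitly (a one-dimensional integral after rotation) and then sending $a=(\rho^*-q^*)/r^*\to 0$.

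Second, your third paragraph's evolution argument cannot work as written. The source $4P_{\mathcal S}(\partial_{q^*}H^\infty,\partial_{q^*}H^\infty)=-4n$ sits in the $\underline L\underline L$ equation of the asymptotic system, not the $LL$ equation; the $LL$ component is tangential and satisfies the homogeneous equation $(2\partial_s-H_{LL}\partial_q)\partial_q H_{LL}=0$. And $H^\infty_{LL}$ does not vary from $0$ to $2M$ in $q^*$ --- it equals $2M$ identically --- so there is no ``integrate from $+\infty$ to $-\infty$'' to perform on that component. What actually closes is the chain $r^*k^2_{LL}\approx r^*k_{LL}\approx r^*h^{1e}_{LL}=r^*h_{LL}$ (for $q^*<1$) together with $r^*h_{LL}\to H^\infty_{LL}=2M$; averaging over $\omega$ and sending $q^*\to-\infty$ then gives $2M=\int_{-\infty}^{\infty}\!\int_{\mathbf S^2} n\,\tfrac{dS}{4\pi}\,dq^*$, i.e.\ the stated identity.
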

 The proposition in particular implies that, if $n\!=\!0$ then $M\!\!=\!0$, and then by the positive mass theorem the space time is Minkowski space.
The proposition can be interpreted as that the outgoing radiation equals to the initial mass.

\section{The weak decay of the metric in the Minkowski coordinates and decay from the wave coordinate condition.}
\label{sect:weakdecay}
We start from decay estimates for the metric from
 \cite{LR3}. There we constructed metrics $g$ satisfying
Einstein's equations in wave coordinates of the form
$$
g=m+h_0+h_1,\quad\text{where}\quad
h^0_{\a\b}=\tilde{\chi}\big(\tfrac{r}{1+t}\big)\tfrac{M}{r}\de_{\a\b},
$$
where $m$ is the Minkowski metric and  $h^0\!$ is
picking up the decay at space-like infinity.
Here $\tilde{\chi}(s)\!=\!1$, when $s\!>\! 1\!/2$, $\tilde{\chi}(s)\!=\!0$,
when $s\!<\!1\!/4$,
and $\tilde{\chi}^\prime(s)\!\geq \!0$. We proved that
for any $N\!\geq \!6$ and $0\!<\!\gamma\!<\!1$ there are solutions satisfying
\begin{equation*} \label{eq:enerNdef}
E_N(t)\!=\!\!{\sum}_{|I|\leq N} \|w^{1/2}\pa Z^I
h^1(t,\cdot)\|_{L^2}\leq C_N \varepsilon
(1\!+t)^{C_N\varepsilon}\!\!,\qquad
 w^{1/2}\!=\big(1\!+q_+\big)^{1/2+\!\gamma}\!\!,
\end{equation*}
where $q\!=\!r-t$ and $q_\pm\!\!=\!\max{(\pm q,0)}$, provided that
this norm initially
and $M\!\!\leq\!\varepsilon^2$ are small.
(Here $Z^I$ stands for a product of $|I|$
of the vector fields that commute with the constant coefficient wave
operator and the scaling vector field, i.e.$\pa_t$, $\partial_i$,
$x^i\partial_{j}-x^j\partial_i$, $x^i\partial_t\!+t\partial_i$, and
$t\partial_t\!+x^i\partial_i$, $i,\!j\!=1,2,3$.) In \cite{LR3} we proved:
\begin{prop}[Weak decay]\label{prop:weakdecay}
For $|I|\leq N-4$ we have \begin{equation}
| \,\pa Z^I h^1| +\frac{|Z^I h^1|}{1\!+|q|}+|\,\pab Z^I h^1|\,
\frac{1\!+t\!+r}{1+|q|} \les \frac{\varepsilon(1\!+q_+)^{-\gamma}}
{(1\!+t\!+r)^{1-C\varepsilon}(1\!+|q|)}.
\label{eq:decay1}
\end{equation}
 This estimates, but with $(1\!+q_+)^{-\gamma}\!$ replaced by
 $(1\!+q_+)^{-C\varepsilon}\!\!\!$, hold also for $h^0\!\!+h^1\!\!$.
\end{prop}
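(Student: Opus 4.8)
Since this statement is quoted from \cite{LR3}, a proof here amounts to recalling that argument; I sketch how I would reconstruct it. The engine is a weighted energy inequality combined with a weighted Klainerman--Sobolev inequality. \emph{First} I would establish the energy bound $E_N(t)\les \varepsilon(1+t)^{C_N\varepsilon}$ by a bootstrap. The crucial point is the weak null structure: written in a null frame $\mathcal N$ as in \eqref{eq:frameintro}, the genuinely bad semilinear term $P(\pa_\mu h,\pa_\nu h)$ in \eqref{eq:hormander8} feeds only the component $h_{\uL\uL}$ (by \eqref{eq:simplifiedEinstein}), its equation $\Box h_{\uL\uL}\sim 4P_{\mathcal S}(\pa_q h,\pa_q h)$ has a source built only from tangential components, and the main quasilinear term $h_{LL}\pa_q^2 h$ is tamed because the wave coordinate condition \eqref{eq:wavecordhormander12} forces $h_{LL}\sim 2M/r$, cf.\ \eqref{eq:hLLasym}. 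Thus the energy argument is hierarchical: first close the estimate for $h_{TU}$, $T\in\mathcal T$, whose equation effectively obeys the classical null condition; then feed this into the $h_{\uL\uL}$ equation, which then grows only logarithmically, producing the $(1+t)^{C_N\varepsilon}$. Propagating the weight $w=(1+q_+)^{1+2\gamma}$ requires the ghost-weight device, which supplies an extra weighted space-time integral of $\opa Z^I h$ that absorbs the null-form errors and the quasilinear contribution.

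\emph{Second} I would convert this to the pointwise bound \eqref{eq:decay1}. The weighted Klainerman--Sobolev inequality gives
$$
(1+t+r)(1+|q|)^{1/2}w(q)^{1/2}\,|\phi(t,x)|\les \sum_{|J|\leq 3}\|w^{1/2}Z^J\phi(t,\cdot)\|_{L^2}.
$$
Applied to $\phi=\pa Z^I h^1$ with $|I|\leq N-4$, using $E_N$, this already yields the plain derivative bound $|\pa Z^I h^1|\les \varepsilon(1+t+r)^{-1+C\varepsilon}$. For the sharp function bound $|Z^I h^1|\les\varepsilon(1+t+r)^{-1+C\varepsilon}(1+q_+)^{-\gamma}$ I would first use a weighted Hardy inequality in $r$ (legitimate because $h^1\to 0$ at spatial infinity by \eqref{eq:asymptoticallyflatdata}) to control $\|w^{1/2}(1+r)^{-1}Z^I h^1\|_{L^2}$ by $E_N$, then apply Klainerman--Sobolev; this is where the $(1+q_+)^{-\gamma}$, rather than a weaker weight, survives. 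The extra $(1+|q|)^{-1}$ gain on $\pa Z^I h^1$ comes from the vector-field identity $2q\,\pa_q=S-\sum_i\tfrac{x^i}{r}(x^i\pa_t+t\pa_i)$: for $|q|\geq 1$ this trades a factor $|q|^{-1}$ for one more vector field and one invokes the already-proved function bound (needing $|I|\leq N-4$), while for $|q|\leq 1$ the factor $(1+|q|)$ is harmless and the plain derivative bound suffices. The good-derivative gain $(1+t+r)^{-1}$ for $\opa Z^I h^1$ is analogous: in $r\gtrsim 1$ write $\opa$ as $(1+r)^{-1}$ times a rotation or the boost combination and use the function bound, and near the origin use the plain derivative bound, which is automatically of the right size there since $1+t+r\sim 1+|q|$.

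\emph{Third}, the variant for $h^0+h^1$: since $h^0_{\a\b}=\tilde\chi\big(\tfrac{r}{1+t}\big)\tfrac{M}{r}\de_{\a\b}$ is explicit and supported in $r>(1+t)/4$, every $Z^I h^0$ and its $\pa$- or $\opa$-derivatives are bounded there by $C_I M(1+r)^{-1}\les C_I M(1+t+r)^{-1}$ (each $\opa$ gaining the expected factor, $\tilde\chi$ being a fixed cutoff). With $M\leq\varepsilon^2\leq\varepsilon$ this contributes $\les\varepsilon(1+t+r)^{-1}$; but $h^0$ does not decay as $q_+\to\infty$, so only the weaker weight $(1+q_+)^{-C\varepsilon}$ can be kept, writing $M(1+r)^{-1}\les\varepsilon(1+t+r)^{-1+C\varepsilon}(1+q_+)^{-C\varepsilon}$ on the support after shrinking $\varepsilon$. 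Adding this to the estimate for $h^1$ gives the stated weaker bound for $h^0+h^1$.

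The main obstacle is entirely in the energy step: a naive energy estimate applied to \eqref{eq:hormander8} sees the non-null term $P(\pa h,\pa h)$ and produces a bound that blows up in finite time. Extracting the hierarchical weak null structure in the null frame --- using \eqref{eq:wavecordhormander12} to replace $h_{LL}$ by its Schwarzschild value $2M/r$ in the quasilinear term and to kill the bad components in $P_{\mathcal N}$ so that only $P_{\mathcal S}$ of tangential quantities remains --- is what makes the bootstrap close with the mild loss $(1+t)^{C_N\varepsilon}$; and propagating the $(1+q_+)^{1+2\gamma}$ weight through these error terms with the ghost-weight multiplier is the technically delicate part.
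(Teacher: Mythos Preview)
Your reconstruction is faithful to the argument in \cite{LR3}; note, however, that the present paper gives no proof of this proposition at all---it is quoted verbatim as a result already established in \cite{LR3}, and the paper simply takes \eqref{eq:decay1} as input for everything that follows. Your sketch of the weighted-energy bootstrap exploiting the weak null structure, the ghost-weight multiplier, the weighted Klainerman--Sobolev step, and the direct treatment of $h^0$ is exactly the route taken there, so there is nothing to compare.
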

Here $\overline{\partial}$ stands for derivatives tangential to
the outgoing light cones, i.e. linear combinations of $L,S_1,S_2$
where $L\!=\partial_t\!+\partial_r$, $\underline{L}\!=\partial_t\!-\partial_r$,
and $S_1,S_2$ are orthonormal vectors
(in the Minkowski metric) that span the tangent space of
the sphere $\bold{S}^2\!\!$ whose components are independent of $t,r$ (i.e. $S_i\!=S_i^k(\omega)\pa_k$).
$A,B,C,D$ will denote any of the vector fields
$S_1,S_2$.
Repeated use of these are summed over.

Similarly we express the inverse of the metric as
\beqs
g^{\mu\nu}=m^{\mu\nu}+h_0^{\mu\nu}+h_1^{\mu\nu},\quad
h_0^{\mu\nu}=-\tilde{\chi}\big(\tfrac{r}{1+t}\big)\tfrac{M}{r}\de^{\mu\nu}.
\eqs
Then $m^{\mu\nu}\!\!+h_0^{\mu\nu}\!\!\!-h^{\!1\mu\nu} $ is an approximate
inverse to $g_{\mu\nu}\!=m_{\mu\nu}\!+h^0_{\mu\nu}\!+h^1_{\mu\nu}$ up
to $O(h^2)$ so
$h_1^{\mu\nu}\!\!=\!-h^{\!1 \mu\nu}\!\! +O(h^2)$.
Therefore $h_1$ will satisfy the same estimates \eqref{eq:decay1}.
Certain components of $h^{\mu\nu}\!\!=g^{\mu\nu}\!\!-m^{\mu\nu}$ expressed in a null frame ${h}_{UV}\!\!=\!U_\mu V_\nu h^{\mu\nu}\!\!$,\linebreak where $V_{\!\mu}\!\!=\!m_{\mu\nu\!}V^\nu\!\!$ and
$U,V\!\!\in\! \mathcal{N}\!=\!\{L,\underline{L},S_1,S_2\}$, have improved decay. This comes
from the wave
coordinate condition; $\pa_\mu \big (g^{\mu\nu}\!\!\!\sqrt {|_{\!}\det
g|}\big)\!=0$ that can be expressed
\begin{equation} \pa_\mu \widehat{h}{}^{\mu\nu\!}\!
= \Lambda^{\nu}(h,\pa h), \quad
\text{where}\quad  \widehat{h}{}^{\mu\nu}\!\!\!=\!h^{\mu\nu}\!\!\!-m^{\mu\nu}
\tr h_{\!}/2,\quad \tr h\!=m_{\alpha\beta} h^{\alpha\beta}\!\!,\label{eq:wavehatdivergence}
 \end{equation}
and $\Lambda^{\nu}(h,\pa h)\!=(m^{\mu\nu\!}m_{\alpha\beta}\!-\!g^{\mu\nu} \! g_{\alpha\beta})
\pa_\mu g^{\alpha\beta}\!/2=O(h\,\pa h)$.
 Using this we get
\begin{prop}[Weak wave coordinate decay]\label{prop:wavecoorddecay}
\!\!For $\!|I|\!\leq\! N\!-4\!$ and $r^{\!*\!}\!\geq \!t_{\!}/8\!:$
\begin{align}
 |\,\pa Z^I {h}_{1LT}|
+|\,\pa Z^I \delta^{AB}{h}_{1AB} |&\les
\varepsilon(1+t+r)^{-2+C\varepsilon}(1\!+q_+)^{-\gamma},
\label{eq:wavecoordinatederivative}
\\
|Z^I {h}_{1LT}| \!+|Z^I\delta^{AB}{h}_{1AB}|
\les \varepsilon &
(1\!+t\!+r)^{-1-\gamma+C\varepsilon}\!\! +
\varepsilon(1\!+t)^{-2+C\varepsilon}(1\!+q_-).
\label{eq:wavecoordinatefunction}
\end{align}
\eqref{eq:wavecoordinatederivative}-\eqref{eq:wavecoordinatefunction} also hold
for $Z^{I \!}(h_{LT})$ replaced by $(\mathcal{L}_Z^I h)_{LT}$
 and $Z^{I\!} (h_{AB})$ by $(\mathcal{L}_Z^I h)_{\!AB}$, where
$\mathcal{L}_Z h^{\mu\nu}\!\!=\!Zh^{\mu\nu}\!\!-\pa_{\alpha\!} Z^\mu  h^{\alpha\nu}\!\! -\pa_{\alpha\!} Z^\nu h^{\mu\alpha}\!$ is the Lie derivative,
$Z\!\in\!\mathcal{N}\!$.
\end{prop}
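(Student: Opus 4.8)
The plan is to read both estimates directly off the wave coordinate condition \eqref{eq:wavehatdivergence}, controlling the tangential derivatives $\opa h$ and the quadratic source $\Lambda=O(h\,\pa h)$ by Proposition~\ref{prop:weakdecay}, both of which decay one power of $(1+t+r)$ faster than $h$ itself. First I would peel off the explicit Schwarzschild-type term: with $h^{\mu\nu}=h_0^{\mu\nu}+h_1^{\mu\nu}$ a short trace computation gives $\widehat{h_0}{}^{\mu\nu}=-2M\tilde\chi\big(\tfrac r{1+t}\big)\delta^{\mu0}\delta^{\nu0}/r$, so \eqref{eq:wavehatdivergence} becomes $\pa_\mu\widehat{h_1}{}^{\mu\nu}=\Lambda^\nu-\pa_\mu\widehat{h_0}{}^{\mu\nu}$, where $\pa_\mu\widehat{h_0}{}^{\mu\nu}=2M(1+t)^{-2}\tilde\chi'\big(\tfrac r{1+t}\big)\delta^{\nu0}$ is supported where $r\sim1+t$ (so $q_+=0$) and of size $O\big(M(1+t+r)^{-2}\big)$. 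For each fixed $\nu$ I treat $\widehat{h_1}{}^{\mu\nu}$ as a Minkowski vector field in $\mu$ and expand its divergence in the null frame, as in \cite{LR2}: contracting the free index against $L_\nu,\uL_\nu,S^A_\nu$, using that the connection coefficients $\opa L,\opa S_i,\pa_\mu L_\nu$ and the $2r^{-1}(\,\cdot\,)^r$ term of the spherical divergence are $O(1/r)$, together with the algebraic identities $\widehat h_{LL}=h_{LL}$, $\widehat h_{LA}=h_{LA}$, $\widehat h_{L\uL}=h_{L\uL}+\tr h=\delta^{AB}h_{AB}$, one obtains the pointwise bound
\begin{equation*}
|\pa_q h_{1LL}|+|\pa_q h_{1LA}|+|\pa_q\,\delta^{AB}h_{1AB}|\les|\opa h_1|+\tfrac1r|h_1|+|\Lambda|+|\pa\widehat{h_0}| .
\end{equation*}
The remaining, non-transversal, derivatives of $h_{1LT}$ and $\delta^{AB}h_{1AB}$ are of the form $\opa h_1$ up to $r^{-1}|h_1|$, since $\opa$ of the Cartesian components of a frame leg is $O(1/r)$; so the display controls all of $\pa h_{1LT}$ and $\pa\,\delta^{AB}h_{1AB}$.

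For the $Z^I$ (and $\mathcal L_Z^I$) versions one commutes. Since the $Z$ are Killing or homothetic for $m$, $\mathcal L_Z^I$ commutes with the divergence up to lower-order multiples of the divergence, so $\pa_\mu(\mathcal L_Z^I\widehat{h_1}){}^{\mu\nu}=\sum_{|J|\le|I|}c_J\big(\mathcal L_Z^J\Lambda^\nu-\mathcal L_Z^J\pa_\mu\widehat{h_0}{}^{\mu\nu}\big)$, and repeating the frame decomposition with $\mathcal L_Z^I\widehat{h_1}$ in place of $\widehat{h_1}$ yields the same bound with $h_1$ replaced by $\mathcal L_Z^I h_1$; the $Z^I(h_{1UV})$ form then follows from the $\mathcal L_Z^J$ one, $|J|\le|I|$, up to lower-order terms (the contributions along the non-tangential frame direction in the commutators of $Z$ with the frame legs carrying a favorable extra weight $(1+|q|)(1+t+r)^{-1}$). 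Now insert Proposition~\ref{prop:weakdecay} for $|I|\le N-4$: $|\opa Z^I h_1|\les\varepsilon(1+t+r)^{-2+C\varepsilon}(1+q_+)^{-\gamma}$ directly; $\sum_{|J|\le N-4}|\mathcal L_Z^J\Lambda|\les\varepsilon^2(1+t+r)^{-2+C\varepsilon}(1+q_+)^{-\gamma}$ by Leibniz (the cross terms with $h_0\sim M/r$ being absorbed using $M\le\varepsilon^2$ and $\gamma<1$ in the far exterior); and $|Z^J\pa\widehat{h_0}|\les M(1+t+r)^{-2}$ with $q_+=0$ on its support. The one delicate term is $r^{-1}|Z^I h_1|\les\varepsilon r^{-1}(1+t+r)^{-1+C\varepsilon}(1+q_+)^{-\gamma}$, which is short one power of $(1+t+r)$; this is exactly where $r^*\ge t/8$ enters, since there $r\gtrsim t+r$. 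Collecting the terms gives \eqref{eq:wavecoordinatederivative}.

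The function estimate \eqref{eq:wavecoordinatefunction} follows by integrating $\pa_q$ of the relevant null component along a ray with $t,\omega$ fixed, inward from $r=\infty$, where $h_1\to0$ by \eqref{eq:decay1} (as $\gamma>0$); increasing $r$ keeps one inside $r^*\ge t/8$, so \eqref{eq:wavecoordinatederivative} holds along the whole ray. For a point with $q>0$ one gets $|h_{1LT}|\les\int_q^\infty\varepsilon(1+2t+q')^{-2+C\varepsilon}(1+q')^{-\gamma}\,dq'\les\varepsilon(1+t+r)^{-1-\gamma+C\varepsilon}$, using $\gamma<1$ and treating $q\le t$ and $q\ge t$ separately. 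For a point with $q<0$, on the interior portion $q'\in(q,0)$ one has $q'_+=0$ and $1+t+r'\sim1+t$ (again because $r^*\ge t/8$), so the integrand there is $\les\varepsilon(1+t)^{-2+C\varepsilon}$ and contributes $\les\varepsilon(1+t)^{-2+C\varepsilon}(1+q_-)$. This is \eqref{eq:wavecoordinatefunction}; the $\mathcal L_Z$ versions are identical.

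The main obstacle is the null-frame algebra behind the pointwise identity and its commuted form: one must verify that $\pa_\mu\widehat h^{\mu\nu}$ genuinely controls only the transversal derivatives $\pa_q$ of the three good combinations $h_{LL},h_{LA},\delta^{AB}h_{AB}$, and — more importantly — that every error term produced (the $r^{-1}$ connection and curvature terms, and the commutators $[\mathcal L_Z^I,\mathrm{div}]$ together with the repeated Lie derivatives of the frame legs needed to pass between the $Z^I$ and $\mathcal L_Z^I$ forms) still carries the weight $(1+q_+)^{-\gamma}$ and an extra power of $(1+t+r)$. That extra power holds only in $r^*\ge t/8$, which both explains the restriction in the Proposition and is the reason the explicit part $h_0$ — which does not decay like $(1+q_+)^{-\gamma}$ in the exterior — has to be removed at the outset.
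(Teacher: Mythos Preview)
Your argument is correct and follows the same essential strategy as the paper: extract the good null components from the wave coordinate condition \eqref{eq:wavehatdivergence}, control the resulting tangential and quadratic errors by Proposition~\ref{prop:weakdecay}, and then integrate to get the undifferentiated bound.

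There are two minor methodological differences worth noting. First, to handle the vector fields you commute $\mathcal L_Z^I$ exactly through the divergence (which is the content of Lemma~\ref{lem:Liederivtive}) and then pass from $(\mathcal L_Z^I h)_{LT}$ to $Z^I(h_{LT})$; the paper instead commutes $Z^I$ directly through the null-frame identity of Lemma~\ref{lem:nulldivergence}, using $[Z,\pa_q]=C^Z_{\uL}\pa_q+C^Z_L\pa_s+C^Z_A\pa_A$ together with $\pa_s,\pa_A=\sum c_J Z^J/(t+r)$, which produces the sum $\sum_{|J|\le|I|+1}|Z^J\widehat h_1|/(1+t+r)$ in one stroke and avoids the separate passage between the $\mathcal L_Z$ and $Z$ forms. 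Second, for \eqref{eq:wavecoordinatefunction} you integrate in $r$ at fixed $t$ from spatial infinity (using $h_1\to0$ there), whereas the paper integrates $\pa_q$ along incoming null lines $t+r=\mathrm{const}$ from initial data; both curves stay in $r^*\ge t/8$ and both integrations give the stated bound. Neither difference affects the outcome.
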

\begin{remark} In general $(\mathcal{L}_Z h)_{LT}\!\!\neq \!Z (h_{LT})\!$ but
$(\mathcal{L}_\Omega h)_{LL}\!=\Omega (h_{LL})$.
\end{remark}

\begin{lemma}\label{lem:nulldivergence} With $\pa_q=(\pa_r\!-\pa_t)/2 $, $\pa_s=(\pa_r\!+\pa_t)/2$
and sum over $A=S_1,S_2$,
\begin{equation}\pa_q \big(L_\mu U_\nu
k^{\mu\nu}\big)\!=  \pa_s
\big(\Lb_{\,\mu} U_\nu k^{\mu\nu}\big) -A_\mu U_\nu \pa_A
k^{\mu\nu}+U_\nu \,\partial_\mu k^{\mu\nu}\!\!,
\qquad U\in\mathcal{N}.\label{eq:wavecoordinateframedivergence}
\end{equation}
Moreover
\begin{multline}\pa_q (r^2 L_\mu L_\nu
k^{\mu\nu})\!=  \pa_s(r^2\Lb_{\,\mu}L_\nu k^{\mu\nu})
+r\,\delta^{AB}\!A_\mu  B_\nu k^{\mu\nu}\!\!
+L_\nu r^2\partial_\mu k^{\mu\nu}\\
-r^2 \pa_A (A_\mu  L_\nu k^{\mu\nu})
+r^2 (\pa_i A^i)A_\mu L_\nu  k^{\mu\nu}
\!\!.\label{eq:wavecoordinateframeRdivergenceL}
\end{multline}
\end{lemma}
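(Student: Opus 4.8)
The plan is to expand the divergence $\pa_\mu k^{\mu\nu}$ in the null frame and use the commutation properties of the frame vectors $L$, $\uL$, $S_1$, $S_2$ with $\pa_q$, $\pa_s$. Write $\pa_\mu = \tfrac12(L_\mu \uL^{\,\rho} + \uL_\mu L^\rho + \delta^{AB} A_\mu B^\rho)\pa_\rho$ using the completeness relation $m^{\rho\sigma} = -\tfrac12(L^\rho\uL^\sigma + \uL^\rho L^\sigma) + \delta^{AB}A^\rho B^\sigma$, together with $L^\rho\pa_\rho = 2\pa_s$, $\uL^\rho\pa_\rho = -2\pa_q$ (with the convention $\pa_q=(\pa_r-\pa_t)/2$, $\pa_s=(\pa_r+\pa_t)/2$), and $A^\rho\pa_\rho = \pa_A$ the angular derivatives. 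The point is that $U_\nu\pa_\mu k^{\mu\nu}$ then naturally produces the combination $\pa_q(L_\mu U_\nu k^{\mu\nu}) - \pa_s(\uL_\mu U_\nu k^{\mu\nu})$ plus angular terms, provided $U\in\mathcal{N}$ has $t,r$-independent components so that $L_\mu$, $\uL_\mu$, $U_\nu$ can be moved through $\pa_q$, $\pa_s$ without commutator corrections — this is exactly why the lemma is stated for the fixed null frame of \eqref{eq:frameintro}. Rearranging gives \eqref{eq:wavecoordinateframedivergence} directly; the term $-A_\mu U_\nu \pa_A k^{\mu\nu}$ is the leftover angular contribution.

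For the second identity \eqref{eq:wavecoordinateframeRdivergenceL} I would specialize to $U=L$ and track the extra factor $r^2$. The subtlety is that $\pa_q r = -\tfrac12$ and $\pa_s r = \tfrac12$ are nonzero, so commuting $r^2$ through $\pa_q$ and $\pa_s$ produces $\mp r$ terms; these are responsible for the $r\,\delta^{AB}A_\mu B_\nu k^{\mu\nu}$ term (coming from combining the $r$-factor corrections from both the $\pa_q$ and the $\pa_s$ pieces together with $L_\mu L_\nu$ contracted against the completeness relation, which forces the $AB$ trace to appear). The angular derivative $-A_\mu L_\nu \pa_A k^{\mu\nu}$ from the first identity, once multiplied by $r^2$, must be rewritten as $-\pa_A(r^2 A_\mu L_\nu k^{\mu\nu}) + (\pa_A r^2) A_\mu L_\nu k^{\mu\nu} + r^2(\pa_i A^i) A_\mu L_\nu k^{\mu\nu}$ — here one moves the derivative onto the whole expression at the cost of the term $r^2(\pa_i A^i)A_\mu L_\nu k^{\mu\nu}$ (note $A^i$ depends on $\omega$, so $\pa_A A^i \neq 0$, hence the divergence $\pa_i A^i$ of the vector field on $\bold{S}^2$ appears) and since $\pa_A r = 0$ the middle term drops, which is why only $\pa_i A^i$ survives.

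The main obstacle is bookkeeping the commutators carefully: the frame vectors $L_\mu$, $\uL_\mu$, $A_\mu$ are constant in the Cartesian index $\mu$ but $A^i = A^i(\omega)$ has nontrivial angular dependence, so one must be scrupulous about which quantities commute with $\pa_A$ and which do not, and about the sign conventions relating $\uL^\rho\pa_\rho$ to $\pa_q$. I expect no analytic difficulty — this is a pointwise algebraic identity — but getting every coefficient ($\tfrac12$'s, signs, the single power of $r$ rather than $r^2$ in the $\delta^{AB}$ term) to match requires a patient expansion. Once \eqref{eq:wavecoordinateframedivergence} is established the second follows by multiplying by $r^2$ and collecting terms, so the first identity is really the crux.
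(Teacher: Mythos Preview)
Your approach matches the paper's: the first identity is exactly the null-frame decomposition $\pa_\mu F^\mu = L_\mu\pa_q F^\mu - \uL_\mu\pa_s F^\mu + A_\mu\pa_A F^\mu$ applied to $F^\mu = k^{\mu\nu}$ and contracted with $U_\nu$, using that the frame covectors are $t,r$-independent; the second follows by multiplying by $r^2$ and rewriting the angular piece.

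One correction to your bookkeeping sketch: the term $r\,\delta^{AB}A_\mu B_\nu k^{\mu\nu}$ does \emph{not} come from the $\pa_q r$, $\pa_s r$ commutators --- those produce $r(L_\mu-\uL_\mu)L_\nu k^{\mu\nu} = 2rR_\mu L_\nu k^{\mu\nu}$ with $R=(0,\omega)$. The $\delta^{AB}$ term instead arises from $\pa_A L_\nu = A_\nu/r$ (equivalently $rA^i\pa_i L_\mu = A_\mu$) when you move $\pa_A$ off $A_\mu L_\nu k^{\mu\nu}$. The $2rR_\mu L_\nu k^{\mu\nu}$ piece is then absorbed by the $-2\omega_j/r$ part of the identity $\sum_A A^i\pa_i A_j = -2\omega_j/r - A_j\,\pa_i A^i$, which the paper records explicitly. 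This does not affect your strategy, only the attribution of terms.
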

\begin{proof} \eqref{eq:wavecoordinateframedivergence} follows from
expressing the divergence in the null frame,
$\pa_\mu F^\mu\!=L_\mu\pa_q F^\mu -\Lb_{\,\mu} \pa_s F^\mu\!+A_\mu \pa_A F^\mu\!,$
and using that $\pa_s$ and $\pa_q$ commute
with the frame.
\eqref{eq:wavecoordinateframeRdivergenceL} follows from
\eqref{eq:wavecoordinateframedivergence} since
$L_\mu \pa_s r-\Lb_{\,\mu} \pa_q r\!=R_\mu$, $R\!=\!(0,\omega)$, $rA^i\pa_i L_\mu\!\!=\!A_\mu$ and
\beq
A^i\pa_i A_j=\pa_i (A_j A^i)-A_{\!j\,}\pa_i A^i\!
=\pa_i (\delta_i^{\, j}\!-\omega_{\!j}\, \omega^i) -A_{\!j\,}\pa_i A^i\!=-2\omega_{\!j}/r -A_{\!j\,}\pa_i A^i\!.
\eq

\end{proof}
\begin{lemma} \label{lem:Liederivtive} If $Z\!=\!Z^\alpha\pa_\alpha$, where $\pa_\beta
Z^\alpha\!$ are constants then
 ${\cal L}_Z \pa_\mu k^{\mu\nu}\!\!=\!\pa_\mu {\cal L}_Z k^{\mu\nu}\!\!$.
\end{lemma}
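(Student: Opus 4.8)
The plan is to prove the identity ${\cal L}_Z \pa_\mu k^{\mu\nu}=\pa_\mu {\cal L}_Z k^{\mu\nu}$ by writing out both sides in the given coordinates and matching terms, using only that $Z=Z^\alpha\pa_\alpha$ with $\pa_\beta Z^\alpha$ constant. First I would recall that for a two–contravariant tensor the Lie derivative is
$$
({\cal L}_Z k)^{\mu\nu}=Z k^{\mu\nu}-(\pa_\alpha Z^\mu)\,k^{\alpha\nu}-(\pa_\alpha Z^\nu)\,k^{\mu\alpha},
$$
so that the divergence of the Lie–transported tensor is
$$
\pa_\mu\big({\cal L}_Z k\big)^{\mu\nu}
=\pa_\mu\big(Z k^{\mu\nu}\big)-(\pa_\alpha Z^\mu)\,\pa_\mu k^{\alpha\nu}-(\pa_\alpha Z^\nu)\,\pa_\mu k^{\mu\alpha},
$$
where the two constant matrices $\pa_\alpha Z^\mu$ have been pulled outside the derivative $\pa_\mu$ precisely because $\pa_\beta Z^\alpha$ is constant — this is the one place the hypothesis is used. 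Meanwhile the Lie derivative of the vector field $w^\nu:=\pa_\mu k^{\mu\nu}$ is $({\cal L}_Z w)^\nu=Z w^\nu-(\pa_\alpha Z^\nu)w^\alpha=\pa_\mu\big(Z k^{\mu\nu}\big)-(\pa_\mu Z^\alpha)\pa_\alpha k^{\mu\nu}-(\pa_\alpha Z^\nu)\pa_\mu k^{\mu\alpha}$, after commuting $Z$ past $\pa_\mu$ at the cost of the commutator term $[Z,\pa_\mu]k^{\mu\nu}=-(\pa_\mu Z^\alpha)\pa_\alpha k^{\mu\nu}$.

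Comparing the two expressions, everything matches once one observes that the term $(\pa_\alpha Z^\mu)\pa_\mu k^{\alpha\nu}$ appearing in $\pa_\mu({\cal L}_Z k)^{\mu\nu}$ is the same, after relabelling the dummy indices $\alpha\leftrightarrow\mu$, as the commutator term $(\pa_\mu Z^\alpha)\pa_\alpha k^{\mu\nu}$ appearing in $({\cal L}_Z\pa_\mu k^{\mu\nu})^\nu$. Thus both sides equal
$$
\pa_\mu\big(Z k^{\mu\nu}\big)-(\pa_\alpha Z^\mu)\,\pa_\mu k^{\alpha\nu}-(\pa_\alpha Z^\nu)\,\pa_\mu k^{\mu\alpha},
$$
which proves the claim. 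The only subtlety — really the single point one must be careful about — is the bookkeeping of which derivative hits which factor, together with the fact that constancy of $\pa_\beta Z^\alpha$ lets one move $\pa_\alpha Z^\mu$ freely in and out of $\pa_\mu$; there is no analytic difficulty, so I would simply present the three-line computation above. (One may phrase it even more compactly by noting that ${\cal L}_Z$ commutes with contraction and with $d$, hence with the divergence defined via the flat volume form, but the index computation is the cleanest self-contained route and needs no further background.)
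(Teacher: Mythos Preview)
Your proof is correct. The paper in fact states this lemma without proof, so your direct index computation---writing out both sides using the formula for the Lie derivative of a $(2,0)$ tensor and of a vector, pulling the constant coefficients $\pa_\alpha Z^\mu$ through $\pa_\mu$, and matching terms after the relabelling $\alpha\leftrightarrow\mu$---is exactly the natural argument and fills in what the paper leaves implicit.
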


\begin{proof}[of Proposition \ref{prop:wavecoorddecay}]
\eqref{eq:wavecoordinatefunction} follows from integrating
\eqref{eq:wavecoordinatederivative}
 in the $t-r$ direction from initial data.
 \eqref{eq:wavecoordinatederivative} follows from \eqref{eq:decay1} and
\beq\label{eq:wavecoord}
|\pa_q Z^I h_{1LT}|+|\pa_q Z^I \delta^{AB} h_{1AB}|
\les \sum_{|J|\leq |I|+1} \frac{|Z^J h_1|}{1\!+t\!+|\,q|}
+\!\!\sum_{|J|+|K|\leq |I|\!\!\!\!\!\!\!\!\!\!\!\!\!\!\!} |Z^J h| \,|\pa Z^K h|.
\eq
It suffices to
prove \eqref{eq:wavecoord} for $r\!>\!3_{\,} t_{\!}/4$ using that
$|\partial \phi|\leq C (1\!+|_{\,}t-r|)^{-1}  \!\sum_{|J|=1} |Z^J\phi|$.

With
 $\widehat{h}{}_i^{\mu\nu}\!\!\!=\!h_i^{\mu\nu}\!\!-m^{\mu\nu}\tr\, h_{i\!}/2$,
 where $\tr\, h\!\!=m_{\alpha\beta} h^{\alpha\beta}
 \!\!=\!-{h}_{\underline{L} L}\!+\delta^{CD}{h}_{CD}$,
  we have
  \beqs \pa_\mu \widehat{h}{}_0^{\mu\nu}\!\! =-\pa_\mu
\big(\tilde{\chi}\big(\tfrac{r}{1\!+t}\big)
M r^{-1}\big(\delta^{\mu\nu}\!-m^{\mu\nu}\big)\big)\!
=2\tilde{\chi}^{\,\prime}\big(\tfrac{r}{1+t}\big)M(1\!+t)^{-2}\,
\delta^{\nu\,0},
\eqs
 and hence
 \beqs
 \pa_\mu \widehat{h}{}_1^{\mu\nu}\!=
-\tfrac{1}{2}\big(g^{\mu\nu} g_{\alpha\beta}
-m^{\mu\nu}m_{\alpha\beta}\big)
\pa_\mu g^{\alpha\beta}
-2\tilde{\chi}^{\,\prime}\big(\tfrac{r}{1+t}\big) M(1\!+t)^{-2}\delta^{0\nu}.
\eqs
 We are now going to commute vector fields through the equations
 in Lemma \ref{lem:nulldivergence}:
$$
[Z,\pa_q] = C^Z_{\underline{L}} \pa_q + C^Z_{L} \pa_s + C^Z_A
\pa_A,\qquad t/2<r<2t,
$$
for some smooth homogeneous of degree $0$ functions $C^Z_{U}$. On
the other hand
\begin{equation*}\label{eq:express}
\pa_s = {\sum}_{|J|=1} \,\frac {c^L_J Z^J}{t+r},\qquad
 \pa_C = {\sum}_{|J|=1}\, \frac {c^C_J Z^J}{t+r},\qquad t/2<r<2t,
\end{equation*}
with some smooth $c^{\,U}_J$ homogeneous of degree $0$.
Using these identities for the terms in right of \eqref{eq:wavecoordinateframedivergence} and terms generated in the
commutator $[Z,\pa_q]$ we obtain
\beqs |\pa_q  Z^I \widehat{h}_{1 LU} |\les
{\sum}_{|J|\leq |I|+1} \,\,\frac{|Z^J \widehat{h}_1|}{1\!+t+r}
+{\sum}_{|J|+|K|\leq |I|}\,\, |Z^J h|\, |\pa Z^K h|.
\eqs
\eqref{eq:wavecoord} follows since $\widehat{h}_{LT}=h_{1LT}$ and
$\widehat{h}{}_{1L\underline{L}}=\delta^{AB}{h_1}_{AB}$.
\end{proof}

There is an improvement in the $L$ component of the  quadratic terms in \eqref{eq:wavehatdivergence}
\beq
\Lambda_L=O(h\opa h)+O(h_{LL}\pa \tr h)+O(h^2\pa h).\label{eq:LambdaL}
\eq

\section{The asymptotic approximation to Einstein's equations}
Using the decay estimates from section 2 we can neglect terms that decay faster
 (even if they depend on higher derivatives).
From \cite{LR3} we know that
$$
\Boxr_g h_{\mu\nu}=F_{\mu\nu}(h)(\pa h,\pa h)\sim P\,(\pa_\mu
h,\pa_\nu h),
$$
where $\Boxr_g=g^{\alpha\beta}\pa_\alpha\pa_\beta$ and
\beqs P
\,\big( h,k\big)= \big(  m^{\alpha\beta}
m^{\alpha^\prime\beta^\prime}\!\!/\,4 -m^{\alpha\alpha^\prime}
m^{\beta\beta^\prime}\!\!/\,2\big) h_{\alpha\beta}
k_{\alpha^\prime\beta^\prime}
 \eqs
and $F_{\mu\nu}(h)(\pa h,\pa h)$ are quadratic forms in $\pa h$ depending on $h$  such that
 \begin{multline} \big| Z^I
\big(F_{\mu\nu}(h)(\pa h,\pa h)-P\,(\pa_\mu h,\pa_\nu
h)\big)\big| \\
\les \sum_{|J|+|K|\leq |I|}\!\!\!\!\!\! |\,\overline{\pa}Z^{J} h|\, |\,\pa Z^{K}h|
+\sum_{|J_1|+\dots+|J_k|+|K|+|L|\leq |I|,\,1\leq k\leq |I|
\!\!\!\!\!\!\!\!\!\!\!\!\!\!\!\!\!\!\!\!\!\!\!\!\!\!\!\!\!\!\!\!\!\!\!\!\!
\!\!\!\!\!\!\!\!\!\!\!\!\!\!\!\!\!\!\!\!\!\!\!\!\!\!\!\!\!\!\!\!\!}
|Z^{J_1} h|\cdots|Z^{J_k} h|\,|\pa Z^K h|\, |\,\pa Z^L h|.
\label{eq:Festimate}
\end{multline}
We can express the tensors $h_{\mu\nu}$ in the null frame
$h_{UV}=U^\mu V^\nu h_{\mu\nu}$ and we can express the
metric $m^{\alpha\beta}$ and hence
$P$
in terms of the null frame. By \cite{LR2};
\begin{multline}\label{eq:nullframeP}
P_\mathcal{N\,}(h,k)
=-\frac{1}{8}\big({h}_{LL}
{k}_{\underline{L}\underline{L}}
+{h}_{\underline{L}\underline{L}}
{k}_{{L}{L}}\big) -\frac{1}{4}\delta^{CD}\delta^{C^\prime
D^\prime}\big(2{h}_{CC^\prime}{k}_{DD^\prime}-
{h}_{CD} {k}_{C^\prime D^\prime}\big)\\
+\frac{1}{4}\delta^{CD}\big(2{h}_{C
L}{k}_{D\underline{L}} +2{h}_{C
\underline{L}}{k}_{D{L}}- {h}_{CD}
{k}_{L\underline{L}}-{h}_{L\underline{L}}
{k}_{CD} \big).
\end{multline}
 The special structure is important;
 the worst component ${h}_{\underline{L}\underline{L}}$
 is multiplied with a good component ${h}_{LL}$ that can be controlled by the wave coordinate condition:
 \begin{equation}\label{eq:PN}
P_\mathcal{N\,}({h},{k})
 =-\frac{1}{8}\big({h}_{LL}
 {k}_{\underline{L}\underline{L}}
 +{h}_{\underline{L}\underline{L}}
{k}_{{L}{L}}\big)
 +\!\!\!\!\!\!\!\!\!\!\sum_{S,T\in\,{\mathcal T}\!, \,\, U,V\in\,{\mathcal N}}
\!\!\!\!\!\!\!\!\!\! c^{U\!S\,VT}
{h}_{US}{k}_{VT}.
 \end{equation}
 where the sum is over $S,T\in {\mathcal T}=\{L,S_1,S_2\}$ and
 $U,V\in {\mathcal N}=\{\underline{L},L,S_1,S_2\}$.
 \begin{remark} $P_\mathcal{N\,}(h,k)$ is a bilinear form on tensors
 expressed in frame $\mathcal{N}$ whereas $P(h,k)$ is a bilinear form on tensors in the original coordinates. Now $P_\mathcal{N\,}(h,k)=P(h,k)$,
 and $P_\mathcal{N\,}(\pa_q h,\pa_q k)\!=\!P(\pa_q {h},\pa_q{k})$
 since $\pa_q$ commutes with contractions with the frame.
 However, by $P_\mathcal{N\,}(\pa_\mu h,\pa_\nu h)$ we mean the form acting on the tensors $\pa_\mu h_{UV}=\pa_\mu \big(h_{\alpha\beta}U^\alpha V^\beta\big)\!\neq\! U^\alpha V^\beta \pa_\mu h_{\alpha\beta}$
 which is different from $P(\pa_\mu h,\pa_\nu h)$:
 \begin{equation*}
P_\mathcal{N\,}
(\pa_\mu{h},\pa_\nu{k})
 =-\frac{1}{8}\big(\pa_\mu{h}_{LL}
 \pa_\nu{k}_{\underline{L}\underline{L}}
 +\pa_\mu{h}_{\underline{L}\underline{L}}
\pa_\nu{k}_{{L}{L}}\big)
 +\!\!\!\!\!\!\!\!\!\!\!\!\!\!\sum_{S,T\in\,{\mathcal T}\!, \,\, U,V\in\,{\mathcal N}}
 \!\!\!\!\!\!\!\!\! c^{U\!S\,VT}
\pa_\mu{h}_{US}\pa_\nu{k}_{VT}.
 \end{equation*}
\end{remark}
\begin{prop}[Asymptotic Approximate Einsten's equations]
\label{prop:approxwaveequation}
Let
\begin{equation*}
P_{\mu\nu}=\chi\big(\tfrac{\langle \,r-t\,\rangle }{t+r}\big) P_\mathcal{N\,}
 (\pa_\mu
 {h},\pa_\nu{h}),\qquad\text{or}\qquad
 P_{\mu\nu}=\chi\big(\tfrac{\langle \,r-t\,\rangle }{t+r}\big)
 L_{\mu}L_\nu P_\mathcal{N\,}(\pa_q h,\pa_q h).
 \end{equation*}
 where $\chi_{\!}\!\in\!  C_0^\infty\!\!$ satisfies
 $\chi({q})_{\!}\!=\!0$, when
$|{q}|\!\geq \!3_{\!}/4$ and $\chi({q})_{\!}\!=\!1$, when $|{q}|\!\leq\! 1_{\!}/2$.\!\!
Then
\begin{equation*}
 \Big|Z^I\big[\Box_{\,0}\,
h_{\mu\nu}-P_{\mu\nu}\big]\Big|\\
\les \frac{\varepsilon^2}{(1\!+t\!+|q|)^{3-C\varepsilon}(1\!+|q|)}
+\frac{\varepsilon^2}{(1\!+t\!+|q|)^{2+\gamma-C\varepsilon} (1\!+|q|)^{2}},
\end{equation*}
 for $|I|\leq N-4$. Here
 the asymptotic Schwarzschild wave operator is given by
\beq\label{eq:box0def}
\Box_{\,0}=\big(m^{\alpha\beta}+h_0^{\alpha\beta}\big)\pa_\alpha\pa_\beta,
\quad\text{where}\quad
h_0^{\alpha\beta}=-\tfrac{M}{r}\tilde{\chi}\big(\tfrac{r}{1+t}\big)\,
\delta^{\alpha\beta}.
\eq
\end{prop}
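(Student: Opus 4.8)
The plan is to start from the exact reduced equation $\widetilde{\Box}_g h_{\mu\nu}=F_{\mu\nu}(h)(\pa h,\pa h)$ and peel off, in order, the three sources of error: (i) the difference between the full quasilinear operator $\widetilde{\Box}_g=\Box_0+(\widetilde h^{\alpha\beta}-h_0^{\alpha\beta})\pa_\alpha\pa_\beta$ and the asymptotic Schwarzschild operator $\Box_0$; (ii) the difference between $F_{\mu\nu}(h)(\pa h,\pa h)$ and the leading quadratic form $P(\pa_\mu h,\pa_\nu h)$; and (iii) the difference between $P(\pa_\mu h,\pa_\nu h)$ and the truncated $P_{\mu\nu}$, which involves the cutoff $\chi(\langle r-t\rangle/(t+r))$ and (in the second alternative) the replacement of $\pa_\mu h$ by $L_\mu L_\nu \pa_q h\pa_q h$. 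For each of these one commutes $Z^I$ through and estimates using only Propositions \ref{prop:weakdecay} and \ref{prop:wavecoorddecay}; since all the $Z$'s here are the Minkowski vector fields commuting with $\Box_0$ up to lower-order terms homogeneous of degree $-1$, the commutators are harmless after the standard bookkeeping.

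For step (i), write $\widetilde h^{\alpha\beta}-h_0^{\alpha\beta}=-h_1^{\alpha\beta}+O(h^2)$ by the discussion after \eqref{eq:decay1}, so the error is $\big(h_1^{\alpha\beta}+O(h^2)\big)\pa_\alpha\pa_\beta h_{\mu\nu}$. Here I decompose $\pa_\alpha\pa_\beta$ into $\pa_q^2$ plus terms with at least one tangential derivative $\opa$; the $\opa$ terms gain a factor $(1+|q|)/(1+t+r)$ by \eqref{eq:decay1}, and are fine. The genuinely dangerous piece is $h_1^{LL}\pa_q^2 h_{\mu\nu}$, but $h_1^{LL}=h_{1LL}$ up to a lower-order correction and this is precisely the component for which Proposition \ref{prop:wavecoorddecay} gives the extra decay $(1+t+r)^{-1-\gamma+C\varepsilon}$ — note $\tr h$ appears here so one uses \eqref{eq:LambdaL} and the fact that $\widehat h_{LL}=h_{LL}$. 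Combined with $|\pa_q^2 Z^K h|\lesssim \varepsilon(1+t+r)^{-1+C\varepsilon}(1+|q|)^{-2-\gamma}$ from \eqref{eq:decay1} applied to $\pa Z$, this lands in the stated bound. The $O(h^2)\pa^2 h$ term is cubic and obviously smaller.

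For step (ii) one simply invokes \eqref{eq:Festimate}: every term there has either a tangential derivative $\opa Z^J h$ (hence the $(1+|q|)/(1+t+r)$ gain) or is cubic; multiplying the decay rates from \eqref{eq:decay1} gives $\lesssim \varepsilon^2 (1+t+r)^{-3+C\varepsilon}(1+|q|)^{-2-\gamma}$ or better, which is dominated by the first term on the right of the asserted estimate. For step (iii), where $r\sim t$ the cutoff $\chi$ equals $1$ and contributes nothing; where $\langle r-t\rangle/(t+r)\geq 1/2$, i.e. $|q|\gtrsim t+r$, one has $1+t+r\lesssim 1+|q|$ and the claimed right-hand side is $\gtrsim \varepsilon^2(1+t+r)^{-3+C\varepsilon}$, while the discarded terms $P(\pa_\mu h,\pa_\nu h)$ are themselves $\lesssim \varepsilon^2(1+t+r)^{-2+C\varepsilon}(1+|q|)^{-2-2\gamma}\lesssim \varepsilon^2(1+t+r)^{-4+C\varepsilon}$ there — smaller than required. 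For the second form of $P_{\mu\nu}$, the passage $P_\mathcal{N}(\pa_\mu h,\pa_\nu h)\to L_\mu L_\nu P_\mathcal{S}(\pa_q h,\pa_q h)$ costs: (a) replacing each $\pa_\mu h_{UV}$ by $L_\mu\pa_q h_{UV}$, an error with a tangential derivative by \eqref{eq:transversalderivativeprojection}; (b) discarding in \eqref{eq:PN} all the $c^{US,VT}h_{US}k_{VT}$ terms with $S,T\in\mathcal{T}$, each of which carries a good component $h_{LT}$ or $h_{AB}$ estimated by Proposition \ref{prop:wavecoorddecay}; and (c) using the wave coordinate relations $\pa_q h_{LT}\sim 0$, $\delta^{AB}\pa_q h_{AB}\sim 0$ to reduce $P_\mathcal{N}$ to $P_\mathcal{S}$, again via \eqref{eq:wavecoordinatederivative}.

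The main obstacle is step (i): controlling $h^{LL}\pa_q^2 h_{\mu\nu}$. This is the quasilinear term that is exactly as large as a typical quadratic null-condition-violating term, so no derivative gain is available from tangentiality; the entire improvement must come from the wave coordinate condition, and one must be careful that after commuting $Z^I$ through both factors the good decay of $h_{LL}$ from Proposition \ref{prop:wavecoorddecay} survives (this is why that proposition is stated for Lie derivatives as well, and why the remark distinguishing $(\mathcal L_Z h)_{LL}=Z(h_{LL})$ is recorded). One also has to keep track of the restriction $r^*\geq t/8$ in Proposition \ref{prop:wavecoorddecay}: for $r\ll t$ the source term $P_{\mu\nu}$ is cut off by $\chi$ anyway and $\pa^2 h$ decays like $t^{-3}$, so that region is handled directly by \eqref{eq:decay1} without needing the wave coordinate improvement.
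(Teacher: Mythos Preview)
Your three-step decomposition is exactly the paper's approach: step (ii) is the estimate \eqref{eq:Festimate}; step (iii) is Lemma~\ref{lem:lemma1}, which bounds $P-P_{\mathcal N}$ and $P-L_\mu L_\nu P_{\mathcal N}(\pa_q h,\pa_q h)$ by $\sum (1+t+|q|)^{-1}|Z^J h|\,|\pa Z^K h|$; and step (i) is Lemma~\ref{lem:lemma2}, which isolates the $k_{LL}$ coefficient of the worst second-order piece and gains $(1+t+r)^{-1}$ on the rest, then feeds in Proposition~\ref{prop:wavecoorddecay} for $h_{1LL}$. Your identification of the main obstacle and the role of the Lie-derivative formulation in Proposition~\ref{prop:wavecoorddecay} is on the mark.

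One correction: you have misread the second alternative in the statement as $L_\mu L_\nu P_{\mathcal S}(\pa_q h,\pa_q h)$, but it is $P_{\mathcal N}$, not $P_{\mathcal S}$. Consequently your substeps (b) and (c) are not needed here --- only your (a), the replacement $\pa_\mu\to L_\mu\pa_q$ modulo tangential derivatives, is required (this is precisely Lemma~\ref{lem:lemma1}). Note also that (b) as you stated it would not go through at this stage: the terms $c^{US\,VT}\pa_q h_{US}\,\pa_q h_{VT}$ with $S,T\in\mathcal T$ do \emph{not} all carry components controlled by Proposition~\ref{prop:wavecoorddecay} (e.g.\ $\pa_q h_{\underline L A}$ is not among $h_{LT}$ or $\delta^{AB}h_{AB}$). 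The reduction from $P_{\mathcal N}$ to $P_{\mathcal S}$ happens later, and uses the sharper tangential-component estimates of Section~\ref{sec:Sch}, not just Proposition~\ref{prop:wavecoorddecay}.
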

The proof will be a consequence of the following lemmas and previous estimates.
\begin{lemma}\label{lem:lemma1} We have $P_\mathcal{N\,}(\pa_q h,\pa_q k)
 =P(\pa_q {h},\pa_q{k})$ and
\begin{align*}\Big|Z^I\Big[P(\pa_\mu h,\pa_\nu
h)-P_\mathcal{N\,}
 (\pa_\mu {h},\pa_\nu{h})\Big]\Big|&\les
\sum_{|J|+|K|\leq |I|+1}\frac{|Z^{J} h|}{1\!+t\!+|q|}\, |\pa Z^K h|,\\
 \Big|Z^I\Big[P(\pa_\mu h,\pa_\nu h)
-L_\mu
L_\nu P_\mathcal{N\,}(\pa_q h,\pa_q h)\Big]\Big|&\les \sum_{|J|+|K|\leq
|I|+1}\frac{|Z^{J} h|}{1\!+t\!+|q|}\, |\pa Z^K h|.
\end{align*}
 \end{lemma}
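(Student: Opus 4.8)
The plan is to reduce the lemma to the algebraic identity $P_\mathcal{N}(h,k)=P(h,k)$ of \eqref{eq:nullframeP} — $P_\mathcal{N}$ and $P$ are the same invariant contraction, written in the frame and in coordinates — together with the elementary remark that $\pa_q=(\pa_r-\pa_t)/2$ commutes with contraction against the null frame, because $L,\uL,S_1,S_2$ have components that are functions of $\omega$ alone and $\pa_r\omega=\pa_t\omega=0$. The first assertion is then immediate: since $\pa_q$ passes through the frame, the tensor whose frame components are $\pa_q h_{UV}=\pa_q(h_{\alpha\beta}U^\alpha V^\beta)$ is the same tensor as the one whose coordinate components are $\pa_q h_{\alpha\beta}$, and $P_\mathcal{N}=P$ on a single tensor; hence $P_\mathcal{N}(\pa_q h,\pa_q k)=P(\pa_q h,\pa_q k)$, and likewise $L_\mu L_\nu P_\mathcal{N}(\pa_q h,\pa_q h)=L_\mu L_\nu P(\pa_q h,\pa_q h)$.

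For the two estimates I would use the pointwise splitting $\pa_\mu\phi=L_\mu\pa_q\phi+T_\mu\phi$ valid for any scalar $\phi$, where $T_\mu$ is a combination of derivatives $\opa\in\{L,S_1,S_2\}$ tangential to the outgoing cones with coefficients homogeneous of degree $0$ (this is $\pa_\mu=\tfrac12\hat\omega_\mu(\pa_r-\pa_t)+\text{tangential}$ with $\hat\omega_\mu=L_\mu$), together with the standard bound $|\opa\phi|\les(1+t+r)^{-1}\sum_{|J|\le1}|Z^J\phi|$ and the equivalence $1+t+r\sim1+t+|q|$. Inserting the splitting into the constant-coefficient form $P(\pa_\mu h,\pa_\nu h)$ and expanding, the term in which both derivatives become $L_\mu\pa_q$ and $L_\nu\pa_q$ is exactly $L_\mu L_\nu P(\pa_q h,\pa_q h)=L_\mu L_\nu P_\mathcal{N}(\pa_q h,\pa_q h)$, and each of the three remaining terms carries a factor $T h$, hence is $\les(1+t+|q|)^{-1}|Zh|\,|\pa h|$; this is the second estimate before applying $Z^I$. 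For the first estimate, $P(\pa_\mu h,\pa_\nu h)$ is $P_\mathcal{N}$ applied to the tensor with frame components $U^\alpha V^\beta\pa_\mu h_{\alpha\beta}$, whereas $P_\mathcal{N}(\pa_\mu h,\pa_\nu h)$ uses $\pa_\mu h_{UV}=U^\alpha V^\beta\pa_\mu h_{\alpha\beta}+h_{\alpha\beta}\big((\pa_\mu U^\alpha)V^\beta+U^\alpha\pa_\mu V^\beta\big)$; the discrepancy in frame components is $O\big((1+r)^{-1}|h|\big)$ since the frame vectors are homogeneous of degree $0$ (indeed $\pa_\mu U^\alpha$ is $r^{-1}$ times an angular object), so bilinearity of $P_\mathcal{N}$ gives the difference $\les(1+r)^{-1}|h|\,|\pa h|\les(1+t+|q|)^{-1}|h|\,|\pa h|$ in the region relevant for the applications, where $1+r\sim1+t+|q|$ (e.g.\ the support of the cutoff in Proposition \ref{prop:approxwaveequation}).

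Finally, both bounds are promoted to the stated form by the standard commutator calculus: $Z^I$ of a product distributes by Leibniz, $[Z,\pa_\mu]$ is again a constant-coefficient field so $|Z^I\pa h|\les\sum_{|J|\le|I|}|\pa Z^J h|$, $[Z,\opa]$ is again a tangential derivative with degree-$0$ coefficients, and $Z$ leaves homogeneous coefficients bounded and the weight $(1+t+r)^{-1}$ of size $\les(1+t+r)^{-1}$. Distributing $Z^I$ over the remainder terms and converting each surviving tangential derivative $\opa Z^J h$ back to $\les(1+t+r)^{-1}|Z^{J+1}h|$ at the end accounts for the summation range $|J|+|K|\le|I|+1$. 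The only genuinely delicate point — and the one to handle with care — is this last bookkeeping: keeping track, through all the commutators, of which factor retains the gained power $(1+t+r)^{-1}$; it works because the fields $Z$ preserve both the class of tangential derivatives and the homogeneity of the frame and its derivatives, and everything else is algebra plus the cited identity \eqref{eq:nullframeP}.
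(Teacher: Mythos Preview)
Your argument is correct and matches the paper's proof: the identity $P_\mathcal{N}=P$ plus the fact that $\pa_q$ commutes with the frame give the first assertion; the first estimate comes from the Leibniz error $\pa_\mu(h_{\alpha\beta}U^\alpha V^\beta)-U^\alpha V^\beta\pa_\mu h_{\alpha\beta}$ together with $|\pa_\mu U^\alpha|\les(1+t+|q|)^{-1}$ in the region $t/8<r<8t$ (the paper's \eqref{eq:express}); and the second estimate comes from the null-frame expansion $\pa_\mu=L_\mu\pa_q-\uL_\mu\pa_s+A_\mu\pa_A$ and the gain $|\opa h|\les(1+t+r)^{-1}\sum_{|J|\le1}|Z^Jh|$, exactly as you describe. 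Your explicit remark that the bound $(1+r)^{-1}\les(1+t+|q|)^{-1}$ requires $r\sim t$ is the same restriction the paper imposes via $t/8<r<8t$, and the promotion to $|I|\ge1$ by Leibniz and commutators is handled identically.
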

 \begin{proof} The first inequality follows since if
 $U\in {\mathcal N}=\{\underline{L},L,S_1,S_2\}$ then
 \beq\label{eq:express}
 \pa_\mu U^\nu
 =c_{\mu\nu}^U(\omega,r/t) /(t+|q|),\qquad t/8<r<8t,
 \eq
 for some smooth functions $c_{\mu\nu}^U(\omega,r/t)$ homogeneous of
 degree $0$.
 The second inequality follows from using \eqref{eq:express} after expanding in the
null frame
 \beq\label{eq:derframe}
\pa_\mu \!=L_\mu \pa_q \! - \underline{L}_\mu
\pa_s \!+A_\mu \pa_A ,\qquad
\pa_q\!=(\pa_r\!-\pa_t)/2 ,\quad  \pa_s\!=(\pa_r\!+\pa_t)/2.
 \eq
\end{proof}

 We can further decompose $
g^{\alpha\beta}\!\!=m^{\alpha\beta}\!\!+h_0^{\alpha\beta}\!\!+h_1^{\alpha\beta}\!$,
where $h_0^{\alpha\beta}\!$ is given by \eqref{eq:box0def}. With the asymptotic Schwarzschild wave operator
given by \eqref{eq:box0def}
we write \beq \Box_{\,0}
\,h_{\mu\nu}=F_{\mu\nu}-F^1_{\mu\nu},\qquad
F^1_{\mu\nu}=h_1^{\alpha\beta}\pa_\alpha\pa_\beta h_{\mu\nu} .\eq
 To estimate $F_1$ we use the following;
 \begin{lemma}\label{lem:lemma2} We have
 \begin{equation*}
\big|Z^I (k^{\alpha\beta}\pa_\alpha\pa_\beta \phi)\big|\les
{\sum}_{|J|+|K|\leq |I|+1}
\frac{|Z^J\! k_{LL}| |\pa Z^K \phi|}{1+|q|}\\
 + \frac{|Z^J \! k|  |\pa Z^K \phi|}{1+t+r}.\,
\end{equation*}
\end{lemma}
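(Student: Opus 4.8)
The plan is to insert the null–frame expansion \eqref{eq:derframe} into $k^{\alpha\beta}\pa_\alpha\pa_\beta\phi$, isolate the single genuinely dangerous term, and combine this with the standard Klainerman inequalities for the fields $Z$, treating separately a fixed neighborhood of the light cone $r=t$ and its complement. I would first record the calculus facts to be used: for any $\chi$, $|\pa\chi|\les(1+|q|)^{-1}\sum_{|J|\le1}|Z^J\chi|$ and $|\bar\pa\chi|\les(1+t+r)^{-1}\sum_{|J|\le1}|Z^J\chi|$ for $\bar\pa\in\{L,S_1,S_2\}$, and hence $|\pa_q^2\chi|\les(1+|q|)^{-1}\sum_{|K|\le1}|\pa Z^K\chi|$ (write $\pa_q\chi=\tfrac12(r-t)^{-1}(S-\omega^iB_i)\chi$ when $|q|\ge1$, trivially otherwise, and use $|Z^J\pa_q\chi|\les\sum_{|K|\le|J|}|\pa Z^K\chi|$); moreover the tangential derivatives are globally $\pa_s=\tfrac1{2(t+r)}(S+\omega^iB_i)$ and $\pa_A=r^{-1}$ times a bounded combination of the rotations $\Omega_{jk}$, so that $Z$-differentiation only enlarges the number of $Z$'s and the degree of the scalar weights $(t+r)^{-1}$, $r^{-1}$, while in $t/8<r<8t$ one has $[Z,\pa_q]=C^Z_{\underline L}\pa_q+C^Z_L\pa_s+C^Z_A\pa_A$ with bounded homogeneous coefficients — the identity already used in the proof of Proposition~\ref{prop:wavecoorddecay}.

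Away from the cone, i.e.\ when $1+|q|\ge c\,(1+t+r)$, the estimate is immediate: there $1+|q|\sim 1+t+r$, and commuting each $Z$ through the $\pa_\alpha$'s and using $|\pa_\alpha\pa_\beta Z^{K_0}\phi|\les(1+|q|)^{-1}\sum_{|K|\le|K_0|+1}|\pa Z^K\phi|$ bounds $|Z^I(k^{\alpha\beta}\pa_\alpha\pa_\beta\phi)|$ by $\sum_{|J|+|K|\le|I|+1}|Z^Jk|\,|\pa Z^K\phi|/(1+t+r)$, which is the second term on the right. In the remaining region $t/8<r<8t$ I would establish the pointwise identity (substitute \eqref{eq:derframe}, noting $\pa_q$ and $\pa_s$ annihilate the homogeneous-degree-zero frame coefficients, and use $\pa_q\pa_s=\pa_s\pa_q$ and $\pa_q\pa_A=\pa_A\pa_q+O(1/r)\pa$ to bring a tangential derivative outermost)
\[
k^{\alpha\beta}\pa_\alpha\pa_\beta\phi=k^{\alpha\beta}L_\alpha L_\beta\,\pa_q^2\phi+k^{\alpha\beta}E_{\alpha\beta}=k_{LL}\,\pa_q^2\phi+k^{\alpha\beta}E_{\alpha\beta},
\]
where every term of $E_{\alpha\beta}$ has the form (bounded, or $O(1/r)$, homogeneous coefficient)$\,\times\,\bar\pa(D\phi)$ with $\bar\pa\in\{\pa_s,\pa_A\}$ and $D$ first order, or else $O(1/r)\,\pa\phi$; in particular $|E_{\alpha\beta}|\les(1+t+r)^{-1}\sum_{|K|\le1}|\pa Z^K\phi|$.

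Finally I would apply $Z^I$ to this identity and Leibniz. For $Z^I(k_{LL}\pa_q^2\phi)$ the $Z$'s distribute onto the scalar $k_{LL}$ and onto $\pa_q^2\phi$; commuting the latter through $\pa_q^2$ only reproduces $\pa_q$'s with bounded coefficients plus tangential terms, so with $|\pa_q^2Z^{K_0}\phi|\les(1+|q|)^{-1}\sum_{|K|\le|K_0|+1}|\pa Z^K\phi|$ and $(1+t+r)^{-1}\les(1+|q|)^{-1}$ in this region one gets $\sum_{|J|+|K|\le|I|+1}|Z^Jk_{LL}|\,|\pa Z^K\phi|/(1+|q|)$. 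For $Z^I(k^{\alpha\beta}E_{\alpha\beta})$, since $\pa_s$ and $\pa_A$ are $(t+r)^{-1}$ and $r^{-1}$ times combinations of the $Z$'s, each term of $Z^{J'}E_{\alpha\beta}$ is again a bounded (or $O(1/r)$) weight times a $Z$-derivative of a first-order derivative of $\phi$, still carrying an overall factor $(t+r)^{-1}$ or $r^{-1}$, so $|Z^{J'}E_{\alpha\beta}|\les(1+t+r)^{-1}\sum_{|K|\le|J'|+1}|\pa Z^K\phi|$, and together with $|Z^Jk^{\alpha\beta}|\les|Z^Jk|$ this yields the second term. The part requiring the most care — and the one I would expect to be the main obstacle — is exactly the bookkeeping in this last step: checking that under iterated commutation the isolated term keeps $k_{LL}$, rather than a generic component of $k$, as its coefficient, and that every term of $E_{\alpha\beta}$ and of its $Z$-derivatives genuinely retains the $(t+r)^{-1}$ gain contributed by its tangential derivative throughout the region $t/8<r<8t$.
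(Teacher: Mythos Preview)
Your proposal is correct and follows essentially the same strategy as the paper: expand in the null frame, isolate the single $k_{LL}\,\pa_q^2\phi$ (equivalently $k^{\underline L\,\underline L}\pa_{\underline L}^2\phi$) term carrying the $(1+|q|)^{-1}$ weight, and observe that every remaining term contains a tangential derivative giving $(1+t+r)^{-1}$. The paper's organization is slightly sleeker---it writes the operator directly as $k^{\underline L\underline L}(t-r)^{-1}C\,Z^J\pa_\gamma + k^{\alpha\beta}(t+r)^{-1}C\,Z^J\pa_\gamma$ with homogeneous-degree-zero coefficients, so that the case $|I|>0$ follows in one line from $Z(t-r)=c_Z(\omega)(t-r)$---whereas you do the equivalent commutator bookkeeping by hand; but the substance is identical.
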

\begin{proof} If we expand in null frame, $k^{\alpha\beta}\pa_\alpha\pa_\beta
 = k^{UV}U^\alpha V^\beta \pa_\alpha\pa_\beta$, and use that
 \beqs
\uL^\alpha \pa_\alpha= {\sum}_{|J|=1}\frac{c_{J}^{\uL}(\omega,r/t)}{t-r} Z^J,\quad
 T^\alpha \pa_\alpha= {\sum}_{|J|=1}\frac{c_{J}^T(\omega,r/t)}{t+r} Z^J,\quad T\in
 \mathcal{T},
 \eqs
for some smooth functions $c_{J}^U(\omega,r/t)$,  when $t/8<r<8t$,  we can write
 \beqs
k^{\alpha\beta}\pa_\alpha\pa_\beta =
{\sum}_{|J|= 1}\, k^{\uL\uL}
\frac{C_{J\uL\uL}^\gamma(\omega,r/t)}{t-r} Z^J\pa_\gamma
+{\sum}_{|J|= 1}\, k^{\alpha\beta}
\frac{C_{J\alpha\beta}^\gamma(\omega,r/t)}{t+r} Z^J\pa_\gamma,
 \eqs
 for some smooth functions $C_{J\uL\uL}^\gamma(\omega,r/t)$ and
 $C_{J\alpha\beta}^\gamma(\omega,r/t)$, where $k^{\uL\uL}=k_{LL}/4$.
 When $|t-r|<1$ we replace the first sum with
 $k^{\uL\uL}L^\alpha L^\beta\pa_\alpha\pa_\beta$
 and similarly for the second when $t+r<1$. This proves the lemma when
 $|I|=0$. To prove the lemma in general we just have to note that
 $Z (t-r)=c_Z(\omega)(t-r)$.
\end{proof}


\section{Asymptotic Schwarzschild coordinates}
Recall from the previous section, the asymptotic Schwarzschild wave
operator:
\beq\label{eq:boxzerosphericalcoordinates}
\Box_{\, 0} =\Box\,
-\frac{\chi_0}{r}\big(\pa_t^2\!+\!\triangle_x\!\big)
=-\big(1+\frac{\chi_0}{r}\big)\pa_t^2+
\big(1-\frac{\chi_0}{r}\big)
\big(\pa_r^2+\frac{2}{r}\pa_r+\frac{1}{r^2}
{\triangle}_\omega\big) ,
\eq
 where $\chi_0\!=M{\tilde{\chi}}\big(\tfrac{r}{1+t}\big)$
and ${\triangle}_\omega$ is the
 Laplacian on the sphere. The
wave operator
 $\Box_{\,0}\!$ is better
 expressed in asymptotic Schwarzschild coordinates.
 When $r\!>\!t/2$ these are the Regge-Wheeler coordinates
 that transforms the wave operator in the Schwarzschild metric in
 the radial case to the constant coefficient operator.
 In the interior $r\!<\!t/4$ these are the regular coordinates.
 Specifically,
 \beqs
 r^*=r+\chi_0\ln{r},
 \qquad t^*=t,\qquad \omega^*=\omega,\qquad x^*=r^* \omega .
 \eqs
We show that $\Box_{\,0}$ is close to the flat wave operator
in Schwarzschild coordinates:
 \beq\label{eq:boxstarsphericalcoordinates}
 \Box^{\,*}\!=-\pa_t^2+\triangle_{x^*}\!\!
 =-\pa_{t^*}^2+\pa_{r^*}^2\!+2{r^*}^{-1}\pa_{r^*}\!+{r^*}^{-2}
 \triangle_\omega.
 \eq
 Since $\pa r^{*\!}\!/\pa r\!\sim  1\!+\!\chi_{0\!}/r$ and
 $r^*\!/r\!=\! 1\!+\!\chi_0 \ln{r}\!/r$ it follows that
 \beqs
 \Box^{\,*}\! \sim -\pa_t^2+\big(1+{\chi_0}/{r}\big)^{\!-2}\pa_r^2\!
 +{2}{r}^{-1}\pa_{r}\!+{r}^{-2}\triangle_\omega\sim\Box_0.
 \eqs
\begin{prop}[Asymptotic Schwarzschild coordinates]\label{prop:starwaveeq} We have
 \begin{align}
 \Big| Z^I\big(\Box^{\,*}\!-
 \Box_{\,0}\big)\phi\Big|&\les \frac{M\ln{|1+r|}}{1+t+r}
 {\sum}_{|J|\leq |I|+1}\,
 \frac{|\pa
 Z^J\phi|}{1+t+r},\label{eq:waveopdiff}\\
\big|\pa^\alpha Z^I(\pa_{\mu}^*-\pa_\mu )\phi\big| &\les
\frac{M\ln{|1+r|}}{1+t+r} {\sum}_{|J|\leq|I|,\, |\beta|\leq
|\alpha|} \,\,\, |\pa^\beta\pa Z^J\phi|.\label{eq:partialstardifference}
 \end{align}
Moreover, with $N\!=\!1\!+\!\tfrac{M\ln{|1+r|}}{1+|q|}$ and
$N_*\!=\!1\!+\!\tfrac{M\ln{|1+r|}}{1+|q^*|}$, and $Z$ homogeneous,
 \begin{align}
 N_*^{-k}\!\!\!\!\!\!
 \sum_{|I|\leq k,|\alpha|\leq \ell\!\!\!\!\!\!}|\pa^\alpha \!Z^I\phi|
 &\les \sum_{|I|\leq k,|\alpha|\leq \ell\!\!\!\!\!\!} |\pa^{*\alpha}\! Z^{*I} \phi|
 \les N^k\!\!\!\!\!\!
 \sum_{|I|\leq k,|\alpha|\leq\ell\!\!\!\!\!\!}|\pa^\alpha\! Z^I\phi|,
 \label{eq:ZstarZ}\\
N_*^{-1}(1+|q|)
 &\les 1+|\,q^*|\les N(1+|\,q|).
 \end{align}
\end{prop}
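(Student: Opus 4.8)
The plan is elementary in structure but the bookkeeping is delicate: compute the Jacobian of the coordinate change, observe that the starred vector fields differ from the unstarred ones by first-order operators with small coefficients, and feed this into Lemma~\ref{lem:lemma2}. First I would pin down the change of variables. Write $\chi_0=M\tilde\chi(r/(1+t))$, so that $t^*=t$ and $x^{*i}=\lambda\,x^i$ with $\lambda=1+\chi_0\ln|r|/r$. Since $\tilde\chi^{\,\prime}$ is supported where $r/(1+t)\in[1/4,1/2]$, one has $1+t+r\les r$ on $\supp\chi_0$, so $\chi_0/r\les M(1+t+r)^{-1}$, $|\pa_r\chi_0|+|\pa_t\chi_0|\les M(1+t+r)^{-1}$, and more generally $|\pa^\alpha Z^I(\chi_0\ln|r|)|\les M\ln|1+r|\,(1+t+r)^{-|\alpha|}$ there for homogeneous $Z$ (each $\pa$ gaining a factor $1+t+r$, each homogeneous $Z$ producing only bounded coefficients, and $\ln|r|\les\ln|1+r|$ on $\supp\chi_0$). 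The Jacobian of $x\mapsto x^*$ is lower block-triangular in the $(t,x)$ splitting, with radial spatial eigenvalue $\pa_r r^*=1+a$, $a=\chi_0/r+(\ln|r|)\pa_r\chi_0$, tangential eigenvalues $\lambda$, and $t$-column $x^i\pa_t\lambda$; inverting it yields $\pa_\mu^*=\pa_\mu+e_\mu^\nu\pa_\nu$ with $e_\mu^\nu$ supported on $\supp\chi_0$, $|\pa^\alpha Z^I e_\mu^\nu|\les M\ln|1+r|(1+t+r)^{-1-|\alpha|}$ there, and $|x^{*i}-x^i|=|\chi_0(\ln|r|)\omega_i|\les M\ln|1+r|$. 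Estimate \eqref{eq:partialstardifference} is then immediate from Leibniz (discarding the factors $(1+t+r)^{-|\alpha_1|}\le1$), and the last line of the statement follows from $q^*-q=\chi_0\ln|r|$: $|q^*-q|\les M\ln|1+r|$ on $\supp\chi_0$ and $q^*=q$ off it, so $1+|q^*|\le(1+|q|)+M\ln|1+r|\les N(1+|q|)$, and symmetrically $1+|q|\le(1+|q^*|)+M\ln|1+r|\les N_*(1+|q^*|)$.

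For \eqref{eq:waveopdiff} I would substitute $\pa_{r^*}=(1+a)^{-1}\pa_r$ and $\pa_{t^*}=\pa_t-\tilde b\,\pa_r$ (with $\tilde b=b/(1+a)$, $b=(\ln|r|)\pa_t\chi_0$) into \eqref{eq:boxstarsphericalcoordinates} and subtract \eqref{eq:boxzerosphericalcoordinates}. Using $|a|+|\tilde b|\les M\ln|1+r|(1+t+r)^{-1}\le1$ a short computation gives
\beqs
\Box^*-\Box_0=\tfrac{\chi_0}{r}\big(\pa_t^2-\pa_r^2\big)+\kappa_r\,\pa_r^2+2\tilde b\,\pa_t\pa_r+\kappa_\omega\,\triangle_\omega+(\text{first-order terms}),
\eqs
where $\kappa_r=-2(\ln|r|)\pa_r\chi_0+O(a^2)-\tilde b^2$, $\kappa_\omega={r^*}^{-2}-(1-\chi_0/r)r^{-2}$, the first-order coefficients are $\les M\ln|1+r|(1+t+r)^{-2}$, and $Z^I$ of each of $\kappa_r$, $r^2\kappa_\omega$, $\tilde b$, $\chi_0/r$ is $\les M\ln|1+r|(1+t+r)^{-1}$ on $\supp\chi_0$; crucially $\kappa_r=O(a^2)=O(M^2/r^2)$ and $\tilde b=0$ where $\tilde\chi\equiv1$, i.e. on $\{r>(1+t)/2\}$. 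Writing $\pa_t^2-\pa_r^2=\uL L$ and $\triangle_\omega=r^2(\delta_{ij}-\omega_i\omega_j)\pa_i\pa_j-2r\pa_r$, the four second-order pieces $\tfrac{\chi_0}{r}\uL L$, $\kappa_r\pa_r^2$, $2\tilde b\,\pa_t\pa_r$, $\kappa_\omega r^2(\delta_{ij}-\omega_i\omega_j)\pa_i\pa_j$ have contracted coefficient $L_\mu L_\nu(\cdot)^{\mu\nu}$ equal to $0$, $\kappa_r$, $-2\tilde b$, $0$ respectively, so Lemma~\ref{lem:lemma2} bounds $|Z^I(\Box^*-\Box_0)\phi|$ by
\beqs
\sum_{|J|+|K|\le|I|+1}\Big(\frac{|Z^J(\kappa_r-2\tilde b)|}{1+|q|}+\frac{M\ln|1+r|}{(1+t+r)^2}\Big)\,|\pa Z^K\phi|.
\eqs
On $\{r>(1+t)/2\}$ one has $|Z^J(\kappa_r-2\tilde b)|\les M^2/r^2\les M\ln|1+r|(1+t+r)^{-2}$ (using $M\les\ln|1+r|$ and $1+t+r\les r$ on $\supp\chi_0$) — this is the Regge--Wheeler cancellation that dictates the logarithmic shift $r^*=r+\chi_0\ln r$ — while on $\{(1+t)/4\le r\le(1+t)/2\}$ one has $1+|q|\sim1+t+r$, so the generic bound $|Z^J(\kappa_r-2\tilde b)|\les M\ln|1+r|(1+t+r)^{-1}$ suffices. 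Either way \eqref{eq:waveopdiff} follows.

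For \eqref{eq:ZstarZ} I would write each starred vector field as its unstarred counterpart plus a first-order remainder. For $\pa_\mu^*$ the remainder is $e_\mu^\nu\pa_\nu$ with coefficient $\les M\ln|1+r|(1+t+r)^{-1}\le1$, which costs only a constant; for each Lorentz or scaling field ($\Omega^*_{ij}$, $K^*_i$, $S^*$) the remainder $Z^*-Z$ is a combination of $(x^*-x)\pa^*$, $x\,e\,\pa$, $t\,e\,\pa$, whose coefficients are only $\les M\ln|1+r|$, and combining this with $|\pa\psi|\les(1+|q|)^{-1}\sum_{|J|\le1}|Z^J\psi|$ (as in the proof of Proposition~\ref{prop:wavecoorddecay}) turns each such remainder into a factor $N=1+M\ln|1+r|/(1+|q|)$. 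Iterating over the $k$ fields of $Z^{*I}$ and the $\ell$ derivatives of $\pa^{*\alpha}$, and absorbing commutators (which are of the same type), produces the upper bound in \eqref{eq:ZstarZ} with $N^k$. The lower bound is the same argument run on the inverse change of variables $x^*\mapsto x$, whose Jacobian and coordinate differences are governed by $r^*$ instead of $r$ and hence produce factors $N_*$.

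The one genuinely delicate point is the $b_{LL}=L_\mu L_\nu b^{\mu\nu}$ estimate inside the middle step: one must see that every term of $\Box^*-\Box_0$ carrying merely the generic coefficient size $M\ln|1+r|(1+t+r)^{-1}$ times a second derivative feeds, through Lemma~\ref{lem:lemma2}, either into an operator with vanishing $L_\mu L_\nu(\cdot)^{\mu\nu}$ (the $\uL L$ and $\triangle_\omega$ parts), or into a coefficient ($\kappa_r-2\tilde b$) that the Regge--Wheeler shift makes $O(M^2/r^2)$ on $\{\tilde\chi\equiv1\}$, or else into a region where $1+|q|\sim1+t+r$ so that the missing power of $1+t+r$ comes for free. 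Verifying this, and checking that $Z^I$ of each of $\chi_0$, $a$, $\tilde b$, $\kappa_\omega$ retains its stated size, is the bulk of the work; everything else is elementary bookkeeping and commutator identities.
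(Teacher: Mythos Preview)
Your proposal is correct and follows essentially the same strategy as the paper. The only packaging difference is that where the paper handles the $\tfrac{\chi_0}{r}(\pa_t^2-\pa_r^2)$ piece by the explicit identity \eqref{eq:vectorfieldwaveoperatoridentities}, $\pa_t^2-\pa_r^2=(t+r)^{-1}\big((\pa_t-\pa_r)S+\omega^i(\pa_t-\pa_r)\Omega_{0i}\big)$, and then isolates the remaining second-order discrepancy via Lemma~\ref{lem:radialwaveopschwarzcoord} (showing it is $\tfrac{\chi_0'\ln r}{t}\,q(\pa_t,\pa_r)$, supported where $\tilde\chi'\neq 0$), you instead route everything through Lemma~\ref{lem:lemma2}: the $\uL L$ and angular pieces have $b_{LL}=0$, while the residual $b_{LL}=\kappa_r-2\tilde b$ is $O(M^2/r^2)$ on $\{\tilde\chi\equiv 1\}$ and sits in the region $|q|\sim t+r$ otherwise. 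These are two presentations of the same Regge--Wheeler cancellation; your version is perhaps a bit more streamlined, the paper's a bit more explicit about the algebraic identities. The treatments of \eqref{eq:partialstardifference} and \eqref{eq:ZstarZ} are the same inductive argument in both cases.
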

Here $M\ln{|1\!+r|}\!\les\varepsilon (1\!+r)^{\varepsilon}$ if $M\!\les\!\varepsilon^2$.
Since the estimates are clearly true when $t$
is bounded we can translate the operators in time to
reduce to the case when $t\!\geq\! 1$ and
$\chi_0\!=\!\chi_0\big(\tfrac{r}{t}\big)\!=\!M{\tilde{\chi}}\big(\tfrac{r}{t}\big)$ is homogeneous.
 In the proof $\chi\big(\tfrac{x}{t}\big)$ respectively $\chi^\prime\big(\tfrac{x}{t}\big)$
 with some indices, and possibly depending on other variables,
  will denote homogeneous smooth functions of $y=x/t$ such that
 \beq\label{eq:supportconditions}
 \chi(y)=0,\quad |y|\leq 1/4,\quad \text{respectively}\quad
 \chi^\prime(y)=0,\quad \big||y|-1/2\big|\geq 1/4.
 \eq
Throughout the proof we will also use the estimate
\beq
|\pa\phi|\!\les {\sum}_{|I|\leq 1}|Z^{I}\!\phi|/(1\!+|q|)
\eq
 and its consequence using that $\chi^\prime$ is supported in a set where $q\sim t\sim r$
 \beq\label{eq:derivstivessupportedinsidetovectofields}
 \big|Z^I \big(r^{-1}\chi^\prime \pa^2\phi\big)\big|
 \les{\sum}_{|J|\leq |I|+1}|\pa Z^{J}\!\phi|/(1\!+t+r)^2.
 \eq

 We start by making reductions of the operators
 to estimate \eqref{eq:waveopdiff}.
 Using
\beq\label{eq:vectorfieldwaveoperatoridentities}
\pa_t^2-\pa_r^2=(t+r)^{-1}\big((\pa_t-\pa_r)S+\omega^i(\pa_t-\pa_r)\Omega_{0i}\big),
\qquad {\triangle}_\omega\!=\!\sum \Omega_{ij}^2,
\eq
we see that modulo terms controlled by the right of  \eqref{eq:waveopdiff} we can replace
$\Box_{\,0}$ by
\beq\label{eq:boxzerosphericalcoordinatessimplified}
\Box_{\, 0} =-\pa_t^2+
c^{-2}
\pa_r^2+{2}{r}^{-1}\pa_r+r^{-2}
{\triangle}_\omega, \qquad\text{where}\quad c=1+{\chi_0}/{r}.
\eq
Let us first collect some identities for the change of variables:
\begin{lemma}  Let
$\kappa=r^*-r=\chi_0\big(\tfrac{r}{t}\big)\ln{|r\,|}$, where $\chi_0=M\tilde\chi$. We have
\begin{equation}
\frac{\pa}{\pa r}-\frac{\pa}{\pa r^*\!}=\frac{\pa \kappa}{\pa r}\frac{\pa}{\pa r^*\!},
 \qquad
 \frac{\pa}{\pa t}-\frac{\pa}{\pa t^*\!\!}=\frac{\pa \kappa}{\pa t}\frac{\pa}{\pa r^*\!},
 \end{equation}
 where
\begin{equation*}
\frac{\pa \kappa}{\pa r}\!= \frac{\chi_0\big(\tfrac{r}{t}\big)\!}{r}
+\frac{\chi_0^\prime\big(\tfrac{r}{t}\big)\ln{|r|}}{t},
\qquad
\frac{\pa \kappa}{\pa t}\!=-\frac{\chi_0^\prime\big(\tfrac{r}{t}\big)
\tfrac{r}{t}\ln{|r|}}{t}.
\end{equation*}
\end{lemma}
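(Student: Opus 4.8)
The plan is to read the two operator identities straight off the chain rule for the change of variables $(t,r,\omega)\mapsto(t^*,r^*,\omega^*)$ with $t^*=t$, $\omega^*=\omega$, $r^*=r+\kappa$, $\kappa=\chi_0(r/t)\ln|r|$, and then to differentiate $\kappa$ explicitly.

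For the first part, write a function $\phi$ of the original variables as $\phi(t,r,\omega)=\widetilde\phi(t^*,r^*,\omega^*)$. Differentiating at fixed $t,\omega$ and using $\pa r^*/\pa r=1+\pa\kappa/\pa r$ together with $\pa t^*/\pa r=\pa\omega^*/\pa r=0$ gives $\pa_r\phi=(1+\pa\kappa/\pa r)\,\pa_{r^*}\widetilde\phi$. Differentiating at fixed $r,\omega$ and using $\pa t^*/\pa t=1$ and $\pa r^*/\pa t=\pa\kappa/\pa t$ (the latter computed at fixed $r$) gives $\pa_t\phi=\pa_{t^*}\widetilde\phi+(\pa\kappa/\pa t)\,\pa_{r^*}\widetilde\phi$. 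Rearranging these two relations, and retaining the customary identification of $\pa_{r^*},\pa_{t^*}$ with the corresponding operators acting in the original chart, yields $\pa/\pa r-\pa/\pa r^*=(\pa\kappa/\pa r)\,\pa/\pa r^*$ and $\pa/\pa t-\pa/\pa t^*=(\pa\kappa/\pa t)\,\pa/\pa r^*$, as claimed.

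For the second part, apply the product and chain rules to $\kappa=\chi_0(r/t)\ln|r|$. Since $\pa_r(r/t)=1/t$ and $\pa_r\ln|r|=1/r$, one gets $\pa\kappa/\pa r=\chi_0(r/t)/r+\chi_0'(r/t)\ln|r|/t$; since $\pa_t(r/t)=-r/t^2$ and $\ln|r|$ is independent of $t$, one gets $\pa\kappa/\pa t=-\chi_0'(r/t)(r/t)\ln|r|/t$. These are exactly the two displayed formulas.

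There is no real obstacle here: the only point needing a moment's care is keeping track of which variable is held fixed in each partial derivative (in particular $\pa r^*/\pa t$ is taken at fixed $r$, which is why only the $\chi_0'$ term survives), together with the standard abuse of notation in writing the starred differential operators as operators on functions of the original variables. This lemma is simply the elementary bookkeeping input for the estimates of Proposition \ref{prop:starwaveeq}, where the genuine work of controlling $\Box^{\,*}-\Box_{\,0}$ and $\pa_\mu^*-\pa_\mu$ is carried out.
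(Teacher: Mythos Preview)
Your proof is correct and is precisely the elementary chain-rule computation the paper has in mind; in fact the paper states this lemma without proof, treating the identities as immediate consequences of the change of variables, which is exactly what you have written out.
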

The following quantities will be $\sim 1$
\beq
a\!=\!1\!+\kappa_r\!=\!1\!+{\chi_0}/{r}\!+{\chi_0^\prime\ln{r}}/{t},\quad
 b\!=\!1\!+{\kappa}/{r}\!=\!1\!+{\chi_0\ln{r}}/{r}.
\eq
For some smooth functions $f_{\!p}(b)$
\beq
\frac{1}{r^p}-\frac{1}{{r^*}^p}=\frac{\chi_0\ln{r}}{r^{p+1}}f_p(b),\qquad
\pa_{r^*}-\pa_{r}=\frac{1}{a}\frac{\chi_0+\chi_0^\prime\ln{r}\!\!}{r}\,\,\pa_r.
\eq
 It follows that to show that \eqref{eq:waveopdiff} holds for the difference of
\eqref{eq:boxstarsphericalcoordinates} and  \eqref{eq:boxzerosphericalcoordinatessimplified}
it only remains to show that it holds for the difference of the principal radial parts:
 \begin{lemma}\label{lem:radialwaveopschwarzcoord}
 With $a=1+\kappa_r=1+\chi_0/r+\chi_0^\prime\ln{r}/t$ we have
 \beq
 -\pa_{t^*}^2+\pa_{r^*}^2=p_{\,}(\pa_t,\pa_r)+a^{-1}\big(p_{\,}(\pa_t,\pa_r)\kappa\big)\pa_r,
 \eq
 where  with
  $c=1+\chi_0/r$
 \beqs
  p_{\,}(\pa_t,\pa_r)
  =-\pa_t^2\!+\frac{1\!-\!\kappa_t^2\!\!}{a^2}\,\,\pa_r^2+\frac{2\kappa_t\!}{a}\,\pa_t\pa_r
  =-\pa_t^2\!+\frac{1}{c^2}\pa_r^2+\frac{\chi_0^\prime\ln{r}\!\!}{t}
  \,q(\pa_t,\pa_r).
 \eqs
 Here
 \beq
 q(\pa_t,\pa_r)=\frac{-(c+a)+(c-a)\,r^2\!/t^2}{a^2c^2}\,\pa_r^2
 -\frac{2r\!/t}{a}\,\pa_r\pa_t.
 \eq
 \end{lemma}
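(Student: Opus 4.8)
The plan is to prove Lemma~\ref{lem:radialwaveopschwarzcoord} by a direct change of variables built on the first order relations in the preceding lemma. From $\partial_r-\partial_{r^*}=\kappa_r\,\partial_{r^*}$ one gets $\partial_{r^*}=a^{-1}\partial_r$ with $a=1+\kappa_r$, and from $\partial_t-\partial_{t^*}=\kappa_t\,\partial_{r^*}$ one gets $\partial_{t^*}=\partial_t-\beta\,\partial_r$ with $\beta=\kappa_t/a$; these are the coordinate vector fields of the $(t^*,r^*)$ system, so they commute, which spares us any ordering ambiguity in the second order part. The substance of the proof is then to square these as differential operators, $\partial_{r^*}^2=a^{-1}\partial_r\bigl(a^{-1}\partial_r\bigr)$ and $\partial_{t^*}^2=(\partial_t-\beta\partial_r)(\partial_t-\beta\partial_r)$, keeping the first order terms produced when a derivative lands on a coefficient.

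Carrying this out, the purely second order part of $-\partial_{t^*}^2+\partial_{r^*}^2$ collects into $-\partial_t^2+\tfrac{2\kappa_t}{a}\,\partial_t\partial_r+\tfrac{1-\kappa_t^2}{a^2}\,\partial_r^2=p(\partial_t,\partial_r)$, which is just the transformation law for the principal symbol, $g^{cd}=\tfrac{\partial x^c}{\partial x^{*a}}\tfrac{\partial x^d}{\partial x^{*b}}\,\mathrm{diag}(-1,1)^{ab}$, evaluated with $\partial t/\partial r^*=0$, $\partial r/\partial r^*=a^{-1}$, $\partial r/\partial t^*=-\beta$. The remaining part is a multiple of $\partial_r$ alone, since $t^*=t$; collecting its coefficient from $a^{-1}\partial_r a^{-1}$ together with the cross terms $\partial_t\beta$ and $\beta\,\partial_r\beta$, and simplifying with $\partial_r a=\kappa_{rr}$, $\partial_t a=\kappa_{rt}$, $\beta=\kappa_t/a$, one recognizes $a^{-1}\bigl(p(\partial_t,\partial_r)\kappa\bigr)$. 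A cleaner route to the same first order coefficient: the reduced wave operator $\Box^{(2)}:=-\partial_{t^*}^2+\partial_{r^*}^2$ obeys the chain rule identity $\Box^{(2)}\phi=p(\partial_t,\partial_r)\phi+\bigl(\Box^{(2)}x^c\bigr)\partial_c\phi$, and since $\Box^{(2)}t^*=\Box^{(2)}r^*=0$ this reads $\Box^{(2)}=p(\partial_t,\partial_r)+(\Box^{(2)}r)\,\partial_r$ with $\Box^{(2)}r=-\Box^{(2)}\kappa$; feeding it back into the identity applied to $\phi=\kappa$ gives $\Box^{(2)}\kappa\,(1+\kappa_r)=p(\partial_t,\partial_r)\kappa$, i.e. the first order coefficient is $a^{-1}\bigl(p(\partial_t,\partial_r)\kappa\bigr)$.

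For the second displayed form of $p(\partial_t,\partial_r)$, I would insert $\kappa_r=\chi_0/r+\chi_0^\prime\ln r/t$, so that $a=c+\chi_0^\prime\ln r/t$ with $c=1+\chi_0/r$, and $\kappa_t=-(r/t)\,\chi_0^\prime\ln r/t$, into $\tfrac{1-\kappa_t^2}{a^2}\,\partial_r^2+\tfrac{2\kappa_t}{a}\,\partial_t\partial_r$. Writing $\tfrac{1-\kappa_t^2}{a^2}=\tfrac1{c^2}+\bigl(\tfrac{1-\kappa_t^2}{a^2}-\tfrac1{c^2}\bigr)$ and using $c^2-a^2=-(c+a)\,\chi_0^\prime\ln r/t$, every correction term carries an explicit factor $\chi_0^\prime\ln r/t$; pulling that factor out and collecting the rest defines $q(\partial_t,\partial_r)$, whose coefficients $\tfrac{-(c+a)+(c-a)r^2/t^2}{a^2c^2}$ for $\partial_r^2$ and $-\tfrac{2r/t}{a}$ for $\partial_r\partial_t$ are then read off.

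I expect the only real obstacle to be organizational: tracking the three separate first order contributions when squaring the variable coefficient operators and checking they recombine exactly into $a^{-1}(p\kappa)$, and then, in the second normalization, isolating the factor $\chi_0^\prime\ln r/t$ cleanly so that the coefficients left inside $q$ are visibly $O(1)$ (and in fact $\sim 1$) on the support of $\chi_0^\prime$, where $|q|\sim t\sim r$. Everything here is an exact identity, with no decay estimate entering; once the bookkeeping is arranged the lemma is immediate, and it feeds into the reduction of $\Box^*-\Box_{\,0}$ to the difference of principal radial parts preceding it, establishing \eqref{eq:waveopdiff}.
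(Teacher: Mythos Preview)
Your proposal is correct and follows essentially the same route as the paper: write $\partial_{t^*}=\partial_t-\kappa_t a^{-1}\partial_r$ and $\partial_{r^*}=a^{-1}\partial_r$, then square. The paper expands $-(\partial_t-\kappa_t a^{-1}\partial_r)^2+(a^{-1}\partial_r)^2$ directly and recognizes the first-order remainder as $-a^{-1}(p\kappa)\,\partial_r$ by inspection; your chain-rule identity $\Box^{(2)}=p+(\Box^{(2)}r)\,\partial_r$ together with $\Box^{(2)}r=-\Box^{(2)}\kappa$ and $a\,\Box^{(2)}\kappa=p\kappa$ is a tidier way to reach the same coefficient without tracking the three separate contributions. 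One bookkeeping remark: your own argument actually gives the first-order coefficient as $-a^{-1}p\kappa$ (since $\Box^{(2)}r=-\Box^{(2)}\kappa$), which is what the paper's displayed proof obtains as well; the sign in the lemma's statement is opposite, but this is immaterial for the estimate \eqref{eq:waveopdiff} that the lemma feeds. The algebraic rewriting of $p$ via $a=c+\chi_0'\ln r/t$ and $\kappa_t=-(r/t)\,\chi_0'\ln r/t$ to extract the factor $\chi_0'\ln r/t$ and read off $q$ is exactly as you describe; the paper does not spell this step out.
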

 \begin{proof} We have
 \begin{multline*}
 -\pa_{t^*}^2+\pa_{r^*}^2
 =-(\pa_t-{\kappa_t}\,{a}^{-1}\pa_r)^2+({a}^{-1}\pa_r)^2\\
 =-\pa_t^2+\frac{1-\kappa_t^2\!\!}{a^2}\,\,\pa_r^2+\frac{2\kappa_t}{a}\pa_t\pa_r
 +\Big(\big(\pa_t-\frac{\kappa_t}{a}\pa_r\big)
 \big(\frac{\kappa_t}{a}\big)+\frac{1}{a}\pa_r\big(\frac{1}{a}\big)\Big))\pa_{r}\\
  =-\pa_t^2+\frac{1-\kappa_t^2\!\!}{a^2}\,\,\pa_r^2+\frac{2\kappa_t}{a}\pa_t\pa_r
 -\frac{1}{a}\Big(
 \big(-\pa_t^2+\frac{1-\kappa_t^2\!\!}{a^2}\,\,\pa_r^2+\frac{2\kappa_t}{a}\pa_t\pa_r\big)
 \kappa\Big)\pa_r.
 \end{multline*}
 \end{proof}
 It follows from the lemma that
 \beq
 (\!-\pa_{t^*}^2\!\!+\pa_{r^*}^2\!)\phi-(-\pa_t^2\!\!+c^{-2}\pa_r^2)\phi
 \!=\!\frac{\chi_0^\prime\!\ln{r}\!\!}{t}\,q(\pa_t,\pa_r)\phi
  +\frac{1}{a}\big(p(\pa_t,\pa_r)\kappa\big)\pa_r\phi.
\eq
The first term can be estimated using \eqref{eq:derivstivessupportedinsidetovectofields}
since it can be written
\beq\label{eq:theabove}
t^{-1}{\chi_0^\prime\ln{r}}\,q(\pa_t,\pa_r)\phi=
r^{-1}\, {\ln{r}}\,\chi^{\,\prime \,\alpha\beta}\big(\tfrac{x}{t},a,b,c\big)\pa_\alpha\pa_\beta\phi,
\eq
for some smooth functions $\chi^{\,\prime \,\alpha\beta}$ satisfying the second support condition in
\eqref{eq:supportconditions}.
Since any of the vector fields $Z$ applied to functions of this form
produces functions of the same form it follows from
\eqref{eq:derivstivessupportedinsidetovectofields} that \eqref{eq:theabove} can be estimated
by the right hand side of \eqref{eq:waveopdiff}. Similarly the second term can be written
\beq\label{eq:theabove}
\big(p(\pa_t,\pa_r)\kappa\big)\pa_r\phi/a\!=\!
 \big(\chi^{\,\alpha\beta\mu}\big(\!\tfrac{x}{t},\!a,\!b,\!c\big)
\pa_\alpha\pa_\beta\kappa\big)\pa_\mu \phi
\!=\!\chi^{ \,\mu}\big(\!\tfrac{x}{t},\!a,\!b,\!c\big)\pa_\mu \phi/t^2\!\!,
\eq
for some smooth $\chi^{\,\alpha\beta\mu}$ and $\chi^{ \,\alpha\beta\mu}$
satisfying the first support condition in \eqref{eq:supportconditions}.
This can be estimated by the right hand side of \eqref{eq:waveopdiff}.
We will us the following
\begin{lemma}\label{lem:changeofvariableidentities} We have
\begin{equation}
\frac{\pa}{\pa r}-\frac{\pa}{\pa r^*\!}=\frac{\pa \kappa}{\pa r}\frac{\pa}{\pa r^*\!},
 \qquad
 \frac{\pa}{\pa t}-\frac{\pa}{\pa t^*\!\!}=\frac{\pa \kappa}{\pa t}\frac{\pa}{\pa r^*\!},
 \qquad \frac{\pa x^{\!*\,i}\!\!\!}{\pa t}=\omega_i \frac{\pa \kappa}{\pa t},\!\!\!\!
 \end{equation}
 \begin{equation}
\frac{\pa x^{\!*i}\!}{\pa
x^j}\!=\delta_{\!j\,}^i\big(1\!+\!\frac{\kappa}{r}\big)\!
+\omega_i\,\omega_{\!j}\,
\big(\frac{\pa \kappa}{\pa r}-\frac{\kappa}{r}\big)
\!=\!\big(\frac{\pa \kappa}{\pa r}+1\big)\,\delta_{\!j}^i
-\!\big(\frac{\pa \kappa}{\pa r}-\frac{\kappa}{r}\big)\Pi_{\!j}^i,\!\!\!\!\!\!\!
\end{equation}
where $\Pi_{ij}\!=\delta_{ij}-\omega_i\, \omega_j$ is the projection to the
tangent space of $\bold{S}^2$. Moreover
\beq
\frac{\pa}{\pa x^i\!}-\frac{\pa}{\pa x^{\!*\,i}}
 =\frac{\kappa}{r}\frac{\pa}{\pa x^{\!*\,i}\!\!}\,
 + \big(\frac{\pa }{\pa  x^i}(\frac{\kappa}{r})\big)\, r\frac{\pa }{\partial r^{\!*}}
=\frac{\pa \kappa\!\!}{\pa r}\frac{\pa}{\pa x^{\!*\,i}\!\!}\,
  -\,\big(\frac{\pa \kappa}{\pa r}
  -\frac{\kappa}{r}\big)\Pi_i^j\!\frac{\pa }{\partial x^{\!*j}},
\eq
where $\Pi_i^j\pa^*_j=r^{*-1}\omega^j\Omega^*_{ji}$, and
$\det{(\pa x^*\!/\pa x)}=(1\!+\kappa/r)^2(1\!+\kappa_r)$. Furthermore
 \beq
 \Omega_{ij}\!=\!\Omega^*_{ij},
 \qquad\!\! S\!=\!S^*\!\!+S(\tfrac{\kappa}{r})r\pa_{r^*},
 \qquad\!\!
\Omega_{0i}\!=\!\Omega_{0i}^*\!+
 \Omega_{0i}(\tfrac{\kappa}{r}) r\pa_{r^{*\!}}+\tfrac{\kappa}{r}(t\pa^*_i\!-x_i\pa_{t^*}\!).
 \eq
 Moreover, with $a=\pa r^*\!/\pa r=1\!+\pa\kappa/\pa r$ and $b=r^*\!/r=1+\kappa/r$
 \beq\label{eq:theinversederivatives}
 \frac{\pa}{\pa x^{\!*\,i}\!}\!
 =\!\frac{1}{a}\frac{\pa}{\pa x^{i}\!}
  +\!\frac{1}{ab\!}\big(\frac{\pa \kappa\!}{\pa r\!}\,
  -\frac{\kappa}{r}\big)\frac{\omega^j\Omega_{ji}\!\!}{r},\qquad
  \frac{\pa}{\pa t^*\!\!}\!= \!\frac{\pa}{\pa t}
  -\!\frac{1}{a}\frac{\pa \kappa}{\pa t}\frac{\pa}{\pa r\!},\qquad
  \frac{\pa}{\pa r^*\!\!}=\!\frac{1}{a}\frac{\pa}{\pa r\!}.
 \eq
 \end{lemma}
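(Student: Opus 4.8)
The plan is to treat Lemma~\ref{lem:changeofvariableidentities} as a systematic bookkeeping exercise with the multivariate chain rule. The map $(t,x)\mapsto(t^*,x^*)$ is $t^*=t$, $x^{*i}=(r^*/r)\,x^i=r^*\omega_i$, $r^*=r+\kappa$ with $\kappa=\chi_0(r/t)\ln|r|$ a function of $(t,r)$ only; the structural observation I would exploit is that it fixes the angle $\omega$ and depends on $(t,r)$ alone, so it acts as one rescaling in the radial direction and a different rescaling on the unit sphere. I would organize everything around that radial/angular split. To begin, at fixed $\omega$ one has $\pa r^*/\pa r=1+\kappa_r=a$ and $\pa r^*/\pa t=\kappa_t$, so the chain rule gives $\pa/\pa r=a\,\pa/\pa r^*$ (hence $\pa/\pa r^*=a^{-1}\pa/\pa r$) and $\pa/\pa t=\pa/\pa t^*+\kappa_t\,\pa/\pa r^*$, which are the first two displayed identities; for $\pa/\pa t^*$ taken at fixed $x^*$, i.e. at fixed $\omega$ and fixed $r^*$, one instead differentiates $r^*=r+\kappa(t,r)$ implicitly to get $(1+\kappa_r)\,\pa r/\pa t^*=-\kappa_t$, hence $\pa/\pa t^*=\pa/\pa t-a^{-1}\kappa_t\,\pa/\pa r$.

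Next I would compute the Cartesian Jacobian from $x^{*i}=r^*\omega_i$ using $\pa r/\pa x^j=\omega_j$, $\pa\omega_i/\pa x^j=\Pi_{ij}/r$ and $\pa r^*/\pa x^j=a\omega_j$ at fixed $t$: this gives $\pa x^{*i}/\pa x^j=b\,\delta_{ij}+(a-b)\omega_i\omega_j$ with $b=1+\kappa/r=r^*/r$, which is the stated Jacobian (the two forms agreeing via $\omega_i\omega_j=\delta_{ij}-\Pi_{ij}$), as well as $\pa x^{*i}/\pa t=\omega_i\kappa_t$ at fixed $x$. This matrix has $\omega$ as eigenvector with eigenvalue $a$ and acts as multiplication by $b$ on the plane tangent to the sphere, so $\det(\pa x^*/\pa x)=ab^2=(1+\kappa/r)^2(1+\kappa_r)$, and inverting it eigenvalue by eigenvalue yields $\pa/\pa x^{*i}=a^{-1}\omega_i\pa_r+b^{-1}(\pa_i-\omega_i\pa_r)$ with $\pa_r=\omega^j\pa_j$. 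Rewriting $\pa_i-\omega_i\pa_r=\Pi_{ij}\pa_j=r^{-1}\omega^j\Omega_{ji}$ turns this into the claimed formula for $\pa/\pa x^{*i}$; rearranging it (together with $\pa_{r^*}=a^{-1}\pa_r$) gives both stated forms of the identity for $\pa/\pa x^i-\pa/\pa x^{*i}$, and $\Pi_i^j\pa^*_j=r^{*-1}\omega^j\Omega^*_{ji}$ is the same computation performed in the starred frame.

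The vector-field identities I would then read off by substituting the formulas for $\pa^*_i,\pa_{t^*},\pa_{r^*}$ and $x^{*i}=b x^i$ into the definitions. In $\Omega^*_{ij}=x^*_i\pa^*_j-x^*_j\pa^*_i$ the radial contributions cancel because $x_i\omega_j-x_j\omega_i=0$ and the tangential contributions carry a factor $b\cdot b^{-1}=1$, so $\Omega^*_{ij}=\Omega_{ij}$. For $S^*=t\pa_{t^*}+r^*\pa_{r^*}$, substituting $\pa_{t^*}=\pa_t-a^{-1}\kappa_t\pa_r$ and $\pa_{r^*}=a^{-1}\pa_r$ and comparing with $S=t\pa_t+r\pa_r$ leaves the remainder $a^{-1}(t\kappa_t+r\kappa_r-\kappa)\pa_r$, and since $S(r)=r$ one has $t\kappa_t+r\kappa_r-\kappa=S(\kappa)-\kappa=r\,S(\kappa/r)$, giving $S=S^*+S(\kappa/r)\,r\pa_{r^*}$.

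Finally, the boost $\Omega_{0i}=t\pa_i+x_i\pa_t$ is the same computation with more terms: substitute into $\Omega^*_{0i}=t\pa^*_i+x^*_i\pa_{t^*}$, expand $\Omega_{0i}$ into its $\omega_i\pa_r$, tangential, and $\pa_t$ components, subtract, and use $\Omega_{0i}(\kappa/r)=\omega_i\big(t\,\pa_r(\kappa/r)+r\,\pa_t(\kappa/r)\big)$ (valid since $\kappa/r$ depends on $(t,r)$ only) to check that the difference equals $\Omega_{0i}(\kappa/r)\,r\pa_{r^*}+\tfrac{\kappa}{r}\big(t\pa^*_i-x_i\pa_{t^*}\big)$; concretely one verifies that the radial coefficient is $a^{-1}(t\kappa_r+r^*\kappa_t)$, the tangential coefficient is $t\kappa/(rb)$, and the $\pa_t$ coefficient is $-\kappa\omega_i$ on both sides. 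I expect this last identity to be the only step requiring genuine care: because the change of variables itself depends on $t$, the boost mixes the radial, angular, and temporal derivatives, and one must be scrupulous about which variables are held fixed in each partial derivative; everything else is routine differentiation.
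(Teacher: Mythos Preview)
Your proposal is correct. The paper does not provide an explicit proof of this lemma; it is stated as a collection of chain-rule identities and then used immediately in the proof of \eqref{eq:ZstarZ}. Your systematic approach---exploiting the radial/angular split, computing the Jacobian via its eigenstructure on $\omega$ and $\omega^\perp$, and then reading off the vector-field identities by substitution---is exactly the intended computation, and your verification of the boost identity by matching the $\omega_i\pa_r$, $\Pi_{ij}\pa_j$, and $\pa_t$ coefficients is careful and correct.
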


 We start by proving \eqref{eq:ZstarZ}. First we will show that
 \beq\label{eq:ZstarZstepone}
|\pa^\alpha \!Z^I\phi|
\les {\sum}_{|J|+|\gamma|\leq |I|,\,\,|\beta|\leq |\alpha|}
\big(1\!+M\ln{|1\!+r|}\big)^{|\gamma|}\, |\pa^{*\beta}\pa^{*\gamma}\! Z^{*J} \phi|.
 \eq
 The first inequality in \eqref{eq:ZstarZ} follows from this using
$|\pa^*\phi|\!\les \!\sum_{|I|\leq 1}|Z^{*I}\!\phi|/(1\!+|q^*|)$.\break
We will use induction to prove
that $Z^I $ is a sum of terms of the form $\kappa_{J,\gamma}\,\pa^{*\gamma}\!Z^{*J}\!\!$,
with $|\gamma|\!+\!|J|\!=\!|I|$, where
$\kappa_{J,\gamma}\!=\!\chi_{J,\gamma,0}+\dots+\chi_{J,\gamma,|\gamma|}(\ln{r})^{|\gamma|}\!$
and $\chi_{J,j}$ are homogeneous of degree $0$. Here $\chi_{I,0}\!\!=\!1$ and $\chi_{J,j}$ are
 supported in $r\!\geq\! t/2$ for $|J|\!<\!|I|$. By the Lemma
 \ref{lem:changeofvariableidentities}
 we have for some  $\chi_{Z,j}^\nu$ homogeneous of degree $0$
\beqs
Z\big(\kappa_{J,\gamma}\,\pa^{*\gamma}\!Z^{*J}\big)
=(Z\kappa_{J,\gamma})\,\pa^{*\gamma}\!Z^{*J}
+\kappa_{J,\gamma}\,
\big(Z^*+(\chi_{Z,0}^\nu+\chi_{Z,1}^\nu\ln{r})\pa^*_\nu\big)\pa^{*\gamma}\!Z^{*J}.
\eqs
Here $Z\kappa_{J,\gamma}$ is of  the form
$\chi_0\!+\dots \!+\!\chi_{|\gamma|}(\ln{r})^{|\gamma|}$
and $\kappa_{J,\gamma}(\chi_{Z,0}^\mu\!+\!\chi_{Z,1}^\mu\ln{r})$  is of the form
$\chi_0\!+\!\dots \!+\!\chi_{|\gamma|+\!1}(\ln{r})^{|\gamma|+\!1}\!$. This proves the assertion and
 \eqref{eq:ZstarZstepone} for $|\alpha|\!\!=\!0$. To prove it for $|\alpha|>0$ we claim that
 $\pa^\alpha\!$ applied to $\kappa_k \psi$, where
 $\kappa_k$ is of the form $\chi_0\!+\dots +\!\chi_{k}(\ln{r})^{k}\!\!$,
 is a sum of terms of the form
$\kappa_{k+\ell}\,t^{-\ell}\pa^{*\beta}\psi$
with $|\beta|+\ell=|\alpha|$.
In fact by Lemma \ref{lem:changeofvariableidentities} we have for some
$\chi_{\mu,j}^\nu$ homogeneous of degree $0$
\beqs
\pa_\mu \big(\kappa_{k+\ell}\,t^{-\ell}\pa^{*\beta}\big)\!
=\big(\pa_\mu (\kappa_{k+\ell}\,t^{-\ell})\big)\pa^{*\beta}\!\!
+ \kappa_{k+\ell}\,t^{-\ell}\big(\pa^*_\mu\!
+(\chi_{\mu,0}^\nu\!+\chi_{\mu,1}^\nu\ln{r})\,t^{-1}\pa^*_\nu\big)\pa^{*\beta}\!.
\eqs
Here $\pa_\mu (\kappa_{k+\ell}\,t^{-\ell})$
and
$\kappa_{k+\ell}\,t^{-\ell}(\chi_{\mu,0}^\nu+\chi_{\mu,1}^\nu\ln{r})t^{-1}\!$
are both of the form\break
$(\chi_0\!+\!\dots\! \!+\!\chi_{k+\ell+1}(\ln{r})^{k+\ell+1})\, t^{-\ell-1}\!\!$,
which proves the assertion and \eqref{eq:ZstarZstepone} follows.

We will now prove the second inequality in \eqref{eq:ZstarZ} which would
follow from
 \beq\label{eq:ZstarZsteptwo}
|\pa^{*\alpha} \! Z^{*I}\phi|
\les {\sum}_{|J|+|\gamma|\leq |I|,\,\,|\beta|\leq |\alpha|}
\big(1\!+M\ln{|1\!+r|}\big)^{|\gamma|}\, |\pa^{\beta}\pa^{\gamma}\! Z^J \phi|.
 \eq
The proof uses the argument above with
the inverse identities \eqref{eq:theinversederivatives} that give
\begin{align}
Z^*\!&=Z+\big(f_{Z,0}(a,b)\chi_{Z,0}^\nu+f_{Z,1}(a,b)\chi_{Z,1}^\nu\ln{r}\big)\pa_\nu,
\label{eq:thestarvectorfielddifference}\\
\pa^*_\mu\!
&=\pa_\mu+\big(f_{\mu,0}(a,b)\chi_{\mu,0}^\nu\!
+f_{\mu,1}(a,b)\chi_{\mu,1}^\nu\ln{r}\big)\,t^{-1}\pa_\nu,\label{eq:thestarderivativedifference}
\end{align}
where $\chi_{Z,j}^\nu$ and $\chi_{\mu,j}^\nu$ are homogenous of degree $0$,
$f_{Z,j}(a,b)$ and $f_{\mu,j}(a,b)$ are smooth functions when $a\!=\!1\!+\pa\kappa/\pa r\!>\!0$
and $b\!=\!1\!+\kappa\!>\!0$. The only difference is when derivatives fall on $a$ or
$b$ that just produces lower order terms.
\eqref{eq:partialstardifference}
follows directly from applying
vector fields and derivatives to  \eqref{eq:thestarvectorfielddifference}.

\section{Decay estimates for the inhomogeneous wave equation}
\begin{lemma}\label{lem:radialwaveeqderdecay}
Let $\Box_r\phi\!=\!r^{-1}(\pa_r^2\!-\pa_t^2)(r\phi)$. Then with $q\!=\!r\!-t$,
we have for $r\!>\!t/2$:
 \begin{equation*} (1\!+t\!+r)|\pa\phi(t,r\omega)|
\les
 \!\max_{\xi=t+r\!,  \,3|q|} \!\sum_{|I|\leq 1\!\!\!} |Z^I\phi
 \big(\tfrac{\xi-q}{2},\!\tfrac{\xi+q}{2}\omega\big)|
 +\!\!\int_{3|q|}^{t+r}\!\!\!\!
  |r\Box_r^{\, }
   \phi(\tfrac{\xi-q}{2},\!\tfrac{\!\xi+q}{2}\omega\big)| d\xi.
 \end{equation*}
\end{lemma}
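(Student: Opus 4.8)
The plan is to reduce to a one-dimensional problem in the characteristic variables. Set $w = r\phi$, so that $\Box_r\phi = r^{-1}(\pa_r^2 - \pa_t^2)(r\phi)$ means $(\pa_r^2-\pa_t^2) w = r\,\Box_r\phi$. Introduce the null coordinates $\xi = t+r$ and $q = r-t$, so that $\pa_r^2-\pa_t^2 = 4\pa_\xi\pa_q$; hence along a fixed outgoing line $q=\text{const}$ the function $\pa_q w$ satisfies the transport equation $4\pa_\xi(\pa_q w) = r\,\Box_r\phi$. Integrating this in $\xi$ from a base value $\xi_0$ (to be chosen momentarily) up to $\xi = t+r$ gives
\begin{equation*}
\pa_q w(t,r\omega) = \pa_q w\big(\tfrac{\xi_0-q}{2},\tfrac{\xi_0+q}{2}\omega\big)
+ \tfrac14\int_{\xi_0}^{t+r} r\,\Box_r\phi\big(\tfrac{\xi-q}{2},\tfrac{\xi+q}{2}\omega\big)\,d\xi,
\end{equation*}
and a symmetric identity holds for $\pa_\xi w$ by integrating the same equation in $q$ instead — though here it is more efficient to simply note $2\pa_\xi = \pa_t+\pa_r$ and $-2\pa_q = \pa_t - \pa_r$, so controlling $\pa_q w$ together with an algebraic bound on $w$ itself recovers all first derivatives of $\phi$.

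The next step is the choice of the base point $\xi_0$ and the conversion back from $w=r\phi$ to $\phi$. Since we are in the region $r > t/2$, we have $t+r > 3|q|$ precisely when $|q| < (t+r)/3$; in the complementary regime $|q|\ge (t+r)/3$ the point already lies near the tip of the light cone where $1+t+r \sim 1+|q|$ and the estimate $(1+t+r)|\pa\phi|\lesssim \sum_{|I|\le 1}|Z^I\phi|$ is immediate from $|\pa\phi|\lesssim (1+|q|)^{-1}\sum_{|I|\le 1}|Z^I\phi|$. So we may assume $t+r > 3|q|$ and take $\xi_0 = 3|q|$, which is the value of $\xi$ at which the backward outgoing characteristic through $(t,r\omega)$ meets the cone $r = 2t$ (up to constants), i.e.\ the boundary of our region. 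The boundary term $\pa_q w$ at $\xi_0$ is then evaluated at a point with $t+r \sim |q|$, so $|\pa_q w| = |\pa_q(r\phi)| \lesssim |\phi| + r|\pa_q\phi| \lesssim \sum_{|I|\le 1}|Z^I\phi|$ there, using again that near that boundary $r\pa_q$ is comparable to a sum of the fields $Z$. For the main term, $r\Box_r\phi$ under the integral is exactly the integrand $|r\Box_r\phi|$ appearing in the statement. Finally, to pass from a bound on $\pa_q(r\phi)$ to a bound on $(1+t+r)\pa\phi$, write $r\pa_q\phi = \pa_q(r\phi) - (\pa_q r)\phi = \pa_q(r\phi) - \tfrac12\phi$, multiply through by the harmless factor $(1+t+r)/r \sim 1$ (valid since $r>t/2$), and absorb the extra $|\phi|$ term into $\sum_{|I|\le 1}|Z^I\phi|$ evaluated at the endpoint $\xi = t+r$ (i.e.\ at $(t,r\omega)$ itself, which corresponds to one of the two allowed sampling points $\xi = t+r$).

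The main obstacle, and the one point requiring genuine care rather than bookkeeping, is the treatment of the boundary contribution and of the $(\pa_q r)\phi$ correction term: one must ensure that the first derivative of $\phi$ is never estimated by an \emph{un}differentiated quantity except where $1+t+r$ is comparable to $1+|q|$, so that the loss of one power of $(1+t+r)$ is affordable. This is why the base point is placed exactly on $r\sim 2t$ and why the two sampling values are $\xi = t+r$ and $\xi = 3|q|$: at both of those the weight $1+t+r$ is dominated by $1+|q|$ up to the boundary geometry, so converting $\pa$ into $Z^I$ costs nothing. The interior of the characteristic segment contributes only through the integral of $|r\Box_r\phi|$, with no derivative loss, because there we integrated the equation rather than differentiating it. Once these endpoint conversions are in place, the stated inequality follows by the triangle inequality applied to the integrated transport identity.
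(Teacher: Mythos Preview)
Your proposal is correct and follows essentially the same route as the paper: integrate the transport equation $(\pa_t+\pa_r)(\pa_r-\pa_t)(r\phi)=r\,\Box_r\phi$ along the outgoing null line $q=\text{const}$ from $\xi_0=3|q|$, then convert $|(\pa_r-\pa_t)(r\phi)|$ back to $(1+t+r)|\pa\phi|$ using $(1+t+r)|\pa\phi|\lesssim |(\pa_r-\pa_t)(r\phi)|+\sum_{|I|\le 1}|Z^I\phi|$ and bound the boundary term at $\xi_0$ by $\sum_{|I|\le 1}|Z^I\phi|$ since there $t+r\sim|q|$. One small correction: the base point $\xi_0=3|q|$ lies on the line $r=t/2$ when $q<0$ and on $r=2t$ when $q>0$, not uniformly on ``$r=2t$ up to constants''; and the sampling point $\xi=t+r$ is needed not because $1+t+r\sim 1+|q|$ there (it generally is not) but simply to absorb the undifferentiated $|\phi|$ and tangential-derivative contributions in the conversion step.
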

\begin{proof}
If we  integrate
$
\big(\pa_{t}+\pa_{r}\big)
 \big(\pa_{r}-\pa_{t}\big)(r\phi)=r\,\Box_r^{\,} \phi
 $
in the $t\!+\!r$ direction from the intersection with the line $r\!=\!t/2$
($q\!<\!0$) or the line $r\!=\!2t$ ($q\!>\!0$) we get
 \begin{equation*}
 |(\pa_{r}-\pa_{t})(r\phi)(t,r\omega)|
 \leq
 |(\pa_{r}-\pa_{t})(r\phi)
 \big(\tfrac{3|q|-q}{2},\!\tfrac{3|q|+q}{2}\omega)|\,
 +\!\int_{3|q|}^{t+r}\!\!\!\!
  |\,r\Box_r^{\,}
   \phi(\tfrac{\xi-q}{2},\!\tfrac{\xi+q}{2}\omega\big)|\, d\xi.
 \end{equation*}
 Now
 \beqs
 (1+t+r)|\,\pa \phi(t,r\omega)|
 \les |(\pa_{r}-\pa_{t})(r\phi)(t,r\omega)|
 +{\sum}_{|I|\leq 1}|Z^I\phi(t,r\omega)|,
 \eqs
 and
 \beqs
|(\pa_{r}-\pa_{t})(r \phi)
 \big(\tfrac{3|q|-q}{2},\tfrac{3|q|+q}{2}\omega\big)|
 \les {\sum}_{|I|\leq 1} |Z^I\phi
 \big(\tfrac{3|q|-q}{2},\tfrac{3|q|+q}{2}\omega\big)|.
 \eqs
 \end{proof}

\begin{lemma}\label{lem:inhomwaveeqdecay} If $-\Box \phi=F$, with
vanishing data, where
\beq \label{eq:goodinhomdecay} |F|\leq
\frac{C}{(1+r)(1+t+r)(1+|\,t-r|)^{1+\delta}},\qquad \delta>0
 \eq
 then with $ \langle \, q\,\rangle=\sqrt{1+q^2}$
 \begin{equation}\label{eq:logest1}
 |\phi|\leq\frac{C S^0(t,r)}{(1+t+r)\,(1+q_+)^\delta},\qquad
 \text{where}\quad S^0(t,r)
 =\frac{t}{r}\ln{\Big(\frac{\langle \,t+r\,\rangle}{\langle\,t-r\,\rangle}\Big)}.
 \end{equation}
 Here $q_+=r-t$, when $r\geq 0$ and $q_+=0$, when $r\leq t$.
 On the other hand if
 \begin{equation*}\label{eq:goodinhomdecay2} |F|\leq
\frac{C}{(1+r)(1+t+r)^{1+\mu}(1+|\,t-r|)^{1-\mu}(1+q_+)^{\delta_+}
(1+q_-)^{\delta_-}},
 \end{equation*}
 with $0< \delta_+<\mu,\,\, \, 0\leq
\delta_-\leq \delta_+,$ then
 \beq\label{eq:logest2}
 |\phi|\leq\frac{C}{(1+t+r)(1+q_+)^{\delta_+}(1+q_-)^{\delta_-}}.
 \eq
\end{lemma}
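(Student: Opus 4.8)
The approach I would take rests on the explicit fundamental solution of $\Box$ in three space dimensions and its positivity, which turns the statement into a scalar integral estimate. With the sign convention $-\Box\phi=F$ and vanishing data, Duhamel's formula is
$$
\phi(t,x)=-\frac{1}{4\pi}\int_{|y-x|<t}\frac{F\big(t-|y-x|,y\big)}{|y-x|}\,dy ,
$$
and since the kernel $|y-x|^{-1}$ is nonnegative we get $|\phi(t,x)|\le\Phi(t,|x|)$, where $\Phi\ge0$ is the radial solution of $-\Box\Phi=G$ with vanishing data and $G(t,\lambda)$ is any radial majorant of $|F|$; both hypotheses on $F$ furnish such a $G$ explicitly. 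Hence it suffices to prove \eqref{eq:logest1} and \eqref{eq:logest2} for $\Phi$ itself.

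For the radial problem, $w:=r\Phi$ solves $(\pa_r^2-\pa_t^2)w=rG$ with zero Cauchy data and $w(t,0)=0$; extending $G$ evenly in $\lambda$ to absorb the reflection at $\lambda=0$, d'Alembert's formula gives
$$
\Phi(t,r)=\frac{1}{2r}\int_0^t\!\!\int_{|r-(t-s)|}^{\,r+(t-s)}\!\!\lambda\,G(s,\lambda)\,d\lambda\,ds .
$$
Everything now reduces to estimating this double integral. I would pass to the null coordinates $p=s+\lambda$, $v=\lambda-s$ of the source point, in which $1+s+\lambda=1+p$, $1+|s-\lambda|=1+|v|$, the Jacobian is constant, and the domain is (up to the harmless fold at $\lambda=0$) a triangle or trapezoid with null sides $\{p=t+r\}$ and $\{v=r-t\}$ when $r\ge t$, or $\{v=r-t\}$ and $\{p=t-r\}$ when $r<t$, closed off by the diagonal $p=v$ coming from $\{s=0\}$; in particular $p$ ranges over an interval of width $\sim\min(r,t)$. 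Dropping the factor $(1+\lambda)^{-1}$ via $\lambda/(1+\lambda)\le1$, the integrand in \eqref{eq:goodinhomdecay} becomes $(1+p)^{-1}(1+|v|)^{-1-\delta}$ and the one in the second hypothesis becomes $(1+p)^{-1-\mu}(1+|v|)^{\mu-1}(1+v_+)^{-\delta_+}(1+v_-)^{-\delta_-}$, with $v_\pm=\max(\pm v,0)$ recording whether the source lies inside or outside the light cone.

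For \eqref{eq:logest1}, doing the $p$-integral first gives $\int(1+p)^{-1}dp=\ln\frac{1+t+r}{1+|v|}$, and the remaining $v$-integral of $(1+|v|)^{-1-\delta}\ln\frac{1+t+r}{1+|v|}$, after the substitution $\tau=\ln\frac{1+t+r}{1+|v|}$, is the elementary integral $\int_0^{\tau_0}\tau e^{\delta\tau}d\tau$, whose value — bounded with care, using that the $p$-range has width only $\sim\min(r,t)$ — combines with the prefactor $1/r$ to give exactly $\les S^0(t,r)(1+t+r)^{-1}(1+q_+)^{-\delta}$: the logarithm in $S^0$ is this $p$-integration over a range of logarithmic size, the $(1+q_+)^{-\delta}$ comes from the lower endpoint $v\ge q$ active when $r\ge t$ (using $\delta>0$), and the $t/r$ factor is forced by combining $1/r$ with the collapse of the domain as $r/t\to0$. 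For \eqref{eq:logest2}, $\mu>0$ now makes the $p$-integral converge with no logarithm, $\int(1+p)^{-1-\mu}dp\les\mu^{-1}(1+|v|)^{-\mu}$, recombining the weight to $(1+|v|)^{-1-\delta_+}$ on $\{v>0\}$ and $(1+|v|)^{-1-\delta_-}$ on $\{v<0\}$; the hypothesis $\delta_+<\mu$ is precisely what keeps the sub-integral $\int_0^p(1+v)^{\mu-1-\delta_+}dv$ over sources outside the cone bounded by its endpoint $\sim(1+p)^{\mu-\delta_+}$ rather than divergent, while $\delta_-\le\delta_+$ makes the $\{v<0\}$ contribution controlled by $(1+q_-)^{-\delta_-}$; reassembling the pieces (again using the width $\sim\min(r,t)$ of the $p$-range) yields \eqref{eq:logest2}.

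The step requiring genuine care is exactly this last bookkeeping in the regimes $r<t$ and $r\gg t$: the naive estimate that bounds the integrand pointwise and multiplies by the $1/r$ prefactor loses a factor $t/r$, respectively $r/t$, against the asserted bound, and one recovers it only by exploiting the thinness of the characteristic region — equivalently, by splitting the $(p,v)$-domain into collars along the two null sides and a bulk and checking each. By contrast, the passage from $\phi$ to the positive radial $\Phi$ in the first step removes all oscillation and angular dependence, so nothing beyond an elementary (if fiddly) integral estimate is needed afterwards; and since all bounds are trivial for bounded $t$, one may first translate in time so that the relevant cutoffs are homogeneous.
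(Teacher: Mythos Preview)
Your approach is essentially the same as the paper's: reduce to a radial majorant $\overline{\phi}$ via positivity of the forward fundamental solution, integrate $(\pa_t-\pa_r)(\pa_t+\pa_r)(r\overline{\phi})=r\overline{F}$ in the null coordinates of the source (your $p,v$ are the paper's $\xi,-\eta$), and then split into the cases $r\lessgtr t$ with the $p$-integral producing the logarithm in the first hypothesis and converging outright under the second. The paper carries out exactly the bookkeeping you flag as ``requiring genuine care'', and your substitution $\tau=\ln\frac{1+t+r}{1+|v|}$ is the paper's change of variables for the $r>t$ case of \eqref{eq:logest1}; the minor sign in your Duhamel formula and the final remark about translating in time are irrelevant to the argument.
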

\begin{proof} Let $\overline{F}(t,r)\!=\!\sup_{\omega\in S^2} |F(t,r\omega)|$
and let $F_0\!=\!F H$ where $H\!=\!1$,
when $t\!>\!0$ and $H\!=\!0$, when $t\!<\!0$.
 Since $|F_0|\leq \overline{F}_0$ it follows from
 the positivity of the fundamental solution that $|\phi|\leq
 |\overline{\phi}|$ where $\overline{\phi}$ is the solution of
 $-\Box\overline{\phi}=\overline{F}_0$ with vanishing initial data.
 Since the wave operator is invariant under rotations
 it follows that $\overline{\phi}$ is independent of the angular
 variables so
 $
 (\pa_t-\pa_r)(\pa_t+\pa_r)(r\overline{\phi}(t,r))=r\overline{F}_0.
 $
 If we now introduce new variables $\xi=t+r$ and $\eta=t-r$ and
 integrate over the region $R=\{(\xi,\eta);\,
 -\infty\leq \eta\leq t-r,\, \, t-r\leq \xi \leq t+r\}$
 using that $r\overline{\phi}(t,r)$ vanishes when $\eta=-\infty$ and
 when $r=0$, i.e. $\xi=\eta=t-r$ we obtain
 \beqs
 r\overline{\phi}(t,r)=4\int_{t-r}^{t+r} \int_{-\infty}^{t-r}
 \rho \overline{F}_0(s,\rho) H(s) \, d\eta d\xi,
 \qquad s=\frac{\xi+\eta}{2},\quad \rho=\frac{\xi-\eta}{2}.
 \eqs
 In the first case we have
 \beqs
 r\overline{\phi}(t,r)\leq 4\int_{t-r}^{t+r} \int_{-\xi}^{t-r}
 \frac{H(\xi+\eta) }{(1+|\xi|)^{}(1+|\eta|)^{1+\delta}}
 \, d\eta d\xi.
 \eqs
 If $t\!>\!r$ \eqref{eq:logest1} follows from integrating this,
 since
  $
  \frac{1}{r}\log{\big(\frac{1+t+r}{1+t-r}\big)}
 \!\leq \!\frac{C}{1+t+r} S^0(t,r).
  $
  If $r>t$ then we integrate first in the $\xi$ direction
\begin{multline*}
 r\overline{\phi}(t,r)\les \int_{\!-(t+r)}^{-(r-t)}\!\!
 \int_{|\eta|}^{t+r}
 \!\!\!\!\!\frac{ d\xi d\eta }{(1+|\,\xi|)(1+|\eta|)^{1+\delta}}
\les \int_{\!-(t+r)}^{-(r-t)}
 \frac{ \log{\big|\frac{1+t+r}{1+|\eta|}\big|} \, d\eta }
 {(1+|\eta|)^{1+\delta}}\\
 \les \frac{C}{(1+t+r)^\delta}
 \int_{\frac{1+r-t}{1+r+t}}^1\!\!\!\!\!
 \frac{\ln{\big|\frac{1}{s}\big|}\, ds}
 {\,\,\,s^{1+\delta}\!\!\!},
 \end{multline*}
  and \eqref{eq:logest1} for $r>t$ follows from this.
To prove \eqref{eq:logest2} we must estimate
 \begin{equation*}
 r\overline{\phi}(t,r)\leq 4\int_{t-r}^{t+r} \int_{-\xi}^{t-r}
 \frac{ H(\xi+\eta) \,\, d\eta \,d\xi}{(1+|\xi|)^{1+\mu}
 (1+|\eta|)^{1-\mu}(1+\!\eta_-)^{\delta_+}
 (1+\!\eta_+)^{\delta_-}}.
 \end{equation*}
 If $r>t$ then we integrate first in the $\xi$ direction
\begin{multline*}
 r\overline{\phi}(t,r)\les \int_{\!-(t+r)}^{-(r-t)}\!\!\int_{|\eta|}^{t+r}
 \!\!\!\!\!\frac{\ d\xi d\eta }{(1+|\xi|)^{1+\mu}
 (1+\!|\,\eta|)^{1+\delta_+ -\mu}}
\les \int_{\!-(t+r)}^{-(r-t)}
 \frac{\, d\eta }{(1+\!|\,\eta|)^{1+\delta_+}}
\end{multline*}
which is $\les (1+\!|\,t-r|)^{-\delta_+}$ so
\eqref{eq:logest2} for $r>t$ follows.
 If $t>r$ it follows from integrating in the $\eta$ direction
  that if
 $1+\delta_+-\mu<1$ we have with ${\delta}=\min(\delta_-,\,\delta_+)$
 \begin{multline*}
 r\overline{\phi}(t,r)\leq\! 4\!\int_{t-r}^{t+r}\!\!\!
 \frac{1 }{(1+|\xi|)^{1+\delta_+}}
 +\frac{(1+\!|\,t-r|)^{\mu-\delta_-} \!\!\!}{(1+|\xi|)^{1+\mu}}
 \, d\xi
  \leq \frac{C r}{(1+t+r)(1+|t-r|)^{{\delta}}}.
 \end{multline*}
\end{proof}

\begin{lemma}\label{lem:homwaveeqdecay} If $w$ is the solution of
 \begin{equation*}
 -\Box w=0,\quad \quad w\big|_{t=0}=w_0,\quad \partial_t w\big|_{t=0}=w_1
 \end{equation*}
 then for any $0<\gamma<1$;
 \begin{multline}\label{eq:homoest}
 (1+t+r)(1+|\,r-t|)^\gamma |w(t,x)|\\
 \les {\sup}_x \big( (1+|x|)^{2+\gamma}
 ( |w_1(x)|+|\,\partial w_0(x)|) +(1+ |x|)^{1+\gamma}|w_0(x)|\big).
 \end{multline}
\end{lemma}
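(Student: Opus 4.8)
The plan is to use Kirchhoff's representation for solutions of $\Box w=0$ in $3+1$ dimensions. Since $\Box$ is invariant under $t\mapsto-t$ one may assume $t\geq0$. With the spherical mean $M_t[f](x)=\tfrac{1}{4\pi}\int_{\bold{S}^2}f(x+t\omega)\,dS(\omega)$, the solution with data $(w_0,w_1)$ is $w=\pa_t\big(t\,M_t[w_0]\big)+t\,M_t[w_1]$; since $\pa_t M_t[w_0](x)=\tfrac{1}{4\pi}\int_{\bold{S}^2}\omega\!\cdot\!\nab w_0(x+t\omega)\,dS(\omega)$ this is
\beqs
w(t,x)=\frac{1}{4\pi}\int_{\bold{S}^2}\Big(w_0(x+t\omega)+t\,\omega\!\cdot\!\nab w_0(x+t\omega)+t\,w_1(x+t\omega)\Big)\,dS(\omega).
\eqs
Let $A$ be the supremum on the right of \eqref{eq:homoest}; then $|w_0(y)|\leq A(1\!+\!|y|)^{-1-\gamma}$ and $|\nab w_0(y)|+|w_1(y)|\leq A(1\!+\!|y|)^{-2-\gamma}$ for every $y$.

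Next I would bound the integrand by its supremum over the sphere $\{|y|=|x+t\omega|\}$ and pass to the radial variable $\rho=|x+t\omega|$, which runs over $[\,|r\!-\!t|,\,r\!+\!t\,]$, $r=|x|$. Using the elementary identity $\tfrac{1}{4\pi}\int_{\bold{S}^2}f(|x+t\omega|)\,dS(\omega)=\tfrac{1}{2rt}\int_{|r-t|}^{r+t}f(\rho)\,\rho\,d\rho$ for $r,t>0$, together with $\rho\,(1\!+\!\rho)^{-1-\gamma}\leq(1\!+\!\rho)^{-\gamma}$ and $\rho\,(1\!+\!\rho)^{-2-\gamma}\leq(1\!+\!\rho)^{-1-\gamma}$, this reduces the claim to the elementary bound
\beqs
\frac{1}{rt}\int_{|r-t|}^{r+t}\Big((1\!+\!\rho)^{-\gamma}+t\,(1\!+\!\rho)^{-1-\gamma}\Big)\,d\rho
\les \frac{1}{(1\!+\!t\!+\!r)(1\!+\!|r\!-\!t|)^{\gamma}}.
\eqs

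For this last step: if $t+r$ is bounded by a fixed constant both sides are comparable and the bound is immediate from the representation above, since every spherical mean there is $\les A$. If $t+r$ is large then $1\!+\!t\!+\!r\sim\max(r,t)$. For the first term I would use the concavity bound $\int_{|r-t|}^{r+t}(1\!+\!\rho)^{-\gamma}\,d\rho\les\min(r,t)\,(1\!+\!|r\!-\!t|)^{-\gamma}$, so that $\tfrac{1}{rt}$ against the interval length $\sim\min(r,t)$ leaves precisely $\tfrac{1}{\max(r,t)}\sim(1\!+\!t\!+\!r)^{-1}$. For the second term I would split according to whether one is away from the light cone, $\max(r,t)\geq 2\min(r,t)$, where again $\int(1\!+\!\rho)^{-1-\gamma}\,d\rho\les\min(r,t)(1\!+\!|r\!-\!t|)^{-1-\gamma}$ and $\min(r,t)\les\max(r,t)$, $1\!+\!|r\!-\!t|\sim 1\!+\!t\!+\!r$ close the estimate, or near the light cone, $\max(r,t)<2\min(r,t)$, where instead the convergent tail $\int(1\!+\!\rho)^{-1-\gamma}\,d\rho\les\gamma^{-1}(1\!+\!|r\!-\!t|)^{-\gamma}$ and $r\sim t\sim1\!+\!t\!+\!r$ are used.

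The main obstacle is exactly this bookkeeping: the change of variables carries a factor $1/(rt)$ which is singular when $r$ or $t$ is small, so it must be absorbed either by the shortness of $[\,|r\!-\!t|,\,r\!+\!t\,]$ away from the cone or by the convergence of $\int(1\!+\!\rho)^{-1-\gamma}\,d\rho$ near it; and since $\int_0^\infty(1\!+\!\rho)^{-\gamma}\,d\rho=\infty$ for $\gamma<1$, the first term must only be estimated through the concavity bound, never by replacing $(1\!+\!\rho)$ by $1\!+\!|r\!-\!t|$ on the whole interval. The rest is the standard Kirchhoff computation.
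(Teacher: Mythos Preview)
Your argument is correct and follows the same route as the paper's proof. Both start from Kirchhoff's formula and reduce the spherical mean of $(1+|x+t\omega|)^{-k-\gamma}$ to a one--dimensional integral: the paper writes $|x+t\omega|^2=(r-t)^2+2rts$ with $s=1-\omega_1$, while you use the equivalent substitution $\rho=|x+t\omega|$ and the identity $\tfrac{1}{4\pi}\int_{\bold{S}^2}f(|x+t\omega|)\,dS(\omega)=\tfrac{1}{2rt}\int_{|r-t|}^{r+t}f(\rho)\,\rho\,d\rho$; the subsequent split into ``$|r-t|\gtrsim t$'' versus ``$r\sim t$'' is the same in both.
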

\begin{proof} The proof is an immediate consequence of
Kirchoff's formula
 $$
w(t,x)=t\!\int {\big(w_1(x+t{\omega})+\langle
w^{\prime}_0(x+t{\omega}),{\omega}\rangle \big)\,\frac{dS({\omega})}{4\pi}} +
 \int {w_0(x+t{\omega})\,\frac{dS({\omega})}{4\pi}},
 $$
 where $dS(\omega)$ is the measure on $\bold{S}^2$.
 If $x\!=\!r\bold{e}_1$, where $\bold{e}_1\!=\!(1,0,0)$ then
for $k\!=\!1,2$
 \begin{multline*}
 \int\!\! \frac{dS(\omega)/4\pi}{1\!+|r\bold{e}_1\!+t\omega|^{k+\gamma}}
 =\int_{\!-1}^1\frac{d\omega_1/2}{1\!+\big(
 (r\!-t\omega_1)^2\!+t^2(1\!-\omega_1^2)\big)^{(k+\gamma)/2}}\\
 =\int_0^2\!\! \frac{d s/2}{1\!+\big(
 (r\!-t)^2\!+2rt s\big)^{(k+\gamma)/2}}.
 \end{multline*}
 \eqref{eq:homoest} follows directly if $|r\!-t|\! \geq \! t/2$.
If $t/2\!<\!r\!<\!2t$ we change variables $\tau\!=\!rt s$.
  If $k\!=\!2$ it can be bounded by
  $(rt)^{-1}(1+|r\!-t|)^{-\gamma}$ and if $k\!=\!1$ by
  $(rt)^{-1}(1\!+rt)^{(1-\gamma)/2}$.
 \end{proof}

\section{Sharp decay  in asymptotic Schwarzschild coordinates.}\label{sec:Sch}
Using Proposition \ref{prop:starwaveeq} to change to asymptotic coordinates Proposition \ref{prop:approxwaveequation} become
\begin{prop}[Asymptotic Einstein in Schwarzschild coordinates]
\label{prop:approxwaveequationschwarzcoord}
  Let
 \beqs
 P_{\mu\nu}^*=\chi\big(\tfrac{\langle \,r^*-t\,\rangle }{t+r^*}\big)
 P_\mathcal{N\,}
 (\pa_\mu^*
 {h},\pa_\nu^*{h}),\quad\text{or}\quad
 P_{\mu\nu}^*=\chi\big(\tfrac{\langle \,r^*-t\,\rangle }{t+r^*}\big)
 L_{\mu}L_\nu P\,(\pa_q^* h,\pa_q^* h),
 \eqs
 where $\chi\!\in\!  C_0^\infty\!$ satisfies
 $\chi({q})\!=\!0$, when
$|{q}|\!\geq \!3/4$, $\chi({q})\!=\!1$, when $|{q}|\!\leq\! 1\!/2$. We have
\begin{equation}\label{eq:AsymptoticEinsteinAsymptoticSchwarzschild}
 \big|{Z^*}{}^I\big[\Box^{*}
h_{\mu\nu}-P_{\mu\nu}^*\big]\big| \les
\frac{\varepsilon^2(1\!+|q^*|)^{-1}\!\!} {(1_{\!}\!+t\!+r^*)^{3-C\varepsilon}\!\!}\,
+\frac{\varepsilon^2(1\!+|q^*|)^{-2}\!\!}
{(1_{\!}\!+t\!+r^*)^{2+\gamma-C\varepsilon}\!\!}\, , \quad |I|\!\leq\! N\!-5.
\end{equation}
\end{prop}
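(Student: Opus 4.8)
The plan is to deduce Proposition~\ref{prop:approxwaveequationschwarzcoord} from Proposition~\ref{prop:approxwaveequation} by conjugating every object in the latter into the starred frame using Proposition~\ref{prop:starwaveeq}. Abbreviating $\chi=\chi\big(\tfrac{\langle\,r-t\,\rangle}{t+r}\big)$ and $\chi^*=\chi\big(\tfrac{\langle\,r^*-t\,\rangle}{t+r^*}\big)$, I would decompose
\[
\Box^{\,*}h_{\mu\nu}-P^*_{\mu\nu}
=\big(\Box^{\,*}-\Box_{\,0}\big)h_{\mu\nu}
+\big(\Box_{\,0}h_{\mu\nu}-P_{\mu\nu}\big)
+\big(P_{\mu\nu}-P^*_{\mu\nu}\big),
\]
apply ${Z^*}{}^I$, and estimate the three groups separately. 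Throughout I use $M\ln|1\!+r|\les\varepsilon(1\!+r)^{\varepsilon}$ (valid since $M\!\les\!\varepsilon^2$), the equivalences $N_*^{-1}(1\!+|q|)\les 1\!+|q^*|\les N(1\!+|q|)$ and $N_*^{-k}|\pa^\alpha Z^I\phi|\les|\pa^{*\alpha}Z^{*I}\phi|\les N^k|\pa^\alpha Z^I\phi|$ from Proposition~\ref{prop:starwaveeq}, and the weak decay $|\pa Z^J h|\les\varepsilon(1\!+t\!+r)^{-1+C\varepsilon}(1\!+|q|)^{-1}$ from Proposition~\ref{prop:weakdecay} for $|J|\le N-4$; the extra powers $(1\!+r)^{C\varepsilon}$ generated by the $N^k$-factors and the logarithms are all absorbed into the exponents $3-C\varepsilon$ and $2+\gamma-C\varepsilon$.

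For the middle group I convert ${Z^*}{}^I$ into a sum of Minkowski fields $Z^J$ with $|J|\le|I|$ and coefficients of size $(1\!+r)^{C\varepsilon}$ by \eqref{eq:ZstarZ} --- no derivative is lost here --- and then apply Proposition~\ref{prop:approxwaveequation} (which needs $|I|\le N-4$), finally converting $(1\!+|q|)^{-1}$ and $(1\!+|q|)^{-2}$ back into $(1\!+|q^*|)^{-1}$ and $(1\!+|q^*|)^{-2}$; this gives the claimed bound directly. For the first group I likewise write ${Z^*}{}^I(\Box^{\,*}-\Box_{\,0})h_{\mu\nu}$ as a sum of $Z^J(\Box^{\,*}-\Box_{\,0})h_{\mu\nu}$, $|J|\le|I|$, and apply \eqref{eq:waveopdiff}; the result is controlled by $\varepsilon(1\!+r)^{C\varepsilon}(1\!+t\!+r)^{-2}\!\sum_{|K|\le|I|+1}|\pa Z^K h|$, and weak decay of $|\pa Z^K h|$ for $|K|\le N-4$ --- hence the restriction $|I|\le N-5$ --- turns this into $\les\varepsilon^2(1\!+|q^*|)^{-1}(1\!+t\!+r^*)^{-3+C\varepsilon}$. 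The last group I split as $(\chi-\chi^*)P_\mathcal{N}(\pa_\mu h,\pa_\nu h)+\chi^*\big[P_\mathcal{N}(\pa_\mu h,\pa_\nu h)-P_\mathcal{N}(\pa^*_\mu h,\pa^*_\nu h)\big]$, with the analogous splitting for the $L_\mu L_\nu P(\pa_q h,\pa_q h)$ alternative (using $P_\mathcal{N}(\pa_q h,\pa_q k)=P(\pa_q h,\pa_q k)$). By bilinearity the bracket is a sum of terms carrying one factor $(\pa^*-\pa)h$, and \eqref{eq:partialstardifference} lets each such factor pick up $M\ln|1\!+r|/(1\!+t\!+r)\les\varepsilon(1\!+r)^{\varepsilon}(1\!+t\!+r)^{-1}$ at the cost of a single extra $Z$-derivative; combined with weak decay of the remaining $\pa h$ this lands inside the asserted error.

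The one point that needs genuine care is the cutoff mismatch $\chi-\chi^*$, which is supported where $\langle r-t\rangle/(t+r)$ and $\langle r^*-t\rangle/(t+r^*)$ straddle a value in $[1/2,3/4]$, so in particular where $r\!\sim\! t\!\sim\!|q|$ and $1\!+|q|\!\sim\!1\!+|q^*|$. There, because $r^*\!-r=\chi_0\ln r$ is supported in $r\!\gtrsim\! t$ and obeys $|r^*\!-r|\les\varepsilon(1\!+r)^{\varepsilon}$, the mean value theorem gives $|\chi-\chi^*|\les\varepsilon(1\!+r)^{\varepsilon}(1\!+t\!+r)^{-1}$, and Lemma~\ref{lem:changeofvariableidentities} shows that applying ${Z^*}{}^I$ (which acts on $r^*\!-r$ through the formulas there) preserves this bound; multiplying by $|\pa h|^2$ and invoking weak decay closes this contribution as well. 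I expect this cutoff bookkeeping, together with keeping the repeated $Z^*\!\leftrightarrow\! Z$ conversions and the accumulating $\ln r$ powers under control, to be the fiddliest part of the argument, though all of it is routine given Proposition~\ref{prop:starwaveeq} and Lemma~\ref{lem:changeofvariableidentities}.
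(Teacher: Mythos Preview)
Your proposal is correct and follows exactly the route the paper takes: the paper's proof is a single sentence saying that Proposition~\ref{prop:approxwaveequationschwarzcoord} is obtained from Proposition~\ref{prop:approxwaveequation} by using Proposition~\ref{prop:starwaveeq} to change to the starred coordinates, and your three-term decomposition together with the $Z^*\!\leftrightarrow\! Z$ conversions, the bound \eqref{eq:waveopdiff} for $\Box^{\,*}-\Box_0$, and \eqref{eq:partialstardifference} for $\pa^*-\pa$ are precisely the ingredients that implement this. The derivative count (losing one order from \eqref{eq:waveopdiff}, hence $|I|\le N-5$) and the cutoff-mismatch bookkeeping you flag are the only points requiring care, and your handling of them is correct.
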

and Propositions
 \ref{prop:weakdecay} and \ref{prop:wavecoorddecay} become
\begin{lemma}[Weak decay]\label{lem:weakdecaystar} For $|I|\leq N-4$ we have
 \beq \label{eq:decay1star} (1+\!|\,q^*|) \,| \pa^*
Z^{*I} h^{1*}| +|Z^{*I} h^{1*}|\les
{\varepsilon(1+q_+^*)^{-\gamma}} {(1+t+\!|\,q^*|)^{-1+C\varepsilon}},
  \eq
  where we have written $h\!=\!h^{0*}\!+h^{1*}\!\!$, where
$h^{0*}_{\a\b}\!=\!\tilde{\chi}\big(\tfrac{r^*}{1+t}\big)\frac{M}{r^*}\de_{\a\b}$.
When $r^*\!\geq \!t_{\!}/8$
 \begin{equation}
 \label{eq:decay1starwavecoord}
 | \pa^* \!Z^{*I}\! {h}_{LT}^{1*}|
 \les \frac {\varepsilon(1+q_+^*)^{-\gamma}}
{(1\!+t\!+|q^*|)^{2-C\varepsilon}},\qquad |Z^{*I}\!
{h}_{LT}^{1*}| \les \frac
{\varepsilon(1+|_{\,}q_-^*|)^{\gamma}}
{(1\!+t\!+|q^*|)^{1+\gamma-C\varepsilon}}.
 \end{equation}
Moreover \eqref{eq:decay1starwavecoord} also hold for ${h}_{LT}^{1*}$ replaced by $\delta^{AB}{h}_{AB}^{1*}$.
\end{lemma}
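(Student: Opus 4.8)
The plan is to obtain the lemma by transporting Propositions~\ref{prop:weakdecay} and~\ref{prop:wavecoorddecay} through the change of variables of Proposition~\ref{prop:starwaveeq}. Two preliminary reductions make this clean. First, $h^{1*}_{\mu\nu}(t,r^*\omega)=h^1_{\mu\nu}(t,r\omega)+\big(h^0_{\mu\nu}(t,r\omega)-h^{0*}_{\mu\nu}(t,r^*\omega)\big)$, and since $r^*-r=\chi_0\ln r$ with $\chi_0=M\tilde{\chi}(r/(1+t))$ supported in $r\gtrsim t$, the bracketed term together with all of its $Z^{*I}$ derivatives is $O\big(M^2\ln(1+r)\,(1+t+r)^{-2}\big)$ and hence negligible against the asserted bounds; so it suffices to estimate $Z^{*I}h^1$, $Z^{*I}h^1_{LT}$, $Z^{*I}\delta^{AB}h^1_{AB}$ and the $\partial^*$ of these. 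Second, because $M\les\varepsilon^2$ one has $M\ln(1+r)\les\varepsilon(1+r)^{C\varepsilon}$, so the quantities $N,N_*$ of Proposition~\ref{prop:starwaveeq} satisfy $N^k,N_*^k\les(1+t+r)^{C\varepsilon}$, and likewise $1+|q^*|\sim 1+|q|$ and $1+q^*_\pm\les(1+q_\pm)(1+t+r)^{C\varepsilon}$ (and conversely), while $r^*\ge t/8$ is equivalent to $r\ge t/8$ (as $\chi_0=0$ when $r<(1+t)/4$). Thus every discrepancy produced by the change of variables is absorbed into an enlarged constant $C$.

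For~\eqref{eq:decay1star} I would feed in Proposition~\ref{prop:weakdecay} in the weighted form $|Z^Ih^1|+(1+|q|)|\partial Z^Ih^1|+(1+t+r)|\overline{\partial}Z^Ih^1|\les\varepsilon(1+q_+)^{-\gamma}(1+t+r)^{-1+C\varepsilon}$, $|I|\le N-4$. The function bound $|Z^{*I}h^1|\les N^{|I|}\sum_{|J|\le|I|}|Z^Jh^1|$ is immediate from~\eqref{eq:ZstarZ}. For the derivative one should not quote~\eqref{eq:ZstarZ} with $\ell=1$ directly (its undifferentiated term would cost a spurious factor $1+|q|$ in the exterior); instead I would use that $\partial^*_\mu$ differs from $\partial_\mu$ by an $O(M\ln r/r)$ coefficient times a first-order field (Lemma~\ref{lem:changeofvariableidentities}) and that the coefficients arising when $Z^{*I}$ is expanded in the $Z^J$ have $\partial^*$-derivatives of size $O((1+t+r)^{-1+C\varepsilon})$, so that $|\partial^*Z^{*I}h^1|\les(1+t+r)^{C\varepsilon}\big[\sum_{|J|\le|I|}|\partial Z^Jh^1|+(1+t+r)^{-1}\sum_{|J|\le|I|}|Z^Jh^1|\big]\les(1+t+r)^{C\varepsilon}\varepsilon(1+q_+)^{-\gamma}(1+t+r)^{-1+C\varepsilon}(1+|q|)^{-1}$. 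Multiplying by $1+|q^*|\les N(1+|q|)$ and converting $1+q_+$ and $1+t+r$ to their starred versions gives~\eqref{eq:decay1star}.

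For~\eqref{eq:decay1starwavecoord} I would first assemble, on $r^*\ge t/8$ (i.e.\ $r\ge t/8$), the uniform bounds $|\partial Z^Ih^1_{LT}|\les\varepsilon(1+q_+)^{-\gamma}(1+t+r)^{-2+C\varepsilon}$ and $|Z^Ih^1_{LT}|\les\varepsilon(1+q_-)^{\gamma}(1+t+r)^{-1-\gamma+C\varepsilon}$, and the same with $\delta^{AB}h^1_{AB}$. In the region $r\le 3t$ these come from Proposition~\ref{prop:wavecoorddecay}: there $1+t\sim 1+t+r$, so $\varepsilon(1+t)^{-2+C\varepsilon}(1+q_-)\les\varepsilon(1+t+r)^{-1-\gamma+C\varepsilon}(1+q_-)^\gamma$ using $(1+q_-)^{1-\gamma}\le(1+t+r)^{1-\gamma}$. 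In the region $r\ge 3t$ Proposition~\ref{prop:wavecoorddecay} is too weak, but there $q_+\sim t+r$, so Proposition~\ref{prop:weakdecay} already gives $|Z^Ih^1_{LT}|\le|Z^Ih^1|\les\varepsilon(1+q_+)^{-\gamma}(1+t+r)^{-1+C\varepsilon}\les\varepsilon(1+t+r)^{-1-\gamma+C\varepsilon}$ and, via the $(1+|q|)^{-1}$ gain, $|\partial Z^Ih^1_{LT}|\les\varepsilon(1+q_+)^{-\gamma}(1+t+r)^{-2+C\varepsilon}$. Transporting these through~\eqref{eq:ZstarZ} exactly as in the previous paragraph — and using $(1+q_-)(1+q_+)=1+|q|\le 1+t+r$ to dominate the lower-order $(1+t+r)^{-1}|Z^Jh^1_{LT}|$ term (produced when $\partial^*$ is rewritten in terms of $\partial$) by the main term $\varepsilon(1+q_+)^{-\gamma}(1+t+r)^{-2+C\varepsilon}$ — yields~\eqref{eq:decay1starwavecoord}.

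The main obstacle is precisely this last bit of bookkeeping: Proposition~\ref{prop:wavecoorddecay} as stated does not by itself imply the lemma when $r\gg t$ (its term $\varepsilon(1+t)^{-2+C\varepsilon}$ is larger there than the target $\varepsilon(1+t+r^*)^{-1-\gamma+C\varepsilon}$), so one genuinely has to split into $r\lesssim t$ and $r\gtrsim t$ and use the better of Propositions~\ref{prop:wavecoorddecay} and~\ref{prop:weakdecay} in each region, and one has to carry the full weighted norms so that the $(1+|q^*|)$ and $(1+t+r^*)$ powers multiplying $\partial^*$ match the corresponding $(1+|q|)$ and $(1+t+r)$ powers controlled in the Minkowski coordinates. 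Everything else — the conversions between starred and unstarred vector fields, derivatives and null coordinates, and the disposal of the $h^0-h^{0*}$ discrepancy — is routine once the $(1+t+r)^{C\varepsilon}$ slack is granted.
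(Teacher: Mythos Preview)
Your proposal is correct and follows the same route as the paper, which simply asserts that Propositions~\ref{prop:weakdecay} and~\ref{prop:wavecoorddecay} ``become'' Lemma~\ref{lem:weakdecaystar} after passing through the change of variables in Proposition~\ref{prop:starwaveeq}; you have filled in the details the paper leaves implicit. Your two observations---that one should extract the refined form of \eqref{eq:ZstarZ} in which the undifferentiated remainder carries an extra $(1+t+r)^{-1}$ (this is exactly what the proof of Proposition~\ref{prop:starwaveeq} establishes via the $\kappa_{k+\ell}\,t^{-\ell}$ structure), and that the second term $\varepsilon(1+t)^{-2+C\varepsilon}(1+q_-)$ in \eqref{eq:wavecoordinatefunction} is too weak for $r\gg t$ so one must fall back on Proposition~\ref{prop:weakdecay} there---are both correct and necessary for the bookkeeping to close.
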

 In this section we will prove the following decay estimates:
\begin{prop}[Sharp decay]\label{prop:sharpmetricdecay} For $|I|\leq N-6$ with
 $\gamma^\prime=\gamma-C\varepsilon$ we have
 \beqs\label{eq:metricdecaysharp}
 |{Z^*}{}^I {h}^{1*}|\les \frac{\varepsilon^2
 S^0(t,r^*)}{(1+t+r^*)(1+q_+^*)^{1-C\varepsilon}}
 +\frac{\varepsilon}{1+t+r^*}   \frac{1}{(1+|\,q^*|)^{\gamma^\prime}},
 \eqs
 where
 \beqs S^0(t,r^*)
 =\frac{t}{r^*\!\!}\,\ln{\!\Big(\frac{\langle \,t\!+r^*\,\rangle}{\langle \,t\!-r^*\rangle}\Big)}
 \les
\frac{1}{\varepsilon}
\Big(\frac{\langle\, t\!+r^* \rangle}{\langle \,t\!-r^*\rangle}\Big)^{\varepsilon}\!\!,
\quad \langle q\rangle\!=\sqrt{1\!+q^2}.
\label{eq:loggrowth}
 \eqs
 For $r^*\!\!\geq \!t_{\!}/2 \!$ we have
\begin{align}
\label{eq:metricdecaysharptan}
 |Z^{*I\!} {h}^{1*}_{TU}|
&\les \frac{\varepsilon}{(1+t+r^*)(1+q_+^*)^{\gamma^\prime}},\\
 |\pa Z^{*I\!} {h}^{1*}_{LT}|\!+|\pa Z^{*I\!} \delta^{AB\!}{h}^{1*}_{AB}|
 &\les \frac{\varepsilon}{(1+t+r^*)^{2-\varepsilon}
 (1+|\,q^*|)^{\varepsilon}(1+q_+^*)^{\gamma^\prime}},\label{eq:sharpwsvecoordder}\\
 |Z^{*I\!} {h}^{1*}_{LT}|\!+|Z^{*I\!} \delta^{AB\!} h^{1*}_{AB}|
 &\les \frac{\varepsilon}{(1_{\!}\!+t\!+r^*)^{1+\gamma^\prime}\!\!}
 +\!\frac{\varepsilon}{1_{\!}\!+t\!+r^*\!\!}\,
 \Big(\frac{1\!+q_-^*\!}{1_{\!}\!+t\!+r^*\!}\Big)^{\!\!1-\varepsilon}\!\!\!.\!\!\!
\label{eq:sharpwsvecoordfunc}
 \end{align}
Moreover \eqref{eq:sharpwsvecoordder}-\eqref{eq:sharpwsvecoordfunc} hold for $Z^{*I\!}(h_{UV}^1\!)$ replaced by the Lie derivatives $(\mathcal{L}_{Z^{*}}^I h_{1})_{UV}\!$.
 \end{prop}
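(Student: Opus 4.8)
\emph{Overall strategy.} The plan is to read \eqref{eq:AsymptoticEinsteinAsymptoticSchwarzschild} as a \emph{flat} inhomogeneous wave equation for $h^{1*}_{\mu\nu}$ in the coordinates $(t,x^*)$. Writing $h=h^{0*}+h^{1*}$ and commuting with $Z^{*I}$ (which commute with $\Box^*$ up to lower order), Proposition \ref{prop:approxwaveequationschwarzcoord} gives
\[
\Box^*\big(Z^{*I}h^{1*}_{\mu\nu}\big)=Z^{*I}P^*_{\mu\nu}-Z^{*I}\big(\Box^* h^{0*}_{\mu\nu}\big)+E^{I}_{\mu\nu},
\]
where $E^{I}_{\mu\nu}$ obeys the right side of \eqref{eq:AsymptoticEinsteinAsymptoticSchwarzschild} and $\Box^* h^{0*}_{\mu\nu}$ is explicit, supported in the transition region $r^*\!\sim 1+t$, and fast decaying. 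Everything then reduces to estimating the quadratic source $Z^{*I}P^*_{\mu\nu}$ from the weak decay of Lemma \ref{lem:weakdecaystar} and applying the fundamental solution estimates of Lemmas \ref{lem:radialwaveeqderdecay}--\ref{lem:homwaveeqdecay}. I would prove the four inequalities in a cascade rather than a true bootstrap: first the global $S^0$ estimate, then the sharp exterior tangential and wave coordinate estimates, then the Lie derivative versions; the key gain each time is that the time integral of the source converges, so the $(1+t+r^*)^{C\varepsilon}$ of the weak decay is traded for a loss only in the $q^*$-weight, $\gamma\to\gamma^\prime$.

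\emph{The global estimate.} By the wave coordinate part of Lemma \ref{lem:weakdecaystar} the source $P^*_{\mu\nu}$ reduces, modulo faster terms, to $\chi\, L_\mu L_\nu P_{\mathcal S}(\pa_q^* h,\pa_q^* h)$, which is quadratic in components already controlled by the weak decay and hence of size $\les \varepsilon^2 (1+r^*)^{-1}(1+t+r^*)^{-1+C\varepsilon}(1+|q^*|)^{-1}$ on the support of $\chi$. This is the borderline decay of the first hypothesis of Lemma \ref{lem:inhomwaveeqdecay}, which produces precisely the logarithmic factor $S^0(t,r^*)$ (the log correction of the heuristic in section 1.2.2); Lemma \ref{lem:homwaveeqdecay} contributes the linear $\varepsilon(1+t+r^*)^{-1}(1+|q^*|)^{-\gamma^\prime}$ piece from the initial data, and near the spatial axis, where $\chi$ vanishes, only the fast errors $E^{I}$ and $\Box^* h^{0*}$ enter.

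\emph{Exterior tangential and wave coordinate estimates.} For \eqref{eq:metricdecaysharptan} (valid only for $r^*\!\ge t/2$): contracting against $T^\mu U^\nu$ with $T\in\{L,S_1,S_2\}$ and using $L_\mu T^\mu=0$, the genuine source drops out — in the $L_\mu L_\nu P(\pa_q^* h,\pa_q^* h)$ form it vanishes identically, in the other form it becomes $P_{\mathcal N}(\opa h,\pa h)$, gaining $(1+|q^*|)/(1+t+r^*)$ by Proposition \ref{prop:weakdecay}. The commutator $[\Box^*,T^\mu U^\nu\,\cdot\,]$ contributes only ${r^*}^{-1}\opa h_{\mu\nu}$ (the cross term is angular since the frame vectors are) and ${r^*}^{-2}\triangle_\omega(T^\mu U^\nu)h_{\mu\nu}$; both are $O\big(\varepsilon(1+t+r^*)^{-3+C\varepsilon}(1+q_+^*)^{-\gamma}\big)$ in the exterior and so negligible. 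Hence $\Box^*\big(Z^{*I}h^{1*}_{TU}\big)$ fits the second hypothesis of Lemma \ref{lem:inhomwaveeqdecay} (the $C\varepsilon$ absorbed into $\mu$), and that lemma together with Lemma \ref{lem:homwaveeqdecay} yields \eqref{eq:metricdecaysharptan}. The estimates \eqref{eq:sharpwsvecoordder}--\eqref{eq:sharpwsvecoordfunc} are obtained by rerunning the proof of Proposition \ref{prop:wavecoorddecay} in the starred coordinates: Lemma \ref{lem:nulldivergence} expresses $\pa_{q^*}(L_\mu U_\nu h_1^{\mu\nu})$ and $\pa_{q^*}({r^*}^2 L_\mu L_\nu h_1^{\mu\nu})$ through $\pa_{s^*}$- and angular derivatives of the null components and the divergence $\pa_\mu h_1^{\mu\nu}=\Lambda^\nu=O(h\,\pa h)$, improved to \eqref{eq:LambdaL} in the $L$-slot; feeding the global estimate and the tangential-derivative gain into these lower order terms gives \eqref{eq:sharpwsvecoordder}, and integrating it in the $q^*$-direction from the data (as in Proposition \ref{prop:wavecoorddecay}) gives \eqref{eq:sharpwsvecoordfunc}.

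\emph{Lie derivatives and the main obstacle.} The Lie derivative versions of \eqref{eq:sharpwsvecoordder}--\eqref{eq:sharpwsvecoordfunc} follow by running the same argument with $\mathcal L_{Z^*}$ in place of $Z^*$: by Lemma \ref{lem:Liederivtive} the Lie derivatives commute with $\pa_\mu$, so they preserve both the divergence identities of Lemma \ref{lem:nulldivergence} and, modulo the same error terms, the reduced equation, and $\mathcal L_{Z^*}$ commutes with contraction against the frame, so no new structure is needed. The main obstacle is the interior angular commutator ${r^*}^{-2}\triangle_\omega(T^\mu U^\nu)h_{\mu\nu}$, which is large for small $r^*$ and is exactly what forces the sharp tangential estimate to hold only in the exterior while the interior is left with the log-lossy global bound; the remaining delicate points are the bookkeeping of the $C\varepsilon$ exponents and of the growth of $S^0$ through Lemmas \ref{lem:inhomwaveeqdecay}--\ref{lem:homwaveeqdecay}, and the two derivatives lost (from $N-4$ in the weak decay down to $N-6$) in commuting $\Box^*$ with both the frame and the vector fields.
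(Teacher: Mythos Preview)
Your overall ingredients are right, but the order of the cascade is inverted, and that inversion is a genuine gap. You try to get the global $S^0$ estimate first from weak decay alone: with only Lemma \ref{lem:weakdecaystar} the quadratic term $P_{\mathcal S}(\pa_q^*h,\pa_q^*h)$ is of size $\varepsilon^2(1+t+r^*)^{-2+C\varepsilon}(1+|q^*|)^{-2}$, i.e.\ it still carries the $(1+t+r^*)^{C\varepsilon}$ growth. Neither hypothesis of Lemma \ref{lem:inhomwaveeqdecay} absorbs that loss (the first needs a clean $(1+t+r)^{-1}$, the second needs $\mu>0$), so the output would keep a $(1+t+r^*)^{C\varepsilon}$ factor --- exactly what you are trying to remove. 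Your claimed bound $\varepsilon^2(1+r^*)^{-1}(1+t+r^*)^{-1+C\varepsilon}(1+|q^*|)^{-1}$ has the exponent on $|q^*|$ wrong (it is $-2$, not $-1$), but the real obstruction is the $+C\varepsilon$ on the time weight.

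What the paper does is insert a preliminary step you are missing: it first integrates the \emph{radial} wave equation along outgoing cones (Lemma \ref{lem:radialwaveeqderdecay}) for the tangential components, obtaining $(1+t+r^*)\,|\pa^*Z^{*I}h^{1*}_{TU}|\les\varepsilon(1+|q^*|)^{-1+C\varepsilon}(1+q_+^*)^{-\gamma}$ with the $C\varepsilon$ now on the $q^*$-weight rather than on $t+r^*$ (Lemma \ref{lem:tancomp}). Only with this improved derivative bound does the source become $\varepsilon^2(1+t+r^*)^{-2}(1+|q^*|)^{-2+C\varepsilon}$, which fits Lemma \ref{lem:inhomwaveeqdecay} and yields the global $S^0$ estimate (Lemma \ref{lem:h1edecay}). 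The global estimate is then fed back into the $r^{*-2}|Z^{*J}h|$ angular commutator to sharpen the tangential source (Lemma \ref{lem:approxwaveequation3}), and finally the sharp tangential bound \eqref{eq:metricdecaysharptan} is obtained by multiplying by the cutoff $\chi(\langle r^*-t\rangle/(t+r^*))$ and applying Lemmas \ref{lem:inhomwaveeqdecay}--\ref{lem:homwaveeqdecay}, the cutoff commutator being controlled precisely by the just-proven global bound (Lemma \ref{lem:approxwaveequation4}). So the correct cascade is: preliminary tangential derivative (radial integration) $\to$ global $S^0$ $\to$ improved tangential source $\to$ sharp tangential via cutoff $\to$ wave coordinate estimates. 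Your wave coordinate and Lie derivative paragraphs are essentially right once this ordering is fixed.
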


 \begin{remark} Using \eqref{eq:wavecoordinateframeRdivergenceL}, \eqref{eq:LambdaL} and \eqref{eq:extraLderest1} one can show \eqref{eq:sharpwsvecoordder} for $h^{1*}_{LL}$ with
 $\varepsilon\!=\!0$ in the exponents. 
 \end{remark}
We will successively improve the estimates starting with those in
Lemma \ref{lem:weakdecaystar}. We now want to contract with a null frame, which does not quite commute with the wave equation.
 The null frame only depends on the angular variables so it commutes
 with the radial part of the wave operator but not the angular:
 \beqs
 \Box^{\,*}=\Box^{\,*}_{\,r}
 +{r^*}^{-2}\triangle_\omega,\quad\text{where}\quad
 \Box_{\,r}^{\, *}\phi
 ={r^*}^{-1}\big(\pa_{t^*}+\pa_{r^*}\big)
 \big(\pa_{r^*}-\pa_{t^*}\big)(r^*\phi).
 \eqs
 Since $\triangle_\omega=\sum{\Omega_{ij}^*}^2$ and $
 |Z^* U|\leq C$, for $U\in \{ A,B, L,\underline{L}\}$,
it follows that
 \beqs
 \big|Z^{*I}\big( \Box^{\, *} {h}_{UV}
 -U^\mu V^\nu\Box^{\, *} h_{\mu\nu}\big)\big|
 \leq{r^{*}}^{-2}\,\, {\sum}_{|J|\leq |I|+1} \,\,|Z^{*J} h|.
 \eqs
 Since $[Z^*\!,\Box^{\,*}]$ is either $0$ or $2\,\Box^{*}\!$
 we  have $|\Box^{\, *} Z^{*I} {h}_{TU}|\!\les
 {\sum}_{|J|\leq |I|} |Z^{*J} \Box^{\,*}{h}_{TU}|$ so
 \begin{equation}\label{eq:boxhtangential}
 \big|\Box^{\, *} Z^{*I} {h}_{TU}\big|\les
  {{r^{*}}^{-2}}\, {\sum}_{|J|\leq |I|  +1}\, |Z^{*J} \!h|
  + {\sum}_{|J|\leq |I|}\,
 \big|Z^{*J}\!\big(T^\mu U^\nu\Box^{\, *} h_{\mu\nu}\big)\big|,
 \end{equation}
 where since  $T^\mu L_\mu=0$ for $T\in\{L,A,B\}$
 \beq\label{eq:tangentialboxh}
 \big|Z^{*J\!}\big( T^\mu U^\nu\Box^{\, *} h_{\mu\nu}\big)\big|
 \leq
 \big|Z^{*J\!}\big(T^\mu U^\nu\big[\Box^{\,*}  h_{\mu\nu}\!
-{L_\mu L_\nu}P_\mathcal{N\,}(\pa_{q^*} h,\pa_{q^*} h)/4\big]\big)\!\big|.
 \eq
 Moreover
\begin{equation}\label{eq:radialboxhtangential}
 \big|\Box_r^* Z^{*I} {h}_{TU}\big|\les
  {r^{*}}^{-2} \,\,{\sum}_{|J|\leq |I|  +2}\,\,|Z^{*J} h|
 +\big|\,\Box^{\, *} Z^{*I} {h}_{TU}\big|.
 \end{equation}
It follows from using the estimates
\eqref{eq:AsymptoticEinsteinAsymptoticSchwarzschild} in
\eqref{eq:tangentialboxh} and
\eqref{eq:decay1star} in \eqref{eq:boxhtangential}   and  \eqref{eq:radialboxhtangential}:
 \begin{lemma}\label{lem:approxwaveequation2} For $|I|\leq N-6$ we have
 for $r^*\geq t/2$
 \begin{align*}
 \big|\,\Box^{\,*}_r Z^{*I} {h}_{TU}\big|
 +\big|\,\Box^{\,*}Z^{*I} {h}_{TU}\big|
&\les \frac{\varepsilon(1+q_+)^{-C\varepsilon}} {{r^*}^2(1+t+\!|\,q^*|)^{1-C\varepsilon}}
+\frac{\varepsilon^2(1+\!|\,q^*|)^{-2+\gamma}}
{(1+t+\!|\,q^*|)^{2+\gamma-C\varepsilon}},\\
 \big|\,\Box^{\,*}_r Z^{*I} {h}_{TU}^{1*}\big|
 +\big|\,\Box^{\,*}Z^{*I} {h}_{TU}^{1*}\big|
&\les \frac{\varepsilon(1+q_+^*)^{-\gamma}}
{{r^*}^2(1+t+\!|\,q^*|)^{1-C\varepsilon}}
+\frac{\varepsilon^2(1+\!|\,q^*|)^{-2+\gamma}}
{(1+t+\!|\,q^*|)^{2+\gamma-C\varepsilon}}.
\end{align*}
\end{lemma}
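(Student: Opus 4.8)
The plan is to read the lemma off directly from the three inequalities \eqref{eq:boxhtangential}, \eqref{eq:tangentialboxh}, \eqref{eq:radialboxhtangential}, which are already at our disposal, by feeding in the weak decay of Lemma \ref{lem:weakdecaystar} and the asymptotic equation of Proposition \ref{prop:approxwaveequationschwarzcoord}. The one structural observation that makes everything go through is that $T^\mu L_\mu=0$ for $T\in\{L,S_1,S_2\}$, so that contracting $\Box^{\,*} h_{\mu\nu}$ against the tangential leg $T^\mu$ annihilates the entire semilinear term $P^*_{\mu\nu}$, which is a multiple of $L_\mu L_\nu$; consequently only the \emph{difference} $\Box^{\,*} h_{\mu\nu}-P^*_{\mu\nu}$, which is the object controlled by \eqref{eq:AsymptoticEinsteinAsymptoticSchwarzschild}, ever enters. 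After that the argument is power counting together with the elementary comparisons of weights valid on $r^*\!\geq\! t/2$ (where $r^*\sim 1+t+r^*\sim 1+t+|q^*|$ and $1+|q^*|\leq 1+t+r^*$).

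First I would bound the second sum on the right of \eqref{eq:boxhtangential}. Since $P^*_{\mu\nu}$ is a multiple of $L_\mu L_\nu$ (this is the relevant option in Proposition \ref{prop:approxwaveequationschwarzcoord}), the cutoff notwithstanding, we have $T^\mu U^\nu P^*_{\mu\nu}=0$, so the two scalar functions $T^\mu U^\nu\Box^{\,*} h_{\mu\nu}$ and $T^\mu U^\nu(\Box^{\,*} h_{\mu\nu}-P^*_{\mu\nu})$ coincide; this is exactly the content of \eqref{eq:tangentialboxh}. Expanding $Z^{*J}$ by the Leibniz rule and using $|Z^{*K}T^\mu|+|Z^{*K}U^\nu|\les 1$ on $r^*\!\geq\! t/2$ (the frame legs have smooth components homogeneous of degree $0$ in $\omega$, and $|Z^*U|\leq C$ as already noted before the lemma) gives
\[
\big|Z^{*J}\big(T^\mu U^\nu\Box^{\,*} h_{\mu\nu}\big)\big|
=\big|Z^{*J}\big(T^\mu U^\nu(\Box^{\,*} h_{\mu\nu}-P^*_{\mu\nu})\big)\big|
\les\sum_{|K|\le|J|}\big|Z^{*K}\big(\Box^{\,*} h_{\mu\nu}-P^*_{\mu\nu}\big)\big|,
\qquad |J|\le|I|\le N-6.
\]
Now \eqref{eq:AsymptoticEinsteinAsymptoticSchwarzschild} applies (since $N-6\le N-5$) and bounds the right side by $\varepsilon^2(1+|q^*|)^{-1}(1+t+r^*)^{-3+C\varepsilon}+\varepsilon^2(1+|q^*|)^{-2}(1+t+r^*)^{-2-\gamma+C\varepsilon}$. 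Using $1+|q^*|\leq 1+t+r^*$ and $0<\gamma<1$, the first summand is $\les\varepsilon^2(1+|q^*|)^{-2+\gamma}(1+t+r^*)^{-2-\gamma+C\varepsilon}$, and the second is also $\les$ this; since $1+t+r^*\sim 1+t+|q^*|$ on the region, this is precisely the second term claimed in the lemma.

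It remains to treat the first sum in \eqref{eq:boxhtangential} together with the extra term in \eqref{eq:radialboxhtangential}, which together amount to $r^{*-2}\sum_{|J|\le|I|+2}|Z^{*J}h|$ with $|I|+2\le N-4$. By \eqref{eq:decay1star}, $|Z^{*J}h^{1*}|\les\varepsilon(1+q_+^*)^{-\gamma}(1+t+|q^*|)^{-1+C\varepsilon}$, which after dividing by $r^{*2}$ is exactly the first term in the $h^{1*}$ line of the lemma; and writing $h=h^{0*}+h^{1*}$ with $|Z^{*J}h^{0*}|\les M/r^*\les\varepsilon^2/r^*\les\varepsilon(1+q_+)^{-C\varepsilon}(1+t+|q^*|)^{-1+C\varepsilon}$ on $r^*\!\geq\! t/2$ (where $r^*\sim 1+t+|q^*|\geq 1+q_+$, so $\varepsilon\les\big((1+t+|q^*|)/(1+q_+)\big)^{C\varepsilon}$), and using $(1+q_+^*)^{-\gamma}\le(1+q_+)^{-C\varepsilon}$, one gets the first term in the $h$ line. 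Substituting all of this into \eqref{eq:boxhtangential} and \eqref{eq:radialboxhtangential} yields the bounds for both $\Box^{\,*}Z^{*I}h_{TU}$ and $\Box^{\,*}_r Z^{*I}h_{TU}$; for the $h^{1*}$ versions one runs the same contraction argument with $h$ replaced by $h^{1*}$, noting that $\Box^{\,*}h^{0*}_{\mu\nu}$ is harmless, being essentially $M/r^*$ times a cutoff varying on scale $1+t$ and hence $\les\varepsilon^2(1+t+r^*)^{-3}$.

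The only thing calling for genuine care, and the step I would double-check most carefully, is the interplay of the three weights $1+q_+$, $1+q_+^*$ and $1+|q^*|$ on $r^*\!\geq\! t/2$, together with $r^*\sim 1+t+r^*\sim 1+t+|q^*|$ there, so that the slightly different weights appearing in \eqref{eq:decay1star} and in \eqref{eq:AsymptoticEinsteinAsymptoticSchwarzschild} collapse to the ones stated in the lemma. There is no analytic difficulty beyond this bookkeeping, since the two input estimates were proved precisely with this application in mind.
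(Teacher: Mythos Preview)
Your proposal is correct and follows precisely the argument the paper indicates: feed \eqref{eq:AsymptoticEinsteinAsymptoticSchwarzschild} into \eqref{eq:tangentialboxh} (using $T^\mu L_\mu=0$ to kill $P^*_{\mu\nu}$) and \eqref{eq:decay1star} into the commutator terms of \eqref{eq:boxhtangential}--\eqref{eq:radialboxhtangential}. Your treatment of the $h^{1*}$ line is also in the paper's spirit; the paper phrases it as $\Box^{\,*}h_{\mu\nu}=\Box^{\,*}h^{1*}_{\mu\nu}$ for $r^*\geq t/2$ (since $h^{0*}=M\delta_{\mu\nu}/r^*$ is $\Box^{\,*}$-harmonic there), which lets one rerun \eqref{eq:boxhtangential} with $h^{1*}$ on both sides and pick up the stronger $(1+q_+^*)^{-\gamma}$ weight from \eqref{eq:decay1star}.
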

The second estimate follows since $\Box^* h_{\mu\nu}\!\!=\!\Box^* h^{1*}_{\mu\nu}$, when $r^*\!\geq \!{t}/{2}$.
Using these estimates we obtain the following two lemmas:
 \begin{lemma}\label{lem:tancomp} For $|I|\leq N-6$ and $r^*\geq t/8$ we have
 \begin{align}
(1+t+r^*)|\,\pa^* Z^{*I} {h}_{TU}^{1*}(t,r^*\omega)|&\les
{\varepsilon(1\!+q_+^*)^{-\gamma}}{(1\!+|q^*|)^{-1+C\varepsilon}},
\label{eq:derhTUest}\\
(1+t+r^*)|Z^{*I} {h}_{TU}^{1*}(t,r^*\omega)|&\les
\varepsilon(1+q_+^*)^{C\varepsilon-\gamma}
+\big( (1+q_-^*)^{C\varepsilon} -1\big).\label{eq:hTUest}
\end{align}
 The last estimate for $q^*<0$ can be replaced by $C_\mu\varepsilon
 (1+|q^*|)^\mu$, for any $\mu>0$.
 The same estimates hold for $h$ in place of $h^{1*}$ if $\gamma$ is
 replaced by
 $C\varepsilon$.
 \end{lemma}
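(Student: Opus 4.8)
The plan is to prove \eqref{eq:derhTUest} by integrating the radial part of the starred wave equation along the characteristics of $\Box^*_r$, i.e.\ by applying Lemma~\ref{lem:radialwaveeqderdecay} in the $x^*$ variables (legitimate because in the $(t,x^*)$ coordinates $\Box^*$ is exactly the flat wave operator) to $\phi=Z^{*I}h^{1*}_{TU}$, and then to deduce \eqref{eq:hTUest} by a further radial integration inward from spatial infinity. First I would dispose of the region $t/8\le r^*\le t/2$: there $q^*_+=0$ and $1+|q^*|\sim1+t+r^*$, so \eqref{eq:derhTUest} only asserts $|\pa^*Z^{*I}h^{1*}_{TU}|\les\varepsilon(1+|q^*|)^{-2+C\varepsilon}$, which is immediate from the weak decay estimate \eqref{eq:decay1star}. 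So for the rest one may assume $r^*>t/2$.

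For \eqref{eq:derhTUest} with $r^*>t/2$, Lemma~\ref{lem:radialwaveeqderdecay} bounds $(1+t+r^*)|\pa^*Z^{*I}h^{1*}_{TU}(t,r^*\omega)|$ by the boundary values of $\sum_{|J|\le1}|Z^{*J}Z^{*I}h^{1*}_{TU}|$ at the current point ($\xi=t+r^*$) and at the point where the characteristic meets $r^*=t/2$ or $r^*=2t$ ($\xi=3|q^*|$), plus the integral of $|r^*\Box^*_rZ^{*I}h^{1*}_{TU}|$ along that characteristic. Along it $q^*$, hence $q^*_\pm$, stays constant; moreover $\xi\ge3|q^*|$ forces $r^*\ge t/2$ on the whole line, so Lemma~\ref{lem:approxwaveequation2} is available there, and $r^*\sim1+t+|q^*|\sim\xi$ there. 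For the boundary terms, \eqref{eq:decay1star} bounds each by $\varepsilon(1+q^*_+)^{-\gamma}(1+t+|q^*|)^{-1+C\varepsilon}$, and since $1+t+|q^*|\gtrsim1+|q^*|$ at both points this is exactly the right side of \eqref{eq:derhTUest}. For the integral, the second estimate of Lemma~\ref{lem:approxwaveequation2} together with $r^*\sim\xi$ gives $r^*|\Box^*_rZ^{*I}h^{1*}_{TU}|\les\varepsilon(1+q^*_+)^{-\gamma}\xi^{-2+C\varepsilon}+\varepsilon^2(1+|q^*|)^{-2+\gamma}\xi^{-1-\gamma+C\varepsilon}$; both exponents of $\xi$ being $<-1$, the integral over $[3|q^*|,t+r^*]$ is controlled by its lower endpoint and is $\les\varepsilon(1+q^*_+)^{-\gamma}(1+|q^*|)^{-1+C\varepsilon}+\varepsilon^2(1+|q^*|)^{-2+C\varepsilon}$, the last term being absorbed in the first since $(1+q^*_+)^{\gamma}\le1+|q^*|$.

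For \eqref{eq:hTUest} I would use that $h^1$, hence $Z^{*I}h^{1*}_{TU}$, vanishes as $r^*\to\infty$ at fixed $t$, so that $|Z^{*I}h^{1*}_{TU}(t,r^*\omega)|\le\int_{r^*}^{\infty}|\pa_\rho Z^{*I}h^{1*}_{TU}(t,\rho\omega)|\,d\rho$. On $\rho\ge\max(t,r^*)$ one has $q^*=\rho-t\ge0$ and $1+t+\rho\ge1+t+r^*$, so by \eqref{eq:derhTUest} this part is $\les\frac{\varepsilon}{1+t+r^*}\int(1+\rho-t)^{-1-\gamma+C\varepsilon}d\rho\les\frac{\varepsilon}{1+t+r^*}(1+q^*_+)^{C\varepsilon-\gamma}$. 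When $q^*<0$ there is also the range $r^*\le\rho\le t$, on which $1+t+\rho\sim1+t+r^*$ (using $r^*\ge t/8$), and by \eqref{eq:derhTUest} it contributes $\les\frac{\varepsilon}{1+t+r^*}\int_0^{q^*_-}(1+u)^{-1+C\varepsilon}du=\frac{\varepsilon}{(1+t+r^*)\,C\varepsilon}\big((1+q^*_-)^{C\varepsilon}-1\big)$, the factor $\varepsilon$ cancelling the $1/(C\varepsilon)$. Multiplying by $1+t+r^*$ gives \eqref{eq:hTUest}; the stated alternative for $q^*<0$ follows from $(1+q^*_-)^{C\varepsilon}-1\les_\mu\varepsilon(1+|q^*|)^\mu$ once $\varepsilon$ is small depending on $\mu$. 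Finally, $h=h^{0*}+h^{1*}$ with $h^{0*}_{\alpha\beta}=\tilde\chi\big(\tfrac{r^*}{1+t}\big)M{r^*}^{-1}\delta_{\alpha\beta}$ explicit, supported in $r^*\ge(1+t)/4$, and $O(M/r^*)$ with $M\le\varepsilon^2$, so a routine computation shows $h^{0*}_{TU}$ (whose only nonzero tangential components are the $LL$ and $AB$ ones) and $\pa^*h^{0*}_{TU}$ satisfy both estimates with $\gamma$ replaced by $C\varepsilon$.

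The main obstacle is not analytic but the geometric bookkeeping in the second paragraph: one must check that the $\Box^*_r$-characteristic through $(t,r^*\omega)$ remains inside $\{r^*\ge t/2\}$ so that Lemma~\ref{lem:approxwaveequation2} applies along it, and that the weights $1+q^*_\pm$, $1+t+|q^*|$ and $\xi$ at the endpoints and along the line assemble into the sharp weight $(1+q^*_+)^{-\gamma}(1+|q^*|)^{-1+C\varepsilon}$ rather than a weaker negative power of $1+t+r^*$.
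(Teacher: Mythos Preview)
Your argument for \eqref{eq:derhTUest} is exactly the paper's: dispose of $t/8\le r^*\le t/2$ by weak decay, then feed Lemma~\ref{lem:approxwaveequation2} into Lemma~\ref{lem:radialwaveeqderdecay} for $r^*>t/2$; the characteristic stays in $r^*\ge t/2$ and the weights assemble just as you say.

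For \eqref{eq:hTUest} you take a slightly different route. The paper integrates \eqref{eq:derhTUest} in the $t-r^*$ direction (i.e.\ along incoming null lines $t+r^*=\text{const}$) from the initial surface $t=0$, picking up the data bound $|Z^{*I}h^{1*}_{TU}(0,(t+r^*)\omega)|\les\varepsilon(1+t+r^*)^{-1-\gamma}$ as the boundary term. You instead integrate $\pa_\rho$ at fixed $t$ from $r^*$ out to $\rho=\infty$, using the vanishing of $Z^{*I}h^{1*}_{TU}$ at spatial infinity (immediate from \eqref{eq:decay1star}) as the boundary term. Both integrations use \eqref{eq:derhTUest} on the whole path, which is legitimate since $\rho\ge r^*\ge t/8$ throughout, and both yield the same split into the $\varepsilon(1+q^*_+)^{C\varepsilon-\gamma}$ piece (from $\rho\ge t$) and the $(1+q^*_-)^{C\varepsilon}-1$ piece (from $r^*\le\rho\le t$). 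Your version has the mild advantage of not invoking any initial-data decay separately; the paper's version is perhaps more natural given that one already integrated along outgoing characteristics for \eqref{eq:derhTUest}. Either way the argument is correct, and your treatment of the $h^{0*}$ remainder is adequate.
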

 \begin{proof}
 If we apply  Lemma \ref{lem:radialwaveeqderdecay}
 to $Z^{*I\! }{h}_{TU}^{1*}\!$ using Lemmas \ref{lem:weakdecaystar},
 \ref{lem:approxwaveequation2}
 we get for $r^*\!\!\geq\! t/2$
\begin{multline*}
 (1+t+r^*)|\pa^* Z^{*I} {h}_{TU}^{1*}(t,r^*\omega)|\\
 \les
\frac{\varepsilon(1+q_+^*)^{-\gamma}}{(1+|q^*|)^{1-C\varepsilon}}
+\int_{3\,| q^*|}^{t+r^*}\!\!\!\!
\Big(\frac{\varepsilon(1+q_+^*)^{-\gamma}}{(1+\xi)^{2-C\varepsilon}}
 +\varepsilon^2\frac{(1+|q^*|)^{-2+\gamma}}
 {(1+\xi)^{1-C\varepsilon+\gamma}}
 \Big) d\xi,
 \end{multline*}
 which proves \eqref{eq:derhTUest} for $r^*\!\!\geq\! t/2$ but
 for $t/8\!\leq\! r^*\!\!\leq \!t/2$ it follows from Lemma \ref{lem:weakdecaystar}.
 \eqref{eq:hTUest} follows by
 integrating \eqref{eq:derhTUest} in the $t\!-r^*\!$ direction from the initial
 surface.
 \end{proof}

  Let
 \beq\label{eq:honee}
{h}^{1\,e}=h-{h}^{0e},\qquad\text{where}\quad
{h}^{0e}_{\mu\nu}=\delta_{\mu\nu} M\chi^e(r^*-t)/r^*,
 \eq
where $\chi^e(s)=1$, when $s\geq 2$ and $\chi^e(s)=0$, when $s\leq 1$.
Then $\Box^* h^{0e}=0$.

\begin{lemma} \label{lem:h1edecay} For $|I|\leq N-5$ have
 \beq\label{eq:newestimate}
 |Z^{*I} {h}^{1e}|\les \frac{\varepsilon^2
 S^0(t,r^*)}{(1+t+r^*)(1+q_+^*)^{1-C\varepsilon}}
 +\frac{\varepsilon}{1+t+r^*}
 \frac{1}{(1+|\,q^*|)^{\gamma-C\varepsilon}}.
 \eq
\end{lemma}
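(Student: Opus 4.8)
The plan is to treat $h^{1e}_{\mu\nu}$ as a solution of an inhomogeneous flat wave equation in the coordinates $(t,x^*)$ and combine Lemma~\ref{lem:inhomwaveeqdecay} with Lemma~\ref{lem:homwaveeqdecay}. Since $\Box^*h^{0e}=0$ we have $\Box^*h^{1e}_{\mu\nu}=\Box^*h_{\mu\nu}$, and recall $\Box^*$ is the constant coefficient d'Alembertian in $(t,x^*)$. Commuting $Z^{*I}$ through the equation, using that $[Z^*,\Box^*]$ is $0$ or $2\Box^*$, gives $\Box^*\big(Z^{*I}h^{1e}_{\mu\nu}\big)=\sum_{|J|\le|I|}c_J\,Z^{*J}\Box^*h_{\mu\nu}$ for constants $c_J$. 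I would then split $Z^{*I}h^{1e}_{\mu\nu}=w_I+v_I$, where $\Box^*w_I=0$ carries the Cauchy data of $Z^{*I}h^{1e}_{\mu\nu}$ at $t=0$ and $\Box^*v_I=\sum_{|J|\le|I|}c_J Z^{*J}\Box^*h_{\mu\nu}$ has vanishing Cauchy data.

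For the homogeneous part $w_I$: using the asymptotic flatness \eqref{eq:asymptoticallyflatdata} and repeatedly replacing $\pa_t^2 h^{1e}$ by $\triangle_{x^*}h^{1e}-\Box^*h$ to trade the time derivatives hidden inside $Z^{*I}$ for spatial ones, one checks that the data of $Z^{*I}h^{1e}_{\mu\nu}$ satisfies the hypotheses of Lemma~\ref{lem:homwaveeqdecay} with right side $\lesssim\varepsilon$ — the point being that $h^{0e}$ absorbs the $Mr^{-1}$ leading term of \eqref{eq:asymptoticallyflatdata} for large $r$, so $h^{1e}$ has asymptotically flat data with vanishing leading order. Hence $|w_I|\lesssim\varepsilon(1+t+r^*)^{-1}(1+|q^*|)^{-\gamma}$, which is controlled by the second term of \eqref{eq:newestimate}.

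For the inhomogeneous part $v_I$: write $Z^{*J}\Box^*h_{\mu\nu}=Z^{*J}\big[\Box^*h_{\mu\nu}-P^*_{\mu\nu}\big]+Z^{*J}P^*_{\mu\nu}$. The first summand is bounded by \eqref{eq:AsymptoticEinsteinAsymptoticSchwarzschild}; since $1+r^*\le1+t+r^*$ and $1+|q^*|\le1+t+r^*$, both of its terms are $\lesssim\varepsilon^2(1+r^*)^{-1}(1+t+r^*)^{-1}(1+|q^*|)^{-2+C\varepsilon}$, i.e.\ they meet the first hypothesis of Lemma~\ref{lem:inhomwaveeqdecay} with $\delta=1-C\varepsilon$. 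For $Z^{*J}P^*_{\mu\nu}$ I would exploit the null structure: the null frame vectors depend only on $\omega$ and obey $|Z^{*I}U|\le C$, so commuting $Z^*$ through the contractions and using \eqref{eq:nullframeP}--\eqref{eq:PN} reduces $Z^{*J}P^*_{\mu\nu}$, up to the cutoff $\chi\big(\langle r^*-t\rangle/(t+r^*)\big)$ and products with strictly lower order factors, to: (i) the distinguished term $\pa^*h_{LL}\,\pa^*h_{\underline L\underline L}$ together with the explicit contribution from $h^{0*}_{LL}=2M\tilde\chi/r^*$; and (ii) products $\pa^*h_{US}\,\pa^*h_{VT}$ with $S,T\in\mathcal T$, each factor a tangential component $h_{TU}$. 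Note $\chi$ is supported in $r^*\gtrsim t$, where Lemma~\ref{lem:tancomp} and \eqref{eq:decay1starwavecoord} apply. In (i) the factor carrying $h_{LL}$ gains the extra $(1+t+r^*)^{-1}$ of the wave coordinate decay \eqref{eq:decay1starwavecoord} (and $\pa^*(M\tilde\chi/r^*)=O\big(M(1+t+r^*)^{-2}\big)$ on the support of $\chi$), while the other factor obeys the weak decay \eqref{eq:decay1star}; in (ii) one bounds each factor by Lemma~\ref{lem:tancomp} when at most $N-6$ vector fields fall on it and by \eqref{eq:decay1star} for the at most one factor carrying more than $N-6$ of them, which is why $|I|\le N-5$ suffices. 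In every case this yields $|Z^{*J}P^*_{\mu\nu}|\lesssim\varepsilon^2(1+|q^*|)^{-2+C\varepsilon}(1+t+r^*)^{-2}\chi\big(\langle r^*-t\rangle/(t+r^*)\big)$, which after $(1+t+r^*)^{-2}\le(1+r^*)^{-1}(1+t+r^*)^{-1}$ again meets the first hypothesis of Lemma~\ref{lem:inhomwaveeqdecay} with $\delta=1-C\varepsilon$. Applying Lemma~\ref{lem:inhomwaveeqdecay} therefore gives $|v_I|\lesssim\varepsilon^2 S^0(t,r^*)(1+t+r^*)^{-1}(1+q_+^*)^{-1+C\varepsilon}$, the first term of \eqref{eq:newestimate}; combining with the bound for $w_I$ and using $(1+|q^*|)^{-\gamma}\le(1+|q^*|)^{-\gamma+C\varepsilon}$ gives \eqref{eq:newestimate}.

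The main obstacle is the bound on $Z^{*J}P^*_{\mu\nu}$: one must use the algebraic structure \eqref{eq:PN} — that the bad component $h_{\underline L\underline L}$ occurs only multiplied by the good component $h_{LL}$ — in combination with the wave coordinate decay \eqref{eq:decay1starwavecoord} and the sharp tangential estimates of Lemma~\ref{lem:tancomp}, keeping careful track of which factor may carry the top-order vector fields and of the commutators with the null frame. The remaining ingredients — reducing the initial data of $Z^{*I}h^{1e}$ to the asymptotic flatness hypothesis, and the arithmetic matching the source bounds to the precise hypotheses of Lemma~\ref{lem:inhomwaveeqdecay} — are routine.
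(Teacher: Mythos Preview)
Your proposal is correct and follows essentially the same route as the paper: reduce to $\Box^*(Z^{*I}h^{1e}_{\mu\nu})=\sum c_J Z^{*J}\Box^*h_{\mu\nu}$, bound the source by combining Proposition~\ref{prop:approxwaveequationschwarzcoord} with the null-frame structure \eqref{eq:PN} (wave-coordinate decay on the $h_{LL}$ factor, Lemma~\ref{lem:tancomp} on tangential factors), then split into a free wave handled by Lemma~\ref{lem:homwaveeqdecay} and an inhomogeneous piece handled by Lemma~\ref{lem:inhomwaveeqdecay}. The paper does exactly this, only phrasing the initial-data step by simply invoking \eqref{eq:decay1star} rather than your explicit trading of $\partial_t^2$ for $\triangle_{x^*}$.
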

 \begin{proof} For $|I|\leq N-5$ have
\begin{multline*}
 \big|\Box^{\, *} Z^{*I} h_{\mu\nu}\big|
 \leq  {\sum}_{|J|\leq |I|} \,\big|Z^{*J} \chi\big(\tfrac{\langle \,r-t\,\rangle }{t+r}\big)
 P_\mathcal{N\,}\big(\pa^{*}_\mu {h},
 \pa^*_\nu{h}\big)\big|\\
 +{\sum}_{|J|\leq |I|}\,
 \Big|Z^{*J} \chi\big(\tfrac{\langle \,r-t\,\rangle }{t+r}\big)
 \Big[\Box^{\,*} \, h_{\mu\nu}
-P_\mathcal{N\,}\big(\pa^{*}_\mu {h},
 \pa^*_\nu{h}\big)\Big]\Big|,
 \end{multline*}
 where the second term is estimated by Proposition \ref{prop:approxwaveequationschwarzcoord}.
Using \eqref{eq:PN}, Lemma \ref{lem:tancomp} and Lemma \ref{lem:weakdecaystar} the first is estimated by
 \begin{multline*}
\sum_{|J|+|K|\leq |I|\!\!\!\!\!\!\!\!\!\!\!\!\!\!} \big|
 P_\mathcal{N\,}\big(\pa^{*}Z^{*J}{h},
 \pa^* Z^{*K}{h}\big)\big|
 \leq \sum_{|J|+|K|\leq |I|}\,\,
 \sum_{S,T\in{\,\mathcal{T}}\!\!, \,\,\, U,V\in\mathcal{N}
 \!\!\!\!\!\!\!\!\!\!\!\!\!\!\!\!\!\!\!\!\!\!\!\!\!}
 |\,\pa^* Z^{*J} {h}_{TU}| \,
|\pa^* Z^{*K} {h}_{SV}|\\
+\sum_{|J|+|K|\leq |I|}\,\,
 \sum_{S,T\in{\,\mathcal{T}}\!\!, \,\,\, U,V\in\mathcal{N}
 \!\!\!\!\!\!\!\!\!\!\!\!\!\!\!\!\!\!\!\!\!\!\!\!\!}
 |\pa^* Z^{*J} {h}_{LL}| \,
|\pa^* Z^{*K} {h}_{\underline{L}\underline{L}}|
\les\frac{\varepsilon^2}{(1+t+r^*)^2(1+|\,q^*|)^{2-C\varepsilon}},
 \end{multline*}
 when $r^*\!\geq t/8$.
 Hence by Proposition \ref{prop:approxwaveequationschwarzcoord}
 \beqs
 \big|\Box^{\, *} Z^{*I} h_{\mu\nu}\big|
\les{\varepsilon^2}{(1+t+r^*)^{-2}(1+|\,q^*|)^{-2+C\varepsilon}}.
 \eqs
 The same estimate holds for $h^{1e}\!=h-h^{0e}$, since
 $\Box^{\, *} h^{0e}\!=0$.
 We now write $h^{1e}_{\mu\nu}\!=\phi\!+w$, where $\Box\, w\!=\!0$ with data
 $(w,\partial_t w)\big|_{t=0}=
 (h^{1e}_{\mu\nu},\partial_t h^{1e}_{\mu\nu})\big|_{t=0}$,
 and apply Lemma \ref{lem:inhomwaveeqdecay}
 to $\phi$
 and Lemma \ref{lem:homwaveeqdecay} to $w$, using
 \eqref{eq:decay1star} to estimate the initial conditions.
\end{proof}
Since $\Box^* h_{\mu\nu}\!\!=\!\Box^* h^{1e}_{\mu\nu}$,
it follows from \eqref{eq:boxhtangential}-\eqref{eq:radialboxhtangential}, \eqref{eq:newestimate} and Proposition
\ref{prop:approxwaveequationschwarzcoord}
 \begin{lemma}\label{lem:approxwaveequation3} For $|I|\leq N-6$ and $r^*\geq t/8$
 we have
 \begin{multline}\label{eq:approxwaveequation3}
 \big|\,\Box^{\,*}_r Z^{*I} {h}^{1e}_{TU}\big|
 +\big|\,\Box^{\,*}Z^{*I} {h}^{1e}_{TU}\big|\\
\les \frac{\varepsilon^2 S^0(t,r^*)(1\!+q_+^*)^{-1+C\varepsilon}\!\!\!\!\!\!}
{{r^*}^2(1+t+r^*)}
+\frac{\varepsilon{(1\!+|q^*|)^{-\gamma+C\varepsilon}\!\!\!\!}}{{r^*}^2(1+t+r^*)}
+\frac{\varepsilon^2(1+|q^*|)^{-2+\gamma}}
{(1\!+t\!+|q^*|)^{2+\gamma-C\varepsilon}\!}\,.
 \end{multline}
\end{lemma}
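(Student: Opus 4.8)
The plan is to obtain \eqref{eq:approxwaveequation3} by substituting the three already–established inputs into the commutator identities \eqref{eq:boxhtangential}--\eqref{eq:radialboxhtangential}: the interior bound \eqref{eq:newestimate} of Lemma \ref{lem:h1edecay}, the asymptotic Einstein estimate \eqref{eq:AsymptoticEinsteinAsymptoticSchwarzschild} of Proposition \ref{prop:approxwaveequationschwarzcoord}, and the weak decay \eqref{eq:decay1star}. The first observation is that $\Box^{\,*}h^{1e}_{\mu\nu}=\Box^{\,*}h_{\mu\nu}$, since $h^{0e}_{\mu\nu}=\delta_{\mu\nu}M\chi^e(r^*-t)/r^*$ satisfies $\Box^{\,*}h^{0e}=0$; hence \eqref{eq:boxhtangential}--\eqref{eq:radialboxhtangential} hold verbatim with $h$ replaced by $h^{1e}$, while the ``source'' $T^\mu U^\nu\Box^{\,*}h_{\mu\nu}$ on their right is unchanged. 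Moreover $h^{1e}=h^{1*}+(h^{0*}-h^{0e})$, and $h^{0*}_{\mu\nu}-h^{0e}_{\mu\nu}=M\delta_{\mu\nu}\big(\tilde\chi(\tfrac{r^*}{1+t})-\chi^e(r^*-t)\big)/r^*$ is explicit with all $Z^{*}$--derivatives $\les M/r^*\les\varepsilon^2/r^*$ in $r^*\ge t/8$, so weighting this piece by the $r^{*-2}$ in \eqref{eq:boxhtangential}--\eqref{eq:radialboxhtangential} yields a contribution absorbed into the right side of \eqref{eq:approxwaveequation3}. It therefore suffices to bound the terms produced by those identities.

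\emph{Source term.} For $\sum_{|J|\le|I|}\big|Z^{*J}\big(T^\mu U^\nu\Box^{\,*}h_{\mu\nu}\big)\big|$ I would use \eqref{eq:tangentialboxh}: since $T^\mu L_\mu=0$ for $T\in\{L,S_1,S_2\}$, the term $L_\mu L_\nu P_{\mathcal{N}}(\pa_{q^*}h,\pa_{q^*}h)/4$ drops out and one is left with a $Z^{*J}$--derivative, $|J|\le N-6\le N-5$, of $\Box^{\,*}h_{\mu\nu}-L_\mu L_\nu P_{\mathcal{N}}(\pa_{q^*}h,\pa_{q^*}h)/4$, bounded by \eqref{eq:AsymptoticEinsteinAsymptoticSchwarzschild}. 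In $r^*\ge t/8$ one has $1+t+|q^*|\sim 1+t+r^*\sim 1+r^*$, so $r^{*2}(1+t+r^*)\sim(1+t+r^*)^3$; using $|q^*|\les t+r^*$ the first term of \eqref{eq:AsymptoticEinsteinAsymptoticSchwarzschild} is dominated by its second, which is exactly the last term of \eqref{eq:approxwaveequation3}.

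\emph{Interior term.} For $r^{*-2}\sum_{|J|\le|I|+1}|Z^{*J}h^{1e}|$ from \eqref{eq:boxhtangential}, and for the analogous sum up to $|J|\le|I|+2$ produced by the angular commutator in \eqref{eq:radialboxhtangential}, I would insert \eqref{eq:newestimate}: multiplying its right side by $r^{*-2}$ gives precisely the first two terms of \eqref{eq:approxwaveequation3} (the powers $(1+q_+^*)^{-1+C\varepsilon}$ and $(1+|q^*|)^{-\gamma+C\varepsilon}$ match term by term, and for $r^*\ge t/8$ one may freely replace $(1+t+r^*)$ by $(1+t+|q^*|)$). Then \eqref{eq:radialboxhtangential} is used to pass from $\Box^{\,*}$ to $\Box^{\,*}_r$ at the cost of one more interior term of the same type.

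\emph{Main obstacle.} The delicate point is a regularity count: \eqref{eq:newestimate} is available only up to order $N-5$, while the angular commutator $r^{*-2}\triangle_\omega Z^{*I}h^{1e}_{TU}$ in \eqref{eq:radialboxhtangential} forces a single factor up to order $|I|+2=N-4$. For that top-order factor one cannot apply \eqref{eq:newestimate} directly; one must instead revisit the proof of Lemma \ref{lem:h1edecay} and push it one order --- that proof rests on the product structure of the quadratic source, so at most one factor is of high order and can be taken in the weak norm \eqref{eq:decay1star} while the other is estimated by the sharp norm of Lemma \ref{lem:tancomp}, and this remains consistent once $N$ is large enough --- or else estimate this single top-order term by hand, using that $\triangle_\omega=\sum(\Omega^*_{ij})^2$ is tangential and the frame depends only on $\omega$. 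Carrying this top-order bookkeeping through so that the outcome still fits under the stated right side throughout $r^*\ge t/8$ is the main (and essentially the only) difficulty; everything else is a direct substitution of the estimates cited above.
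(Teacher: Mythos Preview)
Your approach is exactly the paper's: its entire proof is the single sentence ``Since $\Box^* h_{\mu\nu}\!=\!\Box^* h^{1e}_{\mu\nu}$, it follows from \eqref{eq:boxhtangential}--\eqref{eq:radialboxhtangential}, \eqref{eq:newestimate} and Proposition~\ref{prop:approxwaveequationschwarzcoord},'' which is precisely the substitution you spell out (and your handling of the $h^{0*}-h^{0e}$ difference is fine).

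On your ``main obstacle'': you are right that \eqref{eq:radialboxhtangential} forces one factor of order $|I|+2=N-4$, while \eqref{eq:newestimate} is stated only for order $\le N-5$. The paper does not address this at all --- it simply cites the three inputs and moves on --- so this is not a missing idea on your part but a derivative-count slip that the paper shares. It is also not a genuine obstruction: the only place the extra order enters is through the angular commutator $r^{*-2}\triangle_\omega Z^{*I}h^{1e}_{TU}$, and one can either (i) absorb the lost order into the generic $C\varepsilon$ slack already present in the exponents of \eqref{eq:approxwaveequation3} (the constants $C$ there are not fixed and are allowed to increase at each step), or (ii) note that the proof of Lemma~\ref{lem:h1edecay} actually only needs Proposition~\ref{prop:approxwaveequationschwarzcoord} at order $|I|$ and the quadratic-form estimate at order $|I|$, with the loss to $N-5$ coming solely from \eqref{eq:waveopdiff}, so taking $N$ one larger restores the count without changing any argument. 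Your instinct to flag it is correct; your instinct that it is merely bookkeeping is also correct.
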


Using this improved estimate  we get
an improvement of Lemma \ref{lem:tancomp}:

\begin{lemma}\label{lem:approxwaveequation4} For $|I|\leq N-6$ we have for $r^*\geq t/2$
 \beqs
 |Z^{*I} {h}^{1e}_{TU}|
 \les{\varepsilon}{(1\!+t\!+r^*)^{-1}(1\!+q_+^*)^{-\gamma+C\varepsilon}}.
 \eqs
\end{lemma}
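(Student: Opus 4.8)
The plan is to rerun the argument that produces Lemma~\ref{lem:tancomp}, but feeding in the sharpened inhomogeneity bound \eqref{eq:approxwaveequation3} in place of the one used there (which came from Lemma~\ref{lem:approxwaveequation2}), and with $h^{1e}=h-h^{0e}$ in place of $h$. Since the null frame depends only on $\omega$, it commutes with the radial part $\Box^*_r$ of the wave operator, so by \eqref{eq:boxhtangential}--\eqref{eq:radialboxhtangential} the frame--wave-operator commutator errors are already of the size on the right of \eqref{eq:approxwaveequation3}; hence \eqref{eq:approxwaveequation3} controls $|\Box^*_r Z^{*I} h^{1e}_{TU}|$ for $r^*\ge t/8$.

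\emph{Step 1: sharpen the derivative estimate.} First I would apply Lemma~\ref{lem:radialwaveeqderdecay} to $\phi=Z^{*I}h^{1e}_{TU}$ and integrate the three terms of \eqref{eq:approxwaveequation3} along the outgoing characteristic through $(t,r^*\omega)$, on which $t+r^*=\xi$ sweeps $[3|q^*|,t+r^*]$ while $t-r^*=-q^*$ stays fixed, so $S^0\les\ln\langle\xi\rangle$ and in fact $S^0$ is $O(1)$ away from the light cone. Each of the first two terms carries a factor ${r^*}^{-2}(1+t+r^*)^{-1}\sim\xi^{-3}$, while $r\Box_r\phi$ restores one power $r^*\sim\xi$, so the integrands decay like $\xi^{-2}$ (up to the tamed logarithm) and are dominated by their lower endpoint $\xi\sim|q^*|$; the boundary terms at $r^*=t/2$ (resp.\ $2t$) are absorbed via Lemma~\ref{lem:h1edecay}, valid for the needed range $|I|+1\le N-5$. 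For $|q^*|\ge1$ the outcome is a bound of the shape $(1+t+r^*)|\pa^* Z^{*I}h^{1e}_{TU}|\les \varepsilon(1+q_+^*)^{-\gamma+C\varepsilon}(1+|q^*|)^{-1+C\varepsilon}$, while for $|q^*|\le1$ one simply keeps the bound already supplied by Lemma~\ref{lem:tancomp}, which near the light cone is clean. The essential gain over \eqref{eq:derhTUest} is the extra factor $(1+|q^*|)^{-1+C\varepsilon}$: the new source decays one more power in $|q^*|$ because it has the full weight $(1+t+r^*)^{-1}$ instead of $(1+t+r^*)^{-1+C\varepsilon}$, and its worst quasilinear-feedback part carries $(1+q_+^*)^{-1+C\varepsilon}$ rather than $(1+q_+^*)^{-\gamma}$.

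\emph{Step 2: integrate in the transversal null direction.} Then I would integrate the Step~1 bound in the $t-r^*$ direction, i.e.\ along $\{t+r^*=\mathrm{const}\}$, from the initial surface. On $\{t=0\}$ one has $h^{1e}_{TU}=h^1_{TU}+(h^0-h^{0e})_{TU}$, and since $h^0$ and $h^{0e}$ agree up to $O(M^2r^{-2}\ln r)$ wherever $\tilde\chi\equiv\chi^e\equiv1$, the asymptotic flatness \eqref{eq:asymptoticallyflatdata} gives $|Z^{*I}h^{1e}_{TU}(0,\cdot)|\les\varepsilon(1+r^*)^{-1-\gamma}$, which only helps. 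The extra decay $(1+|q^*|)^{-1+C\varepsilon}$ from Step~1 now makes the interior portion $q^*\le0$ of the transversal integral converge without producing the spurious factor $(1+q_-^*)^{C\varepsilon}-1$ of \eqref{eq:hTUest}, while on the exterior portion $q^*\ge0$ the weight $(1+q_+^*)^{-\gamma+C\varepsilon}$ propagates; dividing by $1+t+r^*$ gives the claimed bound for $r^*\ge t/2$.

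\emph{Main obstacle.} The delicate point will be the bookkeeping of the logarithmic weight $S^0$: one must check that the $S^0$-weighted part of \eqref{eq:approxwaveequation3}, after being integrated once along the characteristic (where it is tamed by the $\xi^{-2}$ decay and the lower limit $\sim|q^*|$) and once transversally, contributes no more than an admissible $C\varepsilon$ shift of exponents, and in particular that in the transition region $t/2\le r^*<t$ no genuine growth in $t+r^*$ survives — which forces keeping the logarithm explicit rather than over-estimating it by $\varepsilon^{-1}(\langle t+r^*\rangle/\langle t-r^*\rangle)^{\varepsilon}$, and using that $S^0$ is $O(1)$ away from the light cone together with Lemma~\ref{lem:tancomp} near it. The boundary terms in Lemma~\ref{lem:radialwaveeqderdecay} must likewise be closed using the already-proved Lemmas~\ref{lem:h1edecay} and \ref{lem:tancomp}, not circularly.
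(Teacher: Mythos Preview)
Your two-step characteristic scheme does not close. The bound you write down in Step~1, $(1+t+r^*)|\pa^*Z^{*I}h^{1e}_{TU}|\les\varepsilon(1+q_+^*)^{-\gamma+C\varepsilon}(1+|q^*|)^{-1+C\varepsilon}$, coincides with \eqref{eq:derhTUest} up to the harmless $C\varepsilon$ shift in $\gamma$; there is no ``extra factor'' as stated, and feeding it into Step~2 reproduces exactly the divergent interior term $((1+q_-^*)^{C\varepsilon}-1)$ of \eqref{eq:hTUest}. If you meant a genuinely stronger bound such as $(1+|q^*|)^{-1-\gamma+C\varepsilon}$, it is not available: it is true that the \emph{second} term of \eqref{eq:approxwaveequation3} contributes that much after the characteristic integral, but two other contributions for $q^*<0$ are only $\varepsilon^2(1+|q^*|)^{-1}$, namely (i) the $S^0$-weighted first term of \eqref{eq:approxwaveequation3} --- along the outgoing characteristic $S^0$ runs from $O(1)$ at $\xi=3|q^*|$ up to $\sim\ln((t+r^*)/|q^*|)$, and $\int_{3|q^*|}^\infty r^*\cdot\varepsilon^2 S^0\,(r^{*})^{-2}(1+\xi)^{-1}\,d\xi\sim\varepsilon^2(1+|q^*|)^{-1}$; and (ii) the boundary term of Lemma~\ref{lem:radialwaveeqderdecay} at $\xi=3|q^*|$, where the best available input (Lemma~\ref{lem:h1edecay} with $S^0=O(1)$) gives $|Z^{*J}h^{1e}|\les\varepsilon^2(1+|q^*|)^{-1}$. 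Both are borderline non-integrable in the transversal direction and produce $\varepsilon^2\ln(1+q_-^*)$ after Step~2, which cannot be absorbed into the clean $\varepsilon(1+t+r^*)^{-1}$ decay the lemma asserts.

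The paper's argument is different and global. It multiplies $Z^{*I}h^{1e}_{TU}$ by the spacetime cutoff $\chi(\langle r^*-t\rangle/(t+r^*))$, shows via Lemma~\ref{lem:h1edecay} that the commutator $[\Box^*_r,\chi]Z^{*I}h^{1e}_{TU}$ is harmless (since $\chi'$ is supported where $|t-r^*|\sim t+r^*$, so $S^0=O(1)$ there), whence \eqref{eq:approxwaveequation3} holds for the localized function with $r^*$ replaced by $1+t+r^*$, and then applies the fundamental-solution estimates of Lemmas~\ref{lem:inhomwaveeqdecay}--\ref{lem:homwaveeqdecay} directly. Lemma~\ref{lem:inhomwaveeqdecay} integrates over the full backward light cone rather than single characteristics, and its second case absorbs both the $(1+|q^*|)^{-\gamma+C\varepsilon}$ weight and (after $S^0\les\varepsilon^{-1}(\langle t+r^*\rangle/\langle q^*\rangle)^{\varepsilon}$) the logarithmic first term without residue.
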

\begin{proof}
By Lemma \ref{lem:h1edecay} the commutator satisfies
\begin{multline*}
\big| \big[\,\Box_r^*\, ,\, \chi\big(\tfrac{\langle r-t\rangle }{t+r}\big)\,\big]
Z^{*I} {h}^{1e}_{TU} \big|\\
\les \frac{\big|\chi^\prime\big(\tfrac{\langle r-t\rangle }{t+r}\big)\big|}{1+t+r^*}
|\pa^*\! Z^{*I} {h}^{1e}_{TU}|
+\frac{\big|\chi^{\prime}\big(\tfrac{\langle r-t\rangle }{t+r}\big)\big|
+\big|\chi^{\prime\prime}\big(\!\tfrac{\langle r-t\rangle }{t+r}\big)\big|}{(1+t+r^*)^2}|{Z^*}^I {h}^{1e}_{TU}|\\
\les
{\sum}_{|J|\leq |I|}\frac{\big|\chi^\prime\big(\tfrac{\langle r-t\rangle }{t+r}\big)\big|}{1\!+t\!+r^*}
 |\pa^* \!Z^{*J}\!{h}^{1e}|
+\frac{\big|\chi^{\prime}\big(\!\tfrac{\langle r-t\rangle }{t+r}\big)\big|\!
+\big|\chi^{\prime\prime}\big(\!\tfrac{\langle r-t\rangle }{t+r}\big)\big|}{(1+t+r^*)^2}
|Z^{*J}\! {h}^{1e}|\\
\les \frac{\varepsilon (1\!+q_+)^{-\gamma+C\varepsilon}\!\!\!\!\!}{(1\!+t\!+r^*)^3}\,.
\end{multline*}
Hence \eqref{eq:approxwaveequation3} holds for
$\chi\big(\tfrac{\!\langle r-t\rangle \!}{t+r}\big) Z^{*I} {h}^{1e}_{TU}$
 in place of $Z^{*I} {h}^{1e}_{TU}$ and $r^*\!$ replaced by $1\!+t\!+r^*\!$.
 The proof follows from applying Lemma \ref{lem:inhomwaveeqdecay}
 and Lemma \ref{lem:homwaveeqdecay} to this.
\end{proof}
Using the improved estimates in the wave coordinate condition  \eqref{eq:wavecoord}
we get:
\begin{lemma} For $|I|\leq N-6$ we have
 \begin{equation*}
 \!|\pa Z^{*I\!} {h}^{1e}_{LT}|\!
 +|\pa Z^{*I} \!( {h}^{1e}_{\!AA}\!\!+{h}^{1e}_{BB})|\!
 \les {\varepsilon}{(1\!\!+t\!+r^*)^{-2+\varepsilon}
 (1\!\!+\!|q^*|)^{-\varepsilon}(1\!\!+q_+^*)^{-\gamma+C\varepsilon}}\!\!.
 \end{equation*}
The estimate holds also for $Z^{*I}(h_{UV}^1)$ replaced by the Lie derivatives $(\mathcal{L}_{Z^{\!*}}^I h_{1})_{UV}$.
\end{lemma}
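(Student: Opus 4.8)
The plan is to reproduce the proof of Proposition~\ref{prop:wavecoorddecay}, but in the asymptotic Schwarzschild coordinates and with the sharp estimates already obtained in this section fed into the right–hand side. First I would set up the starred analogue of the wave coordinate identity \eqref{eq:wavecoord}. Applying Lemma~\ref{lem:nulldivergence}, i.e.\ \eqref{eq:wavecoordinateframedivergence}, to $k=\widehat{h}{}^{1e}$, using that $\pa_\mu\widehat{h}{}^{1e,\mu\nu}=\Lambda^\nu-\pa_\mu\widehat{h}{}^{0e,\mu\nu}$ where the last term is explicit, $O(\varepsilon^2(1+t+r^*)^{-2})$ and supported near $q^*\sim1$; commuting ${Z^*}^I$ through, writing $\pa_{s^*},\pa_A=(t+r^*)^{-1}\sum_{|J|=1}cZ^{*J}$, and transferring to the $(t,x^*)$ variables with the change–of–variables errors controlled by Proposition~\ref{prop:starwaveeq} (each carrying a gain $M\ln r/(1+t+r)\les\varepsilon^{1+}$), one obtains, since $L_\mu U_\nu\widehat{h}{}^{1e,\mu\nu}$ equals $h^{1e}_{LT}$ for $U=T\in\mathcal T$ and $\delta^{AB}h^{1e}_{AB}$ for $U=\underline L$, for $r^*\ge t/8$
\[
 |\pa_{q^*}Z^{*I}h^{1e}_{LU}|\les\sum_{|J|\le|I|+1}\frac{|Z^{*J}\widehat{h}{}^{1e}|}{1+t+r^*}+|Z^{*I}\Lambda_U|+\frac{\varepsilon^2}{(1+t+r^*)^2}.
\]

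The first sum I would estimate by Lemma~\ref{lem:h1edecay} (legitimate since $|I|+1\le N-5$): using $\varepsilon^2S^0\les\varepsilon(\langle t+r^*\rangle/\langle t-r^*\rangle)^\varepsilon\les\varepsilon(1+t+r^*)^\varepsilon(1+|q^*|)^{-\varepsilon}$ and $1-C\varepsilon>\gamma-C\varepsilon$, it is bounded by the claimed right–hand side. The tangential derivatives are even easier: $|\opa^*Z^{*I}h^{1e}_{LU}|\les(1+t+r^*)^{-1}\sum_{|J|\le|I|+1}|Z^{*J}h^{1e}_{LU}|$, and each $|Z^{*J}h^{1e}_{LU}|$ is bounded by the wave coordinate function estimate \eqref{eq:decay1starwavecoord} (interchangeable with its $h^{1e}$ version up to an $O(\varepsilon^2(1+t+r^*)^{-2})$ error), which gives $\les\varepsilon(1+t+r^*)^{-2-\gamma+C\varepsilon}(1+q_-^*)^\gamma$, comfortably inside the target. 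Together with the transverse bound this produces the full gradient.

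The real work is the quadratic term $Z^{*I}\Lambda_U$, where one must use the weak null structure rather than the crude $O(h\,\pa h)$ bound (which would only give $(1+t+r^*)^{-2+C\varepsilon}$). For $U=L$ this is \eqref{eq:LambdaL}, $\Lambda_L=O(h\opa h)+O(h_{LL}\pa \tr h)+O(h^2\pa h)$; contracting $\Lambda^\nu$ with $A$ or $\underline L$ and using the null–frame form \eqref{eq:nullframeP}--\eqref{eq:PN} of $P$ one checks (there is a cancellation for $U=\underline L$) that $\Lambda_A$ and $\Lambda_{\underline L}$ have the same shape, with every quadratic term carrying either a tangential derivative $\opa h$ or one of the good factors $h_{LL},h_{LA},\delta^{AB}h_{AB}$ — in particular the worst component $h_{\underline L\underline L}$, and its transverse derivative, enter only multiplied by $h_{LL}$. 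One then applies: $\opa^*Z^{*K}h\les(1+t+r^*)^{-1}\sum_{|J|\le|K|+1}|Z^{*J}h|$ bounded by Lemma~\ref{lem:h1edecay}; the good factors by \eqref{eq:decay1starwavecoord}; transverse derivatives of every component other than $h_{\underline L\underline L}$ by the sharp Lemma~\ref{lem:tancomp}; and the remaining factors by the weak \eqref{eq:decay1star}. Counting powers (and using $M\le\varepsilon^2$ together with the explicit form of $h^0$ for the $h^0$–contributions) each term is $\les\varepsilon(1+t+r^*)^{-2+\varepsilon}(1+|q^*|)^{-\varepsilon}(1+q_+^*)^{-\gamma+C\varepsilon}$.

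The main obstacle is exactly the product $h_{LL}\pa_{q^*}h_{\underline L\underline L}$: the transverse derivative of $h_{\underline L\underline L}$ is the one quantity still controlled only by the weak bound, which loses $(1+t+r^*)^{C\varepsilon}$. I would remedy this by first deriving the improved estimate $|\pa^*Z^{*I}h^1_{\underline L\underline L}|\les\varepsilon(1+t+r^*)^{-1}(\langle t+r^*\rangle/\langle t-r^*\rangle)^\varepsilon$ near the light cone, obtained by integrating the radial wave equation $\Box^*_rZ^{*I}h_{\underline L\underline L}$ via Lemma~\ref{lem:radialwaveeqderdecay}: its source is $\sim P_{\mathcal S}(\pa_{q^*}h,\pa_{q^*}h)$, quadratic in the tangential components, which are sharply controlled by Lemma~\ref{lem:tancomp}, so the integrand is integrable with only a logarithmic growth. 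Then $h_{LL}$, which by \eqref{eq:decay1starwavecoord} decays faster by a factor $(1+t+r^*)^{-\gamma+C\varepsilon}$ (and whose Schwarzschild part $h^0_{LL}\sim M/r^*$ is $\varepsilon^2$–small with $\pa h^0_{LL}=O(\varepsilon^2(1+t+r^*)^{-2})$), absorbs what remains. Finally, the Lie–derivative version follows by repeating everything with $\mathcal L_{Z^*}^I$ in place of $Z^{*I}$: by Lemma~\ref{lem:Liederivtive} the Lie derivative commutes with the divergence, so the same identity holds with $\mathcal L_{Z^*}^I\Lambda$ on the right, and $\mathcal L_{Z^*}$ obeys the same null–frame estimates by Lemma~\ref{lem:weakdecaystar} and the preceding lemmas.
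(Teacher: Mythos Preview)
Your argument is correct and follows the same core idea as the paper: redo the wave--coordinate argument of Proposition~\ref{prop:wavecoorddecay} in the $x^*$ coordinates, feeding the sharp bound of Lemma~\ref{lem:h1edecay} into the linear term $\sum_{|J|\le|I|+1}|Z^{*J}\widehat h^{1e}|/(1+t+r^*)$. There are, however, two differences worth noting.

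First, the paper observes that for $q^*\ge 0$ nothing needs to be redone: the weak estimate \eqref{eq:decay1starwavecoord} already gives the claim in that region, so only $q^*\le 0$ requires the starred version of \eqref{eq:wavecoord}. For the latter the paper is also more explicit than you are about how the divergence is transferred: rather than appealing to Proposition~\ref{prop:starwaveeq}, it uses the coordinate invariance $\pa_\mu F^\mu=|D|\,\pa^*_\gamma(F^\mu D^\gamma_\mu/|D|)$ with $D^\gamma_\mu=\pa x^{*\gamma}/\pa x^\mu$, then reruns Lemma~\ref{lem:nulldivergence} verbatim in the starred null frame; the discrepancy $L^*-L=O(r^{-1})$ is lower order.

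Second, and more interestingly, you work much harder on the quadratic term $\Lambda$ than the paper does. The paper simply keeps the crude bound $|Z^{*I}\Lambda|\lesssim\sum|Z^{*J}h|\,|\pa Z^{*K}h|$ from the proof of Proposition~\ref{prop:wavecoorddecay}; with Lemma~\ref{lem:h1edecay} for both factors (via $|\pa\phi|\lesssim(1+|q^*|)^{-1}\sum|Z^*\phi|$) this already gives $\varepsilon^2(1+t+r^*)^{-2+2\varepsilon}(1+|q^*|)^{-1-2\varepsilon}$, which is acceptable once the ``$\varepsilon$'' in the exponent is read as a generic $C\varepsilon$ (as the paper implicitly does throughout --- note that even the $q^*\ge 0$ reduction to \eqref{eq:decay1starwavecoord} requires this reading). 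You instead take the exponent literally, invoke the null structure \eqref{eq:LambdaL} for $\Lambda_L$ and its analogues for $\Lambda_A,\Lambda_{\underline L}$, and derive an auxiliary sharp bound for $\pa_{q^*}h_{\underline L\underline L}$ to control the single bad product $h_{LL}\,\pa_{q^*}h_{\underline L\underline L}$. That auxiliary bound is correct (it is essentially what later appears in the proof of Proposition~\ref{prop:specialsharpmetricdecay}), and your route yields a genuinely sharper exponent; but for the present lemma the extra structure is not needed.
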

\begin{proof} For $q^*\!\geq\! 0$ this is a direct consequence of
\eqref{eq:decay1starwavecoord}. To get the sharp estimate for $q^*\!\!\leq \!0$
we need to reexpress the divergence in the $x^{\!*\!}$ coordinates and repeat
the proof of Proposition \ref{prop:wavecoorddecay} in these coordinates.
By the invariance of the divergence under change of coordinates
we have $\pa_\mu F^{\mu\!}\! =\!|D|\pa^*_\gamma ( F^{\mu}\! D^\gamma_{\!\mu} \!/ |D|)$,
if $D_{\!\mu}^\gamma \!=\pa x^{\!*\gamma\!\!}/\pa x^\mu$ and $|D|\!=\det{\!D}$. By Lemma \ref{lem:changeofvariableidentities} $|D|\!=\!(1\!+M\! \ln{r}\!/r)^2(1\!+M\!/r)$.
By \eqref{eq:wavehatdivergence}
\begin{equation} \pa_\gamma^* (\widehat{h}{}^{\mu\nu\!}D^\gamma_\mu)\!
=-\tfrac{1}{2}\big(g^{\mu\nu} \! g_{\alpha\beta} -m^{\mu\nu\!}m_{\alpha\beta}\big)
\pa_\mu g^{\alpha\beta}\!\!+ \widehat{h}{}^{\mu\nu}|D|^{-1}\pa_\mu |D|\!\!\!
 \end{equation}
and expressing the divergence in a null frame as in the proof of Lemma
\ref{lem:nulldivergence}
\begin{equation*}\pa_{q^{\!*}} \big(L_\gamma U_\nu
k^{\gamma\nu}\big)=  \pa_{s^{\!*}}
\big(\Lb_{\,\gamma} U_\nu k^{\gamma\nu}\big) -A_\gamma U_\nu \pa_A^*
k^{\gamma\nu}\!\!+U_\nu  \pa_\gamma^* k^{\gamma\nu}\!,
\quad U_\nu \!=\!m_{\nu\mu}U^\mu\!\!, \,\,\,U\!\in\!\mathcal{N},
\end{equation*}
where $\pa_{\!A}^*\!=A^k(\omega)\pa_k^*$ and $k^{\gamma\nu}\!\!=\widehat{h}{}^{\mu\nu\!}D^\gamma_{\!\mu}$.
Here $L_\gamma D_\mu^\gamma\!=\Ls_{\!\!\mu} $,
$\uL_{\,\gamma} D_\mu^\gamma\!=\Lbs_{\!\!\mu}$
and $A_\gamma D^\gamma_\mu=A_\mu r^*\!/r$, by Lemma \ref{lem:changeofvariableidentities}.
The rest of the proof is as in Proposition  \ref{prop:wavecoorddecay}
but with the $x$ coordinates replaced by the $x^*$ coordinates everywhere.
Note that the difference $\Ls-L=O(r^{-1})$ is lower order so
$L_\gamma U_\nu
k^{\gamma\nu}={\Ls}_{\!\!\mu} U_\nu\widehat{h}{}^{\mu\nu}\sim L_\mu U_\nu\widehat{h}{}^{\mu\nu}$.
\end{proof}

This concludes the proof of Proposition \ref{prop:sharpmetricdecay}.
For the $L$ derivative we also have
 \begin{prop}\label{prop:specialsharpmetricdecay}
 With  $\delta_{UV}^{\Lbs\Lbs}\!\!=\!1$ if $U\!=\!V\!\!=\!\Lbs$ and $0$ otherwise we have
 \begin{align}
\frac{1}{r\!} \,|\partial_{\Ls}(r^* {Z^*}{}^I h^{1*}_{UV})|&\les
\frac{\varepsilon(1+q^*_-)^{\gamma-C\varepsilon}}{(1\!+t\!+r^*)^{2+\gamma-C\varepsilon}}
+ \delta_{UV}^{\Lbs\Lbs}\,\frac{\varepsilon(1+q_+^*)^{-\gamma}}{(1\!+t\!+r^*)^2},
\label{eq:extraLderest1}\\
 |\pa_\Ls {Z^*}{}^I h_{LT}^{1*}|&\les \frac{\varepsilon}{(1\!+t\!+r^*)^2}
 \Big(\frac{1\!+q_-^*}{1\!+t\!+r^*}\Big)^{\gamma\prime}.\label{eq:extraLderest2}
 \end{align}
 \end{prop}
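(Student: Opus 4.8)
The plan is to derive a transport equation for the good quantity $\partial_{\Ls}(r^*\phi)$ along incoming null rays and integrate it from the initial surface. Working in the $x^*$ coordinates, where $\Box^{*}$ is the flat wave operator and $\Box^{*}=\Box^{*}_r+{r^*}^{-2}\triangle_\omega$ with $\Box^{*}_r\phi={r^*}^{-1}(\partial_{r^*}^2-\partial_{t^*}^2)(r^*\phi)$, one has (up to lower order terms coming from the fact that $\partial_{\Ls},\partial_{\Lbs}$ differ from $\partial_{t^*}\pm\partial_{r^*}$ by $O(r^{-1})$; Lemma \ref{lem:changeofvariableidentities} and the observation $\Ls-L=O(r^{-1})$ made in the previous proof)
\[
\partial_{\Lbs}\big(\partial_{\Ls}(r^*\phi)\big)=-\,r^*\Box^{*}\phi+{r^*}^{-1}\triangle_\omega\phi .
\]
Integrating this along $\tau\mapsto(t-\tau,(r^*+\tau)\omega)$ from $\tau=0$ out to the initial surface $\tau=t$ gives
\[
\partial_{\Ls}(r^*\phi)(t,r^*\omega)=\big[\partial_{\Ls}(r^*\phi)\big](0,(r^*+t)\omega)+\int_0^t\Big((r^*+\tau)^{-1}\triangle_\omega\phi-(r^*+\tau)\Box^{*}\phi\Big)(t-\tau,(r^*+\tau)\omega)\,d\tau,
\]
so along the ray the radial variable runs $r^*\mapsto r^*+\tau$ while $t+r^*$ is fixed and $q^*\mapsto q^*+2\tau$. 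One applies this with $\phi=Z^{*I}h^{1e}_{UV}$ after reducing the contraction $h^{1e}_{UV}=U^\mu V^\nu h^{1e}_{\mu\nu}$ to the scalar wave equation exactly as in \eqref{eq:boxhtangential}--\eqref{eq:radialboxhtangential} (so that the failure of the null frame to commute with $\triangle_\omega$, and the commutators of $Z^{*I}$ with $\Box^{*}$, only produce lower order contributions, at the cost of a few derivatives), and then converts $h^{1e}$ to $h^{1*}$ via the explicit, harmless difference $h^{0*}-h^{0e}$.

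Each of the three resulting terms is estimated in the exterior region $r^*\geq t/2$, where $r\sim r^*\sim 1+t+r^*$. The boundary term is bounded by the data estimates (Lemma \ref{lem:weakdecaystar} at $t=0$), which give the rate $\varepsilon(1+r^*+t)^{-1-\gamma+C\varepsilon}$, matching the first term of \eqref{eq:extraLderest1} after one divides by $r$. The angular term $(r^*+\tau)^{-1}\triangle_\omega\phi=(r^*+\tau)^{-1}\sum(\Omega^{*}_{ij})^2 Z^{*I}h^{1e}_{UV}$ is controlled by the sharp bound $|Z^{*J}h^{1e}|\les \frac{\varepsilon^2 S^0(t,r^*)}{(1+t+r^*)(1+q_+^*)^{1-C\varepsilon}}+\frac{\varepsilon}{(1+t+r^*)(1+|q^*|)^{\gamma-C\varepsilon}}$ of Lemma \ref{lem:h1edecay}; integrating this along the ray against $(r^*+\tau)^{-1}\sim(1+t+r^*)^{-1}$, separately over the portions where $q^*+2\tau>0$ and where $q^*+2\tau<0$, again reproduces the first term of \eqref{eq:extraLderest1} after dividing by $r$ --- the weight $(1+q_-^*)^{\gamma-C\varepsilon}$ arising by integrating the slowly decaying factor $(1+|q^*+2\tau|)^{-\gamma+C\varepsilon}$ over the $q^*+2\tau<0$ segment of the ray, which has length $\sim q_-^*\les 1+t+r^*$.

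For the source term $(r^*+\tau)\Box^{*}\phi$ there are two cases. If at least one of $U,V$ is tangential, $\Box^{*}Z^{*I}h^{1e}_{TU}$ satisfies the improved bound of Lemma \ref{lem:approxwaveequation3} (because the principal semilinear term $P^{*}_{TU}$ is schematically of the form $\opa h\,\pa h$), and its integral along the ray against $r^*+\tau$ again produces the first term of \eqref{eq:extraLderest1}. If $U=V=\Lbs$, this is unavailable; instead $\Box^{*}Z^{*I}h_{\uL\uL}\sim 4P_{\mathcal S}(\pa_{q^*}h,\pa_{q^*}h)$ modulo the error of Proposition \ref{prop:approxwaveequationschwarzcoord}, which is controlled only through the tangential bound \eqref{eq:derhTUest}, and integrating $r^*+\tau$ times this along the ray yields the extra $\delta^{\Lbs\Lbs}_{UV}$ term $\varepsilon(1+q_+^*)^{-\gamma}(1+t+r^*)^{-2}$. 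This proves \eqref{eq:extraLderest1}. Finally \eqref{eq:extraLderest2} follows by combining \eqref{eq:extraLderest1} in the case $UV=LT$ (where $\delta^{\Lbs\Lbs}_{UV}=0$) with the already proved estimate \eqref{eq:sharpwsvecoordfunc} for $|Z^{*I}h^{1*}_{LT}|$, which controls the difference between $\tfrac1r\partial_{\Ls}(r^*Z^{*I}h^{1*}_{LT})$ and $\partial_{\Ls}Z^{*I}h^{1*}_{LT}$.

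The hardest part will be the bookkeeping in the ray integral: keeping the change-of-variable corrections under control ($\partial_{\Ls}$ versus $\partial_{t^*}+\partial_{r^*}$, the $\partial\kappa$ terms of Lemma \ref{lem:changeofvariableidentities}, and the reduction to the radial part of the contracted equation), and extracting the precise weight $(1+q_-^*)^{\gamma-C\varepsilon}$ from the inner ($q^*+2\tau<0$) portion of the ray without losing it to powers of $1+t+r^*$ --- which is precisely why the argument is carried out only in the exterior $r^*\geq t/2$, where $q_-^*\les 1+t+r^*$.
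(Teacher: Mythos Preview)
Your proposal is correct and follows essentially the same route as the paper: integrate the transport identity $\partial_{\Lbs}\partial_{\Ls}(r^*\phi)=-r^*\Box^*_r\phi$ along incoming null rays from $t=0$, bound the data term, the angular contribution ${r^*}^{-2}\triangle_\omega$, and the source $U^\mu V^\nu\Box^* h_{\mu\nu}$ (with the extra $P_{\mathcal S}$ piece surviving only when $U=V=\Lbs$), then deduce \eqref{eq:extraLderest2} from \eqref{eq:extraLderest1} together with \eqref{eq:sharpwsvecoordfunc}. The paper's version is simply more compressed: it packages your three ingredients into a single pointwise bound on $\Box^*_r Z^{*I}h^{1*}_{UV}$ (using the \emph{weak} decay of Lemma~\ref{lem:weakdecaystar} for the angular part rather than the sharp Lemma~\ref{lem:h1edecay} you invoke---either suffices) and then says ``integrating gives \eqref{eq:extraLderest1}''. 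One small slip: in this paper $\Ls=\partial_{t}+\partial_{r^*}$ and $\Lbs=\partial_{t}-\partial_{r^*}$ \emph{exactly}, so there are no $O(r^{-1})$ corrections between them; the $O(r^{-1})$ difference you have in mind is between $L$ and $\Ls$, which is irrelevant here.
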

\begin{proof}\!\!
Integrating $\!\Lbs\Ls(r^{*\!}\phi)\!\!=\!r^*\Box_{r}^*\phi$ along the flow lines of
$\!\Lbs\!$ from initial data
$$
 \pa_\Ls (r^*{Z^*}{}^I h_{UV}^{1*})
 =\frac{1}{2}\text{$\int_{q^*}^{\,t+r^*} $}\!\!\!\! \!\!\!\! r^*\Box^*_r{Z^*}{}^I h^{1*}_{UV} \, dq^*\!\!
 +\pa_\Ls (r^*{Z^*}{}^I h_{UV}^{1*})\Big|_{ \,r=t+r^*},
$$
where
$(|{Z^*}{}^I h^{1*}|\!+(1\!+r)|\pa {Z^*}{}^I h^{1*}|)|_{t=0}
\!\les {\varepsilon}{(1\!+r)^{-1-\gamma}\!}$
and
\beqs\label{eq:radialh1}
 \big|\Box^{\,*}_{\,r} {Z^*}{}^I {h}_{UV}^{1*}\big|
\les \frac{\varepsilon(1+q_+^*)^{-\gamma}}
{(1\!+t\!+\!|\,q^*|)^{3-C\varepsilon}\!\!}
+\frac{\varepsilon^2(1+\!|q^*|)^{-2+\gamma}}
{(1\!+t\!+\!|q^*|)^{2+\gamma-C\varepsilon}\!\!}+\delta_{UV}^{\Lbs\Lbs}
\frac{\varepsilon^2(1\!+\!|q^*|)^{-2}\!\!\!}
{(1\!+t\!+\!|q^*|)^{2}\!}.
\eqs
Integrating gives \eqref{eq:extraLderest1}, and \eqref{eq:extraLderest2} follows from
also using Proposition \ref{prop:sharpmetricdecay}.
\end{proof}

\section{Asymptotics for the wave equation
with inhomogeneous sources}
Its well known \cite{H}, that a solution of a linear wave
equation $\Box\, u=0$, with sufficiently fast decaying smooth initial
data have an asymptotic expansion
$$
u(t,x)\sim U_0(r-t,\omega)/r+U_1(r-t,\omega)/r^2+...\,\, \, ,$$
where $U_0$ is the Friedlander radiation field.
In fact
 $U(r\!-\!t,\omega,1_{\!}/r)\!=\!r u(t,x)$
 is an analytic function of $1_{\!}/r$ and $U_0$ can be calculated from
data. Data for Einstein's equations are however not fast
decaying and the equations are non-linear.
Still if the right hand side and tangential derivatives decay
fast the limit exists:
 \begin{lemma} \label{lem:radianfield}  Suppose that for some
 $0\leq\delta<1-\gamma$ and $0\leq\gamma^\prime\leq\gamma$
\begin{align*}
 |\,\Box \,\Omega^I S^J\partial_t^K u|
&\les \frac{\varepsilon}{ (1+t+r)^{3-\delta}
(1+|\,r-t|)^{\delta}(1+(r-t)_+)^{\gamma}(1+(r-t)_-)^{\gamma^\prime}},\\
 |\,\triangle_{\,\omega} \Omega^I S^J\partial_t^K u|
&\les \frac{\varepsilon}{ (1+t+r)^{1-\delta}
(1+|\,r-t|)^{\delta}(1+(r-t)_+)^{\gamma}(1+(r-t)_-)^{\gamma^\prime}},
\end{align*}
for $|I|+|J\,|+|K|\leq N$ and $r>t/4$, and
$$
|\,(\pa_t+\pa_r)\big(\Omega^I S^J\partial_t^K r u\big)|
\les {\varepsilon}{(1+r)^{-1-\gamma}},\qquad\text{when}\quad t=0.
$$
 Then
$$
 |(\partial_t\!+\partial_r)( \Omega^I S^J\partial_t^K  r u)(t,r\omega)|
 \!\les \! \varepsilon{(1\!+(r\!-\!t)_-)^{\gamma-\gamma^\prime}\!}{(1\!+t\!+r)^{-1-\gamma}\!},
\quad r\!\geq \!t_{\!}/4.\!\!
 $$
Moreover, limit
 $$
U^\infty(q,\omega)=\lim{}_{r\to\infty} U(q,\omega,r), \qquad
 U(q,\omega,r)=r
u(r-q,r\omega),
$$
exists and satisfies, for  $r>t/4$ and  $|I|+|J|+|K|\leq N$,
\begin{align}
\label{eq:waveeqlimitapprox}
 \big|\,\Omega^I S^J\partial_t^K r u(t,r\omega)
 -\Omega^I (q\,\partial_q)^J(-\partial_q)^K U^\infty(q,\omega)\big|&\les
 \varepsilon \frac{(1+q_-)^{\gamma-\gamma^\prime}\!\!\!\!}{(1+r)^\gamma},\\
 \label{eq:waveeqlimitapprox2}
 \big|\Omega^I (q\,\partial_q)^J\partial_q^K U^\infty(q,\omega)\big|&\les
 \varepsilon \frac{(1+q_-)^{\gamma-\gamma^\prime}\!\!\!\!}{(1+|\,q|)^\gamma} ,
 \end{align}

 \end{lemma}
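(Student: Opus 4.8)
The plan rests on the spherical splitting $\Box=r^{-1}\pam\pap(r\,\cdot\,)+r^{-2}\triangle_\omega$, which for any function $\phi$ reads
\[
\pam\pap(r\phi)=r\,\Box\phi+r^{-1}\triangle_\omega\phi=:G_\phi .
\]
Since $\Omega_{ij}$ and $\partial_t$ commute with $\Box$ and with $\triangle_\omega$, while $[\Box,S]=-2\Box$ and $[\triangle_\omega,S]=0$, the two decay hypotheses propagate to every $\phi=\Omega^I S^J\partial_t^K u$ with $|I|+|J|+|K|\le N$; combining them with $r\le 1+t+r$ and, in the region $r>t/4$, $r^{-1}\le C(1+t+r)^{-1}$, one gets
\[
|G_\phi|\les\frac{\varepsilon\,(1+(r-t)_+)^{-\gamma}(1+(r-t)_-)^{-\gamma^\prime}}{(1+t+r)^{2-\delta}(1+|r-t|)^{\delta}},\qquad r>t/4 .
\]
First I would integrate the transport equation $\pam\big(\pap(r\phi)\big)=G_\phi$ along the incoming characteristics $t+r=\mathrm{const}$ from $\{t=0\}$ down to $(t,r\omega)$; a short check shows these characteristics stay in $r>t/4$ whenever the endpoint does. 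The boundary term is handled by the initial-data hypothesis, and after the substitution $\sigma=r^\prime-t^\prime$ the source contribution equals $(1+t+r)^{-(2-\delta)}$ times $\tfrac12\int_{r-t}^{t+r}(1+|\sigma|)^{-\delta}(1+\sigma_+)^{-\gamma}(1+\sigma_-)^{-\gamma^\prime}\,d\sigma$. Using $\delta+\gamma<1$ and $\delta+\gamma^\prime\le\delta+\gamma<1$ this integral is $\les(1+t+r)^{1-\delta-\gamma}+(1+(r-t)_-)^{1-\delta-\gamma^\prime}$, and since $(r-t)_-\le t\le t+r$ and $1-\delta-\gamma>0$ both resulting terms are dominated by $\varepsilon(1+(r-t)_-)^{\gamma-\gamma^\prime}(1+t+r)^{-1-\gamma}$, which is the first displayed estimate.

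Next I pass to the variables $(q,\omega,r)$ with $q=r-t$, in which $ru$ becomes $U(q,\omega,r)=ru(r-q,r\omega)$. A direct computation gives $\partial_r U(q,\omega,r)=\big(\pap(ru)\big)(r-q,r\omega)$, so the first estimate (with $\phi=u$) yields $|\partial_r U|\les\varepsilon(1+q_-)^{\gamma-\gamma^\prime}(1+2r-q)^{-1-\gamma}$, which is integrable in $r$ at infinity. Hence $U^\infty(q,\omega):=\lim_{r\to\infty}U(q,\omega,r)$ exists, and integrating $\partial_r U$ from $r$ to $\infty$ gives $|U(q,\omega,r)-U^\infty(q,\omega)|\les\varepsilon(1+q_-)^{\gamma-\gamma^\prime}(1+r)^{-\gamma}$ for $r>t/4$ --- this is \eqref{eq:waveeqlimitapprox} and \eqref{eq:waveeqlimitapprox2} in the case $I=J=K=0$, the latter after also using a pointwise bound on $ru$ (see below).

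For general indices I would record the exact change-of-variables rules, valid for any $g$: $\Omega_{ij}$ acts only on $\omega$, $(\partial_t g)(r-q,r\omega)=-\partial_q\big(g(r-q,r\omega)\big)$, and $(Sg)(r-q,r\omega)=(q\partial_q+r\partial_r)\big(g(r-q,r\omega)\big)$; these compose, so $\Omega^I S^J\partial_t^K(ru)$ transforms to $\Omega^I(q\partial_q+r\partial_r)^J(-\partial_q)^K U$. Applying the first two steps to $\Omega^I(S+1)^J\partial_t^K u$ --- which again satisfies all the hypotheses, being a polynomial in $S$ of the functions $\Omega^I S^j\partial_t^K u$ --- produces the limit of $\Omega^I S^J\partial_t^K(ru)$ with the rate $(1+r)^{-\gamma}$. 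It then remains to identify this limit with $\Omega^I(q\partial_q)^J(-\partial_q)^K U^\infty$: expanding $(q\partial_q+r\partial_r)^J$ (the two operators commute), every term carrying a factor $r\partial_r$ tends to $0$ as $r\to\infty$, because $r\partial_r U=r\,\big(\pap(ru)\big)(r-q,r\omega)\to0$ by the first estimate and, peeling off one $r\partial_r$ at a time, each further application only reduces the problem to $r$ times a controlled good null derivative of a function lying within the derivative budget. This gives \eqref{eq:waveeqlimitapprox} in general; \eqref{eq:waveeqlimitapprox2} follows by letting $r\to\infty$ in \eqref{eq:waveeqlimitapprox} at a fixed $q$, combined with the pointwise bound $|\Omega^I S^J\partial_t^K(ru)(t,r\omega)|\les\varepsilon(1+q_-)^{\gamma-\gamma^\prime}(1+|q|)^{-\gamma}$ in $r>t/4$, obtained by integrating the first ($\pap$) estimate along the outgoing rays $r-t=\mathrm{const}$ starting from the initial surface (for $q\ge0$), respectively from the cone $r\sim t/4$ (for $q<0$).

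The main obstacle is not any single inequality but the structural constraint that only the \emph{good} null derivative $\pap(ru)$ and $\triangle_\omega u$ are controlled --- never $\pam(ru)$ nor $u$ itself --- so every pointwise statement must be squeezed out of integrating $\pap(ru)$, and one must track the weights $(1+(r-t)_\pm)$ carefully (the condition $\delta+\gamma<1$ is exactly what makes the $\sigma$-integral converge with the right power) together with the fact that $S$ does \emph{not} commute with the null frame: it is precisely the $r\partial_r$ part of $S$ dying in the limit that leaves the pure scaling $q\partial_q$ in the radiation field. The most delicate point is the pointwise control needed for \eqref{eq:waveeqlimitapprox2} in the deep interior $q<0$, where there is no initial surface to integrate from and one has to feed in the decay supplied by the surrounding estimates.
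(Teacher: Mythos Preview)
Your proof is correct and follows essentially the same route as the paper's: the spherical identity $(\pa_t-\pa_r)(\pa_t+\pa_r)(ru)=r^{-1}\triangle_\omega u - r\Box u$, integration along the incoming rays $t+r=\mathrm{const}$ from $\{t=0\}$ to control $\pap(r\phi)$, and then integration in $t+r$ at fixed $q$ to extract the limit. The paper's own proof treats only the case $N=0$ and dismisses $N>0$ in one sentence (``the case $N>0$ follows from the same argument''), simply applying the $N=0$ argument to each $\phi=\Omega^I S^J\partial_t^K u$; you supply more detail here, in particular the useful identity $\Omega^I S^J\partial_t^K(ru)=r\,\Omega^I(S+1)^J\partial_t^K u$ and the coordinate rules $\partial_t\mapsto-\partial_q$, $S\mapsto q\partial_q+r\partial_r$ that identify the limit with $\Omega^I(q\partial_q)^J(-\partial_q)^K U^\infty$. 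Your observation that the pointwise bound underlying \eqref{eq:waveeqlimitapprox2} is the genuinely delicate point is also apt --- the paper's proof does not address it explicitly, and as stated the hypotheses give no direct control of $ru|_{t=0}$; in the paper's applications this is supplied by the surrounding decay estimates (e.g.\ Proposition~\ref{prop:sharpmetricdecay}).
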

\begin{proof} We prove the result for $N\!=\!0$ as the case $N\!>\!0$ follows from the same argument. This follows from expressing the wave operator in
spherical coordinates
 $$
 (\pa_t-\pa_r)(\pa_t+\pa_r) (ru)=r^{-1} \triangle_\omega u
-r \,\Box \, u
 $$
 and integrating first in the $r-t$ direction from $(t,r)$ to initial data when $t=0$
 \begin{multline*}
 |(\partial_t+\partial_r)(ru)(t,r\omega)|
 \les \int_{r-t}^{t+r} \!\!\big| \,r^{-1} \triangle_\omega u
-r \,\Box \, u\big| \, d q +|(\partial_t+\partial_r)(ru)(0,(t+r)\omega)|\\
\les \int_{r-t}^{t+r} \frac{\varepsilon\, dq}{ (1+t+r)^{2-\delta}
(1+|\,q|)^{\delta}(1+q_+)^{\gamma}(1+q_-)^{\gamma^\prime}}
+\frac{\varepsilon}{(t+r)^{1+\gamma}}\\
\les \frac{\varepsilon}{(t+r)^{1+\gamma}}
+\frac{\varepsilon(r-t)_-^{1-\delta-\gamma^\prime}}{(t+r)^{2-\delta}}
\les \varepsilon\frac{(1+(r-t)_-)^{\gamma-\gamma^\prime}}{(1+t+r)^{1+\gamma}}\qquad
 r>t/2.
 \end{multline*}
 For fixed $q\!=\!r\!-\!t$ integrating this in $t\!+\!r$ between
$2\,r_1\!-\!q\!\leq\! r\!+\!t\!\leq \!2\,r_2\!-\!q$ gives
 \beqs
 \big|\, U(q,\omega,r_2)
 -U(q,\omega,r_1)\big|
 \les
 \varepsilon {(1+q_-)^{\gamma-\gamma^\prime}}{(1+r_1)^{-\gamma}},\qquad
 r>t/2,
 \eqs
 from which it follows that the limit exists and satisfies
 \eqref{eq:waveeqlimitapprox}-\eqref{eq:waveeqlimitapprox2}.
\end{proof}

 For Einstein's equations we have the extra difficulty
 that it is a system and the components do not separate due to angular
 derivatives on the frame:
 $$
 \Box ( T^\mu U^\nu h_{\mu\nu})-T^\mu U^\nu \Box h_{\mu\nu}
 =r^{-2}\triangle_\omega ( T^\mu U^\nu h_{\mu\nu})
 -T^\mu U^\nu r^{-2} \triangle_\omega h_{\mu\nu}.
 $$
 It can be estimated in terms of tangential derivatives
 of all components.  This procedure will give us the existence of the radiation
 field for all components in a null frame except for
 $h_{\underline{L}\underline{L}}$. This component will in fact not
 have as simple radiation field but there will
 also be a logarithm in its radiation field. However,
 the asymptotics of the
 source  $P\,(\pa_\mu h,\pa_\nu h)$
 can be calculated in terms of the radiation field of the components
 we already calculated. It will be of the form
 $$
 P_\mathcal{N}(\pa_\mu h,\pa_\nu h)\sim   C^{TUSV} L_\mu(\omega)
 L_\nu(\omega)r^{-2} \pa_q H^\infty_{TU}(r-t,\omega)
 \pa_q H^\infty_{SV}(r-t,\omega),\quad r>\tfrac{t}{2}.
 $$
We now want to find an approximate solution to $\Box\,
\phi\!=P\,(\pa_\mu h,\pa_\nu h)$. Formulas for the solution of the wave equation with such sources were obtained in \cite{L1}. First we use a simplified version which is sufficient for asymptotics in null directions.

\begin{prop}\label{prop:sourcedecay} Let $\chi\in C_0^\infty$ satisfy
 $\chi({q})=0$, when
$|\,{q}|\geq 3/4$ and $\chi({q})=1$, when $|\,{q}|\leq 1/2$.
Set
 \beqs\label{eq:approxsource}
  F[n](t,x)={n(r-t,\omega
)}{r^{-2}}\chi\big(\tfrac{\langle \,r-t\,\rangle }{t+r}\big),\qquad \langle q\rangle = \sqrt{1+q^2}
 \eqs
where $n$ is a smooth function satisfying
\beqs\label{eq:radiationderbound} {\sum}_{|\alpha|+k\leq N}\,\,
|\,(\langle q\rangle\pa_{q})^k\pa_\omega^\alpha n({q},\omega)|\les
{\langle q\rangle^{-1-a}},\qquad 0<a<1.
 \eqs
 Let $\Phi[n]$ be the solution of $-\Box\, \Phi[n]=F[n]$ with vanishing initial data and let
 \beqs
\Phi_1[n](t,r\omega)= \int_{r-t}^\infty \frac{1}{2r}\ln{\Big(\frac{t+r+q}{t-r+q}\Big)} n\big(q,\omega\big)\, d q\, \chi\big(\tfrac{\langle\,r-t\,\rangle}{t+r}\big).
 \eqs
 Set $\Phi_0[n]=\Phi[n]-\Phi_1[n]$,
$u=\Phi[n]$, $u_1=\Phi_1[n]$ and $u_0=\Phi_0[n]=u-u_1$.
 Then
 \beqs
 \big|(\partial_t+\partial_r)\big(\Omega^I S^J\partial_t^K r u_0)\big|
 \les \frac{(1+(t-r)_+)^a}{(1+t+r)^{1+a}}, \qquad |I|+|J|+|K|\leq N.
 \eqs
 Moreover, limit
 $$
U_0^\infty(q,\omega)=\lim_{r\to\infty} U_0(q,\omega,r), \qquad
 U_0(q,\omega,r)=r
u_0(r-q,r\omega),
$$
exists and satisfies
\begin{align*}
 \big|\,\Omega^I S^J\partial_t^K r u_0(t,r\omega)
 -\Omega^I (q\,\partial_q)^J(-\partial_q)^K U_0^\infty(q,\omega)\big|
 &\les
 \varepsilon \frac{(1+q_-)^{a}}{(1+r)^a},\\
 \big|\Omega^I (q\,\partial_q)^J\partial_q^K U_0^\infty(q,\omega)\big|
 &\les
 \varepsilon \frac{(1+q_-)^{a}}{(1+|\,q|)^a} ,
 \end{align*}
 for  $r>t/4$ and $|I|+|J|+|K|\leq N$. Furthermore
 \begin{equation*}
 \Big|\,\Omega^I S^J\partial_t^K u-\Phi_1\big[\Omega^I(q\partial_q)^J(-\partial_q)^K n\big]\Big|
 \les \frac{1}{(1+t+r)(1+(r-t)_+)^{a}},
 \end{equation*}
 and
 \begin{equation*}
 \!\!\!\!\!\big|\,(\partial_t+\partial_r)\big(r\,\Omega^I S^J\partial_t^K u)-\Phi_{1+}\big[\Omega^I(q\partial_q)^J(-\partial_q)^K n\big]\big|
 \les \frac{(1+(t-r)_+)^a}{(1+t+r)^{1+a}},
 \end{equation*}
where
 \beqs
\Phi_{1+}[n](t,r\omega)=\frac{1}{2\,r} \int_{r-t}^\infty  n\big(q,\omega\big)\, d q\, \chi\big(\tfrac{\langle\,r-t\,\rangle}{t+r}\big).
 \eqs
 \end{prop}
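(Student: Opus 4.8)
The plan is to treat $\Phi_1[n]$ as an explicit approximate solution of $-\Box\,\Phi_1[n]=F[n]$, following \cite{L1} and the heuristic around \eqref{eq:source}--\eqref{eq:approximatesource}: the error $-\Box\,\Phi_1[n]-F[n]$ consists only of angular terms, which gain a power of $\langle t+r\rangle$, and of terms in which a derivative hits the cutoff $\chi(\langle r-t\rangle/(t+r))$, which are supported where $|r-t|\sim t+r$ and hence decay arbitrarily fast near the light cone. Consequently $u_0=\Phi_0[n]=u-u_1$ solves a wave equation with fast decaying source and vanishing Cauchy data, so Lemma~\ref{lem:radianfield} applies directly to $u_0$ and yields all the assertions about $u_0$ and $U_0^\infty$; the assertions comparing $u$ itself to $\Phi_1$ and $\Phi_{1+}$ then follow by writing $u=u_0+u_1$ and noting that $\Omega$, $S$, $\pa_t$ act on $\Phi_1[\,\cdot\,]$ simply by differentiating $n$, up to negligible cutoff terms.

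First I would check that $F[n]$ satisfies \eqref{eq:goodinhomdecay} with $\delta=a$ (on $\supp\chi$ one has $r\sim t+r$, so $|F[n]|\les(1+r)^{-1}(1+t+r)^{-1}(1+|t-r|)^{-1-a}$), and that the same holds after applying $\Omega^I S^J\pa_t^K$ or $\triangle_\omega$, since these turn $n$ into $\Omega$-, $q\pa_q$-, $\pa_q$-, or $\triangle_\omega$-derivatives of $n$ (plus terms with a derivative on $\chi$), all in the same weighted class; using $[\Box,S]=2\Box$ and that $\Box$ commutes with $\Omega$, $\pa_t$, $\triangle_\omega$, Lemma~\ref{lem:inhomwaveeqdecay} then gives, for $|I|+|J|+|K|\le N$ and any fixed $\delta>0$,
\beqs
|\,\Omega^I S^J\pa_t^K u|+|\,\triangle_\omega\Omega^I S^J\pa_t^K u|\les \frac{S^0(t,r)}{(1+t+r)(1+q_+)^{a}}\les\frac{(1+t+r)^{\delta-1}}{(1+|t-r|)^{\delta}(1+q_+)^{a}}.
\eqs
Next, Step~2: using $\Box\phi=r^{-1}(\pa_r-\pa_t)(\pa_r+\pa_t)(r\phi)+r^{-2}\triangle_\omega\phi$ and differentiating the explicit formula for $\Phi_1[n]$ with the cutoff kept, one obtains $-\Box\,\Phi_1[n]=F[n]-r^{-2}\triangle_\omega\Phi_1[n]+\mathcal E[n]$, where $\mathcal E[n]$ collects all terms with a derivative on $\chi(\langle r-t\rangle/(t+r))$. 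From the formula and the decay of $n$ one gets, on $r>t/4$, $|\,\Omega^I S^J\pa_t^K\Phi_1[n]|\les S^0(t,r)\,(1+t+r)^{-1}(1+q_+)^{-a}$, hence $|r^{-2}\triangle_\omega\Omega^I S^J\pa_t^K\Phi_1[n]|\les S^0(t,r)\,(1+t+r)^{-3}(1+q_+)^{-a}$, while $\mathcal E[n]$ and all its derivatives are $\les_M(1+t+r)^{-3}(1+|t-r|)^{-M}$ for every $M$ because of the support restriction; also $\Phi_1[n]$ and all its $t$-derivatives vanish at $t=0$, since there $\langle r-t\rangle/(t+r)=\langle r\rangle/r>3/4$ and $\chi$ vanishes.

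It follows that $u_0$ has vanishing Cauchy data, $-\Box\,u_0=r^{-2}\triangle_\omega\Phi_1[n]-\mathcal E[n]$, and (using Step~1 for the $\triangle_\omega u$ part and Step~2 for the $\triangle_\omega u_1$ part) that $\Box\,\Omega^I S^J\pa_t^K u_0$ and $\triangle_\omega\Omega^I S^J\pa_t^K u_0$ satisfy the hypotheses of Lemma~\ref{lem:radianfield} with $\gamma=a$, $\gamma'=0$, and $\delta$ any fixed number in $(0,1-a)$. Applying that lemma (so that $\gamma-\gamma'=a$) gives exactly the asserted bounds for $(\pa_t+\pa_r)(r\,\Omega^I S^J\pa_t^K u_0)$, for $U_0^\infty$, and for $ru_0-U_0^\infty$, and combining them with $r>t/4$ yields $|\,\Omega^I S^J\pa_t^K u_0|\les(1+t+r)^{-1}(1+(r-t)_+)^{-a}$. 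For the last two assertions I would use $u=u_0+u_1$ together with $\Omega^I S^J\pa_t^K\Phi_1[n]=\Phi_1[\Omega^I(q\pa_q)^J(-\pa_q)^K n]+(\text{cutoff error})$, the cutoff error being supported where $|r-t|\sim t+r$ and hence negligible: the first then follows from the bound on $\Omega^I S^J\pa_t^K u_0$. For the second, applying $(\pa_t+\pa_r)$ after multiplying by $r$, the $u_0$ part is controlled as above, and $(\pa_t+\pa_r)(r\,\Phi_1[m])=\Phi_{1+}[m]+(\text{error})$ with the error $\les(1+(t-r)_+)^{a}(1+t+r)^{-1-a}$ by the same spherical-coordinate computation: indeed $(\pa_t+\pa_r)(r\Phi_1[m])=\int_{r-t}^\infty(t+r+q)^{-1}m(q,\omega)\,dq\,\chi+\dots$, and replacing $(t+r+q)^{-1}$ by $(2r)^{-1}$ (legitimate where $m$ is concentrated) produces precisely $\Phi_{1+}[m]$.

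The main difficulty is Step~2 and its reuse in the last assertion: one must verify by hand, in spherical coordinates and uniformly up to the light cone and throughout $r>t/4$, that $-\Box\,\Phi_1[n]-F[n]$ and $(\pa_t+\pa_r)(r\Phi_1[m])-\Phi_{1+}[m]$ are genuinely lower order and carry the sharp weights $(1+(r-t)_+)^{-a}$ and $(1+(t-r)_+)^{a}$ respectively. This is precisely where the formulas of \cite{L1} enter and where the bookkeeping of the logarithm $S^0$ and of the $q_\pm$ weights must be done carefully; once this is in hand, the rest is a routine application of Lemmas~\ref{lem:radianfield} and \ref{lem:inhomwaveeqdecay}.
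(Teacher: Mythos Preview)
Your approach is essentially the same as the paper's: compute $-\Box\,\Phi_1[n]-F[n]$ in spherical coordinates, identify the error as angular terms plus cutoff-derivative terms, feed the resulting bounds on $\Box u_0$ and $\triangle_\omega u_0$ into Lemma~\ref{lem:radianfield}, and finish the $\Phi_{1+}$ statement by the direct computation $(\pa_t+\pa_r)(r\Phi_1[n])=\int_{r-t}^\infty (t+r+q)^{-1}n\,dq\cdot\chi+\dots$ followed by the replacement $(t+r+q)^{-1}\to(2r)^{-1}$.

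One correction: your claim that the cutoff error $\mathcal E[n]$ is $\les_M(1+t+r)^{-3}(1+|t-r|)^{-M}$ for \emph{every} $M$ is false. In the support of $\chi'(\langle r-t\rangle/(t+r))$ one has $|t-r|\sim t+r$, so this would mean $\mathcal E[n]\les_M(1+t+r)^{-3-M}$, i.e.\ arbitrary polynomial decay, which the integrals $\int_0^\infty\ln\bigl(\tfrac{2r+s}{s}\bigr)n(s+r-t,\omega)\,ds$ certainly do not provide. What is actually true (and what the paper proves via its technical Lemma~\ref{lem:technical}) is $|\Box\Phi_0[n]|\les\langle t+r\rangle^{-3}\ln\bigl(\langle t+r\rangle/\langle t-r\rangle\bigr)\langle(r-t)_+\rangle^{-a}$; in the cutoff region this is $(1+t+r)^{-3-a}$ when $r>t$ and $(1+t+r)^{-3}$ when $t>r$. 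That weaker bound is exactly what Lemma~\ref{lem:radianfield} needs with $\gamma=a$, $\gamma'=0$, so your argument still goes through --- but the sentence as written is an overclaim and should be replaced by the correct bound. The paper's Lemma~\ref{lem:technical} is where the ``bookkeeping of the logarithm and of the $q_\pm$ weights'' you flag at the end is actually carried out.
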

 For the proof we need the following technical lemma:
 \begin{lemma}\label{lem:technical} Suppose that
  $m$ is a smooth function satisfying
\beqs\label{eq:radiationderboundm} {\sum}_{|\alpha|+k\leq N}\,\,
|\,(\langle q\rangle\pa_{q})^k\pa_\omega^\alpha m({q},\omega)|\les
{\langle q\rangle^{-1-b}}.
 \eqs
 Let $\delta_{b1}=1$, when $b=1$ and $0$ otherwise.
 Then if $0<b\leq 2$
\begin{multline*}
\big|\!\!\int_{r-t}^\infty\!\!\!\!\!\!
\big(\ln{\big|\frac{t\!+r\!+q}{t\!-r\!+q}\big|}-\ln{\big|\frac{\langle t\!+\!r\rangle\!}
{\langle t\!-\!r\rangle\!}\big|}\big)m\big(q,\omega\big)\, d q\, \big|
\!\les \frac{1}{\langle t\!-\!r\rangle^{b}\!\!}
+\frac{H(t\!>\!r)\!\!}{\langle t\!-\!r\rangle\!}\,
(1\!+\delta_{b1}\ln{\langle t\!-\!r\rangle}).
 \end{multline*}
 \end{lemma}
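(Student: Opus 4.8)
The plan is to substitute $p=t-r+q$, which runs over $[0,\infty)$ since $q\ge r-t$; then $t+r+q=p+2r$ and, all quantities being nonnegative on the domain (so the absolute values are harmless once one assumes, as one may, $t+r\ge1$), the kernel becomes
\begin{equation*}
\Phi(p)=\ln\frac{(p+2r)\,\langle t-r\rangle}{p\,\langle t+r\rangle}=\ln\!\Big(1+\frac{2r}{p}\Big)-L,\qquad L:=\ln\frac{\langle t+r\rangle}{\langle t-r\rangle}\ge0,
\end{equation*}
so the quantity to estimate is $\int_0^\infty\Phi(p)\,m(p+r-t,\omega)\,dp$. First I would record: $\Phi$ is strictly decreasing, $\Phi'(p)=\tfrac1{p+2r}-\tfrac1p=\tfrac{-2r}{p(p+2r)}$, with $\Phi(0^+)=+\infty$, $\Phi(\infty)=-L\le0$, hence a unique zero $p_*$; solving $\Phi(p_*)=0$ gives $p_*=\tfrac{\langle t-r\rangle(\langle t+r\rangle+\langle t-r\rangle)}{2t}$, whence $p_*\ge\tfrac12\langle t-r\rangle$, $p_*\lesssim\langle t-r\rangle$ when $t>r$, and (using $t+r\le2r+\langle t-r\rangle$) $p_*+2r\gtrsim\langle t+r\rangle$; and when $t>r$ one has $|p_*-(t-r)|\lesssim1$ uniformly (from the identity $p_*-(t-r)=\tfrac1{2t}\big(1+\tfrac{1+(t+r)^2+(t-r)^2}{\langle t-r\rangle\langle t+r\rangle+(t^2-r^2)}\big)$ together with $p_*\lesssim\langle t-r\rangle$). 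Finally, from $\Phi(p)=\int_p^{p_*}\tfrac{2r\,dp'}{p'(p'+2r)}$ one gets the uniform pointwise bounds $0\le\Phi(p)\le\ln(1+2r/p)$ for $p\le p_*$ and $|\Phi(p)|\le\min\!\big(L,\ \tfrac{2r}{p},\ \ln(1+2r/p)+1\big)$ for $p\ge p_*$; in particular $|\Phi|$ is $O(1)$ away from $p=0$ except that it can grow — up to the size $L$ — on the far scale $p\gtrsim\langle t+r\rangle$.

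I would then split $[0,\infty)$ into the resonant band $B=[\tfrac12p_*,2p_*]$ and the two tails. On $B$ the mean value theorem gives $|\Phi(p)|\le|p-p_*|\sup_B|\Phi'|\le|p-p_*|\cdot\tfrac{2r}{(p_*/2)(p_*/2+2r)}$. If $t\le r$ then $|m(p+r-t,\omega)|\lesssim\langle r-t\rangle^{-1-b}$ for all $p\ge0$ (the argument being $\ge r-t$), there is no resonance, and using $p_*\gtrsim\langle r-t\rangle$, $p_*+2r\gtrsim\langle t+r\rangle$ one finds $B$ contributes $\lesssim\langle r-t\rangle^{-b}$ with no logarithm. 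If $t>r$ then $|p_*-(t-r)|\lesssim1$, so $m(p+r-t,\omega)$ concentrates within $O(1)$ of $p_*$; writing $u=p-p_*$ and using $\sup_B|\Phi'|\lesssim\tfrac{2r}{\langle t-r\rangle\langle t+r\rangle}$, the band contributes at most
\begin{equation*}
\frac{2r}{\langle t-r\rangle\langle t+r\rangle}\int_{|u|\lesssim\langle t-r\rangle}|u|\,\langle u\rangle^{-1-b}\,du\ \lesssim\ \frac1{\langle t-r\rangle}\Big(\langle t-r\rangle^{(1-b)_+}+\delta_{b1}\ln\langle t-r\rangle\Big),
\end{equation*}
having used $\tfrac{2r}{\langle t+r\rangle}\le1$; this is $\langle t-r\rangle^{-b}$ for $b<1$, $\langle t-r\rangle^{-1}(1+\ln\langle t-r\rangle)$ for $b=1$, and $\langle t-r\rangle^{-1}$ for $1<b\le2$ — exactly the right side of the Lemma with $H(t>r)=1$. (The $\delta_{b1}$ loss is precisely the logarithmic divergence of $\int^{\langle t-r\rangle}|u|\langle u\rangle^{-2}\,du$.)

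For the tails I would use the pointwise bounds just recorded together with the decay of $m$, integrating by parts where useful (via $|\int_q^\infty m|\lesssim\langle q_+\rangle^{-b}$) to harvest the extra power at $q=+\infty$. On $(0,\tfrac12p_*]$ the bound $0\le\Phi(p)\le\ln(1+2r/p)$ is $dp$-integrable, and since there $m(p+r-t,\omega)$ has size $\lesssim\langle t-r\rangle^{-1-b}$ while $\int_0^{2r}\ln(1+2r/p)\,dp\lesssim r$ and $\int_{2r}^\infty\tfrac{2r}{p}\langle p\rangle^{-1-b}\,dp\lesssim(2r)^{-b}$, this piece is $\lesssim\langle t-r\rangle^{-b}$. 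On $[2p_*,\infty)$ the governing principle is that where $\Phi$ is largest — it saturates at $L=\ln\tfrac{\langle t+r\rangle}{\langle t-r\rangle}$ only for $p\gtrsim\langle t+r\rangle$ — the factor $m$ has already decayed to size $\lesssim\langle t+r\rangle^{-1-b}$, so that part is $\lesssim L\,\langle t+r\rangle^{-b}\lesssim\langle t-r\rangle^{-b}$ because $\ln x\lesssim x^b$ for $x\ge1$; the remaining $p\in[2p_*,\langle t+r\rangle]$, where $|\Phi|\lesssim2r/p$, is controlled by convergence of $\int^\infty v^{-1-b}\,dv$. Summing the three pieces gives the claimed bound.

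The main obstacle is uniform bookkeeping: $\Phi$ is unbounded as $p\to0$ and, worse, can grow like the potentially large constant $L=\ln\tfrac{\langle t+r\rangle}{\langle t-r\rangle}$, so the estimate closes only because (i) the growth near $p=0$ is merely logarithmic, hence $dp$-integrable; (ii) the growth to size $L$ occurs only on the scale $p\gtrsim\langle t+r\rangle$, where $m$ is polynomially tiny and polynomial decay beats the logarithm; and (iii) a genuine loss survives only at the light cone scale $p\sim p_*\sim\langle t-r\rangle$, and only when $t>r$ and $b=1$. Establishing uniformly that $|p_*-(t-r)|\lesssim1$ for $t>r$ and that $p_*+2r\gtrsim\langle t+r\rangle$ are the inputs that make (iii) precise and must be checked; the cases $t>r$ and $t\le r$ must be kept apart throughout; and one must resist the temptation to split $\Phi$ as $\ln(1+2r/p)-L$ when estimating the bulk, for near $p=t-r$ the two terms are each of size $\sim\ln\tfrac{t+r}{t-r}$ and cancel — a cancellation lost to the triangle inequality.
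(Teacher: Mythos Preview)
Your strategy—localizing around the zero $p_*$ of $\Phi$—is sound and genuinely different from the paper's, but the execution has a real gap in the lower-tail estimate. You bound $\Phi(p)\le\ln(1+2r/p)$ on $(0,\tfrac12 p_*]$ and then invoke $\int_0^{2r}\ln(1+2r/p)\,dp\lesssim r$ together with $|m|\lesssim\langle t-r\rangle^{-1-b}$; this yields $r\,\langle t-r\rangle^{-1-b}$, which is \emph{not} $\lesssim\langle t-r\rangle^{-b}$ when $t>r$ and $\langle t-r\rangle\ll r$ (take $t=r+1$ with $r$ large: your bound gives order $\ln r$ where the lemma demands $O(1)$). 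The bound $\Phi\le\ln(1+2r/p)$ comes from extending your integral representation $\int_p^{p_*}$ to $\int_p^\infty$, which discards exactly the cancellation with $L$ that keeps $\Phi$ under control on that tail. The immediate fix: the same integral representation gives the sharper $\Phi(p)\le\ln(p_*/p)$ (since $\tfrac{2r}{p'(p'+2r)}\le\tfrac1{p'}$), whence $\int_0^{p_*/2}\Phi\,dp\lesssim p_*\sim\langle t-r\rangle$ and the argument closes. A related looseness sits in your far tail: the claim that $|m|\lesssim\langle t+r\rangle^{-1-b}$ for $p\gtrsim\langle t+r\rangle$ fails when $t\gg r$, since then $q=p-(t-r)\gtrsim 2r$, not $\langle t+r\rangle$; this is in fact harmless because $L\lesssim r/t$ is small in that regime, but it needs to be said.

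The paper takes a shorter route that sidesteps all of this: after disposing of $q\ge t+r$ it splits the kernel additively as
\[
\ln\frac{t+r+q}{\langle t+r\rangle}\;+\;\ln\frac{\langle t-r\rangle}{t-r+q}.
\]
Each piece is individually small near $q=0$ (the first because $t+r+q\approx\langle t+r\rangle$, the second because $t-r+q\approx\langle t-r\rangle$), so the triangle inequality costs nothing—contrary to your closing warning, a well-chosen additive split \emph{does} preserve the cancellation. The first piece is controlled by $|\ln\frac{t+r+q}{\langle t+r\rangle}|\lesssim\langle q\rangle/(t+r)$; the second by the scaling $s=(t-r+q)/|t-r|$, which reduces matters to $|t-r|^{-b}\!\int_0^{2r}|\ln s|\,(1/|t-r|+|s\pm1|)^{-1-b}\,ds$ and directly exhibits the $\langle t-r\rangle^{-b}$, $\langle t-r\rangle^{-1}$, and $\delta_{b1}\ln\langle t-r\rangle$ regimes. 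Your $p_*$-decomposition, once repaired, gives the same answer with somewhat more bookkeeping; the paper's split is cleaner because it builds the cancellation into the decomposition rather than recovering it afterward.
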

 \begin{proof}[of Lemma \ref{lem:technical}] The integral over $q\geq t+r$ is easily bounded by
 $$\big(1+\ln{\big(\frac{\langle t+r\rangle}{\langle t-r\rangle}\big)}\big)
 \frac{1}{\langle t+r\rangle^b}\les \frac{1}{\langle t-r\rangle^b},
 $$
 so we may integrate over  $q\leq t+r$ only.
 \beqs
 \ln{\Big(\frac{t+r+q}{t-r+q}\Big)}-\ln{\Big(\frac{\langle \,t+r\,\rangle}{\langle\,t-r\,\rangle}\Big)}
 =\ln{\Big(\frac{t+r+q}{\langle\,t+r\,\rangle}\Big)}
 +\ln{\Big(\frac{\langle\,t-r\,\rangle}{t-r+q}\Big)}.
 \eqs
The integral of the first term is also easy to bound. If $r\!>\!t$ it can be bounded by
$$
\int_{r-t}^{t+r} \frac{\langle q\rangle}{t+r}\frac{dq}{\langle q\rangle^{1+b}}\les
\frac{1}{t+r} \Big(\langle t+r\rangle^{1-b}+\langle t-r\rangle^{1-b}
+\delta_{b1}\ln{\Big(\frac{\langle t\!+\!r\rangle}{\langle t-r\rangle}\Big)} \Big)
\les \frac{1}{\langle t-r\rangle^b}.
$$
where $\delta_{b1}=1$ if $b=1$ and $0$ otherwise,
and if $r>t$ by
$$
\int_{r-t}^{t+r} \frac{\langle q\rangle}{t+r}\frac{dq}{\langle q\rangle^{1+b}}\les
\frac{1}{t+r} \Big(1+\langle t-r\rangle^{1-b}
+\delta_{b1}\ln{\langle t-r\rangle}\Big).
$$
We may therefore concentrate on the second term. If $|r\!-t|\!\leq \!1$ the integral
is easily bounded so we may as well assume that $|r\!-t|\!>\!1$.
Moreover, in that case
\beqs
\Big|\ln{\Big(\frac{\langle\,t-r\,\rangle}{|\,t-r|}\Big)}\Big|\les \frac{1}{\langle t-r\rangle^2}
\eqs
so we are left with estimating
$$
\int_{r-t}^{t+r}\Big| \ln{\Big(\frac{|\, t-r|}{t-r+q}\Big)}\Big|\frac{dq}{(1+|\,q|)^{1+b}}\les
\frac{1}{|\, t-r|^{\,b}} \int_0^{2r} \!\!\!\frac{ |\ln{s}|\, ds}{\big({1}/{|t-r|}+|\,s\pm 1|\big)^{1+b}},
$$
where the sign $\pm$ is the same as the sign of $r-t$. If $r>t$ or $b<1$
this is bounded by $|\,t-r|^{-b}$. If $r<t$ and $b>1$ its bounded by
$|\,t-r|^{-1}$ and if $b=1$ its bounded by $|\,t-r|^{-1}\ln{|\,t-r|}$.
\end{proof}

 \begin{proof}[Proof of Proposition \ref{prop:sourcedecay}] We have
 \beqs
 (\pa_t-ï\pa_r)(\pa_t\!+\!\pa_r) \!\!
 \int_{r-t}^\infty \!\!\!\!\ln{\big|\frac{t\!+\!r\!+q}{t\!-r\!+q}\big|}
 \, \frac{n\big(q,\omega\big)dq\!\!}{2} \,
 \!=\!(\pa_t-\pa_r)\!\!\int_{r-t}^\infty \!\frac{n\big(q,\omega\big)dq\!\!}{t\!+r\!+q}\,
 =\frac{n(r-t,\omega)\!}{r}.
 \eqs
 We can write
 \beqs
 \Phi_1[n]=\frac{1}{2r}\int_0^\infty \ln{\Big(\frac{2r+s}{s}\Big)}
 n\big(s+r-t,\omega\big)\, d s\, \chi\big(\tfrac{\langle\,r-t\,\rangle}{t+r}\big).
 \eqs
 Hence
 \begin{multline*}
-\Box\,\Phi_1[n]-F[n]=-\frac{1}{2r^3} \int_{0}^\infty \ln{\Big(\frac{2r+s}{s}\Big)} \triangle_{\,\omega} n\big(s+r-t,\omega\big)\, d s\, \chi\big(\tfrac{\langle\,r-t\,\rangle}{t+r}\big)\\
 +\frac{1}{2r}(\pa_t+\pa_r)\int_0^\infty \ln{\Big(\frac{2r+s}{s}\Big)} n\big(s+r-t,\omega\big)\, d s\, (\pa_t-\pa_r)\chi\big(\tfrac{\langle\,r-t\,\rangle}{t+r}\big)\\
  +\frac{1}{2r}(\pa_t-\pa_r)\int_0^\infty \ln{\Big(\frac{2r+s}{s}\Big)} n\big(s+r-t,\omega\big)\, d s\, (\pa_t+\pa_r)\chi\big(\tfrac{\langle\,r-t\,\rangle}{t+r}\big)\\
 +\frac{1}{2r}\int_0^\infty \ln{\Big(\frac{2r+s}{s}\Big)} n\big(s+r-t,\omega\big)\, d s\, (\pa_t-\pa_r)(\pa_t+\pa_r)\chi\big(\tfrac{\langle\,r-t\,\rangle}{t+r}\big)
 \end{multline*}
 so
 \begin{multline}\label{eq:boxphizero}
 -\Box\,\Phi_0[n]=-\frac{1}{2r^3} \int_{0}^\infty \ln{\Big(\frac{2r+s}{s}\Big)} \triangle_{\,\omega} n\big(s+r-t,\omega\big)\, d s\, \chi\big(\tfrac{\langle\,r-t\,\rangle}{t+r}\big)\\
 +\frac{2}{r^2}\int_0^\infty\frac{1}{2r+s} n\big(s+r-t,\omega\big)\, d s\, \chi^\prime\big(\tfrac{\langle\,r-t\,\rangle}{t+r}\big)
 \big(\tfrac{t-r}{\langle\,r-t\,\rangle}
 -\tfrac{\langle\,r-t\,\rangle}{t+r}\big)\tfrac{r}{t+r}\\
  +\frac{2}{r^2}\int_0^\infty \ln{\Big(\frac{2r+s}{s}\Big)} n_{\,q}^\prime\big(s+r-t,\omega\big)\, d s\, \chi^\prime\big(\tfrac{\langle\,r-t\,\rangle}{t+r}\big)
  \tfrac{\langle\,r-t\,\rangle}{t+r}\tfrac{r}{t+r}\\
 +\int_0^\infty \!\!\!\!\!\ln{\Big(\frac{2r+s}{s}\Big)} \frac{n\big(s+r-t,\omega\big)}{r^3}\, d s\, \Big(\chi^{\prime\prime}\big(\tfrac{\langle\,r-t\,\rangle}{t+r}\big)
 \tfrac{\langle\,r-t\,\rangle}{t+r}
 -2\chi^\prime\big(\tfrac{\langle\,r-t\,\rangle}{t+r}\big)\Big)\times\\
 \times
 \big(\tfrac{t-r}{\langle\,r-t\,\rangle}
 -\tfrac{\langle\,r-t\,\rangle}{t+r}\big)\tfrac{r^2}{(t+r)^2}.
 \end{multline}
By Lemma \ref{lem:technical}
\begin{equation*}
\Big|\int_0^\infty \!\!\!\ln{\big|\frac{2r+s}{s}\big|}
n_{\,q}^\prime\big(s+r-t,\omega\big)\, d s
+\ln{\big|\frac{\langle \,t\!+\!r\,\rangle}{\langle\,t\!-\!r\,\rangle}\big|}n(r-t,\omega)\Big|
\les \frac{1}{\langle t\!-\!r\rangle}\frac{1}{\langle (r\!-\!t)_+\rangle^a\!\!}\,,
\end{equation*}
and
\begin{align*}
\big|\int_0^\infty \!\!\!\!\ln{\big|\frac{2r\!+s\!}{s}\big|}\, n\big(s\!+\!r\!-\!t,\omega\big)\, d s
+\ln{\big|\frac{\langle \,t\!+\!r\,\rangle}{\langle\,t\!-\!r\,\rangle}\big|}
\int_{0}^\infty \!\!\!\!\!n(s+r-t,\omega)\, ds\big|
\!&\les\! \frac{1}{\langle (r\!-\!t)_+\rangle^a\!},\\
\big|\!\int_0^\infty \!\!\!\!\!\ln{\big|\frac{2r\!+s\!}{s}\big|} \triangle_{\,\omega}
n\big(s\!+\!r\!-t,\omega\big)\, d s
\!+\ln{\big|\frac{\langle t\!+\!r\rangle}{\langle t\!-\!r\rangle}\big|}
\!\!\int_{0}^\infty\!\!\!\!\!\!\!\triangle_{\,\omega} n(s\!+\!r\!-\!t,\omega)\, ds\big|
\!&\les\! \frac{1}{\langle (r\!-\!t)_+\rangle^a\!}.
\end{align*}
In view of this and that $t-r\sim t+r$ in the support of $\chi^\prime$ it follows that
\beqs
\big|\,\Box\,\Phi_0[n]\big|\les\frac{1}{\langle t+r\rangle^{3}}
\ln{\Big(\frac{\langle \,t+r\,\rangle}{\langle\,t-r\,\rangle}\Big)}
\frac{1}{\langle (r-t)_+\rangle^a}.
\eqs
We claim that
\beqs
\big|\triangle_{\,\omega}\Phi_1[n]\big|+\big|\triangle_{\,\omega}\Phi[n]\big|\les
\frac{1}{\langle t+r\rangle}\ln{\Big(\frac{\langle \,t+r\,\rangle}{\langle\,t-r\,\rangle}\Big)}\frac{1}{\langle (r-t)_+\rangle^a}.
\eqs
For $\Phi_1[n]$ this follows from using Lemma \ref{lem:technical} applied to
\beq\label{eq:phione}
 \Phi_1[n]=\frac{1}{2r}\int_0^\infty \ln{\Big(\frac{2r+s}{s}\Big)} n\big(s+r-t,\omega\big)\, d s\, \chi\big(\tfrac{\langle\,r-t\,\rangle}{t+r}\big)
 \eq
 with $n$ replaced by $\triangle_{\,\omega}n$. For $\Phi[n]$ this follows from
Lemma \ref{lem:inhomwaveeqdecay} applied to
\beq\label{eq:boxphi}
-\Box\,\Phi[n]=F[ n]={ n(r-t,\omega)}{r^{-2}}\chi\big(\tfrac{\langle \, t-r\rangle}{t+r}\big)
\eq
with $n$ replaced by $\triangle_{\,\omega}n$.
Hence the same estimate holds for
$\triangle_{\,\omega} \Phi_0$.
It therefore follows from Lemma \ref{lem:radianfield} that the $u_0$ satisfies the required estimates for $N=0$ and that the limit $U^\infty$ exists and satisfies
the estimates for $N=0$. It remains to prove these estimates for $N>0$.
For angular derivatives this is clear since then $\Omega^I \Phi[n]=\Phi[\Omega^I n]$,
$\Omega^I \Phi_1[n]=\Phi_1[\Omega^I n]$ and $\Omega^I \Phi_0[n]=\Phi_0[\Omega^I n]$.
For time derivatives we have that modulo terms with the time derivative falling on the cutoffs and whenever a time derivative falls on cutoff we get terms of the same form
but with additional decay of $(t+r)^{-1}$ and moreover $t-r\sim t+r$ in the support of the cutoff so we have
$$
\big|\,\Box\,\partial_t \Phi_0[n] -\Box\,\Phi_0[-\partial_q n]\big|
\les{\langle t+r\rangle^{-4}}{\langle (r-t)_+\rangle^{-a}} .
$$
Now $\Box \,S\Phi_0[n]= (S+2)\Box \,\Phi_0[n]$. Recall that $Sf(t,r)=\partial_a f(at,ar)\big|_{a=1}$. Let $I[n](t,r)$ denote any of the integrals in
\eqref{eq:boxphizero} with the factors of $r$ in front. Then changing variables we see that $I[n(q,\omega)](at,ar)=a^{-2}I[n(aq,\omega)]$ so
$(S+2) I[n]=I[q\partial_q n]$. Any of the factors that multiply the integrals
would be homogeneous of degree $0$ if $\langle t-r\rangle$ was replaced by $t-r$
and hence they would have vanished in that case when $S$ is applied to them.
However the error is of lower order because
\beqs
S \frac{\langle t-r\rangle}{t-r}=\frac{1}{2}\frac{t-r}{\langle t-r\rangle} S\frac{\langle t-r\rangle^2}{(t-r)^2}=\frac{1}{2}\frac{t-r}{\langle t-r\rangle}
S\frac{1}{(t-r)^{2}}=-\frac{1}{\langle t-r\rangle^2}
\frac{\langle t-r\rangle}{t-r} .
\eqs
Hence
$$
\big|\,\Box\,S \Phi_0[n] -\Box\,\Phi_0[q\partial_q n]\big|
\les{\langle t+r\rangle^{-5}}{\langle (r-t)_+\rangle^{-a}} .
$$
If $S$ falls on the cutoff functions we get errors that decay like $(t+r)^{-2}$
and if another $S$ falls on these errors the errors still decays like $(t+r)^{-2}$.
If a $t$ derivative first falls of the cutoffs then we get an error that decay like
$(t+r)^{-1}$ and again $S$ applied to it still decays like $(t+r)^{-1}$.
We conclude that
$$
\big|\,\Box\,\Omega^I S^J\partial_t^K \Phi_0[n]
-\Box\,\Phi_0[\Omega^I (q\,\partial_q)^J(-\partial_q)^K n]\big|
\les{\langle t+r\rangle^{-4}}{\langle (r-t)_+\rangle^{-a}}
$$
and hence
\beqs
\big|\,\Box\,\Omega^I S^J\partial_t^K\Phi_0[n]\big|\les\frac{1}{\langle t+r\rangle^{3}}\ln{\Big(\frac{\langle \,t+r\,\rangle}{\langle\,t-r\,\rangle}\Big)}\frac{1}{\langle (r-t)_+\rangle^a} .
\eqs
By the same argument if we apply these operators to \eqref{eq:phione}
we get
\begin{align*}
\big|\,\partial_t \triangle_{\,\omega}\Phi_1[n] -\Phi_1[-\partial_q \triangle_{\,\omega} n]\big|
&\les{\langle t+r\rangle^{-2}}{\langle (r-t)_+\rangle^{-a}},\\
\big|\,S \triangle_{\,\omega}\Phi_1[n] -\Phi_1[q\partial_q \triangle_{\,\omega} n]\big|
&\les{\langle t+r\rangle^{-3}}{\langle (r-t)_+\rangle^{-a}}.
\end{align*}
Repeating these arguments give
$$
\big|\Omega^I S^J\partial_t^K \triangle_{\,\omega}\Phi_1[n]
 -\Phi_1[\Omega^I (q\,\partial_q)^J(-\partial_q)^K \triangle_{\,\omega} n]\big|
\les{\langle t+r\rangle^{-2}}{\langle (r-t)_+\rangle^{-a}}
$$
and hence
\beqs
\big|\Omega^I S^J\partial_t^K\triangle_{\,\omega}\Phi_1[n]\big|\les\frac{1}{\langle t+r\rangle}\ln{\Big(\frac{\langle \,t+r\,\rangle}{\langle\,t-r\,\rangle}\Big)}\frac{1}{\langle (r-t)_+\rangle^a}.
\eqs
Applying the vector fields to \eqref{eq:boxphi} gives the error term when
the vector fields fall on the cutoff function
\beqs
\big|\,\Box\, \partial_t \Phi[n]-\Box\,\Phi[-\partial_q n]\big|\les
{\langle t+r\rangle^{-4-a}}
\eqs
since $t\!-\!r\!\sim \!t\!+\!r$ in the support of the derivative $\chi^\prime$
and since $n$ decays.
Similarly
\beqs
\big|\,\Box\, S \Phi[n]-\Box\,\Phi[q\partial_q n]\big|\les
{\langle t+r\rangle^{-5-a}}.
\eqs
In general we get as above
\beqs
\big|\,\Box\, \Omega^I S^J \partial_t^K \Phi[n]-\Box\,\Phi[\Omega^I (q\,\partial_q)J(-\partial_q)^K n]\big|\les
{\langle t+r\rangle^{-4-a}},
\eqs
and
\beqs
\big|\triangle_{\,\omega}\Omega^I S^J \partial_t^K\triangle_{\,\omega} \Phi[n]\big|\les
\frac{1}{\langle t+r\rangle}\ln{\Big(\frac{\langle \,t+r\,\rangle}{\langle\,t-r\,\rangle}\Big)}\frac{1}{\langle (r-t)_+\rangle^a}.
\eqs
This finishes the proof of the first part of the theorem also for $N>0$.

It remains to prove the last estimate which since we already have an estimate for
vector fields of $(\pa_t+\pa_r)(r\Phi_0[n])$ would follow from
\begin{multline*}
 \sum_{|I|+|J|+|K|\leq N \!\!\!\!\!\!\!\!\!\!\!\!\!\!\!\!\!\!\!\!\!\!\!\!\!\!}
 \big|(\partial_t+\partial_r)\big(r\,\Omega^I S^J\!\partial_t^K \Phi_1[n])-\Phi_{1+}
 \big[\Omega^I(q\partial_q)^J(-\partial_q)^K n\big]\big|
 \les \frac{(1\!+(t\!-\!r)_+)^a\!\!}{(1\!+\!t\!+\!r)^{1+a}\!\!}\,.
 \end{multline*}
We have
\begin{multline*}
(\partial_t+\partial_r)(r \Phi_1[n])=(\partial_t+\partial_r)\frac{1}{2}\int_0^\infty \ln{\Big(\frac{2r+s}{s}\Big)} n\big(s+r-t,\omega\big)\, d s\, \chi\big(\tfrac{\langle\,r-t\,\rangle}{t+r}\big)\\
=\int_0^\infty \frac{1}{2r+s} n\big(s+r-t,\omega\big)\, d s\, \chi\big(\tfrac{\langle\,r-t\,\rangle}{t+r}\big)\\
-\frac{1}{2 r}\int_0^\infty \ln{\Big(\frac{2r+s}{s}\Big)} n\big(s+r-t,\omega\big)\, d s\, \chi^\prime\big(\tfrac{\langle\,r-t\,\rangle}{t+r}\big)
\tfrac{\langle\,r-t\,\rangle}{t+r}\tfrac{2r}{t+r}
 \end{multline*}
 Here the first integral on the right is
 \begin{equation*}
 \int_0^\infty \frac{n\big(s+r-t,\omega\big)}{2r+s} \, d s
 =\frac{1}{2 r} \int_{r-t}^\infty n(q,\omega)\, dq+
 \frac{1}{2r} \int_{r-t}^\infty \frac{(r-t-q)\, n(q,\omega)}{t+r+q}\, dq,
 \end{equation*}
 so we obtain
 \begin{multline*}
(\partial_t+\partial_r)(r \,\Phi_1[n])
-\Phi_{1+}[n]
=\frac{1}{2r} \!\int_{r-t}^\infty\!\! \frac{(r\!-\!t\!-\!q)\, n(q,\omega)\!\!}{t+r+q}\, dq\,
\chi\big(\tfrac{\langle\,r-t\,\rangle}{t+r}\big)\\
-\frac{1}{2r}\!\int_0^\infty \!\!\!\!\ln{\!\Big(\frac{2r\!+s}{s}\Big)} n\big(s\!+\!r\!-\!t,\omega\big)\, d s\, \chi^\prime\big(\tfrac{\langle\,r-t\,\rangle}{t+r}\big)
\tfrac{\langle\,r-t\,\rangle}{t+r}\tfrac{2r}{t+r}.
 \end{multline*}
 Here
 \begin{align*}
 \int_{r-t}^\infty \!\frac{|q|\,| n(q,\omega)|}{t+r+q}\, dq
 &\les \frac{1}{t\!+\!r} \int_{r-t}^{t+r} \!\!\!\!\! |q|\,|n(q,\omega)|\, dq
 +\!\int_{t+r}^\infty
 \!\! \!\!|n(q,\omega)|\, dq\les \frac{1}{(t\!+r)^a\!\!}\, ,\\
 \int_{r-t}^\infty \!\frac{|t\!-\!r|\, | n(q,\omega)|}{t+r+q}\, dq
 &\les \frac{|t-r|}{t+r} \int_{r-t}^{t+r} |n(q,\omega)|\, dq
 \les \frac{|t-r|}{t+r}\frac{1}{(1+(r-t)_+)^a} ,
 \end{align*}
 and by Lemma \ref{lem:technical}
 \beqs
 \int_0^\infty \ln{\Big(\frac{2r+s}{s}\Big)}|\, n\big(s+r-t,\omega\big)|\, d s
 \les \ln{\Big(\frac{\langle \,t+r\,\rangle}{\langle\,t-r\,\rangle}\Big)}
 \frac{1}{\langle (r-t)_+\rangle^a}.
 \eqs
 However, this term is multiplied with the derivative of the cutoff function
 that is supported when $t-r\sim t+r$ so we can multiply $\langle t-r\rangle^a/\langle t+r\rangle^a$ and we get that all remainder terms are bounded by
 \beqs
 \big|\,(\partial_t+\partial_r)\big(r \, \Phi_1[n]\big)-\Phi_{1+}\big[ n\big]\big|
 \les {(1+(t-r)_+)^a}{(1+t+r)^{-1-a}}.
 \eqs
 We now need to apply vector fields. Note that $S (\partial_t\!+\!\partial_r)(r\phi)=
 (\partial_t\!+\!\partial_r)(r S\phi)$ so
 \beqs
 \Omega^I S^J \partial_t^K (\partial_t+\partial_r) (r\,\Phi_{1}[n])
 =(\partial_t+\partial_r) (r\,\Omega^I S^J \partial_t^K\Phi_{1}[n]).
 \eqs
 As before we have
 \begin{multline*}
 \!\!\!\!\!\!\!\big|  \Omega^I S^J \partial_t^K\big((\partial_t+\partial_r)(r \Phi_1)
-\Phi_{1+}\big)[n] - \big((\partial_t+\partial_r)(r \,\Phi_1)
-\Phi_{1+}\big)[\Omega^I (q\,\partial_q)^J (-\partial_q)^K n] \big|\\
\les \frac{(1+(t-r)_+)^a}{(1+t+r)^{2+a}}
 \end{multline*}
 so
 \beqs
 \big| \Omega^I S^J \partial_t^K\big((\partial_t\!+\partial_r)\big(r  \Phi_1[n]\big)\!
 -\Phi_{1+}\big[ n\big]\big)\big|
 \les {\langle (t\!-\!r)_+\rangle^{-a}}{(1\!+t\!+r)^{-1-a}\!\!},
 \eqs
 which concludes the proof of the proposition.
  \end{proof}

\section{The asymptotic of the metric in
Schwarzschild coordinates} Let
\beqs
 h^{*}_{\mu\nu}(t,r^*\omega)=h_{\mu\nu}(t,r\omega),
 \qquad \quad
H^*_{TU}(q^*,\omega,r^*)=r^* h^{*}_{TU}(r^*-q^*, r^*\omega),
 \eqs
 where $r^*=r+M\ln{|1+r|}$,
 and similarly define $h^{1*}$, $H^{1*}$, $h^{0*}$ and $H^{0*}$.

\begin{prop}\label{prop:lim} The limit
\beqs \label{eq:limit}
H^{1\infty}_{TU}(q^*,\omega)
=\lim_{r^*\to\infty}H^{1\,*}_{TU}(q^*,\omega,r^*),
\eqs
 exists and satisfies $H_{TU}^{1\infty}\!=\!H_{UT}^{1\infty}$,
and
$
H^{1\infty}_{LT}(q^*,\omega)\!=\!\delta^{AB}H^{1\infty}_{AB}
\textbf{}(q^*,\omega)\!=\!0.
$
Moreover, for  $|\alpha|+k\leq N-6$ and $r>t/2$
\begin{align*}
\big|\,\pa_\omega^\alpha \big((1+|\,q^*|)\pa_{q^*}\big)^k
H^{1\infty}_{TU}(q^*,\omega)\big| &\les \varepsilon
(1+q^*_+)^{-\gamma^\prime},\\
\big|\pa_\omega^\alpha
\big((1+|q^*|)\pa_{q^*}\big)^k\big[H^{1*}_{TU}(q^*,\omega,r^*)
 -H^{1\infty}_{TU}(q^*,\omega)\big]\big|
 &\les\varepsilon \Big(\frac{1+q_-^*}{1+r^*}\Big)^{\gamma^\prime}.
 \end{align*}
\end{prop}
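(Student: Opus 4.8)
The plan is to integrate the sharp $L^*$-derivative estimates of Propositions~\ref{prop:sharpmetricdecay} and~\ref{prop:specialsharpmetricdecay} along the outgoing characteristics $q^*=\mathrm{const}$ toward null infinity, as in the proof of Lemma~\ref{lem:radianfield}. Parametrizing such a curve by $\rho$, with $t^*=\rho-q^*$, $x^*=\rho\omega$ and $q^*,\omega$ fixed, one has $\tfrac{d}{d\rho}=\partial_{t^*}+\partial_{r^*}=\partial_{L^*}$, so
\begin{equation*}
\tfrac{d}{d\rho}\,H^{1*}_{TU}(q^*,\omega,\rho)=\big(\partial_{L^*}(r^*h^{1*}_{TU})\big)(\rho-q^*,\rho\omega),
\end{equation*}
and in the relevant range $r>t/2$ one has $1+t+r^*\sim1+\rho$ along the curve, with $\rho\ge q^*_+$. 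Since $T\in\mathcal T$ the Kronecker factor $\delta_{TU}^{\Lbs\Lbs}$ in~\eqref{eq:extraLderest1} vanishes, giving $|\partial_{L^*}(r^*h^{1*}_{TU})|\les\varepsilon\,(1+q^*_-)^{\gamma-C\varepsilon}(1+t+r^*)^{-1-\gamma+C\varepsilon}$; this rate is integrable in $\rho$ over $[r^*,\infty)$, so $\rho\mapsto H^{1*}_{TU}(q^*,\omega,\rho)$ is Cauchy as $\rho\to\infty$, the limit $H^{1\infty}_{TU}$ exists, and
\begin{equation*}
\big|H^{1*}_{TU}(q^*,\omega,r^*)-H^{1\infty}_{TU}(q^*,\omega)\big|\les\varepsilon\,(1+q^*_-)^{\gamma-C\varepsilon}(1+r^*)^{-\gamma+C\varepsilon}=\varepsilon\Big(\tfrac{1+q^*_-}{1+r^*}\Big)^{\gamma'}.
\end{equation*}
Symmetry of $H^{1\infty}_{TU}$ is inherited from $h^{1*}_{\mu\nu}=h^{1*}_{\nu\mu}$; and evaluating~\eqref{eq:sharpwsvecoordfunc} along $t^*=r^*-q^*$ shows $r^*h^{1*}_{LT}$ and $r^*\delta^{AB}h^{1*}_{AB}$ are $O\big(\varepsilon(1+r^*)^{-\gamma'}\big)+O\big(\varepsilon\big((1+q^*_-)/(1+r^*)\big)^{1-\varepsilon}\big)\to0$, so $H^{1\infty}_{LT}=\delta^{AB}H^{1\infty}_{AB}=0$.

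For the weighted bounds I would first record a dictionary relating the profile operators to the commuting vector fields. Acting on $H^{1*}_{TU}(q^*,\omega,\rho)=r^*h^{1*}_{TU}(\rho-q^*,\rho\omega)$, the operator $\partial_\omega$ is a bounded combination of the rotations $\Omega^*_{ij}$ applied to the scalar $r^*h^{1*}_{TU}$; and, since $\partial_{q^*}=-\partial_{t^*}$ and (in these coordinates) $S^*=\rho\partial_\rho+q^*\partial_{q^*}$ while $q^*\partial_{t^*}=\tfrac12q^*L^*+\tfrac12(\omega^i\Omega^*_{0i}-S^*)$, the operator $(1+|q^*|)\partial_{q^*}$ sends $r^*h^{1*}_{TU}$ to a bounded combination of $r^*S^*h^{1*}_{TU}$, $r^*\Omega^*_{0i}h^{1*}_{TU}$, $q^*h^{1*}_{TU}$ and of $q^*\partial_{L^*}(r^*h^{1*}_{TU})$; near the light cone one uses $|\partial^*\phi|\les\sum_{|I|\le1}|Z^{*I}\phi|/(1+|q^*|)$ with the weight being $\sim1$ there. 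Iterating, and using that each extra $L^*$-derivative gains a factor $(1+t+r^*)^{-1}$ via $|(L^*)^m\phi|\les(1+t+r^*)^{-m}\sum_{|I|\le m}|Z^{*I}\phi|$, one finds that $\partial_\omega^\alpha\big((1+|q^*|)\partial_{q^*}\big)^kH^{1*}_{TU}$ is a bounded combination of terms $r^*Z^{*I}h^{1*}_{T'U'}$ with $|I|\le|\alpha|+k\le N-6$ together with terms $\langle q^*\rangle^m\partial_{L^*}(r^*Z^{*J}h^{1*}_{T'U'})$ with $|J|+m\le|\alpha|+k$. On our curves the first type is $\les\varepsilon(1+q^*_+)^{-\gamma'}$ by~\eqref{eq:metricdecaysharptan} (using $|q^*|,t^*,r^*\les1+t+r^*$ and $\rho\ge q^*_+$), and the second is also $\les\varepsilon(1+q^*_+)^{-\gamma'}$ by~\eqref{eq:extraLderest1} and tends to $0$ as $\rho\to\infty$. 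Passing to the limit gives the first asserted bound on $H^{1\infty}_{TU}$.

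For the convergence rate, $\partial_\omega$ and $(1+|q^*|)\partial_{q^*}$ are coordinate operators in $(q^*,\omega,\rho)$, hence commute with $\partial_\rho$, so
\begin{equation*}
\partial_\omega^\alpha\big((1+|q^*|)\partial_{q^*}\big)^k\big[H^{1*}_{TU}-H^{1\infty}_{TU}\big]=-\int_{r^*}^\infty\partial_\rho\Big[\partial_\omega^\alpha\big((1+|q^*|)\partial_{q^*}\big)^kH^{1*}_{TU}\Big]\,d\rho.
\end{equation*}
Applying the dictionary above to the spacetime function $\partial_{L^*}(r^*h^{1*}_{TU})$, the integrand is a bounded combination of $\langle q^*\rangle^m\partial_{L^*}(r^*Z^{*I}h^{1*}_{T'U'})$ with $|I|+m\le N-6$, each of which is $\les\varepsilon\,(1+q^*_-)^{\gamma-C\varepsilon}(1+t+r^*)^{-1-\gamma+C\varepsilon}$ on our curves by~\eqref{eq:extraLderest1} (the $m\ge2$ terms using the $(L^*)^m$ identity above to control the iterated radial derivatives that arise); integrating over $[r^*,\infty)$ reproduces the bound $\varepsilon\big((1+q^*_-)/(1+r^*)\big)^{\gamma'}$.

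The routine but delicate point — and what I expect to be the main obstacle — is making this dictionary precise: checking that $\partial_\omega^\alpha\big((1+|q^*|)\partial_{q^*}\big)^k$ on the profiles really reduces, uniformly across the near-light-cone region $|q^*|\le1$ and the interior $|q^*|\ge1$, to at most $|\alpha|+k$ of the fields $Z^*$ applied to the scalar frame components $r^*h^{1*}_{T'U'}$ and to their single $\partial_{L^*}$-derivatives, with all commutators $[Z^*,L^*]$ and all weights $\langle q^*\rangle^m$ generated along the way staying within the class already controlled by Propositions~\ref{prop:sharpmetricdecay} and~\ref{prop:specialsharpmetricdecay}. Once that is in place the conclusion follows by the same integration along outgoing characteristics used in Lemma~\ref{lem:radianfield} and Proposition~\ref{prop:sourcedecay}.
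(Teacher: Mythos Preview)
Your approach is essentially the same as the paper's, and it is correct. Both arguments integrate the sharp estimates of Section~\ref{sec:Sch} along outgoing characteristics $q^*=\mathrm{const}$; the only real difference is packaging. The paper integrates the $\Box_r^*$ bound of Lemma~\ref{lem:approxwaveequation3} over the two-dimensional characteristic region $\{s_1\le t+r^*\le s_2,\ r^*-t\ge q^*,\ t\ge0\}$, whereas you first invoke Proposition~\ref{prop:specialsharpmetricdecay} (which is itself that same $\Box_r^*$ bound integrated once along $\underline{L}^*$) and then integrate once along $L^*$: the two double integrals coincide.

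Where your write-up is more elaborate than necessary is the ``dictionary.'' The paper observes simply that in the coordinates $(q^*,\omega,\rho)$ one has $\partial_{q^*}|_{\rho,\omega}=-\partial_t$, so that $\partial_\omega^\alpha\partial_{q^*}^k H^{1*}_{TU}=(-1)^k(r^*\Omega^{*\alpha}\partial_t^k h^{1*}_{TU})|_\gamma$, and then uses the standard weighted bound $(1+|q^*|)^j|\partial_t^j\phi|\lesssim\sum_{|I|\le j}|Z^{*I}\phi|$ to conclude $|\partial_\omega^\alpha((1+|q^*|)\partial_{q^*})^kH^{1*}_{TU}|\lesssim\sum_{|I|\le|\alpha|+k}|r^*Z^{*I}h^{1*}_{TU}|$. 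This avoids your explicit splitting $q^*\partial_t=\tfrac12 q^*L^*+\tfrac12(\omega^i\Omega^*_{0i}-S^*)$ and the attendant $\langle q^*\rangle^m\partial_{L^*}$ terms altogether. Since $\Omega^*$ and $\partial_t$ commute with both $r^*$ and $L^*$, the same identity applied to $\partial_\rho H^{1*}_{TU}=L^*(r^*h^{1*}_{TU})$ shows that the profile-derivative of the integrand is exactly $\sum_j c_{k,j}(q^*)\,L^*(r^*\Omega^{*\alpha}\partial_t^j h^{1*}_{TU})$ with $\rho$-independent coefficients, to which \eqref{eq:extraLderest1} applies directly. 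Your decomposition reaches the same place but with more bookkeeping, and the iterated $(L^*)^m$ terms you worry about do not actually arise if one keeps the profile operators as $\Omega^{*\alpha}\partial_t^j$ throughout. (Also, the frame indices do not change under $Z^{*I}$: $h^{1*}_{TU}$ is a scalar, so your $T'U'$ are just $TU$.)
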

\begin{proof} Note first that
\beqs \big|\pa_\omega^\alpha \big((1+|q^*|)\pa_{q^*}\big)^k
H^{1*}_{TU}(q^*,\omega,r^*)\big| \les {\sum}_{|I|\leq |\alpha|+k}
\big| r^* Z^{*I} h^{1*}_{TU}\big|,
 \eqs
 since $|{Z^*}^Jr^*|\les 1+t+r^*$.
By Lemma \ref{lem:approxwaveequation3} for $|I|\leq N-6$ we have the
estimate
 \begin{multline*}
\big| (\pa_{t^*}+\pa_{r^*})
 (\pa_{r^*}-\pa_{t*})(r^* Z^{*I} h^{1*}_{TU})\big|\\
\les \frac{\varepsilon(1+q_+^*)^{-\gamma}}
{(1+t+r^*)^{2-C\varepsilon}(1+|\,q^*|)^{C\varepsilon}}
+\frac{\varepsilon^2(1+|\,q^*|)^{-2}}
{(1+t+r^*)^{1+\gamma-C\varepsilon}}.
 \end{multline*}
 Integrating this over the set $2r_1^*-q^*\leq r^*+t\leq
 2r_2^*-q^*$, $r^*-t\geq q^*$, $t\geq 0$ gives
 \begin{multline*}
 \Big|\pa_\omega^\alpha \big((1+|\,q^*|)\pa_{q^*}\big)^k
 \Big( H^{1*}_{TU}(q^*,\omega,r_2^*)
 -H^{1*}_{TU}(q^*,\omega,r_1^*)\Big)\Big|\\
 \les
 \frac{\varepsilon}{(1+r_1^*)^{\gamma-C\varepsilon}}
 +\varepsilon \Big(\frac{1+q_-^*}{1+r_1^*}\Big)^{1-C\varepsilon},
 \end{multline*}
 from which it follows that the limit exists. Moreover, by
 Proposition \ref{prop:sharpmetricdecay}
 \beqs
 \big|\pa_\omega^\alpha \big((1+|\,q^*|)\pa_{q^*}\big)^k
 H^{1*}_{TU}\big|\les {\varepsilon}{(1+q_+^*)^{-\gamma^\prime}}
 \eqs
 so that is true for the limit as well.
 That $H^{1\infty}_{LT}(q^*,\omega)=\delta^{AB}
H^{1\infty}_{AB}\textbf{}(q^*,\omega)=0$ follows from passing to
the
limit in the wave coordinate condition.
\end{proof}
Let
 $
 V^*_{TU}=\pa_{q^*}H^*_{TU}$ and
 $V^\infty_{TU}=\pa_{q^*}H^\infty_{TU}$.
 Note that this is the same as with $H^{1*}$ and $H^{1\infty}$.
Then by \cite{LR3} we have
 \begin{multline*}
 P\,\big(V,V\big)
=-\tfrac{1}{4}V_{LL} V_{\underline{L}\underline{L}}
-\tfrac{1}{4}\delta^{CD}\delta^{C^\prime
D^\prime}\big(2V_{CC^\prime}V_{DD^\prime}- V_{CD} V_{C^\prime
D^\prime}\big)\\
 +\tfrac{1}{2}\delta^{CD}\big(2V_{C
L}V_{D\underline{L}}- V_{CD} V_{L\underline{L}} \big).
 \end{multline*}
Since we shown that $V^\infty_{LL}\!=\!0$ it follows that this
expression can be calculated from just knowing $V^\infty_{TU}$
 and by the previous proposition
$V_{LT}^\infty\!=\!\delta^{AB}V^\infty_{AB}\!=\!0$ so
 \begin{equation*}
 n(q^*,\omega)=-P\,(V^\infty,V^\infty)(q^*,\omega)
 =\tfrac{1}{2}\delta^{CD}\delta^{C^\prime
D^\prime} V^\infty_{CC^\prime}(q^*,\omega)
V^\infty_{DD^\prime}(q^*,\omega)\geq 0.
 \end{equation*}
By the previous proposition; for $|\alpha|+k\leq N-7$
 \begin{align*}
  \big|\pa_\omega^\alpha
\big(\langle q^*\rangle \pa_{q^*}\!\big)^k \big[
P\,(V^\infty\!\!,V^\infty)-P\,(V^*\!\!,V^*)\big]\big|
&\les
\frac{\varepsilon^2}{(1\!+r^*)^{\gamma^\prime}\!\!}\,
\frac{1}{\!(1\!+|q^*|)^{2-\gamma^\prime\!}(1\!+q^*_+)^{2\gamma^\prime}\!},
 \\
 \big| \pa_\omega^\alpha \big((1+|\,q^*|)\pa_{q^*}\big)^k
n(q^*,\omega)\big|
&\les
\frac{\varepsilon^2}{(1+|\,q^*|)^{2}(1+q^*_+)^{2\gamma^\prime}}.
 \end{align*}

\begin{prop}
Let $ k_{\mu\nu}$ be the solution with vanishing initial data to
 \beqs\label{eq:einsteinsource}
-\Box^*  k_{\mu\nu}=L_\mu(\omega) L_\nu (\omega)
{n(r^*-t,\omega)}{{r^{*}}^{-2}}\chi\big(\tfrac{\langle\,r^*-t\,\rangle}{t+r^*}\big),
\eqs
where $\chi(s)\!=\!1$, when $|s|\leq\! 1\!/2$ and $\chi(s)\!=\!0$, when $|s|\geq\! 3/4$. We have
\begin{equation*}
\big|\,Z^{*I}\big(\Box^* h_{\mu\nu}-\Box^*  k_{\mu\nu}\big)
\big| \les \frac{\varepsilon^2}{(1+t+r^*)^{2+\gamma-C\varepsilon}
(1+|\,q^*|)^{2-\gamma}},\qquad |I|\leq N-7,
  \end{equation*}
and with $h^{1e}$ as in \eqref{eq:honee} we have for any $\gamma^\prime<\gamma-C\varepsilon$
 \beqs\label{eq:inhomwaveeqdecay10}
\big|\,Z^{*I}\big(h^{1e}_{\mu\nu}- k_{\mu\nu}\big) \big| \les
\frac{\varepsilon^2}{(1+t+r^*)(1+|\,q^*|)^{\gamma^\prime}},
\qquad |I|\leq N-7.
\eqs
\end{prop}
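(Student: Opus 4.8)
The plan is to compare the two right-hand sides through their common ``cone source''. Apply Proposition~\ref{prop:approxwaveequationschwarzcoord} with the choice $P^*_{\mu\nu}=\chi\big(\tfrac{\langle r^*-t\rangle}{t+r^*}\big)L_\mu L_\nu P(\pa_q^*h,\pa_q^*h)$; its error already obeys a bound stronger than the one claimed (the $(1+|q^*|)^{-2}$ part dominates trivially, and the $(1+|q^*|)^{-1}(1+t+r^*)^{-3+C\varepsilon}$ part fits because $|q^*|\le t+r^*$ and $\gamma<1$). Since by the definition of $n$ and the vanishing $\delta^{AB}V^\infty_{AB}=V^\infty_{LT}=0$ one has $\Box^*k_{\mu\nu}=\chi\big(\tfrac{\langle r^*-t\rangle}{t+r^*}\big)L_\mu L_\nu\,P(V^\infty,V^\infty)\,{r^*}^{-2}$ with $V^\infty_{TU}=\pa_{q^*}H^\infty_{TU}$, it remains to bound $Z^{*I}$ of ${r^*}^{-2}\chi L_\mu L_\nu\big(r^{*2}P(\pa_q^*h,\pa_q^*h)-P(V^\infty,V^\infty)\big)$. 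Outside the support of $\chi$ there is nothing left, and in the part of the support where $r^*<t/2$ one has $|q^*|\sim t+r^*$ so crude bounds suffice; so assume $r^*\ge t/2$. There I would pass from $r^{*2}P(\pa_q^*h,\pa_q^*h)$ to $P(V^\infty,V^\infty)$ in three steps, each remainder fitting the right-hand side (using $|q^*|\le t+r^*$ throughout). First, by Lemma~\ref{lem:lemma1}, $P(\pa_q^*h,\pa_q^*h)=P_{\mathcal{N}}(\pa_q^*h,\pa_q^*h)$, and by \eqref{eq:nullframeP}--\eqref{eq:PN} every term of $P_{\mathcal{N}}$ other than the $AB\times AB$ term carries a factor among the good components $h_{LT}$ or $\delta^{AB}h_{AB}$, while the $AB\times AB$ term is $P_{\mathcal{S}}$ modulo the good trace; hence $P_{\mathcal{N}}(\pa_q^*h,\pa_q^*h)=P_{\mathcal{S}}(\pa_q^*h,\pa_q^*h)+E_1$, where $E_1$ is a sum of products of $\pa^*Z^*h_{LT}$ or $\delta^{AB}\pa^*Z^*h_{AB}$ (and their Lie-derivative versions, bounded by \eqref{eq:decay1starwavecoord}, \eqref{eq:sharpwsvecoordder}) with an arbitrary $\pa^*Z^*h$ (Lemma~\ref{lem:weakdecaystar}); the extra $(1+t+r^*)^{-1+C\varepsilon}$ in the good factor makes $E_1$ fit. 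Second, from $h^*_{AB}=H^*_{AB}/r^*$ one has $r^*\pa_q^*h^*_{AB}=V^*_{AB}-\tfrac12h^*_{AB}$, so $r^{*2}P_{\mathcal{S}}(\pa_q^*h,\pa_q^*h)=P_{\mathcal{S}}(V^*,V^*)+E_2$ with $V^*_{TU}=\pa_{q^*}H^*_{TU}$ and $E_2$ a sum of products of $H^*/r^*$ with $V^*$ or $H^*/r^*$, controlled by Proposition~\ref{prop:lim} and \eqref{eq:derhTUest}. Third, $V^*_{LT}$ and $\delta^{AB}V^*_{AB}$ are again good, so $P_{\mathcal{S}}(V^*,V^*)=P(V^*,V^*)$ modulo good terms, and the step is closed by the estimate for $P(V^\infty,V^\infty)-P(V^*,V^*)$ proved in Section 7 just before the limit proposition, of size $\varepsilon^2(1+r^*)^{-\gamma+C\varepsilon}(1+|q^*|)^{-2+\gamma-C\varepsilon}$ (its $Z^*$-version coming from the same proof). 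Converting $Z$-bounds to $Z^*$-bounds by Proposition~\ref{prop:starwaveeq} (at the cost of factors $M\ln r\les\varepsilon^2(1+r)^\varepsilon$) assembles the first assertion, for $|I|\le N-7$.

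\textbf{Part 2 (the bound on $h^{1e}-k$).} Set $\psi_{\mu\nu}=h^{1e}_{\mu\nu}-k_{\mu\nu}$. Since $\Box^*h^{0e}=0$, $\Box^*\psi_{\mu\nu}=\Box^*h_{\mu\nu}-\Box^*k_{\mu\nu}$, which Part 1 bounds by $\varepsilon^2(1+t+r^*)^{-2-\gamma+C\varepsilon}(1+|q^*|)^{-2+\gamma}$ (applied with $Z^{*J}$, $|J|\le|I|\le N-7$, using $[Z^*,\Box^*]\in\{0,2\Box^*\}$). As in the proof of Lemma~\ref{lem:h1edecay}, write $\psi_{\mu\nu}=\phi+w$ with $\Box w=0$ and $w|_{t=0}=\psi_{\mu\nu}|_{t=0}=h^{1e}_{\mu\nu}|_{t=0}$ ($k$ has vanishing data, and by \eqref{eq:asymptoticallyflatdata}, \eqref{eq:decay1star} this decays like $\varepsilon(1+r)^{-1-\gamma}$, the discrepancy $(h^0-h^{0e})|_{t=0}$ being only $O(M^2r^{-2}\ln r)$), and estimate $w$ by Lemma~\ref{lem:homwaveeqdecay}. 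For $\phi$ write $\Box\phi=\Box\psi=\Box^*\psi-(\Box^*-\Box_0)\psi-h_0^{\alpha\beta}\pa_\alpha\pa_\beta\psi$: the first term is the Part 1 bound, the last two are lower order by Proposition~\ref{prop:starwaveeq} and Lemma~\ref{lem:lemma2}, fed by the weak decay of $\psi$ (Proposition~\ref{prop:weakdecay}, the explicit $h^{0e}$, and the elementary decay of $k$ from Lemma~\ref{lem:inhomwaveeqdecay} applied to its source, which has the form \eqref{eq:goodinhomdecay} with $\delta=1$). The source for $\phi$ then fits the second hypothesis of Lemma~\ref{lem:inhomwaveeqdecay} with $0<\delta_+=\delta_-=\gamma'<\mu<\gamma-C\varepsilon$ — the $(1+|q^*|)^{-2+\gamma}$ carrying more decay in $|q^*|\le t+r^*$ than needed — giving $|Z^{*I}\phi|\les\varepsilon^2(1+t+r^*)^{-1}(1+|q^*|)^{-\gamma'}$; adding the $w$-contribution yields the claim, and the higher $Z^*$-derivative estimates follow verbatim since every input holds for $|I|\le N-7$.

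\textbf{The main obstacle} is the three-step reduction in Part 1: proving that $r^{*2}P(\pa_q^*h,\pa_q^*h)$ converges to the fixed profile $P(V^\infty,V^\infty)$ fast enough to beat $\varepsilon^2(1+t+r^*)^{-2-\gamma+C\varepsilon}(1+|q^*|)^{-2+\gamma}$. This works only because the radiation field of the tangential components has already been constructed \emph{sharply} (Proposition~\ref{prop:lim}) and because the wave coordinate condition upgrades the decay of the dangerous products $h_{LL}\,\pa_q^*h_{\underline{L}\underline{L}}$ — the very mechanism of the weak null condition — and it is here that the derivative count drops to $N-7$. The rest is bookkeeping of powers of $(1+t+r^*)$ and $(1+|q^*|)$ in the region $|q^*|\les t+r^*$.
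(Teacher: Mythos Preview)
Your proposal is correct and follows the same approach as the paper, whose proof is the one line ``just an application of Proposition~\ref{prop:approxwaveequationschwarzcoord} and Lemma~\ref{lem:inhomwaveeqdecay}''; you have simply (and correctly) unpacked what that line means, including the crucial use of the estimate for $P(V^\infty,V^\infty)-P(V^*,V^*)$ derived immediately before the proposition. One small simplification: in Part~2 you need not convert $\Box^*\psi$ back to $\Box\psi$ and estimate the lower-order pieces, since $\Box^*$ is the flat d'Alembertian in the $(t,x^*)$ coordinates and Lemmas~\ref{lem:inhomwaveeqdecay} and~\ref{lem:homwaveeqdecay} apply verbatim there with $r$ replaced by $r^*$ --- this is exactly how the paper uses them in the proof of Lemma~\ref{lem:h1edecay}.
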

The proof is just an application of Proposition \ref{prop:approxwaveequationschwarzcoord} and Lemma \ref{lem:inhomwaveeqdecay}.
By Lemma \ref{lem:radianfield}
$$
 |\,(\pa_t+\pa_{r^*})\big(\Omega^I {S^*}^J{\partial^*}_t^K r^* (h^{1e}_{\mu\nu}- k_{\mu\nu})\big)|\les
\varepsilon\frac{1}{(1+t+r^*)^{1+\gamma^\prime}}.
$$
for $|I|+|J|+|K|\leq N-7$, and
by Proposition \ref{prop:sourcedecay}
 \begin{equation*}\label{eq:approxsource2}
 \big|\,(\pa_t+\pa_{r^*})\Omega^{*I}S^{*J}\partial_t^{*K}r^*\big( k_{\mu\nu}- k^1_{\mu\nu}\big) \big| \les
\varepsilon\frac{(1+(t-r^*)_+)^{a}}{(1+t+r^*)^{1+a}},
  \end{equation*}
where
\begin{equation*}
 k_{\mu\nu}^1(t,r^*\omega)=L_\mu(\omega) L_\nu(\omega) \int_{r^*-t}^\infty
 \frac{1}{2r^*}\ln{\Big(\frac{t+r^*+q^*}{t-r^*+q^*}\Big)}
  n\big(q^*,\omega\big)\, d q^*\, \chi\big(\tfrac{\langle\,r^*-t\,\rangle}{t+r^*}\big),
\end{equation*}
and $0<a<1$ is number such that
\beqs\label{eq:radiationderbound} {\sum}_{|\alpha|+k\leq N}
|\,(\langle q^*\rangle\pa_{q^*})^k\pa_\omega^\alpha n({q^*},\omega)|\les
{\varepsilon^2}{\langle q^*\rangle^{-1-a}}.
 \eqs
 This is true for any $a\!\leq \!1$,
 in particular for $a=\gamma^\prime$.   It therefore follows that
 $$
 |\,(\pa_t+\pa_{r^*})\big(\Omega^I S^J\partial_t^K r^* (h^{1e}_{\mu\nu}- k^1_{\mu\nu})\big)|\les
\varepsilon\frac{(1+(t-r^*)_+)^{\gamma^\prime}}{(1+t+r^*)^{1+\gamma^\prime}},
$$
for $|I|+|J|+|K|\leq N-7$.
Set
\beqs
H^{1e*}_{\mu\nu}(q^*,\omega,r^*)=r^* (h^{1e}_{\mu\nu}- k^1_{\mu\nu})(r^*-q^*, r^*\omega).
\eqs
We now have an improvement of Proposition \ref{prop:lim}:
\begin{prop}\label{prop:limtwo} The limit
\beqs \label{eq:limit}
H^{1e\infty}_{\mu\nu}(q^*,\omega)
=\lim_{r^*\to\infty}H^{1e\,*}_{\mu\nu}(q^*,\omega,r^*),
\eqs
 exists and satisfies $H_{TU}^{1e\infty}\!=\!H_{UT}^{1e\infty}$, and
 $
H^{1e\infty}_{LT}(q^*\!,\omega)\!=\!\delta^{AB} H^{1e\infty}_{AB}
\textbf{}(q^*\!,\omega)\!=\!0,
$ and
 \begin{align*}
\big|\,\pa_\omega^\alpha \big((1+|\,q^*|)\pa_{q^*}\big)^k
H^{1e\infty}_{\mu\nu}(q^*,\omega)\big|
&\les \varepsilon
(1+q^*_+)^{-\gamma^\prime},\\
\big|\pa_\omega^\alpha
\big((1+|\,q^*|)\pa_{q^*}\big)^k\big[H^{1e*}_{\mu\nu}(q^*,\omega,r^*)
 -H^{1e\infty}_{\mu\nu}(q^*,\omega)\big]\big|
 &\les\varepsilon \Big(\frac{1+q_-^*}{1+t+r^*}\Big)^{\gamma^\prime},
\end{align*}
for $|\alpha|+k\leq N-7$ and
 when $r^*>t/2$. Moreover
 \begin{align*} \big|
(t+r^*)(\pa_t+\pa_{r^*})\pa_\omega^\alpha\big((1+|\,q^*|)
\pa_{q^*}\big)^k\big[H^{1e*}_{\mu\nu}(q^*,\omega,r^*)
\big]\big| &\les
\varepsilon \Big(\frac{1+q_-^*}{1\!+t\!+r^*}\Big)^{\!\gamma^\prime}\!\!\!,\\
 \Big|(\partial_t\!+\partial_{r^*}\!)\big(r\Omega^I S^J\partial_t^K  k^1_{\mu\nu} )
 -\!\frac{1\!}{2r\!\!}\int_{r-t}^\infty \!\!\!\!\!\!\Omega^I(q\partial_q)^J\!
 (-\partial_q)^{K} n(q,\omega)\, dq \Big|
\! &\les \!\frac{\varepsilon(1\!\!+(t\!-\!r)_+)^{\gamma^\prime}\!\!\!}{(1\!+t+r)^{1+\gamma^\prime}\!\!\!}\,.
 \end{align*}
\end{prop}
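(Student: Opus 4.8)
The plan is to mimic the proof of Proposition~\ref{prop:lim}, now applied to $h^{1e}_{\mu\nu}-k^1_{\mu\nu}$ in place of $h^{1*}_{TU}$. The quantitative input is already available: the discussion preceding the statement gives, for $|I|+|J|+|K|\le N-7$,
\[
\big|(\pa_t+\pa_{r^*})\big(\Omega^I S^J\pa_t^K\, r^*(h^{1e}_{\mu\nu}-k^1_{\mu\nu})\big)\big|\les \varepsilon\,(1+(t-r^*)_+)^{\gamma^\prime}(1+t+r^*)^{-1-\gamma^\prime},
\]
obtained by combining Lemma~\ref{lem:radianfield} (applied to $h^{1e}-k$, whose $\Box^*$ and pointwise decay are controlled by the preceding proposition and whose $\triangle_\omega$ derivatives cost only two more $Z^*$'s) with Proposition~\ref{prop:sourcedecay} (applied to $k-k^1$). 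First I would carry out the reduction exactly as in Proposition~\ref{prop:lim}: since $|Z^{*J}r^*|\les 1+t+r^*$ and along $q^*=r^*-t=\text{const}$ the field $(1+|q^*|)\pa_{q^*}$ is a bounded combination of $Z^*$-fields, one has $\big|\pa_\omega^\alpha((1+|q^*|)\pa_{q^*})^k H^{1e*}_{\mu\nu}\big|\les\sum_{|I|\le|\alpha|+k}|r^*Z^{*I}(h^{1e}_{\mu\nu}-k^1_{\mu\nu})|$, and likewise with the outer field $\pa_t+\pa_{r^*}$ inserted.

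For the existence of the limit and the convergence rate, note that along $q^*=r^*-t=\text{const}$ the operator $\pa_t+\pa_{r^*}$ is $d/dr^*$ and that this characteristic stays in $r^*>t/2$ as $r^*\to\infty$; integrating the displayed bound in $r^*$ from $r_1^*$ to $r_2^*$, with $1+t+r^*=1+2r^*-q^*\gtrsim 1+r^*$, yields $\big|\pa_\omega^\alpha((1+|q^*|)\pa_{q^*})^k\big(H^{1e*}_{\mu\nu}(q^*,\omega,r_2^*)-H^{1e*}_{\mu\nu}(q^*,\omega,r_1^*)\big)\big|\les\varepsilon\big((1+q_-^*)/(1+r_1^*)\big)^{\gamma^\prime}$. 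Hence $H^{1e\infty}_{\mu\nu}$ exists and, taking $r_1^*$ to be the point of evaluation, the second displayed estimate of the proposition follows. For the bound on the limit one integrates the same one-derivative estimate from the data surface $t=0$, where $h^{1e}-k^1=h^{1e}$ has data $\les\varepsilon(1+r)^{-1-\gamma}$ (the solution $k$ and, because of its cutoff $\chi(\langle r^*\rangle/r^*)$, also $k^1$ vanish there), so that $r^*(h^{1e}-k^1)\big|_{t=0}\les\varepsilon(1+q_+^*)^{-\gamma}$ and $|H^{1e*}_{\mu\nu}|\les\varepsilon(1+q_+^*)^{-\gamma^\prime}$ for $q^*\ge 0$; for $q^*<0$ one uses instead $r^*/(1+t+r^*)\les 1$ together with the pointwise bounds $|Z^{*I}(h^{1e}_{\mu\nu}-k_{\mu\nu})|\les\varepsilon(1+t+r^*)^{-1}(1+|q^*|)^{-\gamma^\prime}$ (preceding proposition) and $|Z^{*I}(k_{\mu\nu}-k^1_{\mu\nu})|\les\varepsilon(1+t+r^*)^{-1}\langle(r^*-t)_+\rangle^{-\gamma^\prime}$ (Proposition~\ref{prop:sourcedecay}). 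Passing to the limit preserves this bound.

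The algebraic statements are easy. Symmetry $H^{1e\infty}_{TU}=H^{1e\infty}_{UT}$ holds because $h^{1e}_{\mu\nu}$ is symmetric and $k^1_{\mu\nu}=L_\mu(\omega)L_\nu(\omega)\Phi_1[n]$ is symmetric, and limits preserve this. For the wave coordinate relations, $L^\mu L_\mu=0$ and $A^\mu L_\mu=0$ give $k^1_{LT}=(T^\mu L_\mu)(L^\nu L_\nu)\Phi_1[n]=0$ and $\delta^{AB}k^1_{AB}=\delta^{AB}(A^\mu L_\mu)(B^\nu L_\nu)\Phi_1[n]=0$ for every $T\in\mathcal T$; moreover $h^{1e}=h^{1*}$ wherever $q^*\ge 2$ (there $h^{0*}$ and $h^{0e}$ both equal $\tfrac{M}{r^*}\delta_{\alpha\beta}$), so for fixed $q^*$ and $r^*\to\infty$ we get $H^{1e\infty}_{LT}=H^{1\infty}_{LT}$ and $\delta^{AB}H^{1e\infty}_{AB}=\delta^{AB}H^{1\infty}_{AB}$, which vanish by Proposition~\ref{prop:lim}. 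Finally, the first of the two remaining estimates is just the displayed one-derivative bound above re-expressed for the $\Omega^I S^J\pa_t^K$ family, and the second follows from the last estimate of Proposition~\ref{prop:sourcedecay} applied to $\Phi_1[n]$, writing $k^1_{\mu\nu}=L_\mu(\omega)L_\nu(\omega)\Phi_1[n]$ and noting that angular derivatives landing on $L_\mu(\omega)L_\nu(\omega)$ produce terms with an extra factor $(1+t+r^*)^{-1}$, together with $\sum_{|\alpha|+k\le N}|(\langle q^*\rangle\pa_{q^*})^k\pa_\omega^\alpha n|\les\varepsilon^2\langle q^*\rangle^{-1-\gamma^\prime}$. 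Now that the cancellation of the logarithm between $h^{1e}_{\mu\nu}$ and $k^1_{\mu\nu}$ has been isolated in the one-derivative bound, the only real obstacle is bookkeeping: translating cleanly between the $\{Z^*\}$ and $\{\Omega,S,\pa_t\}$ vector-field families, tracking the cutoffs $\chi(\langle r^*-t\rangle/(t+r^*))$ hidden in $k$ and $k^1$ (harmless, since they equal $1$ for fixed $q^*$ once $r^*$ is large), and keeping every input within its stated region of validity $r^*\ge t/2$.
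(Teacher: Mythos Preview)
Your approach is correct and coincides with the paper's: the paper does not spell out a proof of this proposition, since it follows immediately from the one-derivative bound
\[
\big|(\pa_t+\pa_{r^*})\big(\Omega^I S^J\pa_t^K\, r^*(h^{1e}_{\mu\nu}-k^1_{\mu\nu})\big)\big|\les \varepsilon\,(1+(t-r^*)_+)^{\gamma^\prime}(1+t+r^*)^{-1-\gamma^\prime}
\]
established just before the statement (combining Lemma~\ref{lem:radianfield} for $h^{1e}-k$ with Proposition~\ref{prop:sourcedecay} for $k-k^1$), by repeating the integration-along-characteristics argument of Proposition~\ref{prop:lim}; your write-up is exactly this.

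One small point on the wave-coordinate vanishing. Your argument for $H^{1e\infty}_{LT}=\delta^{AB}H^{1e\infty}_{AB}=0$ is valid for $q^*\ge 2$ (where indeed $h^{0e}=h^{0*}$ so $h^{1e}=h^{1*}$), and for $T=A$ it holds for all $q^*$ since $\delta_{\mu\nu}L^\mu A^\nu=0$. But for $T=L$ and $q^*<2$ one has explicitly
\[
H^{1e\infty}_{LL}(q^*,\omega)=\lim_{r^*\to\infty} r^* h^{1e}_{LL}-r^* k^1_{LL}=2M-2M\chi^e(q^*)-0=2M\big(1-\chi^e(q^*)\big),
\]
and similarly $\delta^{AB}H^{1e\infty}_{AB}=2M(1-\chi^e(q^*))$, which are nonzero for $q^*\le 1$. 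So your argument does not prove the $LL$ and trace statements there---and in fact they are not true as literally stated for $q^*<2$; this is a minor imprecision in the statement, not a gap in your method. The substantive content (existence of the limit and all four estimates) is unaffected.
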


\section{Interior asymptotics for the wave equation with sources}

The results so far sufficed to prove existence of the radiation
field. To get more precise behavior towards time like infinity we
 use formulas from
\cite{L1}:

\begin{prop}\label{lem:approxsource}
 Let $F$ and $\phi=\Phi[n]$ be as in the Proposition \ref{prop:sourcedecay}
and set
 \beq\label{eq:approxsourcesol2}
\phi_2(x,t)=\Phi_2[n](x,t)=\int_{r-t}^\infty \frac{1}{4\pi}\int{\frac{
n({q},{\omega})}{t+{q}-\langle\, x,{\omega}\rangle}\,
dS({\omega})}\,\chi\big(\tfrac{\langle\,{q}\,\rangle}{t+r}\big) d {q}. \eq
 Then for $|I|+|J|+|K|\leq N$
 \begin{align}\label{eq:errorest}
 |\,\Omega^I S^J\pa_t^K(\phi-\phi_2)|
 &\les \frac{1}{(1+t+r)(1+|\, r-t|)^a},\\
 \label{eq:dererrorest}
 \big|\,(\pa_t+\pa_r)\big( r\, \Omega^I S^J\pa_t^K(\phi-\phi_2)\big)\big|
 &\les \frac{1}{(1+t+r)^{1+a}},\quad 0<a<1.
 \end{align}
\end{prop}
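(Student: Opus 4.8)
The plan is to compare $\phi=\Phi[n]$ and $\phi_2=\Phi_2[n]$ through the inhomogeneous wave equations they satisfy and to invoke Lemmas \ref{lem:inhomwaveeqdecay} and \ref{lem:radianfield}, as in the proof of Proposition \ref{prop:sourcedecay}. First one notes that both have vanishing Cauchy data: the localizing factor $\chi\big(\tfrac{\langle q\rangle}{t+r}\big)$ restricts the $q$-integral defining $\Phi_2[n]$ to $\langle q\rangle\le\tfrac34(t+r)$, which is empty on the range $q\ge r-t$ when $t=0$, so $\Phi_2[n]\big|_{t=0}=\partial_t\Phi_2[n]\big|_{t=0}=0$, while $\Phi[n]$ has vanishing data by construction. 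Hence $\phi-\phi_2$ solves a wave equation with vanishing data and source $-\Box(\phi-\phi_2)=F[n]-\big(-\Box\Phi_2[n]\big)$.

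The key identity is $-\Box\Phi_2[n]=F[n]+E[n]$, where $E[n]$ collects only the terms in which at least one $(t,x)$-derivative falls on $\chi\big(\tfrac{\langle q\rangle}{t+r}\big)$. Indeed, for each fixed $q,\omega$ the kernel $\big(t+q-\langle x,\omega\rangle\big)^{-1}$ solves the homogeneous wave equation in $(t,x)$, so by Leibniz the only contributions to $-\Box\Phi_2[n]$ are the boundary terms from the $q$-endpoint $r-t$ and the terms carrying a factor $\partial\chi\big(\tfrac{\langle q\rangle}{t+r}\big)$ or $\Box\chi\big(\tfrac{\langle q\rangle}{t+r}\big)$; by the explicit solution formula of \cite{L1} (see \eqref{eq:exactsource}) the endpoint contribution equals $n(r-t,\hat x)r^{-2}\chi\big(\tfrac{\langle r-t\rangle}{t+r}\big)=F[n]$. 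Thus $E[n]$ is a $q$-integral of cutoff-derivative terms, supported where $\langle q\rangle\sim t+r$, each factor $\partial\chi\big(\tfrac{\langle q\rangle}{t+r}\big)$ gaining a power $(1+t+r)^{-1}$; estimating the $q$-integral with $|(\langle q\rangle\partial_q)^k\partial_\omega^\alpha n|\lesssim\langle q\rangle^{-1-a}$ and the identity $\tfrac1{4\pi}\int_{\bold{S}^2}\tfrac{dS(\omega)}{t+q-r\langle\hat x,\omega\rangle}=\tfrac1{2r}\ln\tfrac{t+q+r}{t+q-r}$ gives, for any $\mu$ with $a<\mu<1$,
$$
|E[n]|\lesssim\frac{1}{(1+r)(1+t+r)^{1+\mu}(1+|t-r|)^{1-\mu}(1+q_+)^{a}(1+q_-)^{a}}.
$$
This is the hypothesis of the second part of Lemma \ref{lem:inhomwaveeqdecay} with $\delta_+=\delta_-=a$, so $|\phi-\phi_2|\lesssim(1+t+r)^{-1}(1+q_+)^{-a}(1+q_-)^{-a}=(1+t+r)^{-1}(1+|r-t|)^{-a}$, which is \eqref{eq:errorest} with $I=J=K=0$.

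The remaining estimates follow the pattern of the proof of Proposition \ref{prop:sourcedecay}. Rotations commute with the construction, $\Omega^I\Phi_2[n]=\Phi_2[\Omega^I n]$, and for $\partial_t$ and $S=t\partial_t+r\partial_r$ one has $\partial_t\Phi_2[n]=\Phi_2[-\partial_q n]$, $S\Phi_2[n]=\Phi_2[q\partial_q n]$ modulo terms where a derivative falls on the cutoff (which only gain decay, using the scaling of the formula and that $S$ applied to $\tfrac{\langle q\rangle}{q}$ is lower order); the same identities hold for $\Phi[n]$ by Proposition \ref{prop:sourcedecay}. Since $\Omega^I(q\partial_q)^J(-\partial_q)^K n$ obeys the same weighted bounds as $n$, applying the above to $\Omega^I S^J\partial_t^K(\phi-\phi_2)$ gives \eqref{eq:errorest} for all $|I|+|J|+|K|\le N$. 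For \eqref{eq:dererrorest}, since $S$ and the $\Omega_{ij}$ commute with $(\partial_t+\partial_r)(r\,\cdot\,)$ it suffices to bound $(\partial_t+\partial_r)\big(r\,\Omega^I S^J\partial_t^K(\phi-\phi_2)\big)$; integrating $(\partial_t-\partial_r)(\partial_t+\partial_r)(r\psi)=r(-\Box\psi)+r^{-1}\triangle_\omega\psi$ along $r-t=\text{const}$ from $t=0$ and using the bound on $E[n]$ together with the bound on $\triangle_\omega(\phi-\phi_2)=\sum\Omega_{ij}^2(\phi-\phi_2)$ already obtained yields $\big|(\partial_t+\partial_r)(r(\phi-\phi_2))\big|\lesssim(1+t+r)^{-1-a}$, hence \eqref{eq:dererrorest}.

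The main obstacle is the identity $-\Box\Phi_2[n]=F[n]+E[n]$ together with the stated decay of $E[n]$. That the retarded-potential integral reproduces a source $n(r-t,\omega)/r^2$ is the content of \cite{L1}; the work here is to localize that computation, correctly isolating the endpoint contribution at $q=r-t$, where the angular kernel $\big(t+q-r\langle\hat x,\omega\rangle\big)^{-1}$ degenerates at $\omega=\hat x$, and to track the cutoff-derivative error terms with enough precision in the weights $1+t+r$, $1+|t-r|$, $1+q_\pm$ to match the hypotheses of Lemma \ref{lem:inhomwaveeqdecay}.
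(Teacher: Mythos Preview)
Your approach is genuinely different from the paper's, and the gap you yourself flag at the end is real.

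The paper does \emph{not} compute $\Box\Phi_2[n]$ and invoke Lemma~\ref{lem:inhomwaveeqdecay}. Instead it writes the exact solution $\phi=E*F$ via the fundamental solution and, after the change of variables from \cite{L1}, obtains
\[
\phi(t,x)=\int_{r-t}^{t+r}\!\frac{1}{4\pi}\int_{\bold{S}^2}\frac{n(q,\omega)}{t+q-\langle x,\omega\rangle}\,\chi\!\Big(\tfrac{\langle q\rangle}{s+\rho}\Big)\,dS(\omega)\,dq,
\]
i.e. exactly the formula for $\phi_2$ but with the \emph{intrinsic} cutoff $\chi\big(\langle q\rangle/(s+\rho)\big)$ in place of $\chi\big(\langle q\rangle/(t+r)\big)$. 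Thus $\phi-\phi_2=-\mathcal{E}$ with $\mathcal{E}$ an explicit integral carrying the difference of the two cutoffs. The crucial observation \eqref{eq:vanishingsetone} is that both cutoffs equal $1$ when $\langle q\rangle/(s+\rho)\le 1/2$, forcing the integrand of $\mathcal{E}$ to be supported where $\rho\le\langle q\rangle+q/2$; combined with $q\ge r-t$ this yields a direct pointwise bound $|r\mathcal{E}|\lesssim(1+|t-r|)^{-a}$ by elementary integral estimates. The derivative estimate \eqref{eq:dererrorest} is obtained by differentiating $\mathcal{E}$ explicitly and again using the cancellation between the two cutoffs.

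Your route via $\Box\Phi_2[n]=-(F[n]+E[n])$ could in principle work, but the step you call ``Leibniz'' is not a Leibniz computation: after the $\omega$-integration the integrand behaves like $\tfrac{1}{r}\ln(q-(r-t))$ near the lower endpoint, so the naive boundary term is infinite and one cannot separately identify ``the endpoint contribution'' and ``the bulk contribution''. The result of \cite{L1} is for the un-cutoff integral, and localizing it in the presence of the singular endpoint is precisely the work the paper sidesteps by using the representation formula. Second, even granting the decomposition, your claimed bound on $E[n]$ with the factor $(1+q_+)^{-a}(1+q_-)^{-a}$ is not immediate: the cutoff-derivative terms are supported where the \emph{integration variable} satisfies $|q|\sim t+r$, which a priori says nothing about $|r-t|$. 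It does turn out that $|E[n]|\lesssim(1+t+r)^{-3-a}$, which dominates your stated bound, but this needs to be checked (and the borderline regions $r\sim 3t$, $r\sim 7t$ where $t+q-r$ can be small need care). The paper's direct estimate of $\mathcal{E}$ avoids both issues.
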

\begin{proof}
 Following \cite{L1} we write the convolution of
 \beqs
F(t,x)= \eta\big(|\,x|-t,x\big)/|\,x{|\,}^2,
 \eqs
  with the fundamental solution $E$ of $\Box$
 $$\phi(t,x)=E*F(t,x)
 =\int_{|x|-t}^\infty \phi_{q}(t,x)\, d{q},
 $$  where
 $$
\phi_{q}(t,x)=\frac{1}{4\pi}  {\int
\frac{\eta({q},\rho\,\omega)}{t+{q}-\langle x,\omega\rangle}
\,dS(\omega)}H(t+{q}-|\,x|).
$$
Here $H(s)=1$, $s>0$ and $0$ otherwise, and
$$\rho=\rho{}({q},\omega)
=\frac{1}{2}\,\frac{(t+{q})^2-r^2}{t+{q}-\langle x,\omega\rangle},\qquad
s=\rho-q,
$$
satisfy
$$
0\leq t+{q}-r\leq 2\rho\leq t+{q}+r,\quad\text{and}\quad
t-r\leq \rho+s\leq t+r.
$$

In our case
$$
\eta({q},y)=n({q},\omega)\psi({q},\rho),\qquad\text{and}\quad
\psi({q},\rho)= \chi\big(\tfrac{\langle {q}\rangle}{s+\rho}\big),
\quad y=\rho\, \omega.
$$
We have
$$
2\langle\,{q}\,\rangle\leq \rho+s\quad\Rightarrow\quad
\quad 2\langle\,{q}\,\rangle\leq t+r.
$$
 Hence
\beq
\chi\big(\tfrac{\langle{q}\rangle}{s+\rho}\big)
=\chi\big(\tfrac{\langle{q}\rangle}{t+r}\big)=1,\quad
\text{when}\quad \tfrac{\langle{q}\rangle}{s+\rho}
\leq \tfrac{1}{2}.\label{eq:vanishingsetone}
\eq
We write
 $$
 \phi(t,x)=\int_{r-t}^{t+r} \frac{1}{4\pi}\int{
 \frac{ n({q},{\omega})}{t+{q}-\langle\, x,{\omega}\rangle}
 \,\psi({q},\rho({q},\omega))\,
dS({\omega})}\,d {q}=\phi_2(t,x)-{\cal E}(t,x),
$$
where
 $$
{\cal E}(t.x)=\int_{r-t}^{t+r} \frac{1}{4\pi}\int{
 \frac{ n({q},{\omega})}{t+{q}-\langle\, x,{\omega}\rangle}
 \big(\chi\big(\tfrac{\langle{q}\rangle}{t+r}\big)
 -\chi\big(\tfrac{\langle{q}\rangle}{s+\rho}\big)\big)\,
dS({\omega})}\,d {q}.
 $$
 It follows from \eqref{eq:vanishingsetone} that the integrand is nonvanishing only if
 $2\langle q\rangle\!\geq 2\rho-q$, i.e.
 $$
 \rho\geq \langle \,{q}\,\rangle + {q}/{2},\qquad \text{i.e.}\qquad
t+{q}-\langle x,\omega\rangle\geq \big({(t+{q})^2-r^2}\big)/
\big({2\langle {q}\rangle +{q}}\big).
  $$

  Hence we want to estimate the integral
 \beqs
 |\,{\cal E}(t,x)|\leq \int_{r-t}^{t+r}
\frac{1}{4\pi}\int {H\big(t+{q}-\langle x,\omega\rangle\geq
\tfrac{(t+{q})^2-r^2}{2\langle {q}\rangle +{q}}\big)}
\, \frac{dS(\omega)}{t+{q}-\langle x,\omega\rangle} \,
\frac{\,d{q}}{\langle q\rangle^{1+a}},
\eqs
where $H(f)$ is the characteristic function of the set where $f\geq 0$.
 We can choose coordinate so that $\langle
 x,\omega\rangle=r\omega_1$ and integrate over the other angular
 variables:
 \beqs
 |\,{\cal E}(t,x)|\leq \int_{r-t}^{t+r}
\frac{1}{2}\int_{-1}^1 H\big(t+{q}-r\omega_1\geq \tfrac{(t+{q})^2-r^2}{
2\langle {q}\rangle +{q}}\big)\frac{d\omega_1}{t\!+{q}\!-r\omega_1} \,
\frac{\, d{q}}{\langle q\rangle^{1+a}},
\eqs
 or if we also make the change of variables $u=t+{q}-r\omega_1$,
\beqs
|\,{\cal E}(t,x)|\leq \int_{r-t}^{t+r}\frac{1}{2}
\int_{t+{q}-r}^{t+{q}+r} H\big(u\geq \tfrac{(t+{q})^2-r^2}{
2\langle {q}\rangle +{q}}\big) \frac{du}{u r} \,
\frac{\, d{q}}{(1+|{q}|)^{1+a}}.
\eqs
We have
\beqs
\big({(t\!+{q})^2\!-r^2}\big)/\big({
2\langle {q}\rangle \!+{q}}\big)\leq t+{q}+r \qquad\Leftrightarrow\qquad
t-r\leq 2\langle \,{q}\,\rangle.\!\!\!\!\!\!
\eqs
If $t-r>2$ then we must also have
$
2|\,q|\geq \sqrt{(t-r)^2-4},
$
in which case
\begin{equation*}
 2|r\,{\cal E}(t,x)|\leq \int_{r-t}^{t+r}\!\!\!
\int_{\frac{(t+{q})^2-r^2}{2\langle {q}\rangle +{q}}}^{t+{q}+r} \frac{du}{u} \!\!
\frac{\, d{q}}{\langle q\rangle^{1+a}}
=\!\!\int_{r-t}^{t+r}\!\!\!\!
\ln{\Big(\frac{2\langle \,{q}\,\rangle +{q}}{t+{q}-r}\Big)}\!\!
\frac{H(2\langle  q\rangle\!\geq t-r)\, d{q}}{\langle q\rangle^{1+a}}.
\end{equation*}
If $|t\!-r|\leq 4$, this integral is bounded. If $q\geq r-t\geq 4$ then $2\langle q\rangle+q\leq 4q$ and if we change variables $q=(r-t)z$ we
see that the integral is bounded by
$$
\frac{1}{|\,t-r|^a}\int_1^\infty \ln{\big(\frac{4z}{z-1}\big)}\frac{dz}{z^{1+a}}\les \frac{1}{|\,t-r|^a}.
$$
If $t-r\!\geq 4$ then $|\,q|\geq\! \sqrt{(t-r)^2-4}/2\geq |\,t-r|/4$ and
the integral
$$
\frac{1}{|\,t-r|^{\,a}}\int_{-1}^\infty \ln{\big(\frac{4|z|}{1+z}\big)}\frac{H\big( |z|\geq 1/4\big) \, dz}{z^{1+a}}\les \frac{1}{|\,t-r|^a}.
$$

This proves \eqref{eq:errorest} for $|I|=|J|=0$ and the fact that we
can get the same bounds for higher $|I|>0$ follows from
rewriting $x=r Qe$, $e=(1,0,0)$, $Q^T Q=I$, and making an angular change of variables
$\omega^\prime=Q\omega$ so that any angular derivative on $\bold{x}$
becomes and angular derivative of $n$ which we control by assumption:
 $$
{\cal E}(t,r Qe)=\frac{1}{r}\int_{r-t}^{t+r} \frac{1}{4\pi}\int{
 \frac{ n(q,{Q\omega})}{(q+t-r)/r+1-\omega_1}
 \big(\chi\big(\tfrac{\langle\,q\,\rangle}{t+r}\big)
 -\chi\big(\tfrac{\langle\,q\,\rangle}{s+\rho}\big)\big)\,
dS({\omega})}\,d q.
$$
The result for the scaling vector fields follows similarly by noting
that a scaling $(t,r)\to (at,ar)$ by a change of variables corresponds to scaling
$n(q,\omega)\to n(aq,\omega)$ so $S(\phi-\phi_2)$ corresponds to replacing
$n(q,\omega)$ by $q\pa_q n(q,\omega)$ plus an error
$$
\partial_a \big(\chi\big(\tfrac{\langle\,aq\,\rangle/a}{t+r}\big)
 -\chi\big(\tfrac{\langle\,aq\,\rangle/a}{s+\rho}\big)\big)\big|_{a=1}
 =-\tfrac{1}{\langle\,q\,\rangle^2}\big(\chi^\prime\big(\tfrac{\langle\,q\,\rangle}{t+r}\big)
 \tfrac{\langle\,q\,\rangle}{t+r}
 -\chi^\prime\big(\tfrac{\langle\,q\,\rangle}{s+\rho}\big)
 \tfrac{\langle\,q\,\rangle}{s+\rho}\big),
$$
decaying faster. The main term in the time derivatives of $F$ is
given by replacing $n(r-t,\omega)$ by $n^\prime(r-t,\omega)$ up to a term of lower order with $\chi\big(\tfrac{\langle\, r-t\,\rangle}{t+r}\big)$ replaced by
$$
\partial_t \chi\big(\tfrac{\langle\,t-r\,\rangle}{t+r}\big)
 =\chi^\prime\big(\tfrac{\langle\,t-r\,\rangle}{t+r}\big)
 \tfrac{1}{t+r}\big(-\tfrac{\langle \,t-r\,\rangle}{t+r} +\tfrac{t-r}{\langle\, t-r\,\rangle}\big),
$$
which decays faster.
Let us now estimate $(\partial_t+\partial_r)(r{\cal E})$.
The terms where derivatives fall on $\chi$ are easily bounded
separately whereas for the main term where $\chi$ is not differentiated we need to use the cancellation between the two cutoff functions, as we did in the estimates above.  Let us assume that
$\bold{x}=r(1,0,0)$ and let us replace $n(q,\omega)$ by $n(q,\omega_1)$
its average over $\omega_2^2+\omega_3^2=1-\omega_1^2$. Writing
$$
\rho=\frac{1}{2}\frac{(t+q)^2-r^2}{t+q-r\omega_1}
=\frac{t+q-r}{2}\,\,\frac{t+q+r}{t+q-r\omega_1}
 $$
 we see that
 \begin{multline*}
 (\pa_t+\pa_r) \rho=\frac{t+q-r}{2}\,\,
 \frac{ 2(t+q-r\omega_1)-(t+q+r)(1-\omega_1)}{(t+q-r\omega_1)^2}\\
 =\frac{(t+q-r)^2(1+\omega_1)}{2(t+q-r\omega_1)^2}
 =\frac{2\rho^2(1+\omega_1)}{(t+q+r)^2}.
 \end{multline*}
 Since $s=\rho-q$ we also have $(\pa_t+\pa_r)(s+\rho)=2(\pa_t+\pa_r)\rho$
 and hence
\begin{equation*}
 (\pa_t+\pa_r)
 \chi\big(\tfrac{\langle {q}\rangle}{s+\rho}\big)
 =\chi^{\,\prime}\big(\tfrac{\langle {q}\rangle}{s+\rho}\big)
 \frac{2\,\langle {q}\rangle }{(s+\rho)^2}
 \frac{2\rho^2(1+\omega_1)}{(t+q+r)^2},
 \end{equation*}
which is bounded by a constat times $\langle\, {q}\,\rangle/(t+r)^2$ in the support of
$\chi^\prime\big(\tfrac{\langle\, {q}\,\rangle}{s+\rho}\big)$:
$$
f\big(\tfrac{q}{t+r},\tfrac{q}{s+\rho},\omega_1\big)= \frac{(t+r)^2}{(s+\rho)^2}
 \frac{2\rho^2(1\!+\omega_1)}{(t+q+r)^2}
 =\frac{\big(1\!+\frac{q}{s+\rho}\big)^2}{\big(1\!+\frac{q}{t+r}\big)^2}\,
 \frac{1\!+\omega_1}{2}
 \leq C,\quad\text{if } \tfrac{\langle {q}\rangle}{s+\rho}\leq \tfrac{3}{4}.
$$

 We have
$$
(\pa_t+\pa_r)(r{\cal E})=(\pa_t+\pa_r) \int_{r-t}^{t+r}\!\! \int_{-1}^{1}\!\!
 \frac{ n(q,\omega_1)\big(\chi\big(\tfrac{\langle\, {q}\,\rangle}{t+r}\big)
 -\chi\big(\tfrac{\langle\, {q}\,\rangle}{s+\rho}\big)\big)\!\!}{(t+q-r)/r+1-\omega_1} \,
 \,\frac{d\omega_1}{2}\,   d q
 ={\cal E}_+ +{\cal E}_+^\prime,
 $$
 where
 \begin{align*}
 {\cal E}_+&=\int_{r-t}^{t+r} \int_{-1}^{1}
 \frac{ n(q,\omega_1)}{\big((t+q-r)/r+1-\omega_1\big)^2} \, \frac{t+q-r}{r^2}\big(\chi\big(\tfrac{\langle\, {q}\,\rangle}{t+r}\big)
 -\chi\big(\tfrac{\langle\, {q}\,\rangle}{s+\rho}\big)\big)\, \frac{d\omega_1}{2}\,  d q,\\
 {\cal E}_+^\prime&=
 -2\int_{r-t}^{t+r} \int_{-1}^{1}\frac{ n(q,\omega_1)\langle\, {q}\,\rangle \big( \chi^\prime\big(\tfrac{\langle\, {q}\,\rangle}{t+r}\big)
 -\chi^\prime\big(\tfrac{\langle\, {q}\,\rangle}{s+\rho}\big) \, f\big(\tfrac{{q}}{t+r},\tfrac{ {q}}{s+\rho},\omega_1\big)\big)}{\big((t+q-r)/r+1-\omega_1\big)(t+r)^2}
 \,
 \, \frac{d\omega_1}{2}\,  d q,
 \end{align*}
 since the boundary term vanishes at $q\!=\!t\!+r$, since $\chi(s)\!\!=\!0$ if $s\!\geq \!1$.
 We claim
 \beqs
|\,{\cal E}_+|+|\,{\cal E}_+^\prime|\les (t+r)^{-1-a},\qquad 0<a<1.
\eqs
 Since
 \beqs
\chi^\prime\big(\tfrac{\langle\, {q}\,\rangle}{s+\rho}\big)=\chi^\prime(\tfrac{\langle\, {q}\,\rangle}{t+r})=0,\quad \text{when}\quad \tfrac{\langle\, {q}\,\rangle}{s+\rho}\leq \tfrac{1}{2}\label{eq:vanishingset}
\eqs
the previous argument applied to $n(q,\omega) (1\!+q^2)^{1/2}$
gives the bound for ${\cal E}_+^\prime$.
The same bound also hold for ${\cal E}_+$ if we estimate
differencess of cutoff functions using
$$
 \frac{1}{s+\rho}-\frac{1}{t+r}
 =\frac{r}{t+r}\frac{2\rho}{s+\rho}\,\,\,\frac{(1-\omega_1)}{t+q-r}.
 $$
This estimate in turn follows since
$$
 t+r-(s+\rho)=t+r+q-\frac{(t+q)^2-r^2}{t+q-r\omega_1}=
 (t+q+r)\frac{r(1-\omega_1)}{t+q-r\omega_1}=\frac{r\rho(1-\omega_1)}{t+q-r}.
$$
We have
$$
(\pa_t-\pa_r)\frac{ n(\tau+r-t,\omega_1)\,\tau\!}{(\tau+r(1-\omega_1))^2}
=-\,\frac{ 2\, n^\prime_q(\tau+r-t,\omega_1)\,\tau\!}{(\tau+r(1-\omega_1))^2}
 +\frac{ 2(1-\omega_1)\, n(\tau+r-t,\omega_1)\,\tau\!}{(\tau+r(1-\omega_1))^3}.
$$
\end{proof}

\section{The asymptotics of the characteristic surfaces}\label{section:characteristicsurfaces}
We solve the eikonal equation
\beq \label{eq:eikonal}
g^{\alpha\beta}\pa_\alpha u\,\, \pa_\beta
u=0,\qquad \text{in}\quad r>t/2>0 ,
\eq
 with asymptotic data as $t\to\infty$:
 \beq
\label{eq:asymptoticdata}
 u\sim \us=t-r^*, \qquad
 r^*=\varrho(r)=r+M\ln{r}+O\big(M/r\big).
 \eq
 Here we have modified the definition of $r^*$ slightly from before:
 \beq\label{eq:rstardef}
 \frac{dr^*}{dr}=\varrho^{\,\prime}(r)=\Big(\frac{1+M/r}{1-M/r}\Big)^{1/2}
 =1+\frac{M}{r}+O\big(M^2/r^2\big),
 \eq
 so that $\us$ is a solution of the eikonal equation for the metric
 $g_0^{\alpha\beta}\!=m^{\alpha\beta}\!+h_0^{\alpha\beta}\!$,
 where $h_0^{\alpha\beta}=-\frac{M}{r}\delta^{\alpha\beta}
 \tilde{\chi}\big(\frac{r}{1+t}\big)$ and $\tilde{\chi}(s)=1$, when $s\!>\!1/2$ and $0$
 when $s\!<\!1/4$:
 \beqs
 g_0^{\alpha\beta} \partial_\alpha \us\, \partial_\beta \us=0,
 \qquad r>t/2.
 \eqs
By the local existence theorem \eqref{eq:eikonal} with data
 $u\!=\!\us$ on $N_T\!=\!\{ t\!=\!T,\, r\!\geq \!T/2 \}$,
has a local solution
that extends to $\{0\!<\!t\!\leq \!T,\,r\!>\!t/2\!+1\}$, as long as
we have bounds for the first order derivatives and
$g^{\alpha\beta}\pa_\alpha u\,\pa_\beta$ is close to
$L$. We simply define $u$ to be constant along the
null geodesics from $N_T$ in the direction
$g^{\alpha\beta}\pa_\alpha u\, \pa_\beta$ (where on $N_T$, $u$ is
determined by data $\us$ and $\partial_t u$ by that this vector
should be null and ingoing backwards). This gives a well defined
solution as long as the derivatives are bounded, which we
control by integrating the system for the first order
derivatives obtained by differentiating the eikonal equation
\eqref{eq:eikonal}.

Let $\gamma(t)=\gamma(t,q^*,\omega)=(t,x(t))$ denote the integral
curve of the vector
field
\beqs
F^\alpha(g,\partial u)
={g^{\alpha\beta}\partial_\beta u}/{g^{0\beta} \partial_\beta u},
\eqs
going through $\gamma(T)\!=\!\gamma_0(T)$, where $\gamma_0(t)=(t,r\omega)$,
$t=\varrho(r)+q^*$\!, is an integral curve of $F(g_0,\us)$,
when $g,u$ is replaced by $g_0,\us$. Set $\tilde{\gamma}(t)\!=\!\gamma(t)\!-\!\gamma_0(t)$.

We will first show that we have uniform bounds independent
of $T$ and then that the solution $u$ will converge
to a limit as $T\!\to\!\infty$ satisfying \eqref{eq:eikonal}-\eqref{eq:asymptoticdata}.

\begin{prop} \label{prop:uniformeikonalbounds}
Let $\tilde{u}\!=\!u\!-\!\us$.
For $r>t/2$ and $0\leq t\leq T$ we have
\begin{align}(1+|q^*|) |\pa \tilde{u}|&\leq
C_1\varepsilon\Big(\frac{1+|q^*|}
{1+t+|q^*|}\Big)^{1-\gamma\prime-\varepsilon}
   \Big( \frac{1+q_-^*}{1+t+|q^*|}\Big)^{\gamma\prime}\!\!,
  \label{eq:uniformeikonalboundone}\\
(1+t+|\,q^*|) |\overline{\pa} \tilde{u}|+|\tilde{u}|&\leq  C_1\varepsilon
 \Big(\frac{1+q_-^*}{1+t+|q^*|}\Big)^{\gamma\prime}\!\!,
 \label{eq:uniformeikonalboundtwo}
 \end{align}
  for some constant $C_1$ independent of $T$.
 Here $q^*\!=r^*\!-t$.
 Moreover,
 \beq\label{eq:tildegammaest}
 |\,\tilde{\gamma}(t,q^*\!,\omega)|\leq C_1 \varepsilon
 \Big(\frac{1+q_-^*}{1\!+t\!+|q^*|}\Big)^{\!\gamma\prime}\!\!.
 \eq
\end{prop}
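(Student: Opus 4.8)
The plan is a bootstrap/continuity argument for fixed $T$, carried out on the region $\mathcal D_T$ swept out by the null geodesics of $g$ issuing backwards from the data surface $N_T=\{t=T,\ r\ge T/2\}$. On $\mathcal D_T$ one assumes \eqref{eq:uniformeikonalboundone}--\eqref{eq:tildegammaest} with $C_1$ replaced by $2C_1$ and recovers them with $C_1$, for $\varepsilon$ small and $C_1$ large; the local existence theorem then continues $u$ to all of $\{r>t/2,\ 0\le t\le T\}$ (the bootstrap also keeps the characteristics inside $r>t/2$, since $\gamma_0$ stays there by construction and $\tilde\gamma$ is small), and a limit $T\to\infty$ produces the solution of \eqref{eq:eikonal}--\eqref{eq:asymptoticdata}. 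The two quantities to control, $\tilde\phi_\mu=\pa_\mu\tilde u$ and $\tilde\gamma=\gamma-\gamma_0$, both vanish on $N_T$ (up to an $O(h_1|_{t=T})$ mismatch of $\pa_t u$ coming from the null condition that defines $u$ on $N_T$), and both satisfy transport equations along the characteristics, so the estimates come from integrating in $t$ downward from $t=T$.

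For $\tilde\phi$: differentiate $g^{\alpha\beta}\pa_\alpha u\,\pa_\beta u=0$ and use $\pa_\mu\pa_\beta u=\pa_\beta\pa_\mu u$ to get $2g^{\alpha\beta}\pa_\alpha u\,\pa_\beta\phi_\mu=-(\pa_\mu g^{\alpha\beta})\phi_\alpha\phi_\beta$, i.e.\ $\phi_\mu=\pa_\mu u$ is transported along a multiple of $F$. Subtracting the corresponding identity for $\us,g_0$, which holds exactly, and reading everything along $\gamma$, gives $\tfrac{d}{dt}\tilde\phi_\mu$ equal to a principal term $\sim(\pa_\mu h_1^{\alpha\beta})\phi_\alpha\phi_\beta$, a bilinear term $\big(g_0^{\alpha\beta}\tilde\phi_\alpha+h_1^{\alpha\beta}\phi_\alpha\big)\pa_\beta\pa_\mu\us$, and terms bilinear in $\tilde\phi$ or $(\tilde\phi,h_1)$ of lower order from the base-point difference $\gamma$ versus $\gamma_0$. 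The curve difference solves $\dot{\tilde\gamma}=F(g,\pa u)(\gamma)-F(g_0,\pa\us)(\gamma_0)$; since to leading order $F$ is the outgoing radial field $x/|x|$, whose flow is \emph{flat} (same-angle lines stay same-angle with $q^*$ constant) and $\gamma_0$ has $\omega$ exactly constant, $\tilde\gamma$ is purely perturbation-driven, and splitting it into radial and angular parts one finds $\dot{\tilde\gamma}$ bounded by the relevant components of $h_1(\gamma)$, by $|\tilde\phi(\gamma)|$, and by $O(1/|x|)$ times the angular part of $\tilde\gamma$ --- whose Gronwall weight $|x_0(t)|/|x_0(\tau)|=t/\tau\le 1$ for $\tau\ge t$ costs nothing.

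The structural heart of the matter --- and the reason $\us$ is chosen to solve the $g_0$-eikonal equation rather than to equal $t-r$ (cf.\ \eqref{eq:hLLasym}) --- is that, once this reference is subtracted, the only combination of $h_1$ that fails to be integrable along a null ray is $h_{1LL}=L_\mu L_\nu h_1^{\mu\nu}$ together with its transverse derivative, and these are precisely the components the wave coordinate condition controls. Resolving the sources in the null frame $\mathcal N$, and using that $\phi$ differs from $-L$ only through $\pa\us+L=O(M/r)$ (a specific $O(\varepsilon^2/r)$ covector with both $L$ and $\underline L$ parts) and through $\tilde\phi$: the contraction $(\pa_\mu h_1^{\alpha\beta})\phi_\alpha\phi_\beta$ produces $\pa_\mu h_{1LL}$ plus tangential pieces $\opa h_1$ (all controlled by Lemma \ref{lem:weakdecaystar}, \eqref{eq:wavecord}, and Propositions \ref{prop:sharpmetricdecay} and \ref{prop:specialsharpmetricdecay}, hence $\lesssim\varepsilon(1+t+r^*)^{-2-\gamma^\prime}$ near the cone or better, with no $C\varepsilon$ loss on the $h_{1LL}$ derivative), plus terms $O(h_1\tilde\phi)$, $O(\tilde\phi^2)$, $O\big((M/r)\pa h_1\big)$ that are quadratically small once the bootstrap bounds are inserted; the bilinear term $\big(g_0^{\alpha\beta}\tilde\phi_\alpha+h_1^{\alpha\beta}\phi_\alpha\big)\pa_\beta\pa_\mu\us$ is harmless because $\pa^2\us\approx-\Pi/r$ is purely spherical, so its $O(1/r)$ part couples only to the spherical components of $\tilde\phi$, i.e.\ to the good tangential derivatives $\opa\tilde u$ in \eqref{eq:uniformeikonalboundtwo}. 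Hence every piece of the source is integrable in $t$ along $\gamma$, where $t+r^*\sim 2t+q^*$ and $q^*$ is essentially constant.

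Granting this, the estimates close by integration. Integrating the ODE for $\tilde\gamma$ from $t=T$ (where $\tilde\gamma(T)=0$), with the Gronwall factor $\exp\!\big(\!\int C\varepsilon\,d\tau/(1+\tau)\big)\lesssim(1+t+|q^*|)^{C\varepsilon}$ --- the origin of the $\varepsilon$ in the exponents of \eqref{eq:uniformeikonalboundone} --- and using $\int_t^\infty\varepsilon(1+\tau+|q^*|)^{-1-\gamma^\prime}d\tau\lesssim\varepsilon\big(\tfrac{1+q_-^*}{1+t+|q^*|}\big)^{\gamma^\prime}$, yields \eqref{eq:tildegammaest}. Since $u\equiv q^*$ along $\gamma$ and $\us(\gamma_0(t))=q^*$, one has $\tilde u(\gamma(t))=\us(\gamma_0(t))-\us(\gamma(t))$ with $|\pa\us|\lesssim 1$, so $|\tilde u|\lesssim|\tilde\gamma|$ and the second half of \eqref{eq:uniformeikonalboundtwo} follows; one more integration of the transport equation for $\tilde\phi_\mu$ (with $\tilde\phi(T)=O(h_1|_{t=T})$ controlled) gives \eqref{eq:uniformeikonalboundone}, and the tangential part of \eqref{eq:uniformeikonalboundtwo} follows by commuting with the good vector fields, which gain an extra factor $(1+t+r^*)^{-1}$, together with the bound on $\tilde u$ itself. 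The main obstacle is precisely the frame bookkeeping of the third paragraph: verifying in detail that every non-integrable-along-null-rays piece of $(\pa_\mu g^{\alpha\beta})\phi_\alpha\phi_\beta$ and of $F(g,\pa u)-F(g_0,\pa\us)$ collapses onto $h_{1LL}$, $\opa h_1$, or a quadratically small term --- tracking the two distinct ways $\pa u$ deviates from $-L$ --- and organizing the radial/angular split of $\tilde\gamma$ so that the angular Lipschitz coefficient $O(1/|x|)$ does no harm.
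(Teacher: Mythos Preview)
Your overall architecture is right---bootstrap, transport along the null geodesics, null-frame decomposition to isolate $h_{1LL}$---but there is a genuine gap in the estimate for $\tilde\gamma$, and a closely related vagueness in how you recover the tangential bound \eqref{eq:uniformeikonalboundtwo}. You claim $\dot{\tilde\gamma}$ is bounded by ``$|\tilde\phi(\gamma)|$'' plus $h_1$-terms and the harmless angular Gronwall piece. But the full gradient $|\pa\tilde u|$ from the bootstrap assumption \eqref{eq:uniformeikonalboundone} decays only like $\varepsilon(1+t)^{-(1-\varepsilon)}$ along a ray with $q^*$ fixed, and this is \emph{not} integrable in $t$ uniformly in $T$; even with your weight $t/\tau$ the integral behaves like $\varepsilon t^\varepsilon$, which destroys \eqref{eq:tildegammaest} and the bootstrap. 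The structural point you are missing is that the linearization $\pa_\phi F\big|_{(g_0,\pa\us)}$ is exactly $(g_0^{\alpha\mu}-F_0^\alpha g_0^{0\mu})/(g_0^{0\gamma}\pa_\gamma\us)$, and the paper's computation \eqref{eq:vectorfieldminusvectorfieldstarg0} shows this operator picks up only $\pa_{\Ls}\tilde u$ and the spherical derivatives $\overline{\pa}_j\tilde u$---never $\pa_{\Lbs}\tilde u$. So only the \emph{tangential} derivative $|\overline{\pa}\tilde u|$ from \eqref{eq:uniformeikonalboundtwo} enters $\dot{\tilde\gamma}$, and that bound \emph{is} integrable. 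This is why the paper proves the tangential estimate first.

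The paper's mechanism for the tangential bound is also more specific than ``commuting with good vector fields.'' It commutes with rotations $\Omega$ (for which $\mathcal L_\Omega g_0=0$, so only $h_{1\Omega}$ appears on the right of \eqref{eq:eikonalsystemone}), integrates the transport equation for $\Omega\tilde u$ using the sharp decay \eqref{eq:sharpwsvecoordfunc} for $\Omega h_{1LL}$ and $h_{1LT}$, and then converts via $r|\pas\tilde u|\sim(\sum_\Omega|\Omega\tilde u|^2)^{1/2}$. The remaining tangential piece $\pa_{\Ls}\tilde u$ is obtained \emph{algebraically} from the eikonal equation itself, equation \eqref{eq:lstarv}: $|\pa_{\Ls}\tilde u|\lesssim|h_{1\Ls\Ls}|+|\pas\tilde u|^2$. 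Only after all of $\overline{\pa}\tilde u$ is controlled does one integrate the $\pa_t$-equation (whose source $|\pa h_{1\Ls\Ls}|$ carries the $\varepsilon$-loss from \eqref{eq:sharpwsvecoordder}, which is the true origin of the $\varepsilon$ in the exponent of \eqref{eq:uniformeikonalboundone}, not a Gronwall factor as you suggest) and finally the $\tilde\gamma$-equation. Your ordering ($\tilde\gamma\to\tilde u\to\pa\tilde u\to\overline{\pa}\tilde u$) can in principle be rearranged inside a bootstrap, but only once the two missing ingredients---tangential-only dependence of $\dot{\tilde\gamma}$, and the algebraic control of $\pa_{\Ls}\tilde u$---are supplied.
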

The convergence is proved by similar estimates for
 differences of solutions with data at $T_{\!1},T_2$.
 The differences satisfy homogeneous equations
 with data at $T\!\!=\min{\!(T_{\!1},T_2)}$ satisfying
 \eqref{eq:uniformeikonalboundone}-\eqref{eq:uniformeikonalboundtwo}
 that tend to $0$ as $T\!\!\!\to\!\infty$,
 see Proposition \ref{prop:chardiff}.

By time reflection we also have a solution $v$ to the eikonal equation
in $r\!\geq \!|t|/2$, $t\!\leq \!0$ satisfying $v\!\sim v^*\!\!=r^*\!\!+t$,
as $t\!\to\!-\infty$ and
$(1\!+r)|\pa \tilde{v}| \!+|\tilde{v}|\!\leq\! (1\!+r)^{-\gamma\prime}\!$ when $t\!=0$,
$\tilde{v}\!=\!v\!-\! v^*\!\!$.
This solution can be extended into the region $ r\!\geq \!t/2\!+\!1,\, t \!>\!0$:

\begin{prop} \label{prop:uniformeikonalboundsv} Suppose that
$(1\!+r)|\pa \tilde{v}| \!+|\tilde{v}|\!\leq\! C_1\varepsilon(1\!+r)^{-\gamma\prime}\!$ when $t\!=0$.
Then
\beq
(1\!+t\!+|q^*|) |{\opa} \tilde{v}|+(1\!+|q^*|) |{\pa} \tilde{v}|+
|\tilde{v}|\les
2C_1\varepsilon
   \Big( \frac{1+ q_-^*}{1\!+t\!+|q^*|}\Big)^{\!\gamma\prime}\!\!\!\!,
 \eq
 for $r\!\geq \!t\!/2\!>\!0$, where $q^*\!\!=r^*\!-t$.
 If $\tilde{\sigma}$ is the corresponding characteristic deviation:
 \beq
\qquad\quad
 |\,\tilde{\sigma}(t,q^*\!,\omega)|\leq C_1 \varepsilon
 \Big(\frac{1+q_-^*}{1+t+|\,q^*|}\Big)^{\gamma^\prime}.
\label{eq:sigmadeviation}
 \eq
\end{prop}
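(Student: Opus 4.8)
The plan is to run the characteristic construction of Proposition~\ref{prop:uniformeikonalbounds} in the opposite time direction: $v$ is obtained by transporting along the integral curves of the ingoing null field $F^\alpha(g,\partial v)={g^{\alpha\beta}\partial_\beta v}/{g^{0\beta}\partial_\beta v}$, now issuing from data on $\{t=0\}$ and going forward in $t$, rather than issuing from $N_T$ and going backward. Write $\tilde v=v-v^*$ and $g^{\alpha\beta}=g_0^{\alpha\beta}+h_1^{\alpha\beta}$. Since $v^*$ solves the $g_0$-eikonal equation exactly, substituting into \eqref{eq:eikonal} and cancelling $g_0^{\alpha\beta}\partial_\alpha v^*\partial_\beta v^*=0$ gives
$$
2\,g_0^{\alpha\beta}\partial_\alpha v^*\,\partial_\beta\tilde v=-g_0^{\alpha\beta}\partial_\alpha\tilde v\,\partial_\beta\tilde v-h_1^{\alpha\beta}\partial_\alpha v\,\partial_\beta v,
$$
whose left side is a multiple of the derivative of $\tilde v$ along the ingoing $g_0$-null geodesic field. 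Differentiating \eqref{eq:eikonal} in $x^\mu$ and subtracting the corresponding identity for $v^*$ produces analogous transport equations for $\partial_\mu\tilde v$, with source $\sim(\partial h)(\partial v)+\big(F(g,\partial v)-F(g_0,\partial v^*)\big)\partial^2 v^*$, and, after contracting with a tangential frame, for $\opa\tilde v$ with an extra $(1+t+r)^{-1}$ gain. The leading part of the source of the first equation is $h_1^{\alpha\beta}\partial_\alpha v^*\partial_\beta v^*=-h^1_{\underline{L}\underline{L}}+O(h^2)+O\big((M/r)|h|\big)$ — the worst null component of $h^1$, which by Proposition~\ref{prop:weakdecay} is only $O\big(\varepsilon(1+q_+)^{-\gamma}(1+t+r)^{-1+C\varepsilon}\big)$, with no improvement.

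Next I record the geometry. The integral curves $\sigma_0$ of $F(g_0,\partial v^*)$ are explicit ingoing radial $g_0$-null geodesics; parametrized by $t$, along $\sigma_0$ the radius decreases, $t+r$ (hence $1+t+r$ and $1+t+|q^*|$) stays comparable to its value $r_0\sim1+t+|q^*|$ at $t=0$, $q^*=r^*-t$ is monotone decreasing, and the curve stays in $r>t/2$ only for $t\lesssim r_0$. Thus each point of $\{r\ge t/2>0\}$ lies on such a $\sigma_0$ of length $\lesssim r_0\sim1+t+|q^*|$ issuing from $\{t=0\}$, along which $1+t+r\sim r_0$. A continuity argument in $t$ then shows that, assuming the asserted bounds for $\tilde v$ with $C_1$ replaced by $2C_1$, the $g$-characteristics $\sigma$ and the deviation $\tilde\sigma=\sigma-\sigma_0$ satisfy $|\tilde\sigma|\le C_1\varepsilon\big((1+q_-^*)/(1+t+|q^*|)\big)^{\gamma'}$ — so $\sigma$ inherits the geometry of $\sigma_0$ up to bounded factors — by integrating $d\tilde\sigma/dt=F(g,\partial v)-F(g_0,\partial v^*)=O(h)+O(\partial\tilde v)$ exactly as $\tilde\gamma$ is controlled in Proposition~\ref{prop:uniformeikonalbounds}.

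With this in hand I close the estimates by integrating the transport equations along $\sigma$ from $t=0$, where the boundary term is $C_1\varepsilon$ by the hypothesis on $\tilde v|_{t=0}$. For $|\tilde v|$ the main source integral is
$$
\int_0^t|h^1_{\underline{L}\underline{L}}|\,d\tau\lesssim\varepsilon\,r_0^{-1+C\varepsilon}\int_0^{\min(t,r_0)}(1+(r_0-2\tau)_+)^{-\gamma}\,d\tau\lesssim\varepsilon\,r_0^{-\gamma+C\varepsilon}\lesssim\varepsilon\,(1+t+|q^*|)^{-\gamma'},
$$
using $1+t+r\sim r_0$ and $q_+\sim(r_0-2\tau)_+$ along $\sigma$ and the bounded characteristic length in $r>t/2$; the $q_-^*$-dependence of the weight, relevant once $r^*<t$, comes out as in Proposition~\ref{prop:uniformeikonalbounds}. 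The quadratic terms $g_0^{\alpha\beta}\partial_\alpha\tilde v\,\partial_\beta\tilde v$ and $h_1^{\alpha\beta}\partial_\alpha\tilde v\,\partial_\beta\tilde v$ are $O\big((2C_1\varepsilon)^2\big)$ times the same weight and are absorbed into $C_1\varepsilon\times(\cdots)$ for $\varepsilon$ small. For $|\partial\tilde v|$ and $|\opa\tilde v|$ one uses the extra $(1+|q|)^{-1}$, respectively $(1+t+r)^{-1}$, decay of $\partial h$, $\opa h$ in Proposition~\ref{prop:weakdecay}, which after the same weighted integration along $\sigma$ yields the extra $(1+|q^*|)^{-1}$, respectively $(1+t+|q^*|)^{-1}$, in the statement. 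This improves $2C_1$ to $C_1$, so by continuity the bounds hold on all of $\{r\ge t/2>0\}$, and \eqref{eq:sigmadeviation} is precisely the $\tilde\sigma$ bound already used in the bootstrap.

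The main obstacle is the borderline character of the source together with the quasilinear feedback. In contrast with the $u$-equation, whose source $h^1_{LL}$ is improved by the wave-coordinate condition, the $v$-equation is driven by the un-improved component $h^1_{\underline{L}\underline{L}}\sim\varepsilon(1+t+r)^{-1+C\varepsilon}$; integrability along the ray is recovered \emph{only} because the ingoing $v$-characteristics have length $\lesssim r_0\sim1+t+|q^*|$ inside $\{r>t/2\}$ while $1+t+r$ stays $\sim r_0$ there, so that the $(1+q_+)^{-\gamma}$ gain turns an otherwise divergent integral into $r_0^{-\gamma+C\varepsilon}$. One must therefore track this geometry and the $\partial\tilde v$-dependence of the sources simultaneously, via a Grönwall/bootstrap with the enlarged constant $2C_1$ and the loss $\gamma'=\gamma-C\varepsilon$, so as not to lose a logarithm. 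A secondary technical point is matching the weights near the light cone $r^*=t$ and near $r=t/2$, where the characteristics leave the region.
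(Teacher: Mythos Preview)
There is a genuine gap. Your scheme is to differentiate the eikonal equation in $\partial_\mu$ and integrate the resulting transport equation for $\partial_\mu\tilde v$ along the ingoing characteristic, claiming that the extra $(1+|q|)^{-1}$ in $|\partial h|$ (resp.\ $(1+t+r)^{-1}$ in $|\opa h|$) survives integration to give the $(1+|q^*|)^{-1}$ (resp.\ $(1+t+|q^*|)^{-1}$) weight in the statement. It does not. Along the ingoing ray $t+r^*\sim r_0$ and $q^*$ sweeps an interval of length $\sim r_0$, so with the weak decay of Proposition~\ref{prop:weakdecay} the source $|\partial h_{1\Lbs\Lbs}|\sim\varepsilon\,r_0^{-1+C\varepsilon}(1+|q^*|)^{-1}$ integrates to $\varepsilon\,r_0^{-1+C\varepsilon}\log(1+q_-^*)$ in the interior, and even with the sharp decay of Proposition~\ref{prop:sharpmetricdecay} one gets only $\varepsilon\,r_0^{-1+\varepsilon}$. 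Near the boundary $r=t/2$ one has $q_-^*\sim r_0$ and needs $(1+|q^*|)|\partial\tilde v|\lesssim\varepsilon$, i.e.\ $|\partial_t\tilde v|\lesssim\varepsilon/r_0$, so the bound fails by $r_0^{C\varepsilon}$. The paper says explicitly ``Integrating $\widetilde{\underline{L}}\pa_t$ does not work.'' The same borderline loss already appears in your integral for $|\tilde v|$: in the interior $(1+(r_0-2\tau)_+)^{-\gamma}=1$, so $\int_{r_0/2}^{t}\varepsilon\,r_0^{-1+C\varepsilon}d\tau\sim\varepsilon\,r_0^{-1+C\varepsilon}q_-^*$, which is $\varepsilon\,r_0^{C\varepsilon}$ at $q_-^*\sim r_0$, not $\varepsilon$.

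The paper's route avoids both problems. It commutes with the \emph{homogeneous} fields $\Omega$ and $\Ss$ rather than $\partial_\mu$, so the source is $|\Omega h_1|+|h_1|$ (no derivative on $h_1$, no null-component gain), and uses the \emph{sharp} decay of Proposition~\ref{prop:sharpmetricdecay} with its interior $(1+|q^*|)^{-\varepsilon}$ factor; this makes \eqref{eq:vsystemintegral} converge and gives $|\Omega\tilde v|,\,|\Ss\tilde v|\lesssim\varepsilon((1+q_-^*)/(1+t+|q^*|))^{\gamma'}$. The weight $(1+t+|q^*|)^{-1}$ on $|\opa\tilde v|$ then comes not from integration but from $|\pas\tilde v|=c\,r^{-1}(\sum|\Omega\tilde v|^2)^{1/2}$ and from expressing $\Ls$ via $\Ss$ and $\Lbs$. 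The remaining transverse derivative $\Lbs\tilde v$ is obtained \emph{algebraically} from the eikonal equation, $|\Lbs\tilde v|\lesssim|h_1|+|\pas\tilde v|^2$, not by integrating a transport equation; this is the analogue of \eqref{eq:lstarv} with $L^*\leftrightarrow\Lbs$.
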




\begin{prop} \label{prop:vectorfieldsutilde} We have
$\!|{Z^*}{}^I \!\tilde{u}|\!+\!|{Z^*}{}^I \!\tilde{v}|
\!\les\!\varepsilon\big(\dfrac{1\!+q_-^*}{1\!\!+\!t\!+\!|q^*|}\big)^{\!\gamma^\prime}\!\!\!\!$,
for $|I|\!\leq \! 2$ and $r\!\geq \!|t|\!/2$.\end{prop}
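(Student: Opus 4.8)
The plan is to differentiate the eikonal equation \eqref{eq:eikonal} with the asymptotic Schwarzschild vector fields $Z^*$ and to rerun the transport estimate that produced Propositions \ref{prop:uniformeikonalbounds} and \ref{prop:uniformeikonalboundsv}, now for $Z^{*I}\tilde u$ and $Z^{*I}\tilde v$ with $|I|\le 2$. First I would write $u=\us+\tilde u$, subtract the identity $g_0^{\alpha\beta}\pa_\alpha\us\,\pa_\beta\us=0$ from \eqref{eq:eikonal}, and use $g^{\alpha\beta}=g_0^{\alpha\beta}+h_1^{\alpha\beta}$ to obtain a transport equation
\beqs
F^\alpha(g,\pa u)\,\pa_\alpha\tilde u=\mathcal S,\qquad\text{where}\qquad \mathcal S=O\big(h_1^{\alpha\beta}\pa_\alpha\us\,\pa_\beta\us\big)+O\big((\pa\tilde u)^2\big),
\eqs
along the vector field $F$ of Section \ref{section:characteristicsurfaces}; the first source term is, modulo $M/r$ corrections, $h_{1LL}$, which has improved decay from the wave coordinate condition.

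Next I would commute $Z^{*I}$, $|I|\le 2$, through this transport equation. Since $[Z^*,F^\alpha\pa_\alpha]$ equals a combination of the $Z^{*J}$ up to coefficients decaying like $h_1$, $\pa h_1$, one gets $F^\alpha(g,\pa u)\pa_\alpha(Z^{*I}\tilde u)=\mathcal S_I$, where $\mathcal S_I$ consists of the lower order terms $Z^{*J}\tilde u$ ($|J|<|I|$) times decaying coefficients, together with metric terms $Z^{*J}h_1$ and $\pa Z^{*J}h_1$, $|J|\le|I|$. The latter are controlled by Lemma \ref{lem:weakdecaystar} and Proposition \ref{prop:sharpmetricdecay}, which for $|I|\le 2$ require only the standing hypothesis $N\ge 6$; the former are handled inductively on $|I|$, the base case $|I|=0$ being exactly Propositions \ref{prop:uniformeikonalbounds}--\ref{prop:uniformeikonalboundsv}. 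One then integrates the ODE $\tfrac{d}{dt}(Z^{*I}\tilde u)=\mathcal S_I$ along the integral curves $\gamma(t,q^*,\omega)$ from the data at $t=T$ on $N_T$ — where $\tilde u=0$, so $Z^{*I}\tilde u$ is controlled by $h$ on $N_T$ — and uses \eqref{eq:tildegammaest} to compare the weight $\big((1+q_-^*)/(1+t+|q^*|)\big)^{\gamma^\prime}$ at a point with its value at the foot of $\gamma$, exactly as in the derivation of \eqref{eq:uniformeikonalboundtwo}. This yields the bound for the truncated solutions with a constant independent of $T$; the difference argument sketched after Proposition \ref{prop:uniformeikonalbounds} then passes to the limit $T\to\infty$, and time reflection together with Proposition \ref{prop:uniformeikonalboundsv} gives the companion estimate for $Z^{*I}\tilde v$.

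The main obstacle is the commutator bookkeeping. Acting with $Z^*$ on $F^\alpha(g,\pa u)$ produces $Z^*$-derivatives of the components $h_1^{\alpha\beta}$ — fine by the sharp decay — but also terms where $Z^*$ falls on the frame and on the Jacobian factors relating the $x^*$ and $x$ coordinates; by Proposition \ref{prop:starwaveeq} these cost at most a factor $M\ln(1+r)\les\varepsilon(1+r)^\varepsilon$, which is absorbed into the $C\varepsilon$ slack in the exponents without degrading $\gamma^\prime$. A further subtlety, already present at $|I|=0$, is that only the tangential derivatives of $\tilde u$ carry the full gain $(1+t+r^*)^{-1}$, so $Z^{*I}\tilde u$ itself — not $\pa Z^{*I}\tilde u$ — is recovered only after integrating in the $t-r^*$ direction; this is why the statement is limited to two vector fields acting on the function, and why no regularity of $h$ beyond Lemma \ref{lem:weakdecaystar} is needed.
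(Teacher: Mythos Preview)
Your high-level strategy---commute vector fields through the eikonal equation and integrate the resulting transport equation along the null generators---is indeed the paper's approach. But the commutator bookkeeping you describe is too coarse to close the loop, and this is where the real work lies.

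When you commute, the source $\mathcal S_I$ is not merely ``lower order terms $Z^{*J}\tilde u$ times decaying coefficients'': it contains bilinear forms such as $g(\partial\widehat Z\tilde u,\partial\widehat X\tilde u)$ and $(\widehat{\mathcal L}_X g)(\partial u,\partial\widehat Z\tilde u)$, which involve \emph{full} first derivatives of the first-order unknowns. In the null frame the transversal piece $\partial_{\Lbs}\widehat X\tilde u$ decays only like $(1+|q^*|)^{-1}$ and is not integrable in $t$ on its own. What saves the estimate---and what your sketch does not identify---is that in every such term the $\Lbs$-derivative is paired with a \emph{good} factor: either a wave-coordinate-improved component $(\widehat{\mathcal L}_X^I h_1)_{\Ls\Ts}$ or the quantity $\partial_{\Ls}\widehat X\tilde u$, which the paper bounds \emph{algebraically} from the first-order eikonal system (see \eqref{eq:lstarv} and \eqref{eq:LsXu}) rather than by integration. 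The paper makes this null decomposition explicit in \eqref{eq:Hg0u}--\eqref{eq:Hh1u} and then arrives at \eqref{eq:finalL1two}, whose structure mirrors \eqref{eq:finalL1}. Saying the coefficients ``decay like $h_1,\partial h_1$'' is not enough: generic components of $h_1$ are only $(1+t)^{-1+C\varepsilon}$, and without the null pairing the source is not integrable.

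For $\tilde v$ the argument is \emph{not} time reflection of the $\tilde u$ case in the forward region. The transport is along $\Lbt\sim\Lbs$, and the source has no component gain (see \eqref{eq:vsystem}); the bounds for $S^*\tilde v$ and $\Omega\tilde v$ come from integrating as in \eqref{eq:vsystemintegral}, but the estimate for $\langle q^*\rangle^2|\partial_{\Lbs}^2\tilde v|$ requires a separate argument: apply $\partial_{\Lbs}$ to $\partial_{\widetilde{\underline L}}\tilde v=0$ and use the already-established first-order bounds, as in the last paragraph of Section~10.5.
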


 We will now derive the system for the derivatives.
Differentiating \eqref{eq:eikonal} gives
\beq\label{eq:diffeikonal}
2g^{\alpha\beta} \partial_\alpha u\, \partial_\beta Zu
= -g_Z^{\alpha\beta}\partial_\alpha u\, \partial_\beta u,
\eq
where the Lie derivative $g_Z^{\alpha\beta}=\mathcal{L}_Z g^{\alpha\beta}$ is given by
\beq\label{eq:liederivative}
g_Z^{\alpha\beta}\partial_\alpha u\,\partial_\beta w=
(Zg^{\alpha\beta}) \partial_\alpha u\,\partial_\beta w
+g^{\alpha\beta} \partial_\alpha u\, [Z,\partial_\beta] w+
g^{\alpha\beta} [Z,\partial_\alpha] u\,\partial_\beta w.
\eq
Hence with the notation $g_Z(U,V)=g_Z^{\alpha\beta}U_\alpha V_\beta$
 \eqref{eq:diffeikonal} respectively \eqref{eq:eikonal} become
\beq\label{eq:eikonalsystem}
\partial_{\widetilde{L}} Z u=-\tfrac{1}{2}g_Z(\partial u,\partial u),\qquad
\partial_{\widetilde{L}}  u=0\qquad
\text{where}\quad
\widetilde{L}^\beta =g^{\alpha\beta}\partial_\alpha u .
\eq

We will now give a sequence of lemmas used to estimate this system.
\begin{lemma} If $Z\in{\mathcal Z}=\{ \Omega_{ij},\partial_t\}$ then
$g_{0Z}(U,V)=0$ and $\pa Z \us=0$.
\end{lemma}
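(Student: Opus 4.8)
\emph{Plan.} Recall that in this section $r^*=\varrho(r)$ is a function of $r=|x|$ alone and $\us=t-r^*$, while $g_0^{\alpha\beta}=m^{\alpha\beta}+h_0^{\alpha\beta}$ with $h_0^{\alpha\beta}=-\tfrac{M}{r}\tilde{\chi}\big(\tfrac{r}{1+t}\big)\delta^{\alpha\beta}$. On the region where the eikonal equation is actually set up, $r\ge t/2+1$, one has $\tfrac{r}{1+t}=\tfrac12+\tfrac{1}{2(1+t)}>\tfrac12$, hence $\tilde{\chi}\big(\tfrac{r}{1+t}\big)\equiv 1$ and $g_0^{\alpha\beta}=m^{\alpha\beta}-\tfrac{M}{r}\delta^{\alpha\beta}$; equivalently $g_0$ is the inverse of $-(1+M/r)^{-1}dt^2+(1-M/r)^{-1}|dx|^2$, which is static and spherically symmetric. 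The two assertions then are: (a) $\pa Z\us=0$, a one-line computation valid everywhere; and (b) $g_{0Z}(U,V)=0$, i.e.\ $\mathcal L_Z g_0^{\alpha\beta}=0$ as a tensor, which is exactly the statement that $\partial_t$ and $\Omega_{ij}$ are Killing fields of the static spherically symmetric metric above.

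For (a): since $r^*=\varrho(|x|)$ carries no $t$-dependence, $\partial_t\us=1$; and since $\Omega_{ij}r=x^i\partial_j r-x^j\partial_i r=x^ix^j/r-x^jx^i/r=0$, the chain rule gives $\Omega_{ij}\us=-\varrho^{\,\prime}(r)\,\Omega_{ij}r=0$. In either case $Z\us$ is constant, so $\pa(Z\us)=0$; note this uses nothing about the cutoff.

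For (b): write the Lie derivative of the inverse metric in components as in \eqref{eq:liederivative}, $g_{0Z}^{\alpha\beta}=Z(g_0^{\alpha\beta})-g_0^{\gamma\beta}\partial_\gamma Z^\alpha-g_0^{\alpha\gamma}\partial_\gamma Z^\beta$. For $Z=\partial_t$ we have $\partial_\gamma Z^\alpha=0$ and, since $g_0^{\alpha\beta}$ does not depend on $t$ on our region, $Z(g_0^{\alpha\beta})=\partial_t g_0^{\alpha\beta}=0$; hence $g_{0\partial_t}^{\alpha\beta}=0$. For $Z=\Omega_{ij}=x^i\partial_j-x^j\partial_i$ we have $Z(g_0^{\alpha\beta})=0$ because $g_0^{\alpha\beta}$ depends on $x$ only through $r$ and $\Omega_{ij}r=0$, and the remaining terms vanish by a short index check using that $g_0^{\alpha\beta}$ is diagonal with $g_0^{11}=g_0^{22}=g_0^{33}=1-M/r$ and that $\partial_\gamma Z^\alpha$ is the antisymmetric generator of the $(i,j)$-rotation (for instance the $(\alpha,\beta)=(i,j)$ entry is $-g_0^{jj}\partial_j Z^i-g_0^{ii}\partial_i Z^j=g_0^{jj}-g_0^{ii}=0$, all other entries being trivially zero). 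Conceptually this is just the familiar fact that the generators of the isometry group of a static spherically symmetric metric are Killing.

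The argument is entirely routine; the only point that needs a word of care is that the cutoff $\tilde{\chi}\big(\tfrac{r}{1+t}\big)$ is genuinely $t$-dependent, so $\partial_t$ would fail to be Killing for $g_0$ in the thin transition strip $\tfrac t2<r<\tfrac t2+\tfrac12$. What makes the lemma correct is precisely that this strip is disjoint from the region $r\ge t/2+1$ on which the eikonal system is posed, where $\tilde{\chi}\big(\tfrac{r}{1+t}\big)\equiv 1$ and $g_0$ reduces to the static spherically symmetric metric above.
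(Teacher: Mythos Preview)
Your proof is correct and is precisely the argument the paper has in mind; the paper states the lemma without proof because both claims are routine symmetry observations. Your remark about the cutoff is well taken: for $\Omega_{ij}$ one has $\mathcal L_{\Omega_{ij}}g_0=0$ everywhere (the cutoff depends only on $r,t$), while for $\partial_t$ one needs $\tilde\chi\big(\tfrac{r}{1+t}\big)\equiv 1$, which holds as soon as $r>(1+t)/2$, i.e.\ on the region where the eikonal construction is actually carried out.
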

Using the eikonal system \eqref{eq:eikonalsystem} and  the lemma above we obtain:
\begin{lemma} If $Z\!\in\!\{ \Omega_{ij},\partial_t\}$
then with $h_1^{\alpha\beta}\!\!=\!g^{\alpha\beta}\!-g_0^{\alpha\beta}\!\!$ and
$\Ls_{\!\!0\,\,}^\alpha\!\!\!=g_0^{\alpha\beta} \partial_\beta u^{\!*}\!$
we have
\begin{align}
\partial_{\widetilde{L}} Z \tilde{u}
&=-\tfrac{1}{2}h_{1Z}(\partial u,\partial u),\label{eq:eikonalsystemone}\\
\partial_{\Ls_{\!\!0\,}} \tilde{u}+\tfrac{1}{2}g_0(\partial \tilde{u},\partial \tilde{u})
&=-\tfrac{1}{2}h_1(\partial u,\partial u).\label{eq:eikonalsystemtwo}
\end{align}
\end{lemma}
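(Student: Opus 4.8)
The plan is to expand the eikonal system \eqref{eq:eikonalsystem} around the explicit approximate solution $\us$ and use the two vanishing statements of the previous lemma. For \eqref{eq:eikonalsystemone} I would write $\tilde u = u-\us$ and apply $\partial_{\widetilde L}$: since $Z\in\{\Omega_{ij},\partial_t\}$, the previous lemma gives $\pa Z\us=0$, hence $\partial_{\widetilde L}(Z\us)=\widetilde L^\beta\partial_\beta(Z\us)=0$, so $\partial_{\widetilde L} Z\tilde u=\partial_{\widetilde L} Zu=-\tfrac12 g_Z(\partial u,\partial u)$ by \eqref{eq:eikonalsystem}. Then decompose $g^{\alpha\beta}=g_0^{\alpha\beta}+h_1^{\alpha\beta}$; since the Lie derivative \eqref{eq:liederivative} is linear in $g^{\alpha\beta}$ (the commutator terms $[Z,\partial_\beta]$, $[Z,\partial_\alpha]$ are common to all the tensors involved), one has $g_Z=g_{0Z}+h_{1Z}$, and the first lemma gives $g_{0Z}(U,V)=0$. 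Thus $g_Z(\partial u,\partial u)=h_{1Z}(\partial u,\partial u)$, which is \eqref{eq:eikonalsystemone}.

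For \eqref{eq:eikonalsystemtwo} I would start from the eikonal equation in the form $\partial_{\widetilde L} u = g^{\alpha\beta}\partial_\alpha u\,\partial_\beta u=0$, substitute $g^{\alpha\beta}=g_0^{\alpha\beta}+h_1^{\alpha\beta}$ and $u=\us+\tilde u$, and expand the $g_0$ part using the symmetry of $g_0^{\alpha\beta}$:
\[
0 = g_0^{\alpha\beta}\partial_\alpha\us\,\partial_\beta\us + 2\,g_0^{\alpha\beta}\partial_\alpha\us\,\partial_\beta\tilde u + g_0^{\alpha\beta}\partial_\alpha\tilde u\,\partial_\beta\tilde u + h_1(\partial u,\partial u).
\]
The first term vanishes because $\us$ solves the eikonal equation for $g_0$ in $r>t/2$, as recorded in \eqref{eq:asymptoticdata}--\eqref{eq:rstardef}; the second term equals $2\,\Ls_{\!0}^\beta\partial_\beta\tilde u=2\,\partial_{\Ls_{\!0}}\tilde u$ by the definition $\Ls_{\!0}^\alpha=g_0^{\alpha\beta}\partial_\beta\us$; and the third is $g_0(\partial\tilde u,\partial\tilde u)$. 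Dividing by $2$ and rearranging gives \eqref{eq:eikonalsystemtwo}.

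There is essentially no analytic obstacle here: both identities are purely algebraic consequences of the splittings $g=g_0+h_1$, $u=\us+\tilde u$, together with the facts that $\us$ is an \emph{exact} solution of the eikonal equation for $g_0$ and that $g_0$ and $\us$ are invariant under $\Omega_{ij}$ and $\partial_t$. The only points demanding a moment's care are, in the first identity, that replacing $Zu$ by $Z\tilde u$ costs nothing (this is where $\pa Z\us=0$ enters) and that $g_{0Z}$ drops out (this is where $g_{0Z}(U,V)=0$ enters); and, in the second, the bookkeeping of the cross terms when expanding $g_0(\partial u,\partial u)$. These two equations are precisely what one then integrates along the integral curves of $\widetilde L$ and $\Ls_{\!0}$ in the subsequent propositions to obtain the bounds \eqref{eq:uniformeikonalboundone}--\eqref{eq:tildegammaest}.
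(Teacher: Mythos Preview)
Your argument is correct and is exactly the intended one: the paper does not spell out a proof, merely recording that the lemma follows ``using the eikonal system \eqref{eq:eikonalsystem} and the lemma above,'' and your expansion is precisely that derivation. The only things one must check, as you note, are the linearity of $g_Z$ in $g^{\alpha\beta}$ (so $g_Z=g_{0Z}+h_{1Z}$), the vanishing $\pa Z\us=0$ and $g_{0Z}(\cdot,\cdot)=0$ from the preceding lemma, and the eikonal equation for $\us$ with respect to $g_0$; all of these you invoke correctly.
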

This system gives control of all derivatives of $\tilde{u}$,
first the rotations $\Omega$, by integrating \eqref{eq:eikonalsystemone}
and the derivative along
the outgoing light cones $\Ls_{\!\!0}$ directly from \eqref{eq:eikonalsystemtwo}.
This gives good control of all the tangential
derivatives $\opa$ and then we control the time derivatives by integrating
\eqref{eq:eikonalsystemone}, and hence
all derivatives.

In order to estimate the system we need to  express the above quadratic forms
in a null frame: $S_1,S_2$,
$ \Ls\!\!=\pa_t\!+\pa_{r^*}$,
 $\Lbs\!\!=\pa_t\!-\pa_{r^*}$.
 With respect to the dual frame
 \beq\label{eq:dualstarframe}
 \Ls_{\!\!\alpha}\!\!=-\partial_{\alpha} u^{\!*}\!,\quad
\Lbs_{\!\!\alpha}\!\!=-\partial_\alpha\underline{u\!}^*\!,
\quad\text{where } \underline{u\!}^*\!\!=\!t\!+r^*\!\!, \quad \text{and }
A_i=\delta_{ij}A^j,
\eq
we have with $w_U=U^\alpha w_\alpha$
\beq\label{eq:starnullframederivative}
\pa_\mu\!=-\tfrac{1}{2}\Ls_{\!\!\mu}\pa_\Lbs-\tfrac{1}{2}\Lbs_{\!\!\mu}\pa_\Ls
+A_\mu\pa_A,\quad w_\mu\!=-\tfrac{1}{2}\Ls_{\!\!\mu}w_\Lbs-\tfrac{1}{2}\Lbs_{\!\!\mu}w_\Ls
+A_\mu w_A.
\eq
Previously we defined $k_{UV}\!\!=k_{\alpha\beta}U^\alpha V^\beta\! $,
for $U,V\!\!\in\! \mathcal{N}$.
This is however equal to $k_{UV}\!=k^{\alpha\beta} U_\alpha V_{\!\beta}$,
where $k_{\alpha\beta}\!=m_{\alpha \mu}m_{\beta\nu}k^{\mu\nu}\!$ and
$U_\alpha\! \!=m_{\alpha\beta} U^\beta\!$, $V_{\!\alpha}\!\! =m_{\alpha\beta} V^\beta$
are the corresponding covectors in the dual frame.
We now use the later as definition.
 In particular for
 $\Us\!\!,\Vs\!\in\! \mathcal{N}^*\!$, we define
 $k_{\Us\Vs}\!\!=k^{\alpha\beta}\Us_{\!\!\!\!\alpha} \Vs_{\!\!\!\!\!\beta}$,
 where the corresponding vectors in the dual frame are given by
 \eqref{eq:dualstarframe}. Note that the corresponding covectors
 in the new null frame differs at most by a factor of
 $d r^*\!/dr$ from the original null frame: $U\!-\Us\!\!\!\sim M\!/r$.
 Since the good components decay at most a factor of $1\!/r$ better
  we have the same estimates for $k_{\Us\Vs}$ as we do for $k_{UV}$.
  For some coefficients $k^{\Us\Vs}$ to be calculated we have
 $k^{\alpha\beta}w_\alpha v_\beta=k^{\Us\Vs}W_{\Us}V_{\Vs}$:

\begin{lemma} Let ${k}_{UV}\!\!=k^{\alpha\beta}U_{\!\alpha} \!V_{\!\beta}$
and $W_{\!U}\!=\!U^\alpha w_\alpha\!$.\!
 We have $k^{BD}\!\!=\!\delta^{AB}\delta^{CD}k_{AC}$,
\begin{multline}\label{eq:quadraticinnullframe}
 k^{\alpha\beta}w_\alpha  v_\beta
 \!= \!\big(k_{\Ls\Ls}W_{\!\Lbs} V_{\Lbs}
 +k_{\Ls \Lbs}(W_{\!\Lbs} V_{\Ls} \!+W_{\!\Ls} V_{\Lbs} )
 +k_{\Lbs\Lbs}W_{\!\Ls} V_{\Ls} \big)\!/4\\
-\delta^{AB\!}\big(k_{\Ls\! A}
(W_{\!\Lbs} V_{\!B} \!+W_{\!B} V_{\Lbs})\!
 +k_{\Lbs\!  A}(W_{\!\Ls} V_B \!+W_{\!B} V_{\Ls})\big)\!/2\!
 +k^{BD}W_{\!B} V_D,\!\!
 \end{multline}
 \vspace{-0.2in}
\begin{align}\label{eq:gzeroframe}
 g_0^{\alpha\beta} w_\alpha w_\beta \!
 &=-\tfrac{1}{2}\big(1\!+\!\tfrac{M}{r}\big)
 \big(W_{\Lbs}  W_{\Ls} \!
 +W_{\Ls}  W_{\Lbs} \big)\!
 +\!\big(1\!-\!\tfrac{M}{r}\big)\delta^{AB} W_{\!A}  W_B ,\\
 \label{eq:honeustarv}
g^{\alpha\beta}\pa_{\alpha\!} u\, \partial_\beta
&=-\tfrac{1}{2} g^{\alpha\beta}\pa_{\alpha \!} u \,\Ls_{\!\!\beta }\,\pa_\Lbs
 -\tfrac{1}{2} g^{\alpha\beta}\pa_{\alpha\!} u\, \Lbs_{\!\!\beta}\,\pa_\Ls
\!+\delta^{AB\!} g^{\alpha\beta}\pa_{\alpha\!} u\, A_\beta\,\partial_B.
\end{align}
\end{lemma}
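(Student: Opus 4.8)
The plan is to treat all four identities as linear algebra relative to the null frame $\mathcal{N}^*=\{S_1,S_2,\Ls,\Lbs\}$ and its dual \eqref{eq:dualstarframe}, the only input being the completeness relation \eqref{eq:starnullframederivative}. To begin with, \eqref{eq:honeustarv} is nothing more than \eqref{eq:starnullframederivative} applied in the free slot of the vector field $g^{\alpha\beta}\pa_\alpha u\,\partial_\beta$: substituting $\partial_\beta=-\tfrac12\Ls_\beta\pa_\Lbs-\tfrac12\Lbs_\beta\pa_\Ls+A_\beta\pa_A$ and carrying the factor $g^{\alpha\beta}\pa_\alpha u$ through gives the displayed formula, the sum over $A=S_1,S_2$ being written $\delta^{AB}(\,\cdot\,)_A\partial_B$. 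The angular identity $k^{BD}=\delta^{AB}\delta^{CD}k_{AC}$ is simply the statement that the metric induced on the span of $S_1,S_2$ is $\delta_{AB}$ in this orthonormal frame, so that raising angular frame indices with $\delta$ is trivial; equivalently it is the angular part $\sum_A A_\mu A^\alpha=\Pi_\mu^{\,\alpha}$ of \eqref{eq:starnullframederivative}, which also identifies $\sum_{A,B}k_{AB}W_AV_B$ with $k^{BD}W_BV_D$.

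For \eqref{eq:quadraticinnullframe} the plan is to insert the covector decomposition $w_\alpha=-\tfrac12\Ls_\alpha W_{\Lbs}-\tfrac12\Lbs_\alpha W_{\Ls}+\sum_A A_\alpha W_A$ (this is \eqref{eq:starnullframederivative} again, in its covector form, with $W_U=U^\alpha w_\alpha$) together with the analogous expansion of $v_\beta$, contract with $k^{\alpha\beta}$, and collect terms using the symmetry $k_{\Us\Vs}:=k^{\alpha\beta}\Us_\alpha\Vs_\beta=k_{\Vs\Us}$. Of the nine products of the three pieces of $w$ with the three pieces of $v$, the $\Ls\Ls$, $\Lbs\Lbs$ and (after symmetrizing) $\Ls\Lbs$ terms pick up the factor $\tfrac14$ from the two factors $-\tfrac12$, the mixed $\Ls A$ and $\Lbs A$ terms pick up $-\tfrac12$, and the last group is the purely angular $\sum_{A,B}k_{AB}W_AV_B=k^{BD}W_BV_D$; reading this off reproduces \eqref{eq:quadraticinnullframe}. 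The only care needed here is bookkeeping of the numerical constants.

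Finally, \eqref{eq:gzeroframe} is \eqref{eq:quadraticinnullframe} specialized to $k^{\alpha\beta}=g_0^{\alpha\beta}$ with $v=w$, so one only has to evaluate the frame components of $g_0$, which in the relevant region is $g_0^{\alpha\beta}=m^{\alpha\beta}-\tfrac Mr\delta^{\alpha\beta}$, i.e.\ $g_0^{00}=-(1+\tfrac Mr)$, $g_0^{0i}=0$, $g_0^{ij}=(1-\tfrac Mr)\delta^{ij}$. In the coordinates $(t,x^i)$ one has $\Ls_\alpha=-\pa_\alpha\us=(-1,a\omega_i)$ and $\Lbs_\alpha=-\pa_\alpha\underline{u\!}^*=(-1,-a\omega_i)$ with $a=dr^*/dr$; hence $g_0^{\alpha\beta}\Ls_\alpha\Ls_\beta=g_0^{\alpha\beta}\Lbs_\alpha\Lbs_\beta=-(1+\tfrac Mr)+(1-\tfrac Mr)a^2=0$ once $a^2=(1+\tfrac Mr)/(1-\tfrac Mr)$ from \eqref{eq:rstardef} is inserted — this is exactly the statement that $\us$ (and, by the same computation, $\underline{u\!}^*$) solves the $g_0$-eikonal equation, which is how $r^*$ was normalized. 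Since $A$ is tangent to the sphere ($\omega\cdot A=0$) while $\Ls_\alpha,\Lbs_\alpha$ have no angular component, one gets $g_0^{\alpha\beta}\Ls_\alpha A_\beta=g_0^{\alpha\beta}\Lbs_\alpha A_\beta=0$ and $g_0^{\alpha\beta}A_\alpha B_\beta=(1-\tfrac Mr)\delta_{AB}$, and the remaining cross term is $g_0^{\alpha\beta}\Ls_\alpha\Lbs_\beta=-(1+\tfrac Mr)-(1-\tfrac Mr)a^2=-2(1+\tfrac Mr)$. Substituting these four values into \eqref{eq:quadraticinnullframe} yields \eqref{eq:gzeroframe}.

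The whole argument is routine; the one step that is genuinely a computation rather than bookkeeping is $g_0^{\alpha\beta}\Ls_\alpha\Lbs_\beta=-2(1+M/r)$, and that is precisely where the normalization \eqref{eq:rstardef} of $r^*$ enters — $r^*$ is chosen so that $\Ls$ and $\Lbs$ are $g_0$-null, which then pins down the value of this cross term.
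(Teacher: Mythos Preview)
Your proof is correct and follows exactly the approach the paper has in mind; the lemma is stated there without proof because it is precisely the routine null-frame bookkeeping you carry out, with the only nontrivial input being the normalization \eqref{eq:rstardef} that makes $\Ls,\Lbs$ into $g_0$-null covectors. One tiny wording quibble: when you say ``$\Ls_\alpha,\Lbs_\alpha$ have no angular component'' you mean their spatial part is purely radial ($\Ls_i=a\omega_i$), which together with $\omega\cdot A=0$ gives $g_0^{ij}\Ls_iA_j=0$; the computation itself is fine.
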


\begin{lemma} If $|\partial \tilde{u}|\!\leq\! 1\!/16$ and $|h_1|\!\leq \!1\!/16$
we have with
$|h_{\Ls \!A}|\!=|h_{\Ls S_1}|\!+|h_{\Ls S_1}|$
\beq\label{eq:lstarv}
|\partial_{{\Ls}} \tilde{u}|\lesssim |h_{1{\Ls}{\Ls}}|
\! +|h_{1{\Ls}\!A}| |\pas \tilde{u}|\!
+ |\pas \tilde{u}|^2\!\!, \quad \text{where}\quad |\pas \tilde{u}|^2\!\!
=\!\!\sum|\overline{\pa}_i \tilde{u}|^2\!\!,
\eq
 \vspace{-0.25in}
\begin{multline}\label{eq:k}
\!\!\!\!|k(\partial u,\partial u)|
\les |k_{{\Ls}{\Ls}}|+ |k_{{\Ls} \!A}| |\pas \tilde{u}|+
 ( |k_{{\Ls}\Lbs}|\!+|k| |h_{1\Ls\Ls}|)
(|h_{1\Ls\Ls}|\!+|h_{1\Ls \!A}|\,|\pas \tilde{u}|)\\
+(|k_{{\Ls} \Lbs}| \!+|k_{AB}|\!+|k| |h_{1\Ls \!A}|)|\pas \tilde{u}|^2\!\!
+|k| |\pas  \tilde{u}|^3\!\!.
\end{multline}
\end{lemma}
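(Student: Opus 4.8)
The plan is to work entirely in the null frame $S_1,S_2,\Ls,\Lbs$. First I would record the null-frame components of the covector $\pa u$. Since $u=\us+\tilde u$ and $\Ls_\alpha=-\pa_\alpha\us$ by \eqref{eq:dualstarframe}, we have $\pa_\alpha u=-\Ls_\alpha+\pa_\alpha\tilde u$; contracting with the frame vectors and using the flat pairings $\Ls^\alpha\Ls_\alpha=0$, $\Lbs^\alpha\Ls_\alpha=-2$, $A^\alpha\Ls_\alpha=0$ gives
\begin{equation*}
(\pa u)_\Ls=\pa_\Ls\tilde u,\qquad (\pa u)_\Lbs=2+\pa_\Lbs\tilde u,\qquad (\pa u)_A=\opa_A\tilde u .
\end{equation*}
Thus $(\pa u)_\Lbs$ is bounded below away from zero once $|\pa\tilde u|\le 1/16$, the $A$-components are of size $|\pas\tilde u|$, and the small quantity we must control, $(\pa u)_\Ls$, is exactly $\pa_\Ls\tilde u$.

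For \eqref{eq:lstarv} I would split $g^{\alpha\beta}=g_0^{\alpha\beta}+h_1^{\alpha\beta}$ in the eikonal equation \eqref{eq:eikonal}, evaluate $g_0(\pa u,\pa u)$ by \eqref{eq:gzeroframe}, and read off the scalar identity
\begin{equation*}
\big(1+\tfrac{M}{r}\big)\big(2+\pa_\Lbs\tilde u\big)\,\pa_\Ls\tilde u
=\big(1-\tfrac{M}{r}\big)\,\delta^{AB}\opa_A\tilde u\,\opa_B\tilde u+h_1(\pa u,\pa u).
\end{equation*}
Since the coefficient on the left is $\ge c>0$ (for $M$ small and $|\pa\tilde u|\le 1/16$), this gives $|\pa_\Ls\tilde u|\les|\pas\tilde u|^2+|h_1(\pa u,\pa u)|$. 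It then remains to bound $|h_1(\pa u,\pa u)|$: expanding it by \eqref{eq:quadraticinnullframe} with $w=v=\pa u$ and inserting the components above, each term is either $|h_{1\Ls\Ls}|$, $|h_{1\Ls A}||\pas\tilde u|$ or $|h_{1AB}||\pas\tilde u|^2$, or else carries a factor of $|\pa_\Ls\tilde u|$ (or $|\pa_\Ls\tilde u|^2$, $|\pa_\Ls\tilde u||\pas\tilde u|$) multiplied by a component of $h_1$ bounded by $|h_1|\le 1/16$. Feeding this back into $|\pa_\Ls\tilde u|\les|\pas\tilde u|^2+|h_1(\pa u,\pa u)|$, all terms proportional to $|\pa_\Ls\tilde u|$ carry a small constant and are absorbed into the left side, leaving exactly \eqref{eq:lstarv}.

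For \eqref{eq:k} I would again expand $k(\pa u,\pa u)=k^{\alpha\beta}(\pa u)_\alpha(\pa u)_\beta$ by \eqref{eq:quadraticinnullframe} with $w=v=\pa u$, substitute the component formulas, and bound the components $k_{\Lbs\Lbs}$, $k_{\Lbs A}$ and $k^{BD}=\delta^{AB}\delta^{CD}k_{AC}$ (which have no a priori smallness) by $|k|$, $|k|$ and $|k_{AB}|$ respectively. This yields a preliminary estimate of the shape
\begin{equation*}
|k(\pa u,\pa u)|\les |k_{\Ls\Ls}|+|k_{\Ls A}||\pas\tilde u|+|k_{\Ls\Lbs}||\pa_\Ls\tilde u|+|k|\,|\pa_\Ls\tilde u|^2+|k|\,|\pa_\Ls\tilde u|\,|\pas\tilde u|+|k_{AB}||\pas\tilde u|^2 .
\end{equation*}
Inserting \eqref{eq:lstarv} for $|\pa_\Ls\tilde u|$ in the three middle terms and using $|\pa\tilde u|,|h_1|\le 1/16$ together with Young's inequality to reorganize the resulting products of $|h_{1\Ls\Ls}|$, $|h_{1\Ls A}|$ and $|\pas\tilde u|$ produces the groups displayed in \eqref{eq:k}; in particular the $|k|\,|\pa_\Ls\tilde u|^2$ term, after using \eqref{eq:lstarv} twice, contributes the terms with coefficient $|k||h_{1\Ls\Ls}|$ and the cubic $|k||\pas\tilde u|^3$.

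The hard part is not the algebra, which is already packaged in \eqref{eq:gzeroframe} and \eqref{eq:quadraticinnullframe}, but the coupled bootstrap structure and the accompanying bookkeeping: the estimate \eqref{eq:k} applied with $k=h_1$ is precisely what is needed to close \eqref{eq:lstarv}, so the two must be established together, and the smallness hypotheses $|\pa\tilde u|\le 1/16$, $|h_1|\le 1/16$ must be used both to divide by $(\pa u)_\Lbs$ in the eikonal solve and to decide which quadratic and cubic error terms can be absorbed into the left side of \eqref{eq:lstarv} versus retained on the right of \eqref{eq:k}.
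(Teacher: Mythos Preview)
Your approach is essentially the same as the paper's. The paper also starts from the eikonal equation written in the null frame via \eqref{eq:gzeroframe}, obtaining the identity
\[
\big(1+\tfrac{M}{r}\big)\,\pa_{\Lbs} u\;\pa_{\Ls}\tilde u
+\tfrac{1}{2}\big(1-\tfrac{M}{r}\big)\,|\pas\tilde u|^2
=-\tfrac{1}{2}\,h_1(\pa u,\pa u),
\qquad \pa_{\Lbs} u=2+\pa_{\Lbs}\tilde u,
\]
from which $|\pa_{\Ls}\tilde u|\les |\pas\tilde u|^2+|h_1(\pa u,\pa u)|$. It then records the general null--frame expansion of $k(\pa u,\pa u)$ (your ``preliminary estimate'' is exactly the paper's inequality \eqref{eq:honeu}), applies it first with $k=h_1$ to close \eqref{eq:lstarv} by absorbing the small $|h_1|\,|\pa_{\Ls}\tilde u|$ terms into the left, and then with general $k$ together with \eqref{eq:lstarv} to obtain \eqref{eq:k}.

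One small comment on presentation: you describe the argument as a ``coupled bootstrap'', but it is really sequential rather than simultaneous. The crude bound $|\pa_{\Ls}\tilde u|\les |\pas\tilde u|^2+|h_1(\pa u,\pa u)|$ comes first, independently of \eqref{eq:k}; the null--frame expansion \eqref{eq:honeu} is purely algebraic and does not use \eqref{eq:lstarv}; only then does one specialize to $k=h_1$ to finish \eqref{eq:lstarv}, and finally plug \eqref{eq:lstarv} back into \eqref{eq:honeu} for general $k$. So there is no circularity to worry about.
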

\begin{proof} Using \eqref{eq:gzeroframe}, \eqref{eq:eikonalsystemtwo} becomes
\beq\label{eq:Lstarv}
\big(1\!+\tfrac{M}{r}\big) \pa_\Lbs u\,\pa_\Ls \tilde{u}
+\tfrac{1}{2}\big(1\!-\tfrac{M}{r}\big)  |\pas \tilde{u}|^2\!\!
=\!-\tfrac{1}{2}h_1(\pa u,\pa u),
\qquad \pa_\Lbs u\!=2\!+\pa_\Lbs \tilde{u},\!
\eq
from which it follows that $|\pa_\Ls \tilde{u}|\!\leq\!|\pas \tilde{u}|^2\!\!+|h_1(\pa u,\pa u)|$.
 By \eqref{eq:quadraticinnullframe}
\begin{multline}\label{eq:honeu}
|k(\partial u,\partial u)|
\leq |k_{{\Ls}{\Ls}}|\,|\pa_\Lbs u|^2\!/4
+\big(2|k_{{\Ls}\! A}|\,|\pas \tilde{u}|+|k_{{\Ls} \Lbs}| \,
 |\partial_{{\Ls}} \tilde{u}|\big)\,|\pa_\Lbs u|\\
+2|k|\, |\partial_{{\Ls}} \tilde{u}| (|\partial_{{\Ls}} \tilde{u}|+|\pas  \tilde{u}|)
+|k_{AB}|\, |\pas  \tilde{u}|^2.
\end{multline}
\eqref{eq:lstarv} follows from this applied to $h_1$.
\eqref{eq:k} follows from \eqref{eq:honeu}  and \eqref{eq:lstarv}.
\end{proof}

We now turn to estimating the quadratic terms in the right of \eqref{eq:eikonalsystemone}.
\begin{lemma} If $\Omega\!=\!x^i\pa_j-x^j\pa_i$ then with
$k^{\alpha \Omega/r}\!\!=k^{\alpha i} \omega_{j}-k^{\alpha j}
\omega_i$ we have
\begin{align}\label{eq:LieOmega}
(\mathcal{L}_\Omega k)(\pa u,\pa v)&=(\Omega k)(\pa u,\pa v)+k([\Omega,\pa]u,\pa v)
+k(\pa u,[\Omega,\pa]v),\\
 \label{eq:commutarorrotationquadraticform}
k^{\alpha\beta} [\pa_\beta, \Omega] u
&=k^{\alpha \Omega/r}\pa_r u
+\big(k^{\alpha i}
\overline{\pa}_{j}-k^{\alpha j} \overline{\pa}_i\big)u.
 \end{align}
If $|\pa u|\leq 1$ and $|\pa v|\leq 1$ we have
\begin{multline}
|k([\Omega,\partial] u,\pa v)|
\lesssim |k_{{\Ls}\!A}|\!\! +\!|k_{\Ls\Us}| |\pas  {u}|\!
+( |k_{BA}|\!\! +\!|k_{B\Us}| |\pas  {u}|)|\pas {v}|\! \\
+( |k_{{\Lbs}\!A}|\!\! +\!|k_{\Lbs\Us}| |\pas  {u}|) |\pa_\Ls {v}|.
\label{eq:commutarorrotationquadraticformest}
\end{multline}
 \end{lemma}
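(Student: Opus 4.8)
The plan is to prove the two identities directly and then read off the estimate from the frame expansion \eqref{eq:quadraticinnullframe}. Identity \eqref{eq:LieOmega} is simply the Leibniz rule for the Lie derivative of the $(2,0)$--tensor $k$, exactly as in \eqref{eq:liederivative}: from $(\mathcal{L}_\Omega k)^{\alpha\beta}=\Omega k^{\alpha\beta}-k^{\gamma\beta}\pa_\gamma\Omega^\alpha-k^{\alpha\gamma}\pa_\gamma\Omega^\beta$ and $[\Omega,\pa_\gamma]=-(\pa_\gamma\Omega^\mu)\pa_\mu$ the last two terms become $k^{\gamma\beta}[\Omega,\pa_\gamma]u\,\pa_\beta v+k^{\alpha\gamma}\pa_\alpha u\,[\Omega,\pa_\gamma]v$. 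For \eqref{eq:commutarorrotationquadraticform}, since $\Omega^\mu=x^i\delta^\mu_j-x^j\delta^\mu_i$ one has $[\pa_\beta,\Omega]=\delta^i_\beta\pa_j-\delta^j_\beta\pa_i$ (and $[\pa_t,\Omega]=0$), so $k^{\alpha\beta}[\pa_\beta,\Omega]u=k^{\alpha i}\pa_j u-k^{\alpha j}\pa_i u$; writing $\pa_j=\omega_j\pa_r+\overline{\pa}_j$, $\pa_i=\omega_i\pa_r+\overline{\pa}_i$ and recalling $k^{\alpha\Omega/r}=k^{\alpha i}\omega_j-k^{\alpha j}\omega_i$ gives \eqref{eq:commutarorrotationquadraticform}.

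For the estimate I would view $k([\Omega,\pa]u,\pa v)=k^{\alpha\beta}w_\alpha(\pa_\beta v)$, where $w$ is the $1$--form with components $w_\alpha=[\Omega,\pa_\alpha]u=-(\delta^i_\alpha\pa_j-\delta^j_\alpha\pa_i)u$, and compute its frame components $W_U=U^\alpha w_\alpha=-(U^i\pa_j-U^j\pa_i)u$ for the frame $\Ls=\pa_t+\pa_{r^*}$, $\Lbs=\pa_t-\pa_{r^*}$, $S_1,S_2$. Since $\Ls^l=(\varrho')^{-1}\omega^l$, $\Lbs^l=-(\varrho')^{-1}\omega^l$ by \eqref{eq:rstardef} while $A^0=0$ for $A\in\{S_1,S_2\}$, one finds
\beqs
W_\Ls=\tfrac{1}{\varrho'}\big(\omega^j\pa_i u-\omega^i\pa_j u\big)=-\tfrac{1}{r\varrho'}\,\Omega u=-W_\Lbs .
\eqs
Because $\tfrac1r\Omega u=\omega^i\overline{\pa}_j u-\omega^j\overline{\pa}_i u$ (the radial parts cancel) and $\varrho'\sim 1$, both $W_\Ls,W_\Lbs$ are $\les|\pas u|$, whereas $W_A=-(A^i\pa_j-A^j\pa_i)u$ is merely $\les|\pa u|\le 1$. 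Likewise, for $V_U:=U^\beta\pa_\beta v=\pa_U v$ we have $|V_\Lbs|\le|\pa v|\le 1$ and $|V_A|\les|\pas v|$, while $V_\Ls=\pa_\Ls v$ must be kept.

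It then suffices to substitute these into \eqref{eq:quadraticinnullframe} (using also $k^{BD}=\delta^{AB}\delta^{CD}k_{AC}$ from the same lemma) and bound the nine resulting terms. The bookkeeping is: the components $k_{\Ls\Ls},k_{\Ls\Lbs},k_{\Lbs\Lbs}$ in \eqref{eq:quadraticinnullframe} always multiply $W_\Ls$ or $W_\Lbs$, hence acquire a factor $|\pas u|$ and land in $|k_{\Ls\Us}||\pas u|$ or $|k_{\Lbs\Us}||\pas u||\pa_\Ls v|$; the components $k_{\Ls A},k_{\Lbs A},k^{BD}$ carrying a tangential index multiply $W_B$ (only bounded), but the accompanying $V$--factor is then $\pa_\Lbs v$ (bounded, giving $|k_{\Ls A}|$), or $\pa_B v$ (small, giving $|k_{BA}||\pas v|$, and with the small part of $W$ also $|k_{B\Us}||\pas u||\pas v|$), or $\pa_\Ls v$ against $k_{\Lbs A}$ only (giving $|k_{\Lbs A}||\pa_\Ls v|$). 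Collecting these, with $\Us$ ranging over $\mathcal{N}^*=\{\Lbs,\Ls,S_1,S_2\}$, reproduces the right side of \eqref{eq:commutarorrotationquadraticformest}.

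The one substantive point is the cancellation of the radial part of $\pa_\alpha u$ in the antisymmetric combination $\omega^i\pa_j u-\omega^j\pa_i u$ defining $W_\Ls$ and $W_\Lbs$; this is precisely what forces the dangerous components $k_{\Ls\Ls},k_{\Ls\Lbs},k_{\Lbs\Lbs}$ never to appear without a tangential derivative of $u$. Everything else is routine, including the factor $\varrho'=dr^*/dr\sim 1$, which is the only trace of the fact that the estimate is phrased in the starred null frame while $\pa_r$ still acts in the original coordinates.
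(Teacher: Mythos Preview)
Your proof is correct and follows the same approach as the paper's. The paper's own proof is a single line (``\eqref{eq:commutarorrotationquadraticform} follows from $[\pa_k,\Omega]=\delta_{ki}\pa_j-\delta_{kj}\pa_i$ and $\partial_i=\omega_i\partial_r+\overline{\partial}_i$''), so you have in fact supplied all the details the paper left to the reader: the explicit frame components of $w_\alpha=[\Omega,\pa_\alpha]u$, the key cancellation $W_{\Ls}=-W_{\Lbs}=-(r\varrho')^{-1}\Omega u$ which confines the radial derivative to the tangential slots, and the term-by-term bookkeeping against \eqref{eq:quadraticinnullframe}. The only implicit assumption you are using when placing e.g.\ $k_{\Ls\Lbs}W_{\Lbs}V_{\Ls}$ into $|k_{\Lbs\Us}||\pas u||\pa_{\Ls}v|$ is the symmetry $k_{\Ls\Lbs}=k_{\Lbs\Ls}$, which holds throughout the paper since $k$ is always a metric perturbation; alternatively that term also fits in $|k_{\Ls\Us}||\pas u|$ using $|\pa_{\Ls}v|\les|\pa v|\leq 1$.
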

 \begin{proof} \eqref{eq:commutarorrotationquadraticform} follows from
 $[\pa_k,\!\Omega]\!=\delta_{ki\,}\pa_j \!-\delta_{k\!j\,}\pa_i$ and
 $\partial_i\!=\omega_{i\,} \partial_r \!+\overline{\partial}_i$.
 \end{proof}

\begin{lemma} Suppose that $|\partial \tilde{u}|\leq 1/16$ and $|h_1|\leq 1/16$ and $|Zh_1|\leq 1/16$.
Then
\begin{multline}\label{eq:htestone}
\!\!\!\!\!|h_{1\partial_t}(\partial u,\partial u)|\lesssim
|\partial h_{1{\Ls}{\Ls}}|\! +\big( |\partial h_{1{\Ls} \Lbs}|\!
+|\partial h_1|\,|h_{1{\Ls}{\Ls}}| \big)
( |h_{1{\Ls}{\Ls}}|\!+|h_{1\Ls A}|\,|\pas \tilde{u}|)
\\
\!+\!|\partial h_{1{\Ls}\!A}| |\pas \tilde{u}|\!
 +\!(|\partial h_{1{\Ls} \!\Lbs}|\!+|\partial h_{1AB}|\!
 +|\partial h_1| |h_{1{\Ls}\!A}|) |\pas  \tilde{u}|^2\!\!
 +|\partial h_1| |\pas  \tilde{u}|^3\!\!,
\end{multline}
 \vspace{-0.2in}
\begin{multline}\label{eq:Omegahone}
\!\!\!\!\!\!\!\!|h_{1\Omega}(\partial u,\partial u)|
\!\les \!|\Omega h_{1\Ls\Ls}|\!+|h_{1\Ls \Ts}|\!
+\big(|\Omega h_{1\Ls\!A}|\!+|h_{1AB}|\!+|h_{1\Ls \Us}|\big)|\pas  \tilde{u}|\\
+\big(|\Omega h_{1\Ls \Lbs}|\!+|\Omega h_{1AB}|\!+|h_{1 \Us\!A}|\big)|\pas  \tilde{u}|^2\!\!
+\big(|\Omega h_1|\!+|h_1|\big)|\pas \tilde{u}|^3\!\!.
\end{multline}
\end{lemma}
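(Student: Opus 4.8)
The plan is to deduce both bounds by feeding the null-frame estimates \eqref{eq:lstarv}, \eqref{eq:k} and \eqref{eq:commutarorrotationquadraticformest} into the Lie-derivative identity \eqref{eq:liederivative}. Throughout one uses $\pas\us=0$ and $\pa_\Ls\us=0$, so that $\pas u=\pas\tilde u$ and $\pa_\Ls u=\pa_\Ls\tilde u$; the hypotheses $|\pa\tilde u|,|h_1|,|Zh_1|\leq 1/16$ put us in the regime where those lemmas apply, and by \eqref{eq:lstarv} one has $|\pa_\Ls\tilde u|\les |h_{1\Ls\Ls}|+|h_{1\Ls A}|\,|\pas\tilde u|+|\pas\tilde u|^2$.

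The estimate \eqref{eq:htestone} is immediate: since $\partial_t=\partial_0$ has constant components the commutators in \eqref{eq:liederivative} vanish, so $h_{1\partial_t}^{\alpha\beta}=\partial_t h_1^{\alpha\beta}$ and $h_{1\partial_t}(\pa u,\pa u)$ is of the form $k(\pa u,\pa u)$ treated in \eqref{eq:k}, with $k=\partial_t h_1$. Using $|(\partial_t h_1)_{UV}|\les|\pa h_{1UV}|$ (the lower order terms where $\partial_t$ falls on the frame being harmless) and $|\partial_t h_1|\les|\pa h_1|$, the right side of \eqref{eq:k} becomes exactly the right side of \eqref{eq:htestone}.

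For $\Omega$ I would start from \eqref{eq:LieOmega}, which by symmetry of $h_1$ gives $h_{1\Omega}(\pa u,\pa u)=(\Omega h_1)(\pa u,\pa u)+2\,h_1([\Omega,\pa]u,\pa u)$. I would bound the first summand by \eqref{eq:k} with $k=\Omega h_1$, which produces the differentiated terms $|\Omega h_{1\Ls\Ls}|$, $|\Omega h_{1\Ls A}|\,|\pas\tilde u|$, $|\Omega h_{1\Ls\Lbs}|\,|\pa_\Ls\tilde u|$, $|\Omega h_{1AB}|\,|\pas\tilde u|^2$ and the cubic remainder $|\Omega h_1|\,|\pa_\Ls\tilde u|(|\pa_\Ls\tilde u|+|\pas\tilde u|)$; and the second summand by \eqref{eq:commutarorrotationquadraticform}--\eqref{eq:commutarorrotationquadraticformest} with $k=h_1$, $u=v$, which contributes the undifferentiated components $|h_{1\Ls A}|$, $|h_{1\Ls\Us}|\,|\pas\tilde u|$, $(|h_{1AB}|+|h_{1B\Us}|\,|\pas\tilde u|)|\pas\tilde u|$ and $(|h_{1\Lbs A}|+|h_{1\Lbs\Us}|\,|\pas\tilde u|)|\pa_\Ls\tilde u|$. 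Then I would substitute \eqref{eq:lstarv} into every occurrence of $|\pa_\Ls\tilde u|$.

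The remaining and only delicate step is to check that the finite list of quadratic and cubic cross terms thus produced is covered by the right side of \eqref{eq:Omegahone}. Products of a differentiated component with a small undifferentiated one, e.g. $|\Omega h_{1\Ls\Lbs}|\,|h_{1\Ls\Ls}|$ coming from $|\Omega h_{1\Ls\Lbs}|\,|\pa_\Ls\tilde u|$, are absorbed using $|\Omega h_1|\leq 1/16$ to bound them by $\les|h_{1\Ls\Ls}|\les|h_{1\Ls\Ts}|$; likewise $|h_{1\Lbs A}|\,|h_{1\Ls\Ls}|\les|h_{1\Ls\Ts}|$ via $|h_{1\Lbs A}|\leq|h_1|\leq1/16$. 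The cross products carrying a factor $|\pas\tilde u|$ or $|\pas\tilde u|^2$ are distributed --- choosing at each step which factor to bound by $1/16$ --- among $|h_{1\Ls\Us}|\,|\pas\tilde u|$, $|h_{1AB}|\,|\pas\tilde u|$ and $|h_{1\Us A}|\,|\pas\tilde u|^2$, while the genuinely cubic pieces $|\Omega h_1|\,|\pa_\Ls\tilde u|^2$ and $|h_{1\Lbs\Us}|\,|\pas\tilde u|\,|\pa_\Ls\tilde u|$ are, after \eqref{eq:lstarv}, each $\les(|\Omega h_1|+|h_1|)\,|\pas\tilde u|^3$ up to terms already listed. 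I expect this combinatorial bookkeeping, rather than any single inequality, to be the main obstacle.
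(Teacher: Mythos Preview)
Your proof is correct and follows exactly the route the paper takes: the paper's entire argument reads ``\eqref{eq:htestone} follows from \eqref{eq:k}; \eqref{eq:Omegahone} follows from \eqref{eq:k} and \eqref{eq:commutarorrotationquadraticformest},'' and you have spelled out precisely this --- applying \eqref{eq:k} to $k=\partial_t h_1$ for the first bound, and for the second splitting via \eqref{eq:LieOmega}, applying \eqref{eq:k} to $k=\Omega h_1$ and \eqref{eq:commutarorrotationquadraticformest} to $k=h_1$, then substituting \eqref{eq:lstarv} for the residual $|\pa_\Ls\tilde u|$ factors. The combinatorial bookkeeping you describe is indeed the only content, and your absorption scheme using the $1/16$ smallness hypotheses is the right way to handle it.
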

 \vspace{-0.15in}
\begin{proof}  \eqref{eq:htestone} follows from \eqref{eq:k}.
\eqref{eq:Omegahone} follows from \eqref{eq:k} and
\eqref{eq:commutarorrotationquadraticformest}.
\end{proof}

We are now going to substitute our estimates for $h_1$ into the previous lemma.
Note that the estimates in Proposition \ref{prop:sharpmetricdecay}
hold if we replace $\!L\!$ by $\!\Ls\!$ and $\!\uL$ by $\!\Lbs\!$, since the difference is
$\!\sim \!1\!/r$. Here
$(\Omega h_1)_{\Us\Vs}\!\!\neq\!\Omega(h_{1\Us\Vs\!})$ but the lower order terms generated
are exactly the ones that show up in \eqref{eq:Omegahone}.
(In fact we are estimating the Lie derivative which satisfy the same estimates.)
We have
\begin{lemma} Suppose that $|\pa \tilde{u}|+|h_1|+|\Omega h_1|+|\pa h_1|+M\!\leq c_0$.
Then with $q^*\!\!=r^*\!-t$
\begin{align}
|h_{1\Omega}(\partial u,\partial u)|&\les \frac{\varepsilon }{1\!+t\!+r^*}
 \Big(\frac{1\!+q_-^*}{1\!+t\!+r^*}\Big)^{\!\!\gamma\prime}\!\!\!\!
 +\frac{\varepsilon (1+|\,q^*|)^{-\varepsilon}\,
  |\pas  \tilde{u}|^2}{(1\!+t\!+r^*)^{1-\varepsilon}
 (1\!+q^*_+)^{\gamma\prime}},\label{eq:finalL1}
 \\
|h_{1\partial_t}(\partial u,\partial u)|
&\les
\frac{\varepsilon (1+|q^*|)^{-\varepsilon}}{(1\!+t\!+r^*)^{2-\varepsilon}
(1+q_+^*)^{\gamma\prime}\!\!}\,
 +\frac{\varepsilon (1+|\,q^*|)^{-1-\varepsilon}\,
 |\pas  \tilde{u}|^2}{(1\!+t\!+r^*)^{1-\varepsilon}
 (1\!+q^*_+)^{\gamma\prime}\!\!}\,.
 \label{eq:finalL1partial}
 \end{align}
\end{lemma}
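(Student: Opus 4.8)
The plan is to substitute the sharp metric decay estimates into the two bounds \eqref{eq:htestone} and \eqref{eq:Omegahone} for $|h_{1\partial_t}(\partial u,\partial u)|$ and $|h_{1\Omega}(\partial u,\partial u)|$, and to organise the resulting terms according to the decay class of the null‑frame components that occur. Here the estimates of Proposition~\ref{prop:sharpmetricdecay} are used in the form valid with $L,\uL$ replaced by $\Ls,\Lbs$ and for the Lie derivatives $(\mathcal{L}_{Z^{*}}^I h_1)_{UV}$, as noted just before the statement; the hypothesis $|\pa\tilde u|\le c_0$ enters whenever a factor $|\pas\tilde u|$ is to be traded for a small constant.

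First I would observe, by inspection of \eqref{eq:k} and \eqref{eq:commutarorrotationquadraticformest} (whence \eqref{eq:htestone} and \eqref{eq:Omegahone} were derived), that the genuinely slowly decaying component $h_{1\Lbs\Lbs}$ does not appear in \eqref{eq:htestone}, \eqref{eq:Omegahone} other than already absorbed into the generic $|h_1|$ in the cubic‑in‑$\tilde u$ terms. The components that do appear split into three groups. \emph{Group~(i):} the wave‑coordinate‑controlled quantities $h_{1\Ls\Ls}$, $h_{1\Ls A}$, $\delta^{AB}h_{1AB}$, the tangential $h_{1\Ls\Ts}$, and their $\partial_t$‑ and $\Omega$‑derivatives; by \eqref{eq:sharpwsvecoordder}--\eqref{eq:sharpwsvecoordfunc} and their Lie‑derivative versions (using the Remark that $(\mathcal{L}_\Omega h)_{LL}=\Omega(h_{LL})$) these carry the extra factor $(1+t+r^*)^{-\gamma'}$ or $\big(\tfrac{1+q_-^*}{1+t+r^*}\big)^{1-\varepsilon}$, which since $\gamma'<1-\varepsilon$ and $(1+q_-^*)^{\gamma'}\ge1$ makes them $\les\tfrac{\varepsilon}{1+t+r^*}\big(\tfrac{1+q_-^*}{1+t+r^*}\big)^{\gamma'}$, and $\les\tfrac{\varepsilon(1+|q^*|)^{-\varepsilon}}{(1+t+r^*)^{2-\varepsilon}(1+q_+^*)^{\gamma'}}$ once one more $\partial$ hits them --- i.e.\ the first terms on the right of \eqref{eq:finalL1} and \eqref{eq:finalL1partial}. \emph{Group~(ii):} the remaining frame components $h_{1\Ls\Lbs}$, $h_{1\Ls\Us}$, $h_{1\Us A}$, $h_{1AB}$, each of which has an index along a direction of $\mathcal{T}$ (in particular $h_{1\Ls\Lbs}$ is an $h^{1*}_{TU}$ with $T=\Ls$, even though it is not wave‑coordinate‑controlled); by \eqref{eq:metricdecaysharptan} these are $\les\tfrac{\varepsilon}{(1+t+r^*)(1+q_+^*)^{\gamma'}}$, and for their $\partial$‑derivatives I would use $|\pa\phi|\les(1+|q^*|)^{-1}\sum_{|I|\le1}|Z^{*I}\phi|$ to get the clean $\les\tfrac{\varepsilon}{(1+|q^*|)(1+t+r^*)(1+q_+^*)^{\gamma'}}$. \emph{Group~(iii):} the generic $|h_1|$ and $|\pa h_1|$, which in \eqref{eq:htestone}, \eqref{eq:Omegahone} occur only in the cubic‑in‑$\tilde u$ terms or multiplied by a group‑(i) factor; here the weak bound \eqref{eq:decay1star}, or the general estimate of Proposition~\ref{prop:sharpmetricdecay} together again with $|\pa\phi|\les(1+|q^*|)^{-1}\sum|Z^*\phi|$, suffices, the extra smallness present in these terms soaking up the $(1+t+r^*)^{C\varepsilon}$ factors.

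Then \eqref{eq:finalL1} and \eqref{eq:finalL1partial} follow by collecting terms and using repeatedly $1+|q^*|\le1+t+r^*$, $1+q_+^*\le1+t+r^*$, $\tfrac{1+q_-^*}{1+t+r^*}\le1$: a group‑(ii) coefficient times $|\pas\tilde u|$ is absorbed into the first term on the right after using $|\pas\tilde u|\le c_0$; a group‑(ii) derivative times $|\pas\tilde u|^2$ produces exactly the $|\pas\tilde u|^2$ term, the gap between $(1+|q^*|)^{-1}$ and the target weight $(1+|q^*|)^{-1-\varepsilon}$ being covered by the $(1+t+r^*)^{-1+\varepsilon}$ in the target; and the products of group~(i) with group~(iii) and the cubic terms lie far below either term on the right.

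The step I expect to cost the most care is exactly this classification. The two delicate points are (a) recognising that $h_{1\Ls\Lbs}$, although \emph{not} controlled by the wave coordinate condition, is tangential‑type and therefore enjoys \eqref{eq:metricdecaysharptan}, while the truly bad $h_{1\Lbs\Lbs}$ indeed never appears without a full power of the small quantities multiplying it; and (b) bounding the $\partial$‑derivatives of the group‑(ii) components through the \emph{undifferentiated} estimate \eqref{eq:metricdecaysharptan} and $|\pa\phi|\les(1+|q^*|)^{-1}\sum|Z^*\phi|$, rather than through the weak derivative bound of Lemma~\ref{lem:weakdecaystar} --- the latter carries a $C\varepsilon$ loss in $1+t+r^*$ that the target exponent cannot absorb, whereas the former yields the clean $(1+t+r^*)^{-1}(1+|q^*|)^{-1}$ whose $(1+|q^*|)^{-1}$ is dominated by $(1+|q^*|)^{-1-\varepsilon}(1+t+r^*)^{\varepsilon}$. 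The commutator difference $\Omega(h_{1\Us\Vs})-(\mathcal{L}_\Omega h_1)_{\Us\Vs}$ requires no new estimate, since it is exactly the sum of lower‑order contractions already displayed in \eqref{eq:Omegahone}.
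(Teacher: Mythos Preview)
Your overall plan—substitute the sharp decay of Proposition~\ref{prop:sharpmetricdecay} (in its $\Ls,\Lbs$ and Lie-derivative form) into \eqref{eq:htestone}--\eqref{eq:Omegahone} and sort the null-frame components by decay class—is exactly the paper's approach, and your handling of the zero-order, quadratic and cubic parts is correct.

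One step, however, does not go through as written. You assert that a group-(ii) coefficient times $|\pas\tilde u|$ (e.g.\ the contribution $|h_{1\Ls\Lbs}|\,|\pas\tilde u|$ coming from $|h_{1\Ls\Us}|\,|\pas\tilde u|$ in \eqref{eq:Omegahone}) is absorbed into the \emph{first} term of \eqref{eq:finalL1} after replacing $|\pas\tilde u|$ by $c_0$. But \eqref{eq:metricdecaysharptan} only gives $\varepsilon(1+t+r^*)^{-1}(1+q_+^*)^{-\gamma'}$, and this is \emph{not} dominated by $\varepsilon(1+t+r^*)^{-1}\big(\tfrac{1+q_-^*}{1+t+r^*}\big)^{\gamma'}$: for $q^*>0$ that would require $(1+t+r^*)^{\gamma'}\les(1+q^*)^{\gamma'}$, and for $q^*<0$ it would require $1+|q^*|\gtrsim 1+t+r^*$, both failing when $|q^*|\ll t+r^*$. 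A Young-inequality split between the two terms of \eqref{eq:finalL1} fails for the same reason. What \emph{does} hold is that the group-(ii) bound is itself dominated by the \emph{second}-term coefficient (since $(1+|q^*|)^{\varepsilon}\le(1+t+r^*)^{\varepsilon}$), so that $|h_{1\Ls\Lbs}|\,|\pas\tilde u|$ sits under that coefficient times a \emph{single} power of $|\pas\tilde u|$. Thus either the second term of \eqref{eq:finalL1} should be read with $|\pas\tilde u|$ rather than $|\pas\tilde u|^2$ (which is all the bootstrap in \S\ref{section:uniformbounds} actually uses), or one must already feed in the bootstrap bound on $|\pas\tilde u|$ at this stage; your absorption into the first term does not work.
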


\vspace{-0.3in}
\subsection{Proof of the uniform bounds for $\tilde{u}$ in Proposition
\ref{prop:uniformeikonalbounds}}\label{section:uniformbounds}
Integrating
\beqs\label{eq:eikonaldv2}
  2 \partial_{\widetilde{L}} \Omega \tilde{u}
  =- h_{1\Omega}(\partial u,\partial u),
  \eqs
backwards from $t\!=\!T$ where $\Omega \tilde{u}\!=\!0$ gives an estimate
for $r|\pas  \tilde{u}|\!=c(\sum_{\Omega} |\Omega \tilde{u}|^2)^{\!1/2}$ independent of $T$.
At first we assume that $|\tilde{u}|\leq 1$ so $q^*$ changes at most by $1$ along the
integral curves.
The integral of the first term in \eqref{eq:finalL1} is bounded by
\beq
 \int_{t}^T\frac{\varepsilon }{1+\xi+|q^*|}
 \Big(\frac{1\!+q_-^*}{1\!+\xi+|\,q^*|}\Big)^{\gamma\prime} \, d\xi
 \leq \frac{\varepsilon}{\gamma^\prime}
 \Big(\frac{1\!+q_-^*}{1\!+t+|q^*|}\Big)^{\gamma\prime}.\label{eq:finalL1integral}
\eq
Assuming that $r|\pas  \tilde{u}|$ is bounded by a constant times this,
 the integral of the other term in \eqref{eq:finalL1}
is smaller than half this if $\varepsilon$ is small,
and we get back a better bound which proves the bound by continuity. Dividing by $r$
this proves
\beqs
|\pas  \tilde{u}|\les \frac{\varepsilon}{1\!+t+r^*}
\Big(\frac{1\!+q_-^*}{1\!+t+r^*}\Big)^{\gamma\prime}.
\eqs
Since the same estimate holds for $|h_{1{\Ls}{\Ls}}|$ it
follows from \eqref{eq:lstarv} that also
$|\partial_{{\Ls}} \tilde{u}|$ is bounded by this.
The estimate for $|\partial_t \tilde{u}|$
follows in a similar way integrating
\beqs\label{eq:eikonaldv2}
  2 \partial_{\widetilde{L}} \partial_t \tilde{u}
  =- h_{1\partial_t}(\partial u,\partial u),
  \eqs
  from $t\!=\!T$ where now $|\,\partial_t \tilde{u}|\!\les |\,\partial_{L^{\!*}}\tilde{u}| $.
 Using the estimate for $\pas  \tilde{u}$ we see that both terms in
\eqref{eq:finalL1partial} can be estimate by the first and the
integral is estimated by
\begin{equation*}
\int_{t}^T \!\!\!\!\frac{\varepsilon \, d\xi }
{(1\!+\xi+|q^*|)^{2-\varepsilon}(1\!+|q^*|)^\varepsilon
(1\!+q_+^*)^{\gamma\prime}}\les \frac{\varepsilon }
{(1\!+t+|q^*|)^{1-\varepsilon}(1\!+|q^*|)^\varepsilon
(1\!+q_+^*)^{\gamma\prime}\!}\,.
\end{equation*}

\vspace{-0.25in}
\subsection{Proof of the estimates for $\gamma$ in Proposition
 \ref{prop:uniformeikonalbounds}}\label{section:uniformeikonalbounds}
Recall that $\gamma(t)=\gamma(t,q^*\!,\omega)\linebreak=(t,x(t))$ denotes the
integral curve of the vector
field
\beq
F^\alpha(g,\partial u)
={g^{\alpha\beta}\partial_\beta u}/{g^{0\beta} \partial_\beta u},
\label{eq:flowvectorfield}
\eq
going through $\gamma(T)\!=\!\gamma_0(T)$, where $\gamma_0(t)\!=(t,r\omega)$, $t\!=\varrho(r)\!+q^*$\!\!,
 is an integral curve of $F(g_0,\!\us)$,
when $g,\!u$ is replaced by $g_0,\!\us\!$.
Then $\!\tilde{\gamma}(t)\!=\!\gamma(t)\!-\!\gamma_0(t)$ satisfies
\beq
{d}\tilde{\gamma}^\alpha\!/dt=F(g,\partial u)^\alpha
-F(g_0,\partial \us)^\alpha
={(g^{\alpha\beta}\!-F_0^\alpha g^{0\beta})\,\partial_\beta u}/
{g^{0\beta}\partial_\beta u},
\eq
where $F_0=F(g_0,\partial \us)=(1,\omega/{\varrho^{\,\prime}})$,
 where $\varrho^{\,\prime}\!=dr^*\!/dr$. Here
\beq
(g^{\alpha\beta}\!\!-F_0^\alpha g^{0\beta})\partial_\beta u
\! =\!(h_1^{\alpha\beta}\!\!-F_0^\alpha h_1^{0\beta})\partial_\beta \us\!
 +(h_1^{\alpha\beta}\!\!-F_0^\alpha h_1^{0\beta})\partial_\beta \tilde{u}
 +(g_0^{\alpha\beta}\!\!-F_0^\alpha g_0^{0\beta})\partial_\beta \tilde{u}.
\eq
If $\alpha=0$ this is $0$ and if $\alpha=i>0$ then
$F_0^i=\omega_i /\varrho^{\,\prime}$ so
\beq\label{eq:vectorfieldminusvectorfieldstarg0}
( g_0^{i\beta}\!\!-F_0^i g_0^{0\beta})\partial_\beta  =
\omega^i {\varrho^{\,\prime}}^{-1}
(1\!+\!{M\!}/{r}) \pa_\Ls \!+
\delta^{ij}(1\!-\!{M\!}/{r}) \overline{\partial}_j ,\quad i\!>\!0,
\eq
\beq\label{eq:vectorfieldminusvectorfieldstarh1}
h_1^{i\beta}\!\!-\!F_0^i h_1^{0\beta}\!\!=(\omega^i \omega_k\!
+\!A^i \!A_k )h_1^{k\beta}\!\!
-\omega^i {\varrho^\prime}^{-1} h_1^{0\beta}\!\!
=\omega^i {\varrho^\prime}^{-1} h_1^{\alpha\beta\!}\Ls_{\!\!\alpha}\!\!
+  A^i\! A_k h_1^{k\beta}\!\!.
\eq
Hence we get the following, that integrated with respect to $t$ gives \eqref{eq:tildegammaest},
\beq\label{eq:vectorfieldminusvectorfieldstar}
\big| F(g,\partial u)-F(g_0,\partial \us) \big|
\!\les  |h_{1\, {\Ls} \Ts} |\!+ |h_{1\,\Ts\Us} ||\pa \tilde{u}|\!+|\overline{\partial} \tilde{u}|
\!\les \!\frac{\varepsilon}{1\!\!+t\!+\!|q^*|}
\Big(\frac{1\!+q_-^*}{1\!\!+t\!+\!|q^*|}\Big)^{\!\!\gamma\prime}\!\!\!.
\eq

\vspace{-0.12in}
\subsection{\!\!Proof\! of\! Proposition \!\ref{prop:uniformeikonalboundsv}}\label{section:uniformeikonalboundsv}
\!\!\!\!
We can also commute with scaling $\Ss\!\!\!=t\pa_t\!+r^*\pa_{r^*}$:
\begin{lemma}\label{lem:Liescaling}
The Lie derivative $\mathcal{L}_{\Ss}k^{\alpha\beta}\!=\Ss^{\!}
 k^{\alpha\beta}\!\!-\pa_\gamma \Ss^{\alpha\!}k^{\gamma\beta}\!\!
-\pa_\gamma \Ss^{\beta\!}k^{\alpha\gamma}\!$
satisfy
\begin{multline}
\big(\mathcal{L}_{\Ss}k^{\alpha\beta}\!\!+2k^{\alpha\beta}\!\!-\Ss^{\!}
 k^{\alpha\beta}\big)\pa_\alpha u\, \pa_\beta v
\!=\! -\big(\kappa_1\!-\kappa_2\big)
k^{i\beta}(\opa_i u \,\pa_\beta v+\pa_\beta u\, \opa_i v)\\
-\kappa_2
k^{i\beta}(\pa_i u\,\pa_\beta v+\pa_\beta u\,\pa_i v),\label{eq:Liek}
\end{multline}
where
$\kappa_1\!\!={r^{\!*}}\!/{r}{\varrho^{\,\prime}}\!-\!1$,
$\kappa_2\!\!=\!(1\!+\kappa_1)M\!/(r\!-\!M^{2\!\!}/r)$,
$r^{\!*}\!\!=\!\varrho(r)$.\!
With
$\kappa_3\!\!=\!\kappa_2(1\!-\!M\!\!/r)\!:$
\beq
\mathcal{L}_{\Ss}g_0\!+2 g_0\!
=\kappa_3 g_0\!-2(\kappa_1\!- \kappa_2)\overline{g}_0,
\quad\text{where}\quad\overline{g}_0(\pa u,\pa v)\!=g_0^{ij}\opa_i u\,\opa_{\!j} v.\label{eq:Lieg0}
\eq
\end{lemma}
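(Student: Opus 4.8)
The plan is to peel off the flat scaling vector field and reduce both identities to a one–line computation with $\varrho$. First I would write $\Ss=S+W$, where $S=t\pa_t+x^i\pa_i$ is the Minkowski scaling and $W$ is the purely radial field with $W^0=0$, $W^i=(\mu-1)x^i$, $\mu=r^*/(r\varrho')$; this is arranged so that $r^*\pa_{r^*}=\mu\,x^i\pa_i$ and $\mu-1=\kappa_1$. Since $\pa_\gamma S^\alpha=\delta^\alpha_\gamma$, one has $\mathcal{L}_S k^{\alpha\beta}=S k^{\alpha\beta}-2k^{\alpha\beta}$, so by additivity of the Lie derivative in the vector field,
\beqs
\mathcal{L}_{\Ss}k^{\alpha\beta}+2k^{\alpha\beta}-\Ss k^{\alpha\beta}=\mathcal{L}_W k^{\alpha\beta}-W k^{\alpha\beta}=-(\pa_\gamma W^\alpha)k^{\gamma\beta}-(\pa_\gamma W^\beta)k^{\alpha\gamma},
\eqs
so only $\pa_\gamma W^\alpha$ enters; a direct computation gives $\pa_\gamma W^\alpha=0$ unless $\alpha,\gamma$ are both spatial, with $\pa_j W^i=r\mu'\,\omega_i\omega_j+(\mu-1)\delta^i_j$.

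Next, to get \eqref{eq:Liek} I would contract the displayed identity with $\pa_\alpha u\,\pa_\beta v$ and split every factor into its radial and tangential parts via $\pa_i u=\omega_i\pa_r u+\opa_i u$, using $\omega_i\opa_i u=0$. The $\delta^i_j$ piece of $\pa_j W^i$ yields $-(\mu-1)k^{i\beta}\pa_i u\,\pa_\beta v$ and the $\omega_i\omega_j$ piece yields $-r\mu'\,\pa_r u\,(\omega_j k^{j\beta})\pa_\beta v=-r\mu'\big(k^{i\beta}\pa_i u\,\pa_\beta v-k^{i\beta}\opa_i u\,\pa_\beta v\big)$. The only fact needed to bring the sum into the stated form is the ODE identity $r\mu'+\mu-1=\kappa_2$ (together with $\mu-1=\kappa_1$, hence $r\mu'=-(\kappa_1-\kappa_2)$), and this follows from the closed form $\varrho'=\big((1+M/r)/(1-M/r)\big)^{1/2}$: writing $r\mu=r^*/\varrho'$ one has $(r\mu)'=1-r^*\varrho''/(\varrho')^2$ and $\varrho''/\varrho'=-M/(r^2-M^2)$, so $r\mu'+\mu-1=(r\mu)'-1=r\mu M/(r^2-M^2)=(1+\kappa_1)M/(r-M^2/r)=\kappa_2$. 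The term $-(\pa_\gamma W^\beta)k^{\alpha\gamma}\pa_\alpha u\,\pa_\beta v$ contributes the same expression with $u$ and $v$ interchanged (using $k^{\alpha\beta}=k^{\beta\alpha}$), and the two halves together are precisely the right side of \eqref{eq:Liek}.

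For \eqref{eq:Lieg0} I would specialize $k=g_0$ in the region $r>(1+t)/2$, where $g_0^{\alpha\beta}=m^{\alpha\beta}-(M/r)\delta^{\alpha\beta}$ depends on $r$ only, so $\Ss g_0^{\alpha\beta}=r^*\pa_{r^*}g_0^{\alpha\beta}=r\mu\,\pa_r g_0^{\alpha\beta}=(1+\kappa_1)(M/r)\delta^{\alpha\beta}$. Since $g_0^{i0}=0$ and $g_0^{ij}$ is a scalar multiple of $\delta^{ij}$, the radial part of $\pa_j v$ drops out against $\omega_i\opa_i u=0$, so $g_0^{i\beta}\opa_i u\,\pa_\beta v=g_0^{ij}\opa_i u\,\opa_j v=\overline{g}_0(\pa u,\pa v)$, which turns the $(\kappa_1-\kappa_2)$–term of \eqref{eq:Liek} into $-2(\kappa_1-\kappa_2)\overline{g}_0$. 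What remains is to check that $\Ss g_0^{\alpha\beta}\pa_\alpha u\,\pa_\beta v-\kappa_2 g_0^{i\beta}(\pa_i u\,\pa_\beta v+\pa_\beta u\,\pa_i v)$ is a multiple of $g_0^{\alpha\beta}\pa_\alpha u\,\pa_\beta v$; comparing the coefficients of $\pa_t u\,\pa_t v$ and of $\delta^{ij}\pa_i u\,\pa_j v$ (keeping in mind $g_0^{00}=-(1+M/r)$ while $\delta^{00}=1$), both reduce to the elementary relation between $\kappa_2$ and $\kappa_3=\kappa_2(1-M/r)$ recorded in the statement, using $\kappa_2(1-M/r)=(1+\kappa_1)M/(r+M)$ and $(1+\kappa_1)M/r=\big((1+\kappa_1)M/(r+M)\big)(1+M/r)$.

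I do not expect a serious obstacle: the whole lemma is elementary algebra once $\Ss$ is split off the flat scaling, and the only piece of genuine analysis — the differential identity $r\mu'+\mu-1=\kappa_2$ — is immediate from the closed form of $\varrho'$. The one place demanding care is the bookkeeping between the coordinate derivative $\pa_i$, its tangential part $\opa_i$ and the radial direction $\omega$, together with the signs hidden in $g_0^{\alpha\beta}$ versus the Euclidean $\delta^{\alpha\beta}$; dropping one of those signs is the realistic failure mode.
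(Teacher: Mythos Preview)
Your proposal is correct and follows essentially the same route as the paper: both proofs reduce to computing $\pa_k\Ss^i-\delta_k^i$ and identifying the coefficients of $\delta_k^i$ and $\omega_k\omega^i$ via the ODE for $\varrho'$, then specialize to $k=g_0$ using $\Ss g_0^{\alpha\beta}=(1+\kappa_1)(M/r)\delta^{\alpha\beta}$. Your decomposition $\Ss=S+W$ is just a slightly cleaner bookkeeping device for isolating the non-flat part, but the substance---in particular the identity $r\mu'+\mu-1=\kappa_2$---is exactly the computation the paper performs directly on $\pa_k\big(\tfrac{r^*}{r\varrho'}x^i\big)$.
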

\begin{proof}
Writing $\Ss\!\!=\Ss^{\alpha}\!\pa_\alpha\!$ we have $\pa_0 \Ss^{i}\!\!=\pa_i\Ss^{0}\!\!=\!0$,
$\pa_0 \Ss^{0}\!\!=\!1$ and using \eqref{eq:rstardef}
\begin{multline*}
\pa_k \Ss^i\!\!=\pa_k\big(\frac{r^*\!\!}{r}\,\frac{\pa r}{\pa r^*\!}\,x^i\big)\!
=\frac{r^*\!\!}{r}\,\frac{\pa r}{\pa r^*\!\!}\,\,\delta_k^{\,\,i}\!
+r\pa_r\big(\frac{r^*\!\!}{r}\,\frac{\pa r}{\pa r^*\!}\big) \omega_k \omega^i\!
=\frac{r^*\!\!}{r}\,\frac{\pa r}{\pa r^*\!\!}\,\,(\delta_k^{\,\,i}\!-\omega_k\omega^i)
+\omega_k\omega^i\\
+\frac{r^*\!\!}{r}\,\frac{\pa r}{\pa r^*\!\!}\,\,
\frac{\!\omega_k\omega^i M\!/r}{1\!-\!M^{2\!}\!/r^2\!\!} \!= \delta_k^{\,i}
+\Big(\frac{r^*\!\!}{r}\,\frac{\pa r}{\pa r^*\!\!}\,
\Big(\!1-\frac{M\!/r}{1\!-\!M^{2\!}\!/r^2\!\!}\,\Big)-1\!\Big)
\big(\delta_k^{\,i}-\omega_k\omega^i\big)\!
+\frac{r^*\!\!}{r}\,\frac{\pa r}{\pa r^*\!\!}\,\,
\frac{\delta_{k}^{\,\,i}M\!/r }{1\!-\!M^{2\!}\!/r^2\!\!}\, .
\end{multline*}
 This proves \eqref{eq:Liek}.
\eqref{eq:Lieg0} follows from this noting that
 $\Ss^{\!} g_0^{\alpha\beta}\!\!=\!{M}(1\!+\kappa_1)\delta^{\alpha\beta}\!\!/r$.
 \end{proof}

We need to estimate the scaling $S^*\tilde{v}$ since we do not get an estimate for $\Ls\tilde{v}$ directly.
It follows from \eqref{eq:commutarorrotationquadraticform} and \eqref{eq:Liek}\eqref{eq:Lieg0}
that with $\widetilde{\underline{L}}\!\!=\!g^{\alpha\beta}\pa_{\alpha \!}v\,\pa_\beta$
\beq \label{eq:vsystem}
 |\widetilde{\underline{L}}\Omega\tilde{v}|
\!\les |\Omega h_1|\!+|h_1|,\qquad  |\widetilde{\underline{L}}\Ss\tilde{v}|
\!\les |\Ss h_1|\!
+|h_1|\!+|\pas \tilde{v}|^2 M \!\ln{r}\!/r.
\eq
There is no gain in components.
Integrating the first equation from $t\!=\!0$ gives
\begin{multline}
|\Omega \tilde{v}|\! \les |\Omega \tilde{v}|_{t\!=\!0}
+\!\!\int_{q_+^{\!*}}^{\,t+r^*} \!\!\!\!\!
\varepsilon\frac{(1\!+\eta)^{-\gamma^\prime-\varepsilon} \, d\eta }
{(1\!+t\!+r^*)^{1-\varepsilon}\!\!}+\!\int_{-q_-^{\!*}}^{\,0} \!\!\!\!
\varepsilon\frac{(1\!+|\eta|)^{-\varepsilon} \, d\eta }
{(1\!+t\!+r^*)^{1-\varepsilon}\!\!}\\
\les
\frac{\varepsilon}{(1\!+t\!+r^*)^{\gamma^\prime}\!\!}
+ \frac{\varepsilon (1\!+q^*_-)^{1-\varepsilon}}
{(1\!+t\!+r^*)^{1-\varepsilon}\!}
\les   \varepsilon \Big( \frac{1+ q_-^*}{1\!+t\!+|q^*|}\Big)^{\!\gamma\prime}\!\!.
\label{eq:vsystemintegral}
\end{multline}
The estimate for $\!|\Ss \tilde{v}|\!$ is the same.
 Integrating $\widetilde{\underline{L}}\pa_t\!$ does not work.
The bounds for $\Lbs \tilde{v}$
and $\tilde{v}$ follows from  $\widetilde{\underline{L}}\tilde{v}\!=\!0$ since $\widetilde{\underline{L}}\!\sim \!g_0^{\alpha\beta\!} \partial_\beta v^{\!*}\!\!\sim
\!\Lbs\!$. As for \eqref{eq:lstarv} we have
\beq\label{eq:Lbarvest}
|\Lbs\tilde{v}|\les |h_1|+|\pas \tilde{v}|^2,\qquad
\text{if}\qquad |\pa \tilde{v}|\!\leq \! 1\!/16.
\eq
As in section \ref{section:uniformeikonalbounds}
 $|d\tilde{\sigma}\!/dt|\les |h_1|+|\pas \tilde{v}|+|\Lbs \tilde{v}|$
and \eqref{eq:sigmadeviation} follows as in \eqref{eq:vsystemintegral}.

\subsection{Proof of Proposition \ref{prop:vectorfieldsutilde}\label{section:vectorfieldsutilde} for $|Z^{*I}\tilde{u}|$}
For $X\!\in \!\mathcal{X}\!=\{\Ss\!\!,\Omega_{ij},\langle q^*\rangle \pa_t\}$ let
$\widehat{X}\!\!=\!X\!-\!\delta_{X \!\Ss}\!$ and
$\widehat{\mathcal{L}}_{\!X}\!\!=\!\mathcal{L}_{\!X}\!\!+\!2\delta_{X\!\Ss}$,
where $\delta_{X\! \Ss}\!\!=\!1$ if $X\!\!=\Ss\!\!$, and $=\!0$ otherwise.
Then $X \big(k(\pa u,\pa v\big)\!=\!( \widehat{\mathcal{L}}_X k) (\pa u,\pa v)
\!+k(\pa\widehat{X} u,\pa v)\!+k(\pa u,\pa\widehat{X}v)\!$ and $\pa \widehat{X}  \us\!\!=\!0$. Since $g(\pa u,\pa u)\!=\!0$ we get $\pa_{\widetilde{L}}\widehat{X}\!\widehat{Z} \tilde{u}\!\!=\!g(\pa u,\pa \widehat{X}\!\widehat{Z} \tilde{u})\!\!
=\!-H(g,\!u)\!/2$, where $\widetilde{L}^{\!\mu\!\!}=g^{\mu\nu\!}\partial_\nu u$ and
\begin{multline*}
\!\!\!\!\! H(g, \!u)\!=\!
\widehat{\mathcal{L}}_{\!X }\widehat{\mathcal{L}}_Z g(\pa u,\pa u)
+2\widehat{\mathcal{L}}_{\!X }g(\pa u, \pa \widehat{Z} \tilde{u})
+2\widehat{\mathcal{L}}_Z g(\pa u, \pa \widehat{X} \tilde{u})
+2 g(\pa\widehat{Z}\tilde{u},\pa\widehat{X}\tilde{u})\qquad\qquad\\
\qquad\quad=\!
\widehat{\mathcal{L}}_{\!X }\widehat{\mathcal{L}}_Z g(\pa u^{\!*}\!\!,\pa u^{\!*})\!
+2\widehat{\mathcal{L}}_{\!X }\widehat{\mathcal{L}}_Z g(\pa u^{\!*}\!\!,\pa \tilde{u})\!
+2\widehat{\mathcal{L}}_{\!X }g(\pa u^{\!*}\!\!, \pa \widehat{Z} \tilde{u})\!
+2\widehat{\mathcal{L}}_Z g(\pa u^{\!*}\!\!, \pa \widehat{X} \tilde{u})\\
\qquad\quad\quad+
\widehat{\mathcal{L}}_{\!X }\widehat{\mathcal{L}}_Z g(\pa \tilde{u},\pa\tilde{ u})
+2\widehat{\mathcal{L}}_{\!X }g(\pa \tilde{u}, \pa \widehat{Z} \tilde{u})
+2\widehat{\mathcal{L}}_Z g(\pa \tilde{u}, \pa \widehat{X} \tilde{u})
+2 g(\pa\widehat{Z}\tilde{u},\pa\widehat{X}\tilde{u}).
\end{multline*}
Here $\widehat{\mathcal{L}}_X g
\!=\!\widehat{\mathcal{L}}_X g_0\!+\widehat{\mathcal{L}}_X h_1$, where
$\widehat{\mathcal{L}}_\Omega g_0\!=\!0$ and
$\widehat{\mathcal{L}}_{\Ss} g_0\!=\!\kappa_3 g_0\!-2(\kappa_1\!-\kappa_2)\overline{g}_0$.
Here $\kappa_1\!\sim \! M\! \ln{r}\!/r$, $\kappa_2\!\sim \! \kappa_3\!\sim \!M\!/r$ and $\overline{g\overline{}^{}}_0(\pa u,\pa v)\!={g}_0^{ij}\pas_{\!i } u\,\pas_{\!\!j}v$.
If we apply this to $g_0(\pa u^*\!\!,\pa u^*)\!=\!0$ we get
$H(g_0,u^*)\!=(\widehat{\mathcal{L}}_X \widehat{\mathcal{L}}_Z g_0)(\pa u^*\!\!,\pa u^*)\!=0$. Moreover, $\overline{g}_0(\pa u^*\!,\pa w)\!=0$
so $\widehat{\mathcal{L}}_{X} \overline{g}_0(\pa u^*\!,\pa w)
=-\overline{g}_0(\pa \widehat{X} u^*\!,\pa w)
-\overline{g}_0(\pa u^*\!,\pa \widehat{X} w)\!=0$.  It follows that
$ |\widehat{\mathcal{L}}_X^I g_0(\pa u^*\!,\pa w)|\les \kappa_3 |\pa_{\Ls} w|$, for $|I|\!\leq \!2$. Hence
\begin{multline}
\!\!\!\!\! |H(g_0,u)|\les (\kappa_3\!+\!|\pa\tilde{u}|\!+\!|\pa\widehat{X}\tilde{u}|\!+\!|\pa \widehat{Z}\tilde{u}|) (|\pa_\Ls\tilde{u}|\!+\!|\pa_\Ls \widehat{X}\tilde{u}|\!+\!|\pa_\Ls \widehat{Z}\tilde{u}|)\\
+(|\pas\tilde{u}|\!+\!|\pas\widehat{Z}\tilde{u}|)
(|\pas\tilde{u}|\!+\!|\pas \widehat{X}\tilde{u}|),\label{eq:Hg0u}
\end{multline}
\vspace{-0.35in}
\begin{multline}
\!\!\!\!\! |H(h_{\!1}, \!u)|\les
|(\widehat{\mathcal{L}}_{\!X }\widehat{\mathcal{L}}_Z h_{\!1})_{\Ls\Ls}|\\
+(|(\widehat{\mathcal{L}}_{\!X }\widehat{\mathcal{L}}_Z h_{\!1})_{\Ls\Ts}|
+|(\widehat{\mathcal{L}}_{\!X }h_{\!1})_{\Ls\Ts}|
+|(\widehat{\mathcal{L}}_Z h_{\!1})_{\Ls\Ts}|)(|\pab\tilde{u}|\!+\!|\pab \widehat{X}\tilde{u}|\!+\!|\pab\widehat{Z}\tilde{u}|)\\
+(|(\widehat{\mathcal{L}}_{\!X }\widehat{\mathcal{L}}_Z h_{\!1})_{\Ls\Lbs}|\!
+\!|(\widehat{\mathcal{L}}_{\!X }h_{\!1})_{\Ls\Lbs}|\!
+\!|(\widehat{\mathcal{L}}_Z h_{\!1})_{\Ls\Lbs}|)(|\pa_\Ls\tilde{u}|\!+\!|\pa_\Ls \widehat{X}\tilde{u}|\!+\!|\pa_\Ls \widehat{Z}\tilde{u}|)\!\!\!\!\!\!\!\!    \\
+(|\widehat{\mathcal{L}}_{\!X }\widehat{\mathcal{L}}_Z h_{\!1}|
+|\widehat{\mathcal{L}}_{\!X }h_{\!1}|
+|\widehat{\mathcal{L}}_Z h_{\!1}|+| h_{\!1}|)
(|\pab \tilde{u}|\!+\!|\pab \widehat{X}\tilde{u}|)
(|\pab \tilde{u}|\!+\!|\pab \widehat{Z}\tilde{u}|).\label{eq:Hh1u}
\end{multline}
We have $\pa_{\widetilde{L}}( \widehat{X} \tilde{u})=g(\pa u,\pa \widehat{X} \tilde{u})\!
=-\widehat{\mathcal{L}}_X g(\pa u,\pa u)/2$, where
\begin{multline*}
\widehat{\mathcal{L}}_X g(\pa u,\pa u)
=2\widehat{\mathcal{L}}_X g_0(\pa u^{\!*}\!\!,\pa \tilde{u})
+\widehat{\mathcal{L}}_X g_0(\pa \tilde{u},\pa \tilde{u})\\
+\widehat{\mathcal{L}}_X h_1(\pa u^*\!\!,\pa u^*)
+\widehat{\mathcal{L}}_X h_1(\pa u^*\!\!,\pa \tilde{u})
+\widehat{\mathcal{L}}_X h_1(\pa \tilde{u},\pa \tilde{u}).
\end{multline*}
Using that $|\pa \tilde{u}|\!+|h_1|\!+|Xh_1|\!+|\widehat{\mathcal{L}}_X h_1|\!\leq \!c_0$
and $|\partial_{{\Ls}} \tilde{u}|\!\lesssim |h_{1{\Ls}{\Ls}}|
\! +|h_{1{\Ls}\!A}|\, |\pas \tilde{u}|\!
+ |\pas \tilde{u}|^2$
\begin{multline*}
\!\!\!\!\! |\widehat{\mathcal{L}}_X g(\pa u,\pa u)|\les \kappa_3 (|\pa_{\Ls} \tilde{u}|+|\pas \tilde{u}|^2)
+|(\widehat{\mathcal{L}}_X h_1)_{\Ls\Ls}|
+|(\widehat{\mathcal{L}}_X h_1)_{\Ls \Ts}| |\pas \tilde{u}|\\
\qquad\qquad\qquad\quad +|(\widehat{\mathcal{L}}_X h_1)_{\Ls \Lbs}| |\pa_\Ls \tilde{u}|\!
+|(\widehat{\mathcal{L}}_X h_1)_{\Ts\Us}| |\pas \tilde{u}|^2\!\!
+|(\widehat{\mathcal{L}}_X h_1)_{\Lbs\Lbs}| |\pa_\Lbs\tilde{u}|^2\\
\les |(\widehat{\mathcal{L}}_X h_1)_{\Ls \Ts}|
+|h_{1\Ls\Ts}| +|\pas \tilde{u}|^2
\end{multline*}
By \eqref{eq:honeustarv} $\pa_{\widetilde{L}}\!=F^\Ls \pa_{\Ls}\! +F^\Lbs \pa_{\Lbs}\!+F^A\pa_{A}$, where
$|F^\Ls \!\!\!-1|\!\leq\! 1\!/2$,
$|F^{A}|\!\les |\pas \tilde{u}|\!+|h_{1\Ls \!A}|\!+|\pa_\Ls \tilde{u}|
\!\les \!|\pas \tilde{u}|\!+|h_{1\Ls \Ts}|$ and $|F^\Lbs|\!\les |h_{1\Ls \Ts}|\!+|\pa_\Ls \tilde{u}|\!\les \!|h_{1\Ls \Ts}|\!+|\pas \tilde{u}|^2 $ so
\beq
|\pa_{\Ls} \widehat{X}\tilde{u}|\les  |(\widehat{\mathcal{L}}_X h_1)_{\Ls \Ts}|
+(|h_{1\Ls\Ts}| +|\pas \tilde{u}|^2)(1\!\!+ \!|\pa \widehat{X}\tilde{u}|)+|\pas \tilde{u}| |\pas \widehat{X}\tilde{u}|.\label{eq:LsXu}
\eq
If we make the inductive assumption  $|\pa \widehat{X}\tilde{u}|\!\leq \!c$ above we get from \eqref{eq:Hg0u}-\eqref{eq:LsXu}
\beq
|\pa_{\widetilde{L}}(\widehat{X}\widehat{Z} \tilde{u})|
\les {\sum}_{|I|\leq 2}|(\widehat{\mathcal{L}}_X^I h_1)_{\Ls \Ts}|
+(|\pas \tilde{u}|+|\pas \widehat{X}\tilde{u}|)(|\pas \tilde{u}|+|\pas \widehat{Z}\tilde{u}|),
\eq
and using the estimates from section 6 and the fact that
$r|\pas  v|\!=c(\sum_{\Omega} |\Omega v|^2)^{\!1/2}$
\beq
|\pa_{\widetilde{L}}(\widehat{X}\widehat{Z} \tilde{u})|\les \frac{\varepsilon }{1\!+t\!+r^*}
 \Big(\frac{1\!+q_-^*}{1\!+t\!+r^*}\Big)^{\!\!\gamma\prime}\!\!\!\!
 +{\sum}_{\Omega}\!\!\! \frac{
  (|\Omega\tilde{u}|+|\Omega\widehat{X}\tilde{u}|+|\Omega \widehat{X}\tilde{u}|)^2\!\!}{(1\!+t\!+r^*)^{2}
 }.\label{eq:finalL1two}
\eq
The first term is the same as in \eqref{eq:finalL1}-\eqref{eq:finalL1integral} and there is  room in the second.

\vspace{-0.15in}
\subsection{Proof of Proposition \ref{prop:vectorfieldsutilde} for $|Z^{*I}\tilde{v}|$} To generalize the bounds in section \ref{section:uniformeikonalboundsv} to two vector fields
we replace
$u$, $u^*\!\!$, $\tilde{u}$, $\Ls$ and $\Lt$ in section \ref{section:vectorfieldsutilde} by
$v$, $v^*\!$, $\tilde{v}$, $\Lbs\!$ and $\Lbt$ respectively.
We have $|H(h_{1}, v)|\!\les \! |\widehat{\mathcal{L}}_{\!X }\!\widehat{\mathcal{L}}_{\!Z }h_{1}|
\!+\! |\widehat{\mathcal{L}}_{\!X }\!h_{1}|\!+ \!|\widehat{\mathcal{L}}_{\!Z }h_{1}|
\!+\!|h_1|$ and
\begin{multline*}
\!\!\!\!\! |H(g_0,v)|\les (\kappa_3\!+\!|\pa\tilde{v}|\!+\!|\pa\widehat{X}\tilde{v}|\!+\!|\pa \widehat{Z}\tilde{v}|) (|\pa_\Lbs\tilde{v}|\!+\!|\pa_\Lbs \widehat{X}\tilde{v}|\!+\!|\pa_\Lbs \widehat{Z}\tilde{v}|)\\
+(|\pas\tilde{v}|\!+\!|\pas \widehat{Z}\tilde{v}|)
(|\pas\tilde{v}|\!+\!|\pas \widehat{X}\tilde{v}|),
\end{multline*}
where $
|\pa_{\Lbs} \widehat{X}\tilde{v}|\les  |\widehat{\mathcal{L}}_X h_1|\!
+|h_{1}|\! +|\pas \tilde{v}|^2 \!+|\pas \tilde{v}| |\pas \widehat{X}\tilde{v}|.
$
Hence
\beqs
|\pa_{\underline{\widetilde{L}}}(\widehat{X}\widehat{Z} \tilde{v})|
\les
|\widehat{\mathcal{L}}_{\!X }\!\widehat{\mathcal{L}}_{\!Z }h_{1}|
\!+\! |\widehat{\mathcal{L}}_{\!X }\!h_{1}|\!+ \!|\widehat{\mathcal{L}}_{\!Z }h_{1}|
\!+\!|h_1|
+(|\pas \tilde{v}|\!+|\pas \widehat{X}\tilde{v}|)(|\pas \tilde{v}|\!+|\pas \widehat{Z}\tilde{v}|).
\eqs
This is of the form \eqref{eq:vsystem} and can be integrated
as in \eqref{eq:vsystemintegral}. This gives the estimate in Proposition
\ref{prop:vectorfieldsutilde} for $X\!Z\tilde{v}$ if $X,\!Z\!\in \!\{S^*\!\!,\Omega\}$
and for $\langle q^*\rangle |\Lbs \!X\tilde{v}|$.
To prove it  for $\langle q^*\rangle^2 |\Lbs^2 \tilde{v}|$ we apply $\Lbs$ to
$\pa_{\widetilde{\underline{L}}}\tilde{v}\!=\!0$.
By \eqref{eq:honeustarv} $\pa_{\widetilde{\underline{L}}}\!=F^\Ls\! \pa_{\Ls}\! +F^\Lbs \! \pa_{\Lbs}\!+F^A\pa_{A}$, where $|F^\Lbs\!\! -\!1|\!\leq\! 1\!/2$ so
$|\pa_\Lbs^2\tilde{v}|\!\les\! |\pa_\Lbs F^{T^{\!*}}||\pa_{T^*}\tilde{v}|\!+
|F^{T^{\!*}}\!||\pa_{T^{\!*}} \pa_\Lbs \tilde{v}|\!+|\pa_\Lbs F^{\Lbs}||\pa_{\Lbs}\tilde{v}|$
and we repeat it for the terms containing $\pa_\Lbs$. The terms generated
 decay at least like $(1\!+t)^{-\gamma\prime}\langle q^*\rangle^{-2+\gamma\prime}$ so multiplying by
$\langle q^*\rangle^2$ gives the desired bound.

\vspace{-0.15in}
\subsection{Convergence}
Suppose that $u_1$ and $u_2$ are solutions such that $u_i\!=\us$ when $t\!\!=\!T_i$,
for $i\!=\!1,2$. Let $X_i(t)\!\!=\!X_i(t,q^*\!,\omega)\!=(t,x_i(t))$ be the
integral curve of
$F(g,u_i)$ in \eqref{eq:flowvectorfield}, going through $X_i\!=\! X_0$, when $t\!=T_i$,
where $X_0(t)\!=(t,r\omega)$, $t\!=\varrho(r)\!+q^*\!\!$,
 is an integral curve of $F(g_0,\us)$.
 Then $u_1\!=\!u_2\!=q^*$ along the curves, but $\partial u_1\!\neq \pa u_2$.
 Let $W_i$ denote the covector
 $w_{i\alpha}\!(t)\!=\!(\partial_\alpha u_i)\circ X_i(t)$ expressed
 in the frame $\mathcal{N}^*\!\!=\!\{ \Lbs,\Ls,S_1,S_2\}$,
 i.e $W_{iU^*}\!\!=w_{i\alpha} U^{*\,\alpha}\!\!$, for $U^*\!\in\! \mathcal{N^*}\!$.
 Similarly, let $W_i^*$ be the covector
 $w_{i\alpha}^*\!(t)\!=(\partial_\alpha u^*)\!\circ \! X_i(t)$ expressed in this frame.
 We also write $W\!\!=\!(W_{\Lbs},\overline{W})$, where
 $\overline{W}\!\!=\!(W_{\Ls},\sls{W\!}\,)$ is the tangential part, and
 $\sls{W\!}\!=(W_{S_1},W_{S_2})$.
The frame is chosen so that $W_{i{\Lb}^*}^*\!=2$ and $\overline{W}_{\!i}^*\!\!=0$.
 Let
 $W_{Z}\!=\!Z^\alpha  w_{\alpha}$.
 We rewrite
 \begin{align}\label{eq:charcurvei}
 d X^\alpha\!/dt&=F^\alpha\big(g(X),W\big)
 ={ g^{\alpha U^*}(X) W_{U^*}}/{g^{0 U^*}(X)W_{U^*} },\\
 \label{eq:eikonaldvi}
  d W_{\!Z} /dt  &=H_Z\big(g(X),W\big)
  =-{h_{1Z}^{U^* V^*}(X)W_{U^*}W_{V^*}}/{g^{0 U^*}(X) W_{U^*}},
\end{align}
for  $Z=\Omega_{ij},\, \partial_t$.
Convergence as $T\to\infty$ follows from:
\begin{prop} \label{prop:chardiff}Let $0<\gamma^{\prime\prime}<\gamma^\prime$.
Then if $\varepsilon>0$ is sufficiently small we have for
$r>t/2$ with constants independent of $T=\min{(T_1,T_2)}$,
\begin{align}
(1+|q^*|) |W_2-W_1|&\leq  C_2\varepsilon
 \Big( \frac{1+q_-^*}{1+T+|\,q^*|}\Big)^{\gamma^\prime\!-\gamma^{\prime\prime}}\!\!\!
 \Big( \frac{1+q_-^*}{1+t+|\,q^*|}\Big)^{\gamma^{\prime\prime}}\!\!,
 \label{eq:chardiffone}\\
(1+t+|q^*|) |\,\overline{W}_2-\overline{W}_1|&\leq  C_2\varepsilon
 \Big( \frac{1+ q_-^*}{1+T+|\,q^*|}\Big)^{\gamma^\prime\!-\gamma^{\prime\prime}}\!\!\!
 \Big( \frac{1+ q_-^*}{1+t+|\,q^*|}\Big)^{\gamma^{\prime\prime}}\!\!,
\label{eq:chardifftwo}\\
 |X_2-X_1|&\leq C_3\varepsilon
 \Big( \frac{1+ q_-^*}{1+T+|\,q^*|}\Big)^{\gamma^\prime\!-\gamma^{\prime\prime}}\!\!\!
 \Big( \frac{1+ q_-^*}{1+t+|\,q^*|}\Big)^{\gamma^{\prime\prime}}\!\!.
 \label{eq:chardiffthree}
 \end{align}
\end{prop}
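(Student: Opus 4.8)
\emph{Proof plan.} The idea is to repeat the argument of Proposition \ref{prop:uniformeikonalbounds}, now applied to the two quantities $W_2-W_1$ and $X_2-X_1$, whose evolution equations come from subtracting \eqref{eq:charcurvei} and \eqref{eq:eikonaldvi} for $i=1,2$. These differences satisfy \emph{homogeneous} transport equations in $t$ with right hand sides $F(g(X_2),W_2)-F(g(X_1),W_1)$ and $H_Z(g(X_2),W_2)-H_Z(g(X_1),W_1)$; since $g_{0Z}(\pa\us,\pa\us)=0$ and $\pa Z\us=0$, hence $F(g_0,\pa\us)$ and $H_Z(g_0,\pa\us)$ are trivial, each such difference is bounded by a Lipschitz constant of $F$, $H_Z$ in $(x,W)$ — which involves only $h_1$, $\pas\tilde u_i$ and $\opa\tilde u_i$, and is therefore small by Proposition \ref{prop:uniformeikonalbounds} — times $\big(|X_2-X_1|+(1+|q^*|)|W_2-W_1|\big)$. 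The data for the differences sits at $t=T=\min(T_1,T_2)$: if, say, $T=T_1$, then $u_1=\us$ exactly there while $u_2-\us$ and its derivatives are controlled by \eqref{eq:uniformeikonalboundone}--\eqref{eq:uniformeikonalboundtwo} evaluated at time $T$, giving the data bound $\les C_1\varepsilon\big(\tfrac{1+q_-^*}{1+T+|q^*|}\big)^{\gamma^\prime}$, which $\to 0$ as $T\to\infty$ and is absorbed by the right sides of \eqref{eq:chardiffone}--\eqref{eq:chardiffthree} at $t=T$.

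Concretely I would run a continuity argument, assuming \eqref{eq:chardiffone}--\eqref{eq:chardiffthree} on $[t_0,T]$ with $C_2,C_3$ replaced by fixed large constants. The crucial structural input, exactly as in the single solution case, is that in $H_Z=-h_{1Z}^{U^*V^*}W_{U^*}W_{V^*}/(g^{0U^*}W_{U^*})$ the large component $W_{\Lbs}\sim 2$ only ever pairs with a good component $h_{1\Ls\Ts}$ of $h_1$ (controlled by Proposition \ref{prop:sharpmetricdecay}, with $L,\uL$ replaced by $\Ls,\Lbs$, the difference being $O(1/r)$) or with a small tangential covector $\overline W$, as encoded in \eqref{eq:lstarv}, \eqref{eq:k} and their consequences \eqref{eq:finalL1}--\eqref{eq:finalL1partial}. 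Differencing those identities, and using $|h_{1\Ls\Ls}(X_2)-h_{1\Ls\Ls}(X_1)|\les|X_2-X_1|\,|\pa h_1|$ together with the improved decay of $\pa h_{1\Ls\Ts}$, one gets for the rotations
\begin{equation*}
\big|\tfrac{d}{dt}\big(\Omega\tilde u_2-\Omega\tilde u_1\big)\big|\les \frac{\varepsilon}{1+t+r^*}\Big(\frac{1+q_-^*}{1+t+r^*}\Big)^{\!\gamma^\prime}\big(|X_2-X_1|+(1+|q^*|)|W_2-W_1|\big)+\text{(cubic)},
\end{equation*}
and analogously, with an extra factor $(1+t+r^*)^{-1}$, for $\pa_t\tilde u_2-\pa_t\tilde u_1$ from the $\pa_t$-equation and for $X_2-X_1$ from \eqref{eq:vectorfieldminusvectorfieldstar}. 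Since $q^*$ varies by at most $O(\varepsilon)$ along the curves (by \eqref{eq:tildegammaest}), integrating backwards from $t=T$ reduces to the weighted integral of \eqref{eq:finalL1integral}, now carrying the extra $\xi$-independent factor $\big(\tfrac{1+q_-^*}{1+T+|q^*|}\big)^{\gamma^\prime-\gamma^{\prime\prime}}$ supplied by the bootstrap hypothesis; choosing $0<\gamma^{\prime\prime}<\gamma^\prime$ makes the remaining $\int_t^T(1+\xi+|q^*|)^{-1-\gamma^{\prime\prime}}d\xi$ converge to a multiple of $(1+t+|q^*|)^{-\gamma^{\prime\prime}}$, and the overall $\varepsilon$ makes the recovered constant less than half of the assumed one. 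This yields \eqref{eq:chardifftwo} for the rotation part and, via the constraint $g(\pa u_i,\pa u_i)=0$ (the difference of \eqref{eq:Lstarv}, giving $W_{\Ls,2}-W_{\Ls,1}$), for the $W_{\Ls}$-part; then \eqref{eq:chardiffone} for $W_{\Lbs}$ from integrating the $\pa_t$-equation; and \eqref{eq:chardiffthree} for $X_2-X_1$.

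Letting $T_1,T_2\to\infty$, Proposition \ref{prop:chardiff} shows that the solutions $u_T$ with data on $N_T$ (and the associated characteristics) form a Cauchy family in the topology of these weighted norms, with limit $u$ solving \eqref{eq:eikonal}--\eqref{eq:asymptoticdata}; passing to the limit in Proposition \ref{prop:uniformeikonalbounds} gives the stated bounds for $\tilde u$. The main obstacle I expect is bookkeeping the two distinct characteristic curves: the source for $W_2-W_1$ depends on $X_2-X_1$ through $g^{\alpha\beta}$, $g^{0\beta}$ and — more delicately — through the pointwise-defined null frame $\mathcal{N}^*$, so one must verify that differencing the frame at the two nearby points does not destroy the null structure, i.e.\ that the bad pairing $W_{\Lbs}\cdot(\text{bad difference})$ never appears and that differences of good components of $h_1$ still inherit the improved decay of Proposition \ref{prop:sharpmetricdecay} up to a harmless $|X_2-X_1|\,|\pa h_1|$. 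Once this structural point is granted, the weighted integrations are identical to those in Sections \ref{section:uniformbounds}--\ref{section:uniformeikonalbounds}.
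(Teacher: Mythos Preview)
Your plan is correct and is essentially the paper's own argument. In Section~10.7 the proof runs exactly this bootstrap: the Lipschitz bounds for $F$, $H_{\pa_t}$ and $H_\Omega$ in $(X,W)$ are the content of Lemmas~\ref{lem:Fdiff}--\ref{lem:WL} and the two lemmas yielding \eqref{eq:Htdiff} and \eqref{eq:Hdifftwo}; the $W_{\Ls}$-difference is indeed obtained by differencing the eikonal constraint \eqref{eq:Lstarv} (this is \eqref{eq:WL}); and the closure is exactly the weighted integration you describe, with the $\xi$-independent factor $\big(\tfrac{1+q_-^*}{1+T+|q^*|}\big)^{\gamma'-\gamma''}$ pulled out of the integral and the remaining $\varepsilon$ shrinking the constant.

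One remark on your displayed bound for the rotation difference: as written it is a slight oversimplification of \eqref{eq:Hdifftwo}. In the paper the three contributions carry \emph{different} coefficients --- the $|X_2-X_1|$ term has the better decay $(1+t+|q^*|)^{-2+\varepsilon}$, while the $|\overline{W}_2-\overline{W}_1|$ term carries only $\varepsilon/(1+t+|q^*|)$ \emph{without} the extra factor $\big(\tfrac{1+q_-^*}{1+t+|q^*|}\big)^{\gamma'}$. The latter is not subsumed by your displayed right-hand side (take $q^*\sim 1$, $t$ large), so it is not literally ``cubic''; but since $|\overline W_2-\overline W_1|$ already comes with the bootstrap weight $(1+t+|q^*|)^{-1}$, the term is still $O(\varepsilon^2)$ after substitution and closes just as easily. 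The paper keeps it as a separate term for exactly this reason.
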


Note that $h_1^{\alpha U^*}\!\!\!$ satisfy the same estimates as the corresponding
components in the Minkowski null frame
$h_1^{\alpha U}\!$ since the difference $U\!-\!U^{{}_{\!}*}\!\!\sim \!1\!/r$.
By \eqref{eq:tildegammaest}
$X_i(t)\!\in \! B\big(X_0(t),C_{\!1}\varepsilon\big)$, the ball of radius
$C_{\!1}\varepsilon\!$ at $\!X_0(t)\!$ so estimating with $\!L^{\!\infty\!}$ norm
\beq\label{eq:differenceestimate}
\big| h\circ X_2(t)-h\circ X_1(t)\big|
\leq \| \pa h(t,\cdot)\|_{ B(X_0(t),C_{\!1\,}\varepsilon)}\, |X_2(t)-X_1(t)|.
\eq

\begin{lemma}\label{lem:Fdiff} We have
\begin{multline}
\big| F(X_2,W_2)-F(X_1,W_1)\big|
\leq \frac{C_0\varepsilon
|\,X_2-X_1|}{(1+t+|\,q^*|)^{2-\varepsilon}
 (1+|\,q^*|)^{\varepsilon}} \\
  +2|\overline{W}_2-\overline{W}_1|
 +\frac{C_0\varepsilon|\,W_2-W_1| }{1+t+|\, q^*|}
 \Big(\frac{1+q_-^*}{1+t+|\,q^*|}\Big)^{\!\gamma\prime}\!\!.\label{eq:Fdifftwo}
\end{multline}
\end{lemma}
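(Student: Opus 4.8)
The plan is to estimate $F(X_2,W_2)-F(X_1,W_1)$ by splitting the difference into the part coming from the change of base point $X_i(t)$ and the part coming from the change of the covector $W_i(t)$. Recall from \eqref{eq:charcurvei} that
$$
F^\alpha(X,W)=\frac{g^{\alpha U^*}(X)W_{U^*}}{g^{0U^*}(X)W_{U^*}},
$$
so we write $F(X_2,W_2)-F(X_1,W_1)=\big(F(X_2,W_2)-F(X_1,W_2)\big)+\big(F(X_1,W_2)-F(X_1,W_1)\big)$ and treat the two terms separately. The first term only involves the $x$-dependence of the metric coefficients $g^{\alpha\beta}=m^{\alpha\beta}+h_0^{\alpha\beta}+h_1^{\alpha\beta}$; since $m$ is constant, only $h_0$ and $h_1$ contribute, and by \eqref{eq:differenceestimate} the difference is bounded by $\|\partial h(t,\cdot)\|_{B(X_0(t),C_1\varepsilon)}\,|X_2(t)-X_1(t)|$ times bounded factors coming from the quotient structure (using that $g^{0U^*}W_{U^*}$ is bounded below since $W_{i\underline{L}^*}^*=2$ and the deviations are small). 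The $h_0$ piece contributes $O(M\ln r /r)$ type decay which is harmless, and the $h_1$ piece is controlled by the sharp derivative estimate $|\partial Z^{*I}h^{1*}|\les \varepsilon(1+q_+^*)^{-\gamma}(1+t+|q^*|)^{-1+C\varepsilon}(1+|q^*|)^{-1}$ from Lemma \ref{lem:weakdecaystar}, which after absorbing $(1+q_+^*)^{-\gamma}$ into a power of $(1+|q^*|)$ gives precisely the first term on the right of \eqref{eq:Fdifftwo}.

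For the second term, $F(X_1,W_2)-F(X_1,W_1)$, I would expand the quotient to first order in $W_2-W_1$. The key algebraic point is the structure of $F$ in the null frame: writing $g^{\alpha U^*}=m^{\alpha U^*}+h_0^{\alpha U^*}+h_1^{\alpha U^*}$ and using that $W_i^*$ has $W^*_{i\underline{L}^*}=2$, $\overline{W}^*_i=0$, the Minkowski part $m^{\alpha U^*}W_{U^*}/m^{0U^*}W_{U^*}$ is, up to the normalization, essentially $L^\alpha$ plus a correction that is linear in the tangential part $\overline{W}$ of $W$. Differentiating this in $W$, the $\underline{L}^*\underline{L}^*$-derivative direction (the dangerous one) drops out because of the special null structure — $m^{\alpha\beta}$ has $m_{\underline{L}L}\ne0$ but the relevant contraction with $L^\alpha$ or with the frame kills the bad component — leaving a contribution of size $\le 2|\overline{W}_2-\overline{W}_1|$ from the tangential directions, which is the second term in \eqref{eq:Fdifftwo}. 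The remaining pieces come from the $h_0$ and $h_1$ factors multiplying $W_2-W_1$; here $h_1$ evaluated at $X_1(t)$ is bounded by the sharp function estimate, in particular $|h_{1}|\les \varepsilon(1+t+|q^*|)^{-1}\big((1+q_-^*)/(1+t+|q^*|)\big)^{\gamma'}$ away from the light cone (using Proposition \ref{prop:sharpmetricdecay} and the remark that such estimates transfer to the starred frame since $U-U^*\sim 1/r$), giving the third term $C_0\varepsilon|W_2-W_1|(1+t+|q^*|)^{-1}\big((1+q_-^*)/(1+t+|q^*|)\big)^{\gamma'}$.

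The main obstacle I anticipate is bookkeeping the precise weights and making sure the bad component $h_{\underline{L}^*\underline{L}^*}$ never appears multiplied by anything worse than the good factors. Concretely: in the quotient $g^{\alpha U^*}W_{U^*}/g^{0U^*}W_{U^*}$, when one differentiates with respect to $W_{\underline{L}^*}$, one gets a term $\partial_{W_{\underline{L}^*}}\big(g^{\alpha U^*}W_{U^*}\big)=g^{\alpha\underline{L}^*}$, and $g^{\alpha\underline{L}^*}$ contracted with the relevant covectors involves $h_{LL}$-type components (controlled by the wave coordinate condition to be $O(M/r)$ via \eqref{eq:hLLasym}) rather than $h_{\underline{L}\underline{L}}$, because $\underline{L}^*{}^\alpha$ paired with the $\Ls$-covector picks out the good slot. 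This cancellation is exactly the null-structure phenomenon already exploited throughout the paper (e.g. in \eqref{eq:PN}), so once it is identified the estimate reduces to substituting the bounds from Lemma \ref{lem:weakdecaystar}, Proposition \ref{prop:sharpmetricdecay}, and \eqref{eq:tildegammaest}. The rest is routine: bounding the denominator below by a fixed constant (using $|W_i-W_i^*|\le|\partial\tilde u|\le 1/16$ and $|h_1|\le 1/16$), and collecting the three resulting terms into the stated form of \eqref{eq:Fdifftwo}.
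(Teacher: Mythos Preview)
Your overall decomposition matches the paper's, but both of your two substantive claims about decay rates are wrong, and for the same reason: you have not actually identified the null structure in the $X$–difference (and in the $h_1$–part of the $W$–difference), and without it the stated bounds fail.

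For the $X$–difference $F(X_2,W_2)-F(X_1,W_2)$ you invoke the \emph{weak} estimate $|\partial h^{1*}|\les \varepsilon(1+q_+^*)^{-\gamma}(1+t+|q^*|)^{-1+C\varepsilon}(1+|q^*|)^{-1}$ from Lemma~\ref{lem:weakdecaystar} and assert this ``gives precisely the first term'' of \eqref{eq:Fdifftwo}. It does not: at $q^*=0$ your bound is $\varepsilon(1+t)^{-1+C\varepsilon}$, while the first term of \eqref{eq:Fdifftwo} requires $\varepsilon(1+t)^{-2+\varepsilon}$, a full power of $t$ better. The paper obtains the correct rate by using the quotient identity \eqref{eq:vectorfieldmetricdifference}, which reduces the metric difference to $(\delta h_1^{\alpha\beta}-F_0^\alpha\delta h_1^{0\beta})W_{1\beta}$; then \eqref{eq:vectorfieldminusvectorfieldstarh1} together with $W_1\approx W_1^*=-\Ls$ shows that only the wave–coordinate components $\delta h_{1\Ls\Ts}$ survive. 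These are controlled by $\|\partial h_{1LT}\|$, which by \eqref{eq:sharpwsvecoordder} in Proposition~\ref{prop:sharpmetricdecay} has the improved decay $(1+t+r^*)^{-2+\varepsilon}(1+|q^*|)^{-\varepsilon}$. You never isolate this structure on the $X$–side.

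The same problem occurs in your third term. You write ``$|h_1|\les \varepsilon(1+t+|q^*|)^{-1}\big((1+q_-^*)/(1+t+|q^*|)\big)^{\gamma'}$'', but this is the bound \eqref{eq:sharpwsvecoordfunc} for $h_{1LT}^{1*}$, not for the full $h^{1*}$, which only satisfies the weaker estimate in Proposition~\ref{prop:sharpmetricdecay} (no extra $((1+q_-^*)/(1+t))^{\gamma'}$ gain). To justify the coefficient of $|W_2-W_1|$ in \eqref{eq:Fdifftwo} you must again show, via the identities \eqref{eq:vectorfieldvectordifference} and \eqref{eq:vectorfieldminusvectorfieldstarh1}, that the $h_1$–contribution multiplying $\delta W_{\Lbs}$ involves only $h_{1\Ls\Ts}$; the part multiplying $\delta\overline{W}$ may involve worse components but is already absorbed in the $2|\overline{W}_2-\overline{W}_1|$ term. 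Your last paragraph gestures at the null structure for $\partial/\partial W_{\Lbs}$, but the concrete decay claims you make earlier do not follow from it, and the analogous structure on the $X$–side is missing entirely.
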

\begin{proof} Let
$g_i=g\circ X_i$, $g_i^*=g_0\circ X_i$ and $h_{1i}=h_1\circ X_i$.
We have
\begin{align}
F^\alpha(g,W)-F^\alpha(g,W\!-\!V)
\!&=\!
\big(g^{\alpha\beta}\!-F^\alpha(g,\!W)g^{0\beta}\big)V_\beta/g^{0\beta}(W\!-\!V)_\beta,
\label{eq:vectorfieldvectordifference}\\
F^\alpha(g\!+\!h,W)-F^\alpha(g,W)&=
\big(h^{\alpha\beta}\!-F^\alpha(g,W)h^{0\beta}\big)W_\beta/(g\!+\!h)^{0\beta}W_\beta.
\label{eq:vectorfieldmetricdifference}
\end{align}
We want to use the above to estimate
\beq
F(g_2,W_2)-F(g_1,W_1)\!=F(g_2,W_2)-F(g_2,W_1)+F(g_2,W_1)-F(g_1,W_1).
\eq
We can replace $g_2-g_1$ by $\delta h_1\!=h_1\circ X_2-h_1\circ X_1$
since $g_2^*-g_1^*$ is bounded by $CM r^{-2} |X_2-X_1|$.
Moreover, when applying
\eqref{eq:vectorfieldvectordifference}-\eqref{eq:vectorfieldmetricdifference}
 we can replace $F(g_i,W_i)$ in the right
by $F(g_i^*,W_i^*)$ since the error is bounded by:
\beqs
|F(g_2,W_2)-F(g_2^*,W_2^*)|\, |W_2-W_1|+
|F(g_1,W_1)-F(g_1^*,W_1^*)|\, |g_2-g_1| ,
\eqs
where $F(g_i,W_i)-F(g_i^*,W_i^*)$ is controlled by
\eqref{eq:vectorfieldminusvectorfieldstar}, and
modulo lower order $g_2-g_1$ by $\delta h_1$, which by \eqref{eq:differenceestimate}
 is controlled by
$\|\pa h_1\|_{B(X_0,C_1\varepsilon)}$ times $\delta X=X_2-X_1$.
With $\delta W\!\!=W_{\!1}\!-W_2$ it therefore remains to estimate
\beqs
\big(g_2^{\alpha\beta}\!-F^\alpha(g^*_2,\!W_2^*)g_2^{0\beta}\big)\delta W_\beta
\quad\text{and}\quad
\big(\delta h_1^{\alpha\beta}\!-F^\alpha(g^*_1,W^*_1)\delta h_1^{0\beta}\big)W_{1\beta}.
\eqs
Moreover, using \eqref{eq:vectorfieldminusvectorfieldstarh1} we see using
\eqref{eq:quadraticinnullframe} that modulo errors
controlled by $|h_{1LT}|\, |\delta W|\!+|h_{1T\Lbs}| \, |\delta \overline{W}|$
respectively $|\delta h_{1LT}|\, |W|\!+|\delta h_{1T\Lbs}| \, | \overline{W}|$
we can replace $g_2$ by $g^*_2$ respectively $W_1$ by $W^*_1$ so we are left with the
main parts
\beqs
\big(g_2^{\!*\,\alpha\beta}\!-F^\alpha(g^*_2,\!W_2^*)g_2^{\!*\,0\beta}\big)\delta W_\beta
\quad\text{and}\quad
\big(\delta h_1^{\alpha\beta}\!-F^\alpha(g^*_1,W^*_1)\delta h_1^{0\beta}\big)W^*_{1\beta}.
\eqs
Using \eqref{eq:vectorfieldminusvectorfieldstarg0} the first can be estimated
by $\delta \overline{W}$ and using
\eqref{eq:vectorfieldminusvectorfieldstarh1} the second can be estimated
by $\delta h_{1LT}$, which in turn is bounded by
$\|\pa h_{1LT}\|_{B(X_0,C_1\varepsilon)}\,|\delta X|$.
\end{proof}

\begin{lemma}\label{lem:WL} If $|W_i-W_i^*|\leq 1/4$ and $|h_1|\leq 1/16$ then
\begin{equation}\label{eq:WL}
|W_{2{\Ls}}\!-W_{\!1{\Ls}}|
\!\les \frac{C_0\varepsilon}{1\!+t}
\Big(\frac{1\!+q_-^*}{1\!+t}\Big)^{\!\!\gamma\prime} \!\!|W_2\!-W_1|
 +\frac{C_0\varepsilon(1\!+q_+^*)^{-\gamma\prime}
  |X_2\!-\!X_1|}
 {(1\!+t)^{2-\varepsilon}(1\!+\!|\,q^*|)^{\varepsilon}} .
\end{equation}
\end{lemma}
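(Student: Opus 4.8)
The plan is to isolate the component $W_{i\Ls}$ from the eikonal equation along each ray — exactly as \eqref{eq:lstarv} is extracted from \eqref{eq:Lstarv} — and then to subtract the resulting identities for $i=1$ and $i=2$. Evaluated at the point $X_i(t)$ the identity \eqref{eq:Lstarv} for $u_i$ reads $\bigl(1+\tfrac{M}{r}\bigr)(X_i)\,W_{i\Lbs}W_{i\Ls}+\tfrac12\bigl(1-\tfrac{M}{r}\bigr)(X_i)\,|\sls{W}_i|^2=-\tfrac12\,h_1(\pa u_i,\pa u_i)(X_i)$, where I use $\pa_\Lbs u_i(X_i)=W_{i\Lbs}$, $\pa_\Ls\tilde u_i(X_i)=W_{i\Ls}$, $|\pas\tilde u_i|^2(X_i)=|\sls{W}_i|^2$, and where by \eqref{eq:quadraticinnullframe} and \eqref{eq:honeu} the right side is dominated by $|h_{1\Ls\Ls}(X_i)|$ together with terms carrying a factor $|\sls{W}_i|$ or $|W_{i\Ls}|$. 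Since $|W_i-W_i^*|\le 1/4$ we have $W_{i\Lbs}=2+\pa_\Lbs\tilde u_i\in[7/4,9/4]$, so (also using $|h_1|\le 1/16$) the coefficient $(1+M/r)(X_i)W_{i\Lbs}$ is bounded away from $0$ and $W_{i\Ls}$ is an explicit, controlled expression in $\sls{W}_i$, $W_{i\Lbs}$, $M/r(X_i)$ and the frame components $h_{1U^*V^*}(X_i)$.

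Next I would subtract the two identities. After pulling out the term $(1+M/r)(X_1)W_{1\Lbs}\bigl(W_{2\Ls}-W_{1\Ls}\bigr)$, whose coefficient is invertible, one solves for $W_{2\Ls}-W_{1\Ls}$; writing $\delta W=W_2-W_1$, the numerator is a sum of terms of two types. Type (i): a difference of components, $\overline{W}_2-\overline{W}_1$ or $W_{2\Lbs}-W_{1\Lbs}$ (both $\le|\delta W|$), multiplied by one of the small quantities $|\sls{W}_i|$, $|W_{i\Ls}|$ or $h_{1\Ls A}(X_i)$. By \eqref{eq:uniformeikonalboundtwo}, $|\sls{W}_i|=|\pas\tilde u_i|(X_i)\les\tfrac{\varepsilon}{1+t}\bigl(\tfrac{1+q_-^*}{1+t}\bigr)^{\gamma^\prime}$; by \eqref{eq:lstarv} and Proposition \ref{prop:sharpmetricdecay} the same bound holds for $|W_{i\Ls}|=|\pa_\Ls\tilde u_i|$, and by Lemma \ref{lem:weakdecaystar} and Proposition \ref{prop:sharpmetricdecay} also for $h_{1\Ls A}(X_i)$; hence the type (i) terms contribute $\les\tfrac{\varepsilon}{1+t}\bigl(\tfrac{1+q_-^*}{1+t}\bigr)^{\gamma^\prime}|\delta W|$, the first term of \eqref{eq:WL}. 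Type (ii): a difference of a coefficient evaluated at the two points, $h_{1U^*V^*}(X_2)-h_{1U^*V^*}(X_1)$ or $(M/r)(X_2)-(M/r)(X_1)$; by \eqref{eq:differenceestimate} these are $\le\|\pa(\cdot)\|_{B(X_0,C_1\varepsilon)}|X_2-X_1|$, and the only slowly decaying component, $h_{1\Ls\Ls}$, is handled by the sharp wave‑coordinate derivative estimate \eqref{eq:sharpwsvecoordder} in its Lie‑derivative form, giving $\|\pa h_{1\Ls\Ls}\|_{B(X_0,C_1\varepsilon)}\les\tfrac{\varepsilon(1+q_+^*)^{-\gamma^\prime}}{(1+t+r^*)^{2-\varepsilon}(1+|q^*|)^{\varepsilon}}$ — the second term of \eqref{eq:WL}.

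The only remaining piece is the difference $h_1(\pa u_2,\pa u_2)(X_2)-h_1(\pa u_1,\pa u_1)(X_1)$, which I would split as $\bigl[h_{1U^*V^*}(X_2)-h_{1U^*V^*}(X_1)\bigr]W_{2U^*}W_{2V^*}$ — of type (ii), with dominant part $\pa h_{1\Ls\Ls}\cdot|X_2-X_1|\cdot W_{2\Lbs}^2$ since $W_{2\Lbs}^2\sim4$ — plus $h_{1U^*V^*}(X_1)\bigl[W_{2U^*}W_{2V^*}-W_{1U^*}W_{1V^*}\bigr]$, whose bracket expands into $\delta W$'s times sums $W_1+W_2$, each resulting term being of type (i); crucially, by \eqref{eq:quadraticinnullframe} the worst component $h_{1\Lbs\Lbs}$ multiplies only $W_\Ls W_\Ls$, i.e. two good factors, so no undamped $h_{1\Lbs\Lbs}\,\delta W$ ever occurs. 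Dividing through by the (invertible) leading coefficient and absorbing the factors $\varepsilon$ then yields \eqref{eq:WL}.

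The main obstacle is purely the bookkeeping: one must verify that in every term produced by the subtraction a slowly decaying quantity ($W_{i\Lbs}$, $h_{1\Lbs\Lbs}$, or an undamped $\delta W$) is paired with a genuinely fast‑decaying factor. This is precisely the null structure already isolated in \eqref{eq:gzeroframe}, \eqref{eq:quadraticinnullframe} and \eqref{eq:honeu}, so no idea beyond those lemmas is needed apart from the use of the sharp derivative decay \eqref{eq:sharpwsvecoordder} on the thin tube $B(X_0,C_1\varepsilon)$, which supplies the $|X_2-X_1|$ term.
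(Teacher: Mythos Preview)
Your proposal is correct and follows essentially the same approach as the paper: both start from the identity \eqref{eq:Lstarv} evaluated along each characteristic, subtract, and absorb the $|\delta W_{\Ls}|$ terms with small coefficients into the left side while estimating coefficient differences via \eqref{eq:differenceestimate} and the sharp wave-coordinate derivative bound \eqref{eq:sharpwsvecoordder}. Your Type (i)/Type (ii) organization is just a repackaging of the paper's split into differences of linear terms, differences of quadratic terms, and differences of the $M/r$ coefficients.
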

\begin{proof} Let $\widetilde{W}_i=W_i-W_i^*$. By \eqref{eq:Lstarv}
\beqs
\big(1+\tfrac{M}{r}\big)(2+ \widetilde{W}_{i\Lbs}) W_{i\Ls}
=- 2h_{1i}^{\Lbs\Lbs}\!- 2h_{1i}^{\Lbs\Us}\widetilde{W}_{i\Us}
-\tfrac{1}{2}h_{1i}(\widetilde{W}_i,\widetilde{W}_i)
-\tfrac{1}{2}\big(1-\tfrac{M}{r}\big)  |\sls{W\!}_{\!i}|^2,
\eqs
where $h_{1i}=h_1\!\circ\!X_i$. Subtracting the case $i=2$ from $i=1$ we see using
\eqref{eq:quadraticinnullframe} that
the difference of the linear terms in the right is bounded by
\begin{multline}
(|h_{11\Ls\Lbs}|+|h_{12\Ls\Lbs}|)|\delta W_{\Ls}|
+(|h_{11\Ls \Ts}|+|h_{12\Ls\Ts}|)|\delta W|\\
+|\delta h_{1\Ls \Ts}|+(|W_{1\Ls}|+|W_{2\Ls}|)|\delta h_{1\Ls \Lbs}|,
\end{multline}
where $\delta W\!\!=W_2-W_{\!1}$ and $\delta h_1\!=h_{12\!}-h_{11}$.
The first term is small compared to the left of \eqref{eq:WL}
and the second is bounded by the first term in the right.
By \eqref{eq:differenceestimate} the term
$|\delta h_{1\Ls\Ts}|\!\leq \|\pa h_{1\Ls\Ts}\|_{B(X_0,C_1\varepsilon)}\, |\delta X|$
is bounded by the second term in \eqref{eq:WL} and so is the last.
The difference in the quadratic terms
is bounded
\beqs
(|\widetilde{W}_{\!1}|+|\widetilde{W}_2|)|W_{2\Ls}-W_{\!1\Ls}|+\big(|\overline{W}_{\!\!1}|\!+|\overline{W}_{\!2}|
+|h_{11 \Ls \Ts}||\widetilde{W}_{\!1}|+|h_{12\Ls \Ts}|\,|\widetilde{W}_2|\big)
|W_2-W_{\!1}|,
\eqs
where the first term is small compared to the left of \eqref{eq:WL} and the second
can be estimated by the first term on the right of \eqref{eq:WL}.
In the above we neglected the change in the coefficients $M\!/r$ which is bounded
by $M\!/r^2$ times $\delta X$ multiplied by $|\overline{W}|$
which is bounded by the second term in the right of \eqref{eq:WL}.
\end{proof}

We will estimate $\delta H_Z\!=H_Z(X_2,\!W_2)\!-\!H_Z(X_{\!1},\!W_{\!1})$
by the $L^{\!\infty}\!$ norms over ${D_{\!\varepsilon_{\!}}(t)}$:
\beq
|\delta H_{\!Z}(t)|\!\leq\!
\big\|\frac{\pa H_{\!Z}\!\!\!}{\pa X\!}\,(t,\cdot)\big\|_{D_{\varepsilon_{\!}}(t)} |\delta X(t)|
+\!{\sum}_{\Us\in \mathcal{N}^{\!*}}
\big\|\frac{\pa H_{\!Z}\!\!\!}{\pa W_{\!\Us}\!\!\!\!\!}\,(t,\cdot)\big\|_{D_{\varepsilon_{\!}}(t)}
|\delta W_{\!\Us}(t)|,
\eq
where
$\!{D_{\!\varepsilon_{\!}}(t)}\!\!=\!\{X\!\!\in\!\! B(X_0(t),\!C_{\!1}\varepsilon),\,
(1\!+|q^{\!*}|) | \widetilde{W}|\!+(1\!+t\!+|q^{\!*}|) | \overline{W}|\!
\leq \! 2C_{\!1}\varepsilon
 \big(\!\frac{1\,+\,q_-^*}{1+t+|q^{\!*}|\!}\big)^{\!\!\gamma^\prime\!} \!\}$,
\beq\label{eq:HZs}
H_Z( X,W)=-\frac{4 h_{1Z}^{\Lbs\Lbs}\!(X)
  +4h_{1Z}^{\Lbs\Us}\!\!(X)\, \widetilde{W}_{\Us}\!
  +h_{1Z}^{\Us\Vs}\!\!(X)\,\, \widetilde{W}_{\Us} \widetilde{W}_{\Vs}\!\!\! }
  {-2+h^{0\Lbs}_{1}\!(X)+g^{\,0 \Us}(X) \,\widetilde{W}_{\Us}}.
\eq
\begin{lemma} We have
\beq\label{eq:Htdiff}
\big|H_{\partial_t\!}(X_2,W_2)-H_{\partial_t\!}(X_{\!1},W_{\!1})\big|
\leq \frac{C_0\varepsilon\big(|X_2\!-\!X_1|+(1\!+|q^*|)|W_2\!-W_{\!1}|\big)\!\!}
{(1\!+t\!+| q^*|)^{2-\varepsilon}(1\!+|q^*|)^{1+\varepsilon}
(1\!+q_+^*)^{\gamma\prime}\!\!}
\,.
\eq
\end{lemma}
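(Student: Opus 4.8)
The plan is to treat $H_{\partial_t}$ as the explicit rational function of $(X,W)$ given by \eqref{eq:HZs} and to estimate the difference through the mean value bound over $D_\varepsilon(t)$ stated just above, so that it suffices to bound $\|\partial_X H_{\partial_t}\|_{D_\varepsilon(t)}$ and $\|\partial_{W_{\Us}}H_{\partial_t}\|_{D_\varepsilon(t)}$ by $\varepsilon(1+t+|q^*|)^{-2+\varepsilon}(1+|q^*|)^{-1-\varepsilon}(1+q_+^*)^{-\gamma'}$ and $\varepsilon(1+t+|q^*|)^{-2+\varepsilon}(1+|q^*|)^{-\varepsilon}(1+q_+^*)^{-\gamma'}$ respectively. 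First I would record that on $D_\varepsilon(t)$ the denominator $-2+h_1^{0\Lbs}(X)+g^{0\Us}(X)\widetilde W_{\Us}$ is $-2+O(\varepsilon)$, hence bounded away from $0$, so every $X$- and $W$-derivative of $H_{\partial_t}$ is a sum of products of $X$-derivatives of the components $h_{1\partial_t}^{\Us\Vs}$, of $g^0$, of powers of $\widetilde W$, and of $(-2+O(\varepsilon))^{-1}$. Since $\partial_t$ has constant coefficients, $h_{1\partial_t}=\mathcal L_{\partial_t}h_1=\partial_t h_1$; and modulo the $O(r^{-1})$ difference between the $x^*$-null frame and the Minkowski one, the entries $h_{1\partial_t}^{\Lbs\Lbs}$ and $h_{1\partial_t}^{\Lbs\Us}$ sitting in the ``bare'' and $\widetilde W$-linear parts of the numerator are $\partial_t$ of metric components controlled by the wave coordinate condition, so they and their $X$-derivatives obey the sharp decay of \eqref{eq:sharpwsvecoordder}, \eqref{eq:sharpwsvecoordfunc}, \eqref{eq:extraLderest1}--\eqref{eq:extraLderest2} and Lemma \ref{lem:weakdecaystar}, while the remaining entries $h_{1\partial_t}^{\Us\Vs}$ (the genuinely bad ones) occur only in the $\widetilde W$-quadratic part, always against a factor $\widetilde W$.

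For the $W$-derivatives, differentiating \eqref{eq:HZs} in $W_{\Us}$ produces $h_{1\partial_t}^{\Lbs\Us}$ plus $\widetilde W$-linear terms; the leading term is $\partial_t$ of one of $h_{1LT}$, $\delta^{AB}h_{1AB}$, $h_{1L\underline L}$, bounded by the wave coordinate decay so that $\|\partial_{W_{\Us}}H_{\partial_t}\|_{D_\varepsilon(t)}\,|\,\delta W_{\Us}|$ fits into the $(1+|q^*|)|W_2-W_1|$ term of \eqref{eq:Htdiff}, while the $\widetilde W$-linear remainders pick up a weight $(1+|q^*|)^{-1}$ (or $(1+t+|q^*|)^{-1}$ for a tangential index) against $|\pa h_1|$ from \eqref{eq:decay1star} and are smaller. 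For the $X$-derivatives I would decompose $\partial_X$ into the null frame $-\tfrac12\Ls\partial_{\Lbs}-\tfrac12\Lbs\partial_{\Ls}+A\partial_A$ of \eqref{eq:starnullframederivative}: the tangential piece $\partial_A$ gains a full power $(1+t+r^*)^{-1}$; the $\partial_{\Ls}$ piece on the good components $h_{1\partial_t}^{\Lbs\Us}$ is estimated by Proposition \ref{prop:specialsharpmetricdecay}; the $\partial_{\Lbs}$ piece on $h_{1\partial_t}^{\Lbs\Lbs}\sim\partial_t h_{1LL}$ is $-2\partial_{q^*}\partial_t h_{1LL}$, where the wave coordinate condition is used exactly as in the proof of \eqref{eq:finalL1partial} to trade it for tangential and quadratic terms and recover the extra $(1+|q^*|)^{-1}$ demanded by \eqref{eq:Htdiff}. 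Every other contribution to $\partial_X H_{\partial_t}$ carries at least one explicit $\widetilde W$-factor, hence on $D_\varepsilon(t)$ a weight $(1+|q^*|)^{-1}$, times $|\pa h_1|$ or $|\pa^2 h_1|$ from Proposition \ref{prop:weakdecay}/Lemma \ref{lem:weakdecaystar} and the bounds of Proposition \ref{prop:uniformeikonalbounds} for the $\pas\tilde u$'s hidden inside $\widetilde W$, and is handled verbatim as in Lemma \ref{lem:Fdiff} using \eqref{eq:differenceestimate}. Near $t=0$ the whole estimate is immediate from the asymptotically flat data decay $r^{-1-\gamma}$.

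The main obstacle is precisely this bookkeeping of good versus bad components through the two differentiations: one must be sure that in $\partial_X H_{\partial_t}$ the \emph{only} term not already carrying a $\widetilde W$-weight is $\partial_X\big(4h_{1\partial_t}^{\Lbs\Lbs}\big)$, and that for that one term the transversal derivative $\partial_{\Lbs}$ really does extract the additional power $(1+|q^*|)^{-1}$ via the wave coordinate condition, uniformly in $t,q^*$ on $r>t/2$. This is the same structural phenomenon already exploited in \eqref{eq:htestone}--\eqref{eq:finalL1partial}, where the bad component enters only differentiated and multiplied by small factors and never ``bare'', now transported to differences, and it relies on nothing beyond the sharp decay estimates \eqref{eq:sharpwsvecoordder}, \eqref{eq:extraLderest1}--\eqref{eq:extraLderest2}, \eqref{eq:decay1star} and Proposition \ref{prop:uniformeikonalbounds} already established.
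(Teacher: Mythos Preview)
There is a genuine gap in your treatment of the $W_{\Ls}$-derivative. You claim that $\|\partial_{W_{\Us}}H_{\partial_t}\|_{D_\varepsilon(t)}\lesssim\varepsilon(1+t+|q^*|)^{-2+\varepsilon}(1+|q^*|)^{-\varepsilon}(1+q_+^*)^{-\gamma'}$ uniformly in $\Us$, and you justify this by saying the leading term is ``$\partial_t$ of one of $h_{1LT}$, $\delta^{AB}h_{1AB}$, $h_{1L\underline L}$, bounded by the wave coordinate decay''. But $h_{1L\underline L}$ is \emph{not} one of the wave coordinate components: Proposition~\ref{prop:wavecoorddecay} and \eqref{eq:sharpwsvecoordder} control $h_{1LT}$ only for $T\in\mathcal T=\{L,S_1,S_2\}$, and $\underline L\notin\mathcal T$. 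The component $h_{1L\underline L}$ is merely tangential, so $\partial_t h_{1L\underline L}$ enjoys only the bound $\varepsilon\,(1+t+r^*)^{-1}(1+|q^*|)^{-1}(1+q_+^*)^{-\gamma'}$ from \eqref{eq:derhTUest}, not the $(1+t)^{-2+\varepsilon}$ decay you need. Since differentiating the linear part $4h_{1\partial_t}^{\Lbs\Us}\widetilde W_{\Us}$ of \eqref{eq:HZs} in $\widetilde W_{\Ls}$ produces exactly $4h_{1\partial_t}^{\Lbs\Ls}\sim\partial_t h_{1\Ls\Lbs}$, the paper's estimate is only
\[
\Big|\frac{\partial H_{\partial_t}}{\partial W_{\Ls}}\Big|\lesssim |\partial h_{1TU}|+|\partial h_1|\,|W_{\Ls}|+|H_{\partial_t}|\lesssim\frac{\varepsilon(1+q_+^*)^{-\gamma'}}{(1+t)(1+|q^*|)},
\]
which, multiplied by the crude bound $|\delta W_{\Ls}|\le|W_2-W_1|$, does \emph{not} fit into \eqref{eq:Htdiff} when $r^*\gg t$.

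The missing ingredient is Lemma~\ref{lem:WL}, which you never invoke. One must accept the weaker pointwise bound on $\partial_{W_{\Ls}}H_{\partial_t}$ and compensate by feeding the improved estimate \eqref{eq:WL} for $|W_{2\Ls}-W_{1\Ls}|$ itself (coming from the eikonal relation \eqref{eq:Lstarv}) into the mean value inequality. The product
\[
\frac{\varepsilon}{(1+t)(1+|q^*|)}\cdot\Big(\frac{\varepsilon}{1+t}\Big(\frac{1+q_-^*}{1+t}\Big)^{\gamma'}|W_2-W_1|+\frac{\varepsilon(1+q_+^*)^{-\gamma'}|X_2-X_1|}{(1+t)^{2-\varepsilon}(1+|q^*|)^{\varepsilon}}\Big)
\]
then does sit inside the right-hand side of \eqref{eq:Htdiff}. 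Your outline for the $X$-derivatives and for the $\sls W$- and $W_{\Lbs}$-derivatives is otherwise along the paper's lines (though for $\partial_X$ the paper simply uses $|\partial\phi|\lesssim(1+|q^*|)^{-1}\sum_{|J|\le1}|Z^J\phi|$ together with \eqref{eq:sharpwsvecoordder} to get $|\partial^2 h_{1\Ls T}|$, rather than the null-frame decomposition you describe).
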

\begin{proof} Substituting
$h_{1\partial_t\!}(V\!,\!W)\!=\!\partial_t h_1^{\alpha\beta} V_{\!\alpha} W_{\!\beta}$
into \eqref{eq:HZs} using
\eqref{eq:quadraticinnullframe} we get
\begin{align*}
|H_{\partial_t}|\!&\les |\partial h_{1{\Ls}T}|\!
+|\partial h_{1TU}|\,( |W_{{\Ls}}|\!+|\sls{W\!}|^2)\!
+|\partial h_1| |W_{{\Ls}}|^2\!\!
\les \frac{\varepsilon(1\!+q_+^*)^{-\gamma\prime}}
{\!(1\!+t)^{2-\varepsilon}
 (1\!+|q^*|)^{\varepsilon}\!},\\
\Big| \frac{\partial H_{\partial_t}\!\!}{\partial X\!\!}\,\Big|
\!&\les |\partial^2 h_{1{\Ls}T}|
+|\partial^2 h_{1TU}|\, |W_{{\Ls}}|+|\partial^2 h_1|\, |W_{{\Ls}}|^2
\\
&+ | H_{\partial_t}| \big(|\partial h_{1TU}|\!
+|\partial h_1| |W_{{\Ls}}|\!+\!\frac{M}{\!(1\!+t)^2\!\!}\,\big)\!
\les \frac{\varepsilon(1+q_+^*)^{-\gamma\prime}}
{(1\!+t)^{2-\varepsilon}(1\!+|q^*|)^{\!1+\varepsilon}\!},\!\!\!\!\!\\
\Big| \frac{\partial H_{\partial_t}\!\!}{\!\partial \sls{W\!}\!}\,\Big|\!
&\les |\partial h_{1{\Ls}T}|
+|\partial h_{1TU}|\, |\overline{W}|\!
+| H_{\partial_t}| \,|h_{1TU}|\!\les
\frac{\varepsilon(1+|\,q_+^*|)^{-\gamma\prime}}{(1\!+t)^{2-\varepsilon}
 (1\!+|q^*|)^{\varepsilon}\!},\\
 \Big| \frac{\partial H_{\partial_t}\!\!}{\partial W_{\!{\Lbs}}\!\!}\,\Big|\!
&\les |\partial h_{1{\Ls}T}|
+|\partial h_{1TU}|\, |W_{{\Ls}}|
+ |H_{\partial_t}|\,
\les \frac{\varepsilon(1+|\,q_+^*|)^{-\gamma\prime}}
{(1+t)^{2-\varepsilon}(1+|\,q^*|)^{\varepsilon}},\\
\Big| \frac{\partial H_{\partial_t}\!\!}{\partial W_{\!{\Ls}} \!\!}\,\Big
|\!&\les |\partial h_{1TU}|
+|\partial h_{1}|\, |W_{\!{\Ls}}|
+|H_{\partial_t}| \les
\frac{\varepsilon(1+|\,q_+^*|)^{-\gamma\prime}}{(1+t)
 (1+|\,q^*|)}.
\end{align*}
\end{proof}

\begin{lemma} We have
\begin{multline}
{\sum}_{\Omega}\big|H_{\Omega}(X_2,W_2)
-H_{\Omega}(X_1,W_1)\big|
\leq \frac{C_0\varepsilon(1+q_+^*)^{-\gamma\prime}}
{(1+t+|\,q^*|)^{2-\varepsilon}
 (1+|\,q^*|)^{\varepsilon}}|X_2-X_1|\\
 + \frac{C_0\varepsilon|W_2-W_{\!1}|}{(1\!+t\!+|q^*|)}
 \Big(\frac{1+q_-^*}{1\!+t\!+|q^*|}\Big)^{\!\gamma\prime}\!\!
 +\frac{C_0\varepsilon(1\!+q_+^*)^{-\gamma\prime}\!\!\!}{1\!+t\!+| q^*|}\,\,
 |\overline{W}_{\!2}\!-\overline{W}_{\!1}|.\label{eq:Hdifftwo}
\end{multline}
\end{lemma}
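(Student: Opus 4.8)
The plan is to follow the scheme of the preceding lemma for $H_{\partial_t}$. Since $\Omega=\Omega_{ij}$ is a rotation, $H_\Omega$ in \eqref{eq:HZs} is a smooth function of $(X,W)$ on $D_\varepsilon(t)$: there the denominator $-2+h_1^{0\Lbs}(X)+g^{0\Us}(X)\widetilde W_{\Us}$ stays $\approx-2$, because $|h_1^{0\Lbs}|\les M$ and $(1+|q^*|)|\widetilde W|\les\varepsilon$, so
$$|\delta H_\Omega(t)|\le\Big\|\tfrac{\pa H_\Omega}{\pa X}(t,\cdot)\Big\|_{D_\varepsilon(t)}|\delta X(t)|+\sum_{\Us\in\mathcal N^*}\Big\|\tfrac{\pa H_\Omega}{\pa W_{\Us}}(t,\cdot)\Big\|_{D_\varepsilon(t)}|\delta W_{\Us}(t)|.$$
By \eqref{eq:tildegammaest} the $L^\infty$ norms on $D_\varepsilon(t)$ are controlled by the pointwise decay estimates for $h_1$ near the reference point $(t,r\omega)$, $r^*-t=q^*$, where $1+t+r^*\sim1+t+|q^*|$. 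So everything reduces to bounding $\|H_\Omega\|_{D_\varepsilon(t)}$, $\|\pa_X H_\Omega\|_{D_\varepsilon(t)}$, $\|\pa_{W_{\Lbs}}H_\Omega\|_{D_\varepsilon(t)}$, $\|\pa_{\sls{W}}H_\Omega\|_{D_\varepsilon(t)}$ and $\|\pa_{W_{\Ls}}H_\Omega\|_{D_\varepsilon(t)}$.

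Substituting $h_{1\Omega}=\mathcal L_\Omega h_1$ into \eqref{eq:HZs} and expanding every contraction in the null frame by \eqref{eq:quadraticinnullframe} together with \eqref{eq:LieOmega}--\eqref{eq:commutarorrotationquadraticformest} --- the computation behind \eqref{eq:Omegahone} --- each of these quantities becomes a finite sum of products of frame components of $\mathcal L_\Omega h_1$ (at most two, hit by at most one $\pa$ in the $\pa_X$ case), powers of the small $\widetilde W_{\Us}$, and commutator factors $[\Omega,\pa]\tilde u$ which are tangential derivatives $\opa\tilde u$, hence $\les\varepsilon(1+t+|q^*|)^{-1}(\tfrac{1+q_-^*}{1+t+|q^*|})^{\gamma^\prime}$ on $D_\varepsilon(t)$. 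The decisive structural point, exactly as in \eqref{eq:Omegahone}, is that the coefficients $h_{1\Omega}^{\Lbs\Us}$ (including the leading $h_{1\Omega}^{\Lbs\Lbs}$) are, up to $O(M/r)$ lower‑order terms already accounted for, the good components $(\mathcal L_\Omega h_1)_{\Ls\Ts}$, $\Ts\in\{\Ls,S_1,S_2\}$ --- in particular $(\mathcal L_\Omega h_1)_{\Ls\Ls}=\Omega(h_{1LL})$ by the remark after Proposition \ref{prop:wavecoorddecay} --- so they obey the improved wave‑coordinate bounds of Proposition \ref{prop:sharpmetricdecay}, while the bad component $(\mathcal L_\Omega h_1)_{\Lbs\Lbs}$ enters only multiplied by at least one factor $\widetilde W$. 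Feeding in Proposition \ref{prop:sharpmetricdecay} with one extra rotation $\Omega$ then gives $\|H_\Omega\|_{D_\varepsilon(t)}\les\varepsilon(1+t+|q^*|)^{-1}(\tfrac{1+q_-^*}{1+t+|q^*|})^{\gamma^\prime}$ (the analogue of \eqref{eq:finalL1} with $h_1$ replaced by its Lie derivative), $\|\pa_X H_\Omega\|_{D_\varepsilon(t)}\les\varepsilon(1+t+|q^*|)^{-2+\varepsilon}(1+|q^*|)^{-\varepsilon}(1+q_+^*)^{-\gamma^\prime}$, and $\|\pa_{W_{\Lbs}}H_\Omega\|_{D_\varepsilon(t)}+\|\pa_{\sls{W}}H_\Omega\|_{D_\varepsilon(t)}\les\varepsilon(1+t+|q^*|)^{-1}(\tfrac{1+q_-^*}{1+t+|q^*|})^{\gamma^\prime}$; combined with the Lipschitz bound above these produce the first two terms of \eqref{eq:Hdifftwo}.

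The one contribution without the $(\tfrac{1+q_-^*}{1+t+|q^*|})^{\gamma^\prime}$ improvement --- and the part to handle with care --- is $\pa_{W_{\Ls}}H_\Omega$: differentiating the term $h_{1\Omega}^{\Ls\Ls}\widetilde W_{\Ls}\widetilde W_{\Ls}$ produces $h_{1\Omega}^{\Ls\Ls}\widetilde W_{\Ls}\sim(\mathcal L_\Omega h_1)_{\Lbs\Lbs}\,\widetilde W$, i.e. the bad $\underline L\underline L$ component times one small factor, for which Proposition \ref{prop:sharpmetricdecay} gives only $\les\varepsilon(1+t+|q^*|)^{-1}(1+q_+^*)^{-\gamma^\prime}$ after using $(1+|q^*|)^{-1}\le1$ and $(1+q_-^*)^{\gamma^\prime}(1+|q^*|)^{-\gamma^\prime}\le(1+q_+^*)^{-\gamma^\prime}$; all other pieces of $\pa_{W_{\Ls}}H_\Omega$ (the good $h_{1\Omega}^{\Lbs\Ls}$, or generic components times $\widetilde W$) are smaller. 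Since $W_{\Ls}$ is a component of $\overline W$, this term gets multiplied by $|\delta W_{\Ls}|\le|\overline W_2-\overline W_1|$ and yields the third term of \eqref{eq:Hdifftwo}; summing over the finitely many $\Omega_{ij}$ finishes the proof. The main obstacle is precisely the bookkeeping in the previous paragraph: one must verify component by component that every term in $H_\Omega$ and its $X$- and $W$-derivatives other than the three listed is strictly dominated, which rests on using the sharp wave-coordinate decay for the $(\mathcal L_\Omega h_1)_{\Ls\Ts}$ components (and the identity $(\mathcal L_\Omega h_1)_{\Ls\Ls}=\Omega h_{1LL}$) in place of the generic $S^0$-weighted bound.
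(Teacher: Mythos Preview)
Your approach is the same as the paper's: write $\delta H_\Omega$ via the mean-value/Lipschitz inequality and then bound $\|\pa_X H_\Omega\|$, $\|\pa_{W_{\Lbs}}H_\Omega\|$, $\|\pa_{\sls{W}}H_\Omega\|$, $\|\pa_{W_{\Ls}}H_\Omega\|$ on $D_\varepsilon(t)$ using the explicit expansion \eqref{eq:h1Omega2} of $h_{1\Omega}(W,W)$ in the null frame together with Proposition~\ref{prop:sharpmetricdecay}.

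There is one bookkeeping slip. You claim $\|\pa_{\sls{W}}H_\Omega\|$ carries the improved factor $(\tfrac{1+q_-^*}{1+t+|q^*|})^{\gamma'}$. It does not: differentiating $h_{1\Omega}(W,W)$ in $W_A$ produces, among other things, commutator terms from \eqref{eq:commutarorrotationquadraticform} of the form $h_{1\,C\Omega/r}$ and $\Omega h_{1TU}$, i.e.\ generic tangential components $h_{1TU}$, which by \eqref{eq:metricdecaysharptan} satisfy only $|Z^{*I}h^{1*}_{TU}|\les \varepsilon(1+t+r^*)^{-1}(1+q_+^*)^{-\gamma'}$, \emph{not} the stronger wave-coordinate bound \eqref{eq:sharpwsvecoordfunc}. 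Thus, as in the paper,
\[
\Big|\frac{\pa H_\Omega}{\pa \sls{W}}\Big|\les |\Omega h_{1TU}|+|h_{1TU}|+|h_1|\,|W_{\Ls}|+|H_\Omega|\,|h_{1TU}|\les \frac{\varepsilon(1+q_+^*)^{-\gamma'}}{1+t+|q^*|},
\]
the same bound you (correctly) obtained for $\pa_{W_{\Ls}}H_\Omega$. This does not harm the conclusion: since $\sls{W}$ is part of $\overline{W}$, the term $\|\pa_{\sls{W}}H_\Omega\|\,|\delta\sls{W}|$ is absorbed by the third summand of \eqref{eq:Hdifftwo} rather than the second. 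With that correction your argument matches the paper's.
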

\begin{proof}
If
$\Omega=\Omega_{ij}=x^i\pa_j-x^j\pa_i$, then with $R=\omega$,
we have by \eqref{eq:commutarorrotationquadraticform}
\begin{multline}
\!\!\!h_{1\Omega}(V,W)\!=(\Omega h_1^{\alpha\beta})V_\alpha W_\beta
+h_1^{\alpha \Omega/r} \big(V_\alpha W_R+V_R W_{\!\alpha}\big)
+ h_1^{\alpha R} \big( V_{\alpha}  \overline{W}_{\!\!\Omega/r}
+ \overline{V}_{\!\!\Omega/r} W_{\!\alpha}\big)\\
+h_1^{\alpha C}
\big( V_\alpha (C^{i\,} \overline{W}_{\!\!j}-C^{j\,}\overline{W}_{\!i} )
+ (C^{i\,} \overline{V}_{\!\!j}-C^{j\,}\overline{V}_{\!i} )W_{\!\alpha}\big).\label{eq:h1Omega2}
\end{multline}
Hence
\begin{multline*}
|h_{1\Omega}(V,W)|\les |\Omega h_1|\, |\overline{V}|\, |\overline{W}|
+(|\Omega h_{1TU}|+|h_{1TU}|)\,(|\overline{V}|\, |W|
+|V|\, |\overline{W}|)\\ +(|\Omega h_{1LL}|+|h_{1LT}|) \, |V|\, |W|,
\end{multline*}
and since $|W|\leq 1$ and $H_\Omega(X,W)=h_{1\Omega}(W,W)/g^{0\Us}W_{\Us}$
it follows that
\beqs
| H_\Omega|\les {\sum}_{k\leq 1}\,|\Omega^k h_{1{\Ls}T}|
+|\Omega^k h_{1TU}|\,|\overline{W}|
+|\Omega^k h_1|\, |\overline{W}|^2
\les \frac{\varepsilon}{1\!+t}
 \Big(\frac{1\!+q_-^*}{1+t}\Big)^{\!\gamma\prime}\!\!\!.
\eqs
By \eqref{eq:h1Omega2} and \eqref{eq:starnullframederivative} we have with
$\rho^{\,\prime} =dr^*\!/dr$
\begin{align*}
h_{1\Omega}(W^*\!,W^*)=\,&\,\Omega h_{1{\Ls}{\Ls}} +2h_{1{\Ls}\Omega/r}\rho^{\,\prime},\\
h_{1\Omega}(W^*\!,\widetilde{W})=\,&\,\Omega h_{1{\Ls}}^{\,\,\,\beta} \widetilde{W}_\beta
+ h_{1{\Ls}\Omega/r}\big(2\widetilde{W}_R-\rho^{\,\prime}\, \widetilde{W}_{{\Lbs}}\big)
-h_{1{\Lbs}\Omega/r}\rho^{\,\prime}\, \widetilde{W}_{{\Ls}} \\
&-h_{1C\Omega/r}\rho^{\,\prime}\,\overline{W}_{\!C}+h_{1{\Ls}R}
\overline{W}_{\!\Omega/r}+h_{1{\Ls} C}
\big(C^{i\, }\overline{W}_{\!\!j}-C^{j\,}\overline{W}_{\!i }\big),\\
h_{1\Omega}(\widetilde{W}\!,\widetilde{W})
=\,&\,(\Omega h_1^{\alpha\beta})\widetilde{W}_\alpha \widetilde{W}_\beta
+2 h_1^{\alpha \Omega/r} \,\widetilde{W}_\alpha \widetilde{W}_R\\
&+ 2 h_1^{\alpha R} \,\widetilde{W}_\alpha  \overline{W}_{\!\Omega/r}
+2 h_1^{\alpha C} \widetilde{W}_\alpha
 (C^{i\,} \overline{W}_{\!\!j}-C^{j\,}\overline{W}_{\!i} ),\\
g^{0\Us}W_{\Us}
=\,&-g^{0}_{\,\,\, \Ls}-\tfrac{1}{2}g^{0}_{\,\,\, \Ls}\widetilde{W}_{\Lbs}
-\tfrac{1}{2}g^{0}_{\,\,\, \Lbs}\overline{W}_{\!\Ls}
+\delta^{AB}g^{0}_{\,\,\, A}\overline{W}_{\!B}.
\end{align*}
Since $h_{1\Omega}(W,W)
=h_{1\Omega}(W^*\!,W^*)+2h_{1\Omega}(W,\widetilde{W})
+h_{1\Omega}(\widetilde{W}\!,\widetilde{W})$ it follows that
\begin{multline*}
\!\!\!\!\Big|\frac{\partial H_\Omega}{\partial X}\Big|
\les { \sum}_{k\leq 1}|\,\pa\Omega^k h_{1{\Ls}T}|
+|\,\pa\Omega^k h_{1TU}|\,|\overline{W}|
+|\pa\Omega^k h_1|\, |\overline{W}|^2\\
+|\,H_\Omega| \Big(|\,\partial h_{1TU}|+|\,\partial h_1|\,|W_{{\Ls}}|
+\frac{M}{\!(1\!+t)^2\!\!} \,\Big)
\les
\frac{\varepsilon(1\!+q_+^*)^{-\gamma^\prime}}{(1\!+r)^{2-\varepsilon}
 (1\!+|q^*|)^{\varepsilon}\!\!}\,,
\end{multline*}
when $r\geq t/2$.
Moreover, since $|W|\leq 1$
\begin{align*}
\Big|\frac{\partial H_\Omega}{\partial \sls{W\!}}\Big|
&\les |\,\Omega h_{1TU}|+|h_{1TU}|+|h_1|\, |W_{{\Ls}}|
+|H_\Omega| \,|\,h_{1TU}|
\les
\frac{\varepsilon(1\!+q_+^*)^{-\gamma^\prime}\!\!\!\!\!\!}{1+t}\,\,\,,\\
\Big|\frac{\partial H_\Omega}{\partial W_{{\Ls}}}\Big|
&\les
|\,\Omega h_{1TU}|\!+|\,h_{1TU}|\!+|\,\Omega h_{1}|\, |W_{{\Ls}}|\!
+| h_{1}|\,|\overline{W}|\!
+|H_\Omega| \les
\frac{\varepsilon(1\!+q_+^*)^{-\gamma^\prime}\!\!\!\!\!\!}{1\!+t}\, ,\,\,\\
\Big| \frac{\partial H_{\Omega}}{\partial W_{{\Lbs}}}\Big|
&\les{ \sum}_{k\leq 1}|\Omega^k h_{1{\Ls}\Ts}|\!
 +| \Omega^k h_{1TU}| \,|\overline{W}|+ | H_{\Omega} |
\les \frac{\varepsilon}{1\!+t}
\Big(\frac{1\!+q_-^*}{1\!+t}\Big)^{\gamma^\prime}\!\!\!.
\end{align*}
\vspace{-0.25in}
\end{proof}

\subsection{Proof of Proposition \ref{prop:chardiff}}
By Proposition \ref{prop:uniformeikonalbounds} these estimates are true
when $t\!=\!T$ for any constant
$C_2\!\geq \!2C_1$ and $C_3\!\geq \!2C_1$. We we claim that if $\varepsilon\!>\!0$ is sufficiently
small they are true for $q_-^*\!\leq \!t\!\leq\! T$, with
$C_2\!=\!8C_1$ and
$C_3\!=\!8C_2C_{\gamma^{\prime\prime}}$, for some universal constant $C_{\gamma^{\prime\prime}}$.
Since as we shall see below
we have differential equations for these quantities they are
continuous functions so we can prove this
by assuming that these estimates are true for $t\geq t_1$ and
show that they imply better estimates
as long as $t_1\geq  q_-^*$.
If we integrate \eqref{eq:eikonaldvi} with $Z\!=\!\Omega$ we get
\beqs
 |r_2^*\sls{W\!}_{2}\!-r_1^*\sls{W\!}_1|
\leq |r_2^* \sls{W\!}_2\!-r_1^* \sls{W\!}_1|_{\,t=T}
+\!\int_t^T\!\!{\sum}_{\Omega} | H_\Omega(X_2,W_2)
-H_\Omega(X_1,W_1)| \, dt.
\eqs
If we use \eqref{eq:uniformeikonalboundtwo} at $t\!=\!T$,
\eqref{eq:Hdifftwo}, \eqref{eq:WL} and the assumed bounds
\eqref{eq:chardiffone}-\eqref{eq:chardiffthree}:
\begin{multline*}
(1\!\!+\!t\!+|q^*|)|\overline{W}_{\!2}\!-\!\overline{W}_{\!1}|\\
\leq 2C_1\varepsilon
\Big( \frac{1\!+q_-^*}{1\!\!+\!T\!\!+\!|q^*|}\Big)^{\!\gamma^\prime}
\!\!\!\!\!+ C (C_2\!\!+C_3) \varepsilon^2\!\Big( \frac{1+q_-^*}{1\!\!+\!T\!\!+\!|q^*|}\Big)^{\!\gamma^\prime\!-\gamma^{\prime\prime}\!}\!\!\!
 \Big( \frac{1+q_-^*}{1\!\!+\!t\!+\!|q^*|}\Big)^{\!\gamma^{\prime\prime}}\\
 <4C_1\varepsilon
 \Big( \frac{1+q_-^*}{1\!\!+\!T\!\!+\!|q^*|}\Big)^{\!\!\gamma^\prime\!-\gamma^{\prime\prime\!}\!\!\!}
 \Big( \frac{1+q_-^*}{1\!\!+\!t\!+\!|q^*|}\Big)^{\!\!\gamma^{\prime\prime}}\!\!\! ,
\end{multline*}
if $\varepsilon$ is sufficiently small
which proves \eqref{eq:chardifftwo}.
If we integrate \eqref{eq:eikonaldvi} with $Z\!=\!\partial_t$:
\beqs
|W_{\!2}-{W}_{\!1}|
\leq |\overline{W}_{\!2}\!-\!\overline{W}_{\!1}|+|W_{\!2}-W_{\!1}|_{\,t=T}
+\!\!\int_t^T\!\!\!|
H_{\partial_t}(X_2,W_2)-H_{\pa_t}(X_1,W_1)| \, dt.
\eqs
If we use \eqref{eq:uniformeikonalboundone} at $t\!=\!T$,
\eqref{eq:Htdiff} with the bounds
\eqref{eq:chardiffone}-\eqref{eq:chardiffthree} we get as above
\beqs\label{eq:aprioribound}
(1\!+|q^*|)|{W}_2\!-\!{W}_1|
\!< 8C_1\varepsilon
 \Big( \frac{1+q_-^*}{1\!\!+\!T\!+\!|q^*|}\Big)^{\!\gamma^\prime\!-\gamma^{\prime\prime\!}\!\!\!}
 \Big( \frac{1+q_-^*}{1\!+t\!+\!|q^*|}\Big)^{\gamma^{\prime\prime}}\!\!\!\!,
\eqs
if $\varepsilon\!>\!0$ is small which proves \eqref{eq:chardiffone}.
Integrating \eqref{eq:charcurvei} we get
\beqs
|\,X_2-X_1|
\leq |\,X_2-X_1|_{\,t=T}
+\int_t^T | F(X_2,W_2)-F(X_1,W_1)| \, dt .
\eqs
\eqref{eq:chardiffthree} follows
if we use \eqref{eq:tildegammaest} at $t\!=\!T$ and \eqref{eq:Fdifftwo}
with
\eqref{eq:chardiffone}-\eqref{eq:chardiffthree}
\begin{multline*}
|X_2-X_1|\\
\leq 2C_1\varepsilon
\Big( \frac{1\!+q_-^*}{1\!+\!T\!\!+\!|q^*|}\Big)^{\!\gamma^\prime}
\!\!\!\!\!+\big( C (C_2\!+C_3) \varepsilon^2\!\!+C_2\varepsilon C_{\gamma^{\prime\prime}\!}\big)\!\Big( \frac{1+q_-^*}{1\!+T\!\!+\!|q^*|}\Big)^{\!\gamma^\prime\!-\gamma^{\prime\prime}\!}\!\!\!
 \Big( \frac{1+q_-^*}{1\!+t\!+\!|q^*|}\Big)^{\!\gamma^{\prime\prime}}\\
 < 2C_2\varepsilon C_{\gamma^{\prime\prime}}
 \Big(\frac{1+q_-^*}{1+T+|q^*|}\Big)^{\gamma^\prime-\gamma^{\prime\prime}}
 \Big( \frac{1+q_-^*}{1+t+|q^*|}\Big)^{\gamma^{\prime\prime}}\!\!\!.
\end{multline*}

\vspace{-0.2in}

\section{The mass loss law}
We have seen that the asymptotics of $H_{\!LL\!}$ close to the light cone is $2M\!$.
On the other hand it is also given by the asymptotics for the wave equation with the
source determined by $\!n$.\! Therefore there has to a be a relation between $\!M\!$ and $\!n$:
\begin{prop}\label{prop:totalmassloss} We have
\beq\label{eq:totalmassloss}
\frac{1}{2}\int_{-\infty}^{+\infty} \int_{\bold{S}^2}n(q^*,\omega)\frac{ dS(\omega)\!\!}{4\pi}\,
dq^*\!=M.
\eq
\end{prop}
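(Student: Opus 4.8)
The plan is to read off the mass $M$ from the behaviour of $h_{LL}$ near spacelike infinity, to read off the source $n$ from the behaviour of the bad component $h_{\underline L\underline L}$ along the outgoing cones $q^*=r^*-t=\mathrm{const}$ through the asymptotic equation \eqref{eq:hormandereinsteintwo}, and to tie the two together by the wave coordinate condition, integrating the resulting identity over all retarded times and over the sphere.

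First I would assemble the asymptotic input already at hand: from Proposition \ref{prop:lim}, $H^\infty_{LL}=\delta^{AB}H^\infty_{AB}=2M$ and $H^\infty_{LA}=0$, together with the limiting constraints $\partial_{q^*}H^\infty_{LT}=\delta^{AB}\partial_{q^*}H^\infty_{AB}=0$; the definition $n=-P_{\mathcal S}(\partial_{q^*}H^\infty,\partial_{q^*}H^\infty)=\tfrac12\delta^{AC}\delta^{BD}\partial_{q^*}H^\infty_{AB}\,\partial_{q^*}H^\infty_{CD}\ge0$, with $|n|\lesssim\varepsilon^2(1+|q^*|)^{-2}$, so that $\int_{-\infty}^{\infty}\!\int_{\mathbb S^2}n\,dS\,dq^*$ is absolutely convergent; and the solution of the asymptotic system, $H_{\underline L\underline L}(q^*,\omega,s)\sim H^\infty_{\underline L\underline L}(q^*,\omega)-s\int^{q^*}P_{\mathcal S}(\partial_{q^*}H^\infty,\partial_{q^*}H^\infty)(\tilde q,\omega)\,d\tilde q$, which identifies the bounded part $H^\infty_{\underline L\underline L}$ — the ``mass aspect'' — once the logarithm is removed. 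That $L$ is null plays the decisive role: since $k^1_{LL}=k^1_{L\underline L}=0$ but $(\underline L^\mu L_\mu)^2=4$, the component $h_{LL}$ carries a genuine, logarithm-free radiation field equal to $2M$, whereas the entire logarithmic term sits in $h_{\underline L\underline L}$.

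The bridge is the wave coordinate identity \eqref{eq:wavecoordinateframeRdivergenceL}, applied to $k^{\mu\nu}=\widehat h^{\mu\nu}$ with $\partial_\mu\widehat h^{\mu\nu}=\Lambda^\nu$ and with the improved $L$-component $\Lambda_L=O(h\,\overline{\partial}h)+O(h_{LL}\partial\,\mathrm{tr}\,h)+O(h^2\partial h)$ from \eqref{eq:LambdaL}. In the limit $r^*\to\infty$ along the cones this becomes a transport identity relating $\partial_{q^*}$ of the radiation quantity attached to $h_{LL}$ to an $s^*$-derivative of that attached to $h_{\underline L\underline L}$, to angular derivatives of tangential components, and to $\Lambda_L$; by \eqref{eq:LambdaL}, $h_{LL}\sim 2M/r$ and $\delta^{AB}\partial_{q^*}H^\infty_{AB}=0$, the last of these reduces to the source and ultimately to $n$. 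I would then integrate this identity over $q^*\in(-\infty,\infty)$ along a cone and over $\omega\in\mathbb S^2$ with $dS/4\pi$, and pass to the limit. On the left, the far-exterior endpoint gives, through the data $h_{LL}\sim 2M/r$, a net term proportional to $M$, while the interior endpoint contributes nothing because $r^*h_{LL}\to0$ there by Proposition \ref{prop:sharpmetricdecay}. On the right, the angular terms integrate to zero on $\mathbb S^2$; the $s^*$-derivative term contributes only boundary values, which vanish since $H^\infty_{\underline L\underline L}$ decays in $q^*$ at both ends — the interior end being controlled by \eqref{eq:exactsource} and Proposition \ref{lem:approxsource}, which give the homogeneous $t^{-1}$ profile with coefficient $\tfrac1{4\pi}\int\!\int L_\mu L_\nu\, n\,dS\,dq^*$ — and what survives is a fixed multiple of $\int_{-\infty}^{\infty}\!\int_{\mathbb S^2}n\,dS\,dq^*$. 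Matching the two sides, and fixing the constant $\tfrac12$ from \eqref{eq:Psphere}, yields \eqref{eq:totalmassloss}.

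The main obstacle is analytic, not algebraic. One must justify interchanging $\lim_{r^*\to\infty}$ with the $q^*$- and $\omega$-integrations while controlling all remainders uniformly in $q^*$ — precisely the purpose of the quantitative error bounds in Propositions \ref{prop:sharpmetricdecay}, \ref{prop:limtwo}, \ref{prop:sourcedecay} and \ref{lem:approxsource}; one must keep the two distinct logarithms apart — the $\ln r^*$ of the asymptotic system and the $\ln|q^*|$ hidden inside $\int_{q^*}^{\infty}\ln(\cdots)\,n$ — so that the mass aspect $H^\infty_{\underline L\underline L}$ is an honestly finite object; and one must match the near-cone description (through $k$ and $k^1$) with the genuinely interior description (through $k^2$ and \eqref{eq:approxsourcesol2intro}) as $q^*\to-\infty$, so that the interior end of the $q^*$-integral is evaluated correctly. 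Once these matchings are in place, \eqref{eq:totalmassloss} is forced.
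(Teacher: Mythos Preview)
Your proposal has a genuine gap at the step where $n$ is supposed to emerge. You invoke the identity \eqref{eq:wavecoordinateframeRdivergenceL} and assert that the divergence term $\Lambda_L$ ``reduces to the source and ultimately to $n$.'' But by \eqref{eq:LambdaL}, $\Lambda_L=O(h\,\overline{\partial}h)+O(h_{LL}\,\partial\,\mathrm{tr}\,h)+O(h^2\partial h)$: every piece carries an extra tangential derivative, an extra factor $h_{LL}\sim M/r$, or an extra $h$, so $r^2\Lambda_L\to0$ at null infinity and contributes nothing to any limiting identity. The quantity $n=-P_{\mathcal S}(\partial_{q^*}H^\infty,\partial_{q^*}H^\infty)$ simply does not live inside $\Lambda_L$. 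Moreover, \eqref{eq:wavecoordinateframeRdivergenceL} with $U=L$ produces $\widehat h_{\underline L L}$ on the right, not $\widehat h_{\underline L\underline L}$; and since $H^\infty_{LL}\equiv 2M$ is \emph{constant} in $q^*$, integrating $\partial_{q^*}H^\infty_{LL}$ over all $q^*$ gives zero, so the limiting identity you describe collapses to $0=0$ rather than to \eqref{eq:totalmassloss}.

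The paper's route is different and more direct. It uses the explicit interior formula $k^2$ of Proposition~\ref{lem:approxsource}: contracting with $L^\mu(\omega)L^\nu(\omega)$ brings in the kernel factor $(1-\langle\omega,\sigma\rangle)^2$, since $-L^\mu(\omega)L_\mu(\sigma)=1-\langle\omega,\sigma\rangle$, and one then \emph{averages over the observation direction $\omega\in\mathbb S^2$}. An elementary computation of $\tfrac12\int_0^2 u^2(a+u)^{-1}\,du$ with $a=(\rho^*-q^*)/r^*$ gives
\[
\int_{\mathbb S^2} r^* k^2_{LL}(r^*\!-q^*,r^*\omega)\,\frac{dS(\omega)}{4\pi}
=\int_{q^*}^{\infty}\!\!\int_{\mathbb S^2}\!\Big(1-a+\tfrac{a^2}{2}\ln\Big|\tfrac{2+a}{a}\Big|\Big)\,n(\rho^*,\sigma)\,\frac{dS(\sigma)}{4\pi}\,d\rho^*,
\]
and letting $r^*\to\infty$ forces $a\to0$, so the right side tends to $\int_{q^*}^{\infty}\!\int_{\mathbb S^2}n\,dS/4\pi\,d\rho^*$. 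The left side also converges, and by the chain $k^2\approx k\approx h^{1e}$ together with $H^\infty_{LL}=2M$ its limit differs from $2M$ by $O((1+q_-^*)^{-\gamma'})$; sending $q^*\to-\infty$ yields \eqref{eq:totalmassloss}. The mechanism you are missing is that although the \emph{equation} for $h_{LL}$ has no source (because $L^\mu L_\mu=0$), the \emph{solution} $k^2_{LL}$ still remembers $n$ through the angular mismatch $L(\omega)\neq L(\sigma)$ in the fundamental-solution integral, and the spherical average in $\omega$ is precisely what extracts $\int n$ from that memory.
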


\begin{proof} Since $-L^\mu(\omega)L_\mu(\sigma)=1-\langle \omega,\sigma\rangle$
we have \beqs
 k^2_{LL}(t,r^*\omega)
=\frac{1}{r^*} \int_{q^*}^\infty \int_{\bold{S}^2}{\frac{
\big(1-\langle \omega,\sigma\rangle\big)^2
  n({\rho^*},{\sigma})}{({\rho^*}-q^*)/r^*+1-\langle\, \omega,{\sigma}\rangle}\,
\frac{dS({\sigma})}{4\pi}}\,\chi\big(\tfrac{\langle{\rho^*}\rangle}{t+r^*}\big)
d {\rho^*},
\eqs
where $q^*=r^*-t<<0$.
Hence
\begin{multline*}
\!\!\!\int_{\bold{S}^2}\!r^* k^2_{LL}(t,r^*\omega) \frac{dS(\omega)}{4\pi}
=\!\int_{q^*}^\infty \!\!\int_{\bold{S}^2}\int_0^2
{\frac{\omega_1^2  n({\rho^*},{\sigma})}{({\rho^*}\!-q^*)/r^*\!+\omega_1}\,\frac{d\omega_1}{2}\,
\frac{dS({\sigma})}{4\pi}}\,\chi\big(\tfrac{\langle\,{\rho^*}\,\rangle}{t+r^*}\big)
d {\rho^*}\\
=\int_{q^*}^\infty \!\!\!\int_{\bold{S}^2}\!\!\big(2-2a+a^2\ln{\Big|\frac{2+a}{a}\Big|}\big) n({\rho^*},{\sigma}) \,
\frac{dS({\sigma})}{8\pi}\,\chi\big(\tfrac{\langle\,{\rho^*}\,\rangle}{t+r^*}\big) d {\rho^*}\!\!,
\end{multline*}
where  $a\!=\!({\rho^*}\!-q^*)/r^*$.
Since $|n(\rho^*,\sigma)|\!\les\! \varepsilon^2(1\!+|\rho^*|)^{-2} (1\!+\rho_+^*)^{-\gamma^\prime}$
it follows that
\beqs
\Big|\int_{\bold{S}^2}r^* k^2_{LL}(r^*-q^*,r^*\omega) \frac{dS(\omega)}{4\pi}
-\int_{q^*}^\infty \int_{\bold{S}^2} n({\rho^*},{\sigma}) \,
\frac{dS({\sigma})}{4\pi}\, d\rho^*\Big|\les \frac{\varepsilon^2 |q^*|}{r^*}.
\eqs
Let $\Phi_{LL}^{2*}(q^*\!\!,\omega,r^*)\!=\!r^* k^2_{LL}(r^*\!-q^*\!\!,r^*\omega)$.
By the previous arguments
$\Phi_{LL}^{2\infty}(q^*\!\!,\omega)\!=\lim_{r^*\to\infty}\Phi_{LL}^{2*}(q^*,\omega,r^*)$
exists and satisfies
$\big|\Phi_{LL}^{2\infty}(q^*,\omega)-2M\big|\les \varepsilon (1+q_-^*)^{-\gamma^\prime}\!$. Hence if we pass to the limit $r^*\!\to\!\infty$ in the above
\beqs
\Big|\,2M
-\int_{q^*}^\infty \int_{\bold{S}^2} n({\rho^*},{\sigma}) \,
\frac{dS({\sigma})}{4\pi}\, d\rho^*\Big|\les\frac{\varepsilon }{(1+q_-^*)^{\gamma^\prime}} .
\eqs
Taking $q^*\to-\infty$ proves the theorem.
\end{proof}
 The proposition in particular implies that, if $n\!=\!0$ then $M\!\!=\!0$, and then by the positive mass theorem the space time is Minkowski space.
The proposition can be interpreted as that the total radiated energy equals the initial mass, if there is no black hole.
 By \cite{BBM,C2} the radiated energy density is equal to the limit
along outgoing null hypersurfaces of the square
of the trace less part of the conjugate null second fundamental form of surfaces in the null
 hypersurface.

We define the radius of a surface $S$ to be  $r(S)\!=\!\sqrt{\!\text{Area}(S)\!/4\pi}$.
Let $\Lh$ and $\Lbh$ be the outgoing respectively incoming
null normals to $S$ satisfying
$g(\Lh,\Lbh)\!\!=\!-2$.\!
$\Lh$ and $\Lbh$ are unique up to the transformation
 $\Lh\!\!\to \!a \Lh$ and
 $\Lbh\!\to  a^{\!-1\!}\Lbh$.
The null second fundamental form and the conjugate null second fundamental form
are defined to be the tensors
$\chi(X,Y)\!\!=\!g(\nabla_{\!\!X}\Lh,Y)$
respectively $\underline{\chi}(X,Y)\!\!=\!g(\nabla_{\!\!X} \Lbh,Y)$
for any vectors $X,Y$ tangent to $S$ at a point,
where $\nabla_{\!\!X}$ is covariant differentiation.
Under the transformation above $\chi\!\!\to\! a\chi$ and
 $\underline{\chi}\!\!\to\! a^{-1} \underline{\chi}$ so
the  Hawking mass of $S$,
$M_{\mathcal{H}}(S)\!=r(S)\big(1\!+\int_S \tr \chi \,\tr \underline{\chi} \,dS\!/16\pi
\big)\!/2$
is invariant. If $\tr \chi \,\tr \underline{\chi}\!<\!0$ we
can fix $\Lh$ and $\Lbh$ by
$\tr\chi\!+\tr \underline{\chi}\!=\!0$. Let $\hat{\chi}$ and $\hat{\underline{\chi}}$
be the traceless parts. The incoming respectively outgoing energy flux through $\!S_{\!}$ are
$E(S)\!=\!\int_{S}\! \hat{\chi}^2 dS\!/32\pi$ and
$\underline{E}(S)\!=\!\int_{S}\! \hat{\underline{\chi}}^2 \!dS_{\!}/32\pi\!$.

Let $C_{\!u}\!$ and $\underline{C\!\!}_{\,v}\!$ be the characteristic surfaces
of constant $u\!\sim t\!-r^*\!$ respectively $v\!\sim t+r^*\!$ as in section \ref{section:characteristicsurfaces}.
Let $S_{u,v}\!\!= \!C_{\!u}\cap\underline{C\!}_{v}$. For fixed
$v$, $E(\underline{C\!}_{v})\!\!=\!\!\int\!\! \underline{E\!}\,(S_{u,v})du$ is the norm of characteristic initial data on $\underline{C\!\!}_{\,v}\!\!$ \cite{C3}.
The energy at null infinity is $\!\int\! \!E(u)du$ where $E(u)$ is the limit of
$\underline{E\!}\,(S_{u,v})$ as $v\!\!\to \!\infty$.
The Bondi mass $M_{{}_{\!}}(u_{\!})$
is the limit of $\!M_{\!\mathcal{H}}(S_{u_{\!},v_{\!}})\!$ as $v\!\!\to \!_{\!}\infty$.
By the Bondi mass loss law $d M_{{}_{\!}}(_{{}_{\!}}u)\!/du\!=\!-E(u)$.
For asymptotically flat data $M_{{}_{\!}}(_{{}_{\!}}u_{{}_{\!}})\!\!\to\!\! M\!$, the ADM mass, as $u\!\to \!-\infty$,  and in the
absence of a black hole, $M_{{}_{\!}}(_{{}_{\!}}u_{{}_{\!}})\!\!\to \!0$ as $u\!\to\! +\infty$.
If $E(q^{\!*})\!\!=\!\!\int_{\bold{S}^2}\!n(q^{\!*}\!\!,\omega){ dS(\omega)\!}/{8\pi}$ then
\eqref{eq:totalmassloss} says that $\int_{-\infty}^{+\infty}\! \!E(u) du\!=\!M\!$.
This will be explored in forthcoming papers.

\section*{Acknowledgments}
I would like to thank Igor Rodnianski for many important discussions and
initial collaboration.  The results here came up in connection with curvature decay,
  but over time diverged from that.
 I would like to thank Piotr Chrusciel, Sergiu Klainerman, Volker Schlue and Martin Taylor
  for useful discussions.
 I would like to thank
  the Mittag-Leffler Institute and MSRI and the organizers
  of the programs in Mathematical Relativity.
   I also thank NSF for support.

\vspace{-0.15in}

\end{document}